\documentclass[a4paper]{amsart}
\usepackage{csquotes}

\usepackage[style=alphabetic]{biblatex}
\usepackage[T1]{fontenc}
\usepackage{geometry}
\usepackage{mathrsfs}

\usepackage{stmaryrd}
\title{Kontsevich-Zagier conjecture for Hensel-minimal fields with sections}

\author{Xavier Pigé}

\usepackage{hyperref}
\usepackage{my-macros}
\usepackage{tikz-cd}

\theoremstyle{plain}
\newtheorem{fact}{Fact}[subsection]
\newtheorem{thm}[fact]{Theorem}
\newtheorem*{thm*}{Theorem}
\newtheorem{lem}[fact]{Lemma}
\newtheorem{prop}[fact]{Proposition}
\newtheorem{cor}[fact]{Corollary}
\newtheorem*{cor*}{Corollary}

\newtheorem{thmintro}{Theorem}

\theoremstyle{definition}
\newtheorem{rmk}[fact]{Remark} 
\newtheorem{dfn}[fact]{Definition}

\newtheorem{question}[fact]{Question}
\newtheorem*{question*}{Question}
\newtheorem{example}[fact]{Example}

\numberwithin{equation}{section}

\newcommand\RV{\mathrm{RV}}
\newcommand\VF{\mathrm{VF}}
\newcommand\VFR{\mathrm{VFR}}
\DeclareMathOperator\rv{rv}
\DeclareMathOperator\sn{sn}

\DeclareMathOperator\ac{\overline{ac}}
\newcommand\fL{\mathfrak L}
\DeclareMathOperator\Kp{K_+}
\DeclareMathOperator\Ob{Ob}
\newcommand\Isp{I_{\mathrm{sp}}}
\newcommand{\ACVF}{\mathrm{ACVF}}

\renewcommand{\sp}{\mathrm{sp}}
\newcommand\Kb{\overline K}
\DeclareMathOperator\Jac{Jac}
\newcommand\Inj{\mathrm{Inj}}
\newcommand\Def{\mathrm{Def}}
\newcommand\I{\mathrm I}
\newcommand\bbL{\mathbb L}
\newcommand\DP{\mathrm{DP}}
\newcommand\vol{\mathrm{vol}}
\newcommand\tgamma{\widetilde \gamma}
\newcommand\tGamma{\widetilde \Gamma}
\newcommand\tun{\widetilde 1}

\newcommand\bdd{\mathrm{bdd}}
\DeclareMathOperator\Supp{Supp}
\newcommand\sC{\mathscr{C}}

\newcommand\setItemnumber[1]{\setcounter{enumi}{\numexpr#1-1\relax}}
\begin{document}

\maketitle

\begin{center}
\begin{minipage}{15cm}
We generalise Yin's motivic integration with section to the Hensel-minimal framework, with and without volume forms, building on Stout and Vermeulen's work on a Hrushovski-Kazhdan style motivic integral for 1-h-minimal valued fields. In particular, we get a bijective motivic integral \emph{à la} Hrushovski-Kazhdan between certain Grothendieck groups of definable sets on the valued field and the coarse Grothendieck groups on $\RV$ in the Denef-Pas language, with angular component. In the second half of the article, we build a Cluckers-Loeser-like framework and show that our constructions indeed give back Cluckers-Loeser motivic integral. Thus Cluckers-Loeser motivic integration derives from an isomorphism of Hrushovski-Kazhdan motivic integration, but in the Denef-Pas language with a section of $\RV$.
\end{minipage}
\end{center}

\tableofcontents

\section{Introduction}

In 2006, E. Hrushovski and D. Kazhdan introduced the motivic integration theory which now bears their name \cite{ginzburg_integration_2006}. The integral defined in this way was an isomorphism between some Grothendieck \hbox{(semi-)}groups of definable subsets on the valued field $\VF$ and others on the quotient $\RV^\times = \VF^\times / (1+\mathfrak m)$; this is an amalgam of the residue field $\Kb$ and value group $\Gamma$ which can be thought of as a leading term. The concept of motivic integration was introduced by Kontsevich in 1995 over the field $\bC((t))$, to prove that birational Calabi-Yau varieties have the same Hodge numbers. The idea is to get a sort of integration theory for algebraic varieties over non-local valued fields, mimicking $p$-adic integration for local fields but with values in a Grothendieck ring of varieties over the residue field (instead of the real numbers).

Hrushovski and Kazhdan's theory suffered from some weaknesses: unlike Cluckers-Loeser motivic integration, which works with any characteristic $0$ residue field and with discrete value groups, it was restricted to characteristic $0$ algebraically closed residue fields and divisible value groups. Besides, whereas Cluckers-Loeser motivic integration employs the language of Denef-Pas, classical in model theory of valued fields and which includes an angular component, Hrushovski and Kazhdan chose to work with $\RV$ rather than with an angular component.

The second difficulty was solved by Yin \cite{yin_integration_2013} by adding a section, although this is not made explicit in that article. For the first one, the introduction of Hensel minimality by Cluckers, Rideau and Halupczok \cite{cluckers_hensel_2022} was a first step, which allowed Stout and Vermeulen, in a recent work \cite{stout_integration_2025}, to broadly generalise the principle of Hrushovski-Kazhdan motivic integration. However, this generalisation still doesn't make it possible to add an angular component. More precisely, while they do prove the existence of a motivic integration morphism \emph{à la} Hrushovski-Kazhdan even in the Denef-Pas language, this is not an isomorphism, yet bijectivity is one of the core features of Hrushovski-Kazhdan motivic integration: such bijectivity means that this integration theory is, in some sense, universal.

Let us give some more explanations on that point. Many applications of motivic integration -- including Kontsevich's initial argument -- follow this pattern:
\begin{enumerate}
\item one wishes to prove an equality between two classes $[R]$ and $[S]$ in a Grothendieck group over $\RV$ -- or the residue field, for instance;
\item one finds two classes $[X]$ and $[Y]$ in the corresponding Grothendieck group over $\VF$ such that with $\int [X] = [R]$ and $\int [Y]=[S]$;
\item using Fubini or change of variables formula, one finally proves $[X]=[Y]$.
\end{enumerate}
In other words, the category in which we must work over $\RV$ is fixed, and we want to find the "good category" to work in over $\VF$ in order to get an isomorphism of Grothendieck (semi-)groups between them. Note that we have much freedom, because many different structures on $\VF$ may all induce the same structure on $\RV$. The converse problem, on the other hand, would be much less interesting, for the structure on $\VF$ determines that on $\RV$. In this regard, note however that, in the case we have a 1-h-minimal theory, and we take over $\VF$ the categories defined by Stout and Vermeulen \cite{stout_integration_2025}, then we have a motivic integration isomorphism if we take on $\RV$ the categories whose morphisms are the bijections which lift to $\VF$ (quotienting by the usual congruence).

\subsection{Hrushovski-Kazhdan motivic integration in 1-h-minimal fields with section}
Now let us take a closer look at the following four examples:
\begin{enumerate}
\item $\RV[*]=\bigoplus_{n\in\bN}\RV[n]$ the classical categories defined by Hrushovski-Kazhdan and Stout-Vermeulen, for which the objects of $\RV[n]$ are $\cL_{val}$-definable subsets $R\subset(\RV^\times)^n\times\RV^m$ without parameters, for some $m\in\bN$, with the condition that the projection $R\to(\RV^\times)^n$ is finite-to-one;
\item $\RV[*,\cdot]$ the coarse categories defined by Hrushovski-Kazhdan, analogous to the above but without finiteness condition;
\item $\RV_{\DP}[*]$ analogous to the classical categories, but in the Denef-Pas language (and without finiteness condition on $\Gamma$);
\item $\RV_{\DP}[*,\cdot]$ analogous to the coarse categories, but in the language of Denef-Pas.
\end{enumerate}
In the first case, if the residue field is algebraically bounded over $\emptyset$ (\emph{cf.} \cite{dries_dimension_1989}; this is the case of most standard theories of fields, such as algebraically closed, real closed, $p$-adically closed, pseudofinite or more generally PAC fields) and $\cL=\cL_{val}$, then Stout and Vermeulen \cite{stout_integration_2025} prove, without any condition on the value group, that the "good category" to consider over $\VF$ is the classical category $\VF$ (colimit of the $\VF[n], n\in\bN$), again in the language $\cL=\cL_{val}$. 

In this paper, we show that, in the three other cases, the categories over $\VF$ we have to consider in order to get a motivic integration isomorphism have a similar description, but in an expansion of the language: in the third case, one must add a section $\sn:\Gamma\to\VF^\times$, and in the second and fourth cases a section $\sn:\RV^\times\to\VF^\times$. Actually, in the second and fourth cases,  we do not even need any assumption on the residue field (apart from characteristic $0$).

Let us focus on the last example, thus denoting 
by $\cL'_{\DP}$ the language $\cL_{\DP}\cup\{\sn\}$
. We introduce the following $\VF$-category:
\begin{enumerate}
\setItemnumber{4}
\item $\VF_{\DP}[\cdot]$ is the category whose objects are $\cL'_{\DP}(\emptyset)$-definable subsets $X\subset \VF^n\times\RV^m$, for some $n,m\in\bN$.
\end{enumerate}
Morphisms are just definable bijections (without parameters, in the language $\cL'_{\DP}$). Note that the unrestricted presence of $\RV$-coordinates echoes the definition of coarse $\RV$-categories, so that there is a natural lifting map
\[\fL:\Ob\RV_{\DP}[*,\cdot]\to\Ob\VF_{\DP}[\cdot]\,.\]


Our Theorem \ref{thm_int_sans} for motivic integration without volume forms states the following:
\begin{thmintro}\label{thm_intro_HK}
There is a semiring isomorphism between the Grothendieck semigroups

\[\int:\Kp\VF_{\DP}[\cdot]\to\Kp\RV_{\DP}[*,\cdot]/\Isp\,,\]
where $\Isp$ an congruence generated by one explicit relation.

This isomorphism is characterised by the equality
\[\int[\fL R] = [R]\]
for every object $R$ of $\RV_{\DP}[*,\cdot]$.
\end{thmintro}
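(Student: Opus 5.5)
The plan follows the Hrushovski--Kazhdan template as generalised by Stout and Vermeulen to 1-h-minimal fields, transported into the language $\cL'_{\DP}$ with the section $\sn:\RV^\times\to\VF^\times$. The integral is built as the inverse of the lifting map: first show that $\fL$ induces a well-defined semiring morphism $\Kp\RV_{\DP}[*,\cdot]\to\Kp\VF_{\DP}[\cdot]$, then show it is surjective, and finally identify its kernel congruence with $\Isp$. The characterisation $\int[\fL R]=[R]$ then holds by construction, and uniqueness is automatic from surjectivity of $\fL$.

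\textbf{Step 1 (well-definedness of $\fL$).} For an isomorphism $f:R\to S$ in $\RV_{\DP}[*,\cdot]$, we need a definable bijection $\fL R\to\fL S$. Because of the section, the fibres $\rv^{-1}(r)$ are parametrised definably as $\sn(r)(1+\mathfrak m)$, so $f$ lifts fibrewise by translating and rescaling by $\sn$. The coarse categories have no finiteness or Jacobian condition, and this is precisely why no volume-preserving constraint appears. Additivity and multiplicativity of $\fL$ are immediate.

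\textbf{Step 2 (surjectivity).} Every $\cL'_{\DP}(\emptyset)$-definable $X\subset\VF^n\times\RV^m$ should be definably isomorphic to a disjoint union of lifts. Proceed by induction on $n$, one $\VF$-coordinate at a time. By 1-h-minimality (preserved by adding the section, a result I would assume from the earlier sections), a definable subset of $\VF$ over parameters is, after finitely many exceptional points, a union of fibres of $\rv(x-c)$ for a finite set of centres $c$. With the section, a centre $c$ can be moved to $0$ by translation, and a ball $\rv^{-1}(r)$ becomes $\fL$ of a point. Exceptional points are recorded as extra $\RV$-coordinates, which is harmless in the coarse category. Uniformity in parameters, needed for the induction, requires definable choice of the centres. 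The section supplies definable Skolem functions in the relevant sorts, allowing the fibrewise decompositions to be glued.

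\textbf{Step 3 (kernel $=\Isp$).} This is the main obstacle. The relation in $\Isp$ should identify $[\fL(\text{point in }\RV^\times)]$, the open ball $1+\mathfrak m$ at dimension $1$, with $\fL$ of the annulus decomposition. Concretely, $\rv^{-1}(1)$ is isomorphic to $\mathfrak m\setminus\{0\}$ plus a point, giving the standard relation $[1]_1=[\RV^{\times}_{\gamma>0}]_1+[1]_0$. That $\Isp\subseteq\ker$ is a direct computation. For the converse, I would follow Hrushovski--Kazhdan: show that any definable bijection $\fL R\to\fL S$ decomposes, after refinement, into finitely many maps that are either lifts of $\RV$-bijections or one-variable maps between balls moving the centre. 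The key lemma is that a definable bijection in one $\VF$-variable, fibred over parameters, is piecewise "$\rv$-affine" (via h-minimal Jacobian and valuative Jacobian properties), up to the special re-centering move that produces exactly the $\Isp$ relation. One then uses a Fubini/commutation-of-coordinate-order argument to pass from one variable to $n$ variables, showing that two decompositions obtained from different coordinate orders differ by $\Isp$. I expect this symmetric-order argument, adapted to the Denef-Pas setting where $\ac$ must be respected by the section, to be the hardest part. I would first attempt it in dimension $2$ and then induct.
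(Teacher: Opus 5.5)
Your Step 1 is exactly the paper's Proposition \ref{prop_rel_bij_RV}. Steps 2 and 3, however, rest on a false premise: adding $\sn:\RV^\times\to\VF^\times$ destroys 1-h-minimality, and even 0-h-minimality.

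To see this, note that the $\emptyset$-definable set $\sn(\RV^\times)\subset\VF$ meets each ball $\rv^{-1}(\xi)$ in exactly one point. Given any finite $C\subset\VF$, choose $\xi$ with $|\xi|>|c|$ for all $c\in C$. Then $x\mapsto(\rv(x-c))_{c\in C}$ is constant on $\rv^{-1}(\xi)$, so no finite set of centres prepares $\sn(\RV^\times)$. Hence your description of definable subsets of $\VF$ as ``finitely many points plus fibres of $\rv(x-c)$ for finitely many $c$'' fails in $\cT'$. So does the Jacobian preparation by finitely many centres that you want to apply to $\cL'_{\DP}$-definable bijections in Step 3. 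This is the same phenomenon that kills the fact Stout--Vermeulen rely on, namely that definable maps $\RV^m\to\VF$ have finite image.

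The missing idea is the $\RV$-partition. By relative quantifier elimination (Proposition \ref{prop_section}, Corollary \ref{cor_section}), every $\cL'$-definable $X$ admits a definable $\psi:X\to Y$ with the following properties: $Y$ injects definably into some $\RV^l$ (take the tuple of all subterms beginning with $\sn$), and every fibre $\psi^{-1}(y)$ is definable in $\cL(y)$. So $X$ is an $\RV$-indexed union of sets definable in the 1-h-minimal theory $\cT$. Surjectivity is then Stout--Vermeulen's Lemma 5.3.1 applied fibrewise and glued by compactness; no Skolem functions are needed. Likewise, the ``finitely many pieces'' in your Step 3 must become $\RV$-indexed families, whose pointwise $\Isp$-relations glue by Lemma \ref{lem_Isp_ponctuel}.

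Even granting $\RV$-partitions, your kernel argument skips the one genuinely new point. It sits in dimension one, not in the Fubini/coordinate-order step. The paper avoids that step entirely: it takes an $\RV$-partition of the graph of the bijection and applies Stout--Vermeulen's Lemma 5.5.15 in each $\cL(y)$-definable fibre.

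What is needed instead is that the class modulo $\Isp$ of the $\RV$-set attached to a cell decomposition of $X\subset\VF$ does not depend on the decomposition. In $\cT'$ the centres of the cells form infinite, $0$-dimensional definable families (e.g.\ $\sn(\RV^\times)$). It is therefore not automatic that two cell partitions are connected by finitely many recentering moves, each costing one instance of $\Isp$. Your phrase ``up to the special re-centering move'' silently assumes this.

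The paper isolates this as the ``refined'' axiom, built on refinements of finite length (Definition \ref{def_theorie_raffinee}) and used only in Lemma \ref{lem_int1_ind_chi}. There each step recentres uniformly along a definable map $c:I_0\to\VF$. The axiom is proved (Proposition \ref{prop_theorie_raffinee}) by a syntactic induction on the number of occurrences of $\sn$ in $\RV$-valued terms, again via relative quantifier elimination (Lemmas \ref{lem_raff_sans_RV} and \ref{lem_raff_avec_RV}).

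Finally, your worry that the section must respect $\ac$ is unfounded. Since $\ac$ factors through $\rv$ and $\rv\circ\sn=\mathrm{id}$, there is nothing to check.
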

We also prove the analogous result for motivic integration with volume forms, see Theorem \ref{thm_int_avec}. To do so, we construct a dimension theory for $\cL'_{\DP}$-definable subsets, see Theorem \ref{thm_dim}. This dimension theory satisfies almost all properties from the dimension in a 1-h-minimal theory stated in \cite{cluckers_hensel_2022}, Proposition 5.3.4.

Note that Theorem \ref{thm_intro_HK} is just a particular case of our results (but a relevant one); throughout this paper, we will adopt a more general approach than solely adding sections -- mainly to simplify notations-- then specialising to these different cases in examples. However, we were guided in our approach by those few cases, which the reader may keep in mind. More precisely, throughout this article, we will work in a expansion of a 1-h-minimal theory satisfying some axiomatic assumptions. An  interesting question to study would be to see how far one can go in generalising Hensel minimality to a framework including sections: is it possible to drop the assumption that there is an underlying Hensel-minimal theory and to find directly some axiomatic assumptions, satisfied by at least some of the above examples, which generalise Hensel minimality?

\subsection{A variant of Kontsevich-Zagier conjecture for Cluckers-Loeser motivic integration}
Note that the last case presents a strong resemblance to the categories in which Cluckers and Loeser work. Indeed, let $\mu_\Gamma\RV_{\DP}^\bdd[*,\cdot]$ be the subcategory of bounded objects (see Definition \ref{dfn_borné}) and $I$ be an ideal generated by two explicit relations, one from Theorem \ref{thm_intro_HK} and one to normalise the measure, see Definition \ref{def_ideal_volume}. Then the Grothendieck group $\mathrm K\mu_\Gamma\RV_{\DP}^\bdd[*,\cdot]/I$ is isomorphic (as a ring) to $C^\vol(\{*\})$, the ring of Cluckers-Loeser constructible functions (or Functions) on the point. Also, Cluckers and Halupczok found out that Hrushovski-Kazhdan motivic integrals could be seen as a kind of generalisation of Kontsevich-Zagier conjecture. In particular, in \cite{cluckers_p-adic_2021}, they treat the $p$-adic case, showing that $p$-adic integration satisfies a non-archimedean version of Kontsevich-Zagier. So from our Hrushovski-Kazhdan motivic integration isomorphisms, we will construct a motivic integration formalism \emph{à la} Cluckers-Loeser (Proposition \ref{prop_formalisme_CL}) -- which we may therefore interpret as the universal integration theory in the theory of henselian valued fields with angular component and a section of $\RV$, as it derives from an isomorphism -- and we will show that this framework indeed corresponds to Cluckers-Loeser motivic integration (Theorem \ref{thm_cluckers_loeser}). Here "universal integration theory" means that any relation between the integrals comes from geometric relations between the functions we integrate, as in Kontsevich-Zagier conjecture for periods. More precisely, we prove the following theorem:

\begin{thmintro}\label{thm_intro_CL}
For $Z$ an $\cL_{\DP}(\emptyset)$-definable set, let $C^{\cL,\vol}(Z)$ be the group of constructible motivic Functions over $Z$ and let $\I_* C^{\cL,\vol}(Z)$ be the subgroup those Functions that are integrable, as defined in \cite{cluckers_constructible_2008}.

For $Z$ an $\cL'_{\DP}(\emptyset)$-definable set, let $C^\vol(Z)$ and  $\I_* C^\vol(Z)$ be their $\cL'_{\DP}$-definable counterparts (Definition \ref{def_Fonctions_CL}).

There is a group morphism $C^{\cL,\vol}(Z)\to C^\vol(Z)$ such that the following diagram
\[\begin{tikzcd}
\I_* C^{\cL,\vol}(Z) \arrow[r] \arrow[d, "\int"] & \I_* C^\vol(Z) \arrow[d, "\int"]\\
C^{\cL, \vol}(*) \arrow[r, equal] & C^\vol(*)
\end{tikzcd}\]
is commutative.
\end{thmintro}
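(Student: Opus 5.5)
The plan is to build the morphism $C^{\cL,\vol}(Z)\to C^\vol(Z)$ directly from the presentation of Cluckers-Loeser constructible Functions, and then to reduce commutativity of the square to the characterising properties of both integration formalisms.

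\textbf{Step 1: the map.} Recall from \cite{cluckers_constructible_2008} that $C^{\cL,\vol}(Z)$ is a tensor product, over constructible Presburger functions $\mathcal P(Z)$, of a Grothendieck group of definable subassignments over $Z\times\RV$-sorts (residue field and value group in the Denef-Pas language), taken modulo the relation normalising $[\bA^1_{\Kb}]=\bbL$. The definition of $C^\vol(Z)$ in Definition \ref{def_Fonctions_CL} mirrors this presentation, but uses $\cL'_{\DP}$-definable objects. Every $\cL_{\DP}$-definable set is $\cL'_{\DP}$-definable, and every $\cL_{\DP}$-definable bijection is an $\cL'_{\DP}$-definable bijection. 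Hence the inclusion of categories induces group morphisms on each factor. These respect the defining relations: the relations are generated by definable isomorphisms, disjoint unions and the normalisation, and all of these are preserved. They are also compatible with the $\mathcal P(Z)$-module structures. So the inclusion induces the desired morphism, and it sends integrable Functions to integrable Functions. Integrability is expressed through summability of Presburger data and boundedness of the support, and both conditions are unchanged by enlarging the language.

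\textbf{Step 2: the bottom equality.} For $Z=\{*\}$ we must identify $C^{\cL,\vol}(*)$ with $C^\vol(*)$. Using the isomorphism $\mathrm K\mu_\Gamma\RV_{\DP}^\bdd[*,\cdot]/I\cong C^\vol(\{*\})$ mentioned above, together with its $\cL_{\DP}$ analogue, it suffices to show that the inclusion $\cL_{\DP}\subset\cL'_{\DP}$ induces an isomorphism on these $\RV$-level groups over $\emptyset$. Adding $\sn$ does not create new $\emptyset$-definable subsets of $\RV$-sorts up to the relevant equivalence: by orthogonality/stable embeddedness, the induced structure on $\RV$ is the Denef-Pas structure, which I would take from the axiomatic assumptions of the paper. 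Surjectivity and injectivity then follow once definable bijections between $\RV$-sets also come from the smaller language. I expect this stable-embeddedness fact to be available from earlier sections.

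\textbf{Step 3: commutativity.} Both integrals are characterised axiomatically, as in \cite{cluckers_constructible_2008}: additivity, Fubini, change of variables, and the values on basic cells or balls. Proposition \ref{prop_formalisme_CL} provides an $\cL'_{\DP}$ formalism satisfying these axioms, and Theorem \ref{thm_cluckers_loeser} identifies it with Cluckers-Loeser integration. Composing the $\cL'$-integral with the morphism of Step 1 therefore gives a family of maps on $\cL_{\DP}$-Functions satisfying the same axioms. By the uniqueness part of Cluckers-Loeser's theorem, this family coincides with the $\cL$-integral.

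The \textbf{main obstacle} is this uniqueness argument. Cluckers-Loeser uniqueness holds for integrals over all definable $Z$ and relative pushforwards, so the restricted $\cL'$ formalism must be checked to satisfy the relative axioms over $\cL_{\DP}$-definable bases. The checks are the graph/Jacobian axiom and the integral of a ball. To do this, I would use the cell decomposition in $\cL_{\DP}$ and reduce to one-variable integrals over cells. On such cells, the value of the $\cL'$-integral is computed by the Hrushovski-Kazhdan isomorphism $\int[\fL R]=[R]$ of Theorem \ref{thm_int_avec}, and this value visibly agrees with the Cluckers-Loeser formula.
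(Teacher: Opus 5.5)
Your construction of the comparison map $\iota\colon C^{\cL,\vol}(Z)\to C^\vol(Z)$ and your identification at the point agree with the paper. Two remarks on Step 1. The Cluckers--Loeser-shaped presentation $\sC^\vol(Z)\cong\mathrm K\Def_{\Kb,Z}\otimes_{\cP^0(Z)}\cP(Z)$ is not the definition of $C^\vol(Z)$; it is a consequence of Lemmas \ref{lem_RV_dec_orth_rel}, \ref{lem_def_mu} and \ref{lem_RV_dec_2}. The grading also needs Lemma \ref{lem_dim_T-def}, so that $\cL$- and $\cL'$-dimensions agree on $\cL$-definable sets.

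Step 3, however, rests on two citations you cannot use. First, Theorem \ref{thm_cluckers_loeser} is circular here: the statement you are proving is its case $S=\{*\}$. Second, the \emph{uniqueness} part of Cluckers--Loeser's theorem says nothing about the composite $f^{\cL'}_!\circ\iota$. Uniqueness concerns systems of maps $\I_S C^{\cL,\vol}(Z')\to\I_S C^{\cL,\vol}(Z)$, whereas your composite takes values in $C^\vol(Z)$. That it lands in the image of $\iota$, compatibly with $f^{\cL}_!$, is exactly what has to be proved.

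What works is the mechanism behind uniqueness rather than its statement, and this is precisely the paper's proof:
\begin{enumerate}
\item Factor an $\cL(\emptyset)$-definable $f$ into its graph embedding followed by projections along one $\Kb$-, $\Gamma$- or $\VF$-coordinate; all of these maps are $\cL$-definable.
\item For each such basic map $g$, check $g^{\cL'}_!\circ\iota=\iota\circ g^{\cL}_!$ by computing both sides with the same Cluckers--Loeser axiom, which holds on the $\cL'$ side by Proposition \ref{prop_formalisme_CL}. This uses that $\iota$ commutes with pullbacks and products, so the projection formula (Corollary \ref{cor_projection}) applies on both sides. For a $\VF$-coordinate, your cell-by-cell computation via $\int[\fL R]=[R]$, in its relative form (Proposition \ref{prop_HK_rel}), is what is needed.
\item Conclude for general $f$ with (A0).
\end{enumerate}

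Step 1 also claims too much: the top arrow of the square is not well defined for free. In the paper, $\cL'$-integrability is not an intrinsic summability condition that is visibly unchanged by enlarging the language. Rather, $\I_S C^\vol(Z)$ comes from $h_!^{-1}(C_+^\bdd(S))$, so integrability means that the $\cL'$-pushforward is bounded. Boundedness of the Function itself does not suffice: $\mathbf 1_{\VF}$ is bounded over $\VF$, but its integral over the point is $[\RV^\times]_1$ with trivial volume form, which is unbounded.

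So to see that $\iota$ maps $\I_*C^{\cL,\vol}(Z)$ into $\I_*C^\vol(Z)$, you must already know $h^{\cL'}_!\iota(\varphi)$. That is exactly what the basic-map comparison provides: it equals $\iota(h^{\cL}_!\varphi)$, which is bounded. This is why the paper proves items (1) and (2) of Theorem \ref{thm_cluckers_loeser} simultaneously. Once you drop the two citations and make this reordering, your proposal becomes the paper's proof.
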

The actual theorem that we prove, Theorem \ref{thm_cluckers_loeser}, is much more general than that; in particular, it takes into account relative motivic integration.

Two aspects of this deserve emphasis:
\begin{itemize}
\item first, it finally allows to properly connect Hrushovski-Kazhdan and Cluckers-Loeser motivic integrals, by matching Cluckers-Loeser motivic integration to an \emph{isomorphism} of motivic integration \emph{à la} Hrushovski-Kazhdan. In that way, not only those two integrals no longer live in two disjoint universes (which was already the case, actually, since the work by Cluckers, Halupczok and Rideau \cite{cluckers_hensel_2022} on the one hand, and by Stout and Vermeulen \cite{stout_integration_2025} on the other hand), but Cluckers-Loeser integral corresponds to a case in which we have the full power of Hrushovski-Kazhdan's results, \emph{i.e.} an isomorphism;
\item however, the required structure to make this isomorphism appear is totally unexpected: it seems very surprising at first sight that it is necessary to add a section of $\RV$. That being said, the reason for this is that the angular component induces "too many" definable bijections on $\RV$, and we need to lift them, which is made possible by the section. In that case, what is remarkable is not that we can lift bijections, but that we still have an integration theory that behaves, in almost all respects, like that of \cite{stout_integration_2025}.
\end{itemize}

The question of rephrasing Hrushovski-Kazhdan motivic integration to connect it to a universal version of Cluckers-Loeser will also be addressed by Stout and Vermeulen in an upcoming article, with a different approach.

\subsection{Structure of the article}
This paper is organised as follows: in Section \ref{sec_axiomes}, we follow a general, axiomatic approach for adding sections, showing that there is a well-behaved dimension theory in that context. We need three axioms: the first is the existence of what we call $\RV$-partitions (following Yin \cite{yin_integration_2013}, see Definition \ref{def_RV_partition}), which is the most used notion in the article. Basically, it allows us to reduce many problems to the 1-h-minimal case. The second axiom, which we call "thin theories" (see Definition \ref{def_theorie_fine}), is a very weak property, which is essentially there to avoid pathologies and to yield a well-behaved dimension theory. The last one, a bit more technical to state, is the fact that the theory is "refined" (see Definition \ref{def_theorie_raffinee}). We only use it once, to show that the integration morphism in dimension one is well-defined, but actually this seems to be a very important notion, which would probably be worth studying on its own. Still in Section \ref{sec_axiomes}, we prove Theorems \ref{thm_int_sans} and \ref{thm_int_avec}, our main results on Hrushovski-Kazhdan motivic integration, closely following \cite{stout_integration_2025}. Then in Section \ref{sec_henselien} we study the example of theories in the pure valued fields language (or some $\RV$-expansions of it, like the Denef-Pas language), with a section of either the residue field, the value group or $\RV$, and at that point we obtain Theorem \ref{thm_intro_HK}. The proof that these theories satisfy the three axioms relies upon a relative quantifier elimination result: when adding such a section, the resulting theory eliminates quantifiers relative to the pure valued fields language (or the considered $\RV$-expansion). On the syntactical side, this means that any formula is equivalent to a boolean combination of quantifier-free formulas in the full language (included the section), and of formulas in the restricted language (without the section); on the semantic side, this just means that any partial embedding for the full language that is elementary for the restricted language is actually elementary for the full language.

The second part of the article, Sections \ref{sec_formalisme_CL} and \ref{sec_integration_CL}, is dedicated to the connection with Cluckers-Loeser motivic integration, and is mostly independent from the first one, apart the main definitions and results. In Section \ref{sec_formalisme_CL}, we explain in detail how to go from a Hrushovski-Kazhdan motivic integral to a Cluckers-Loeser-like formalism. More precisely, we first define relative variants of the Hrushovski-Kazhdan motivic integral, then we use them to define direct image morphisms $f_!:C_+(Z')\to C_+(Z)$ for every definable map (without parameters) $f:Z'\to Z$. Here $C_+(Z)$ is a semi-group defined using the (relative) $\RV$-categories on $Z$. This is done without any further hypothesis on the theory, and both with and without measure -- but the price to pay is that the semigroups of Functions are defined in terms of $\RV$ instead of the residue field. We remedy this in Section \ref{sec_integration_CL}, under the assumption that the value group is elementarily equivalent to $\Z$, and connect our groups of Functions to those of Cluckers-Loeser, allowing us to prove Theorem \ref{thm_intro_CL}.

\subsection*{Acknowledgements} Xavier Pigé acknowledges the support of the CDP C2EMPI, together with the French State under the France-2030 programme, the University of Lille, the Initiative of Excellence of the University of Lille, the European Metropolis of Lille for their funding and support of the R-CDP-24-004-C2EMPI project. The author also thanks Silvain Rideau-Kikuchi as well as Mathias Stout and Floris Vermeulen for their interest and stimulating discussions. Finally, he is very grateful to Raf Cluckers and Arthur Forey for suggesting this topic and for their constant support.

\section{Axiomatic approach for adding sections}\label{sec_axiomes}

Let $\cL$ be a valued field language with two sorts $\VF$ and $\RV$, and $\cL'$ be an expansion of $\cL$. Let $\cT$ be a 1-h-minimal $\cL$-theory as defined by Cluckers, Halupczok and Rideau \cite{cluckers_hensel_2022}, and $\cT'$ be an expansion of $\cT$ to the language $\cL'$. Throughout this section, we will introduce the right framework for Hrushovski-Kazhdan motivic integration with section, get some tools and prove our main general theorems on motivic integration, Theorems \ref{thm_int_sans} and \ref{thm_int_avec}.

\begin{rmk}
In the examples, the language $\cL'$ will be an expansion of $\cL$ by a function symbol $\sn$, and the theory $\cT'$ will be obtained from $\cT$ by saying that $\sn$ is a section of $\rv$ (or of the valuation $|\cdot |$), with compatibilities to addition and multiplication.
\end{rmk}

\subsection{Definitions and axiomatic framework}

Now we will introduce an axiomatic framework in which we will be able to define a motivic integration isomorphism. In particular, we will define $\RV$-partitions in Definition \ref{def_RV_partition} and $\RV$-thin theories in Definition \ref{def_theorie_fine}. The last important axiom will only appear later, in Definition \ref{def_theorie_raffinee}. Let us start by the following definition, which will be essential to our work:

\begin{dfn}\label{def_RV_partition}
Let $M\models\cT', A\subset M$ be a set of parameters such that $A=\dcl_{\cL'}(A)$ and $X\subset\VF^n\times\RV^m$ be a  $\cL'(A)$-definable subset. An \emph{$A$-definable $\RV$-partition} of $X$ (relative to $\cT$) is an $\cL'(A)$-definable map $\chi:X\to Y, Y\subset\VF^d, d\in\bN,$ such that:
\begin{enumerate}
\item for all $y\in Y$, the fibre $X_y=\chi^{-1}(y)$ is $\cL(Ay)$-definable;
\item for some $l\in\bN$, there is an injective $\cL'(A)$-definable map $Y\to \RV^l$.
\end{enumerate}

If such $\RV$-partitions exist for every set $A$ of parameters and every $A$-definable set, we say that $\cT'$ \emph{has $\RV$-partitions}.
\end{dfn}

Certainly adding parameters both in $\VF$ or $\RV$ preserve the existence of $\RV$-partitions. This notion is inspired by the notion introduced by Yin under the same name (\cite{yin_integration_2013}, Definition 4.1). Saying that $\cT'$ has $\RV$-partitions approximatively means that every definable set is an $\RV$-union of subsets which are definable is a 1-h-minimal theory.

As one can see, sets that map into $\RV$ play an important role. This gives motivation for the following definition
\begin{dfn}
Let $M\models\cT'$ and $A\subset M$ be some parameters. An $\cL'(A)$-definable set maps into $\RV$ (for the parameters $A$) if there exist an injective $\cL'(A)$-definable map $i:Y\to\RV^l$ for some $l\in\bN$.

We denote by $\Inj_A$ the Ind-definable (over $\cL'(A)$) set given by the union of all $Y$ that map into $\RV$.
\end{dfn}
Any $\cL'(A)$-definable subset $Y\subset \Inj_A$ (in a saturated model) then maps into $\RV$. Indeed, it is enough to check that, if $Y_1,Y_2$ map into $\RV$, then so does $Y_1\cup Y_2$, which is clear. Also remark that, for every model $M$ of $\cT'$, it holds that $\Inj_A(M)\subset\dcl_{\cL'}(A\cup\RV(M))$.

To be more accurate, we should write $\Inj_A^n$ for the union of all $Y\subset\VF^n$ which map into $\RV$; $\Inj_A$ would then be a sequence of subset of $(\VF^n)_{n\in\bN}$. We will keep with this notational abuse in what follows.

\begin{lem}\label{lem_dcl_acl}
Assume that $\cT'$ has $\RV$-partitions relative to $\cT$. Let $M\models \cT'$, and set $a$ to be some tuple in $M$. Let $S_a = \dcl_{\cL'}(a)\cap \Inj_\emptyset$. Also denote $\widetilde S_a = \acl_{\cL'}(a)\cap\Inj_\emptyset$, so that $S_a\subset \widetilde S_a$. Then 
\[\dcl_{\cL'}(a)=\dcl_\cL(a S_a) \text{ and } \acl_{\cL'}(a)=\acl_\cL(a \widetilde S_a)\,.\]
\end{lem}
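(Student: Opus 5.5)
Both equalities come from applying the existence of $\RV$-partitions to a singleton (or finite set) that is definable over $a$. The inclusions $\dcl_\cL(aS_a)\subset\dcl_{\cL'}(a)$ and $\acl_\cL(a\widetilde S_a)\subset\acl_{\cL'}(a)$ are immediate. Indeed, $\cL\subset\cL'$, $S_a\subset\dcl_{\cL'}(a)$ and $\widetilde S_a\subset\acl_{\cL'}(a)$, and both $\dcl_{\cL'}$ and $\acl_{\cL'}$ are idempotent. So only the reverse inclusions need work.

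For $\dcl$, take $b\in\dcl_{\cL'}(a)$. Then $X=\{b\}$ is $\cL'(A)$-definable with $A=\dcl_{\cL'}(a)$, which is $\dcl_{\cL'}$-closed. I would rather work over $\emptyset$ with $a$ as a named tuple, so that the partition is defined over $a$. Choose an $\cL'(\emptyset)$-formula $\phi(x,z)$ with $\phi(x,a)$ defining $\{b\}$, and let $X'=\{(z,x):\phi(z,x)\}\cap(\text{the locus where }\phi(z,\cdot)\text{ has a unique solution})$, an $\emptyset$-definable set. Apply the existence of $\RV$-partitions with $A=\dcl_{\cL'}(\emptyset)$. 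This gives $\chi:X'\to Y$ with an injective $\emptyset$-definable $Y\to\RV^l$, so $Y\subset\Inj_\emptyset$, and each fibre $X'_y$ is $\cL(Ay)$-definable. Put $y=\chi(a,b)$. Then $y\in\dcl_{\cL'}(a)$, since $b$ is definable from $a$, and $y\in\Inj_\emptyset$; hence $y\in S_a$. The fibre $X'_y$ is $\cL(y)$-definable, using that $A=\dcl_{\cL'}(\emptyset)\subset\Inj_\emptyset$ consists of points definable from nothing, so they lie in $S_a$ as well. The fibre of $X'_y$ above $a$ is exactly $\{b\}$, so $b\in\dcl_\cL(a\,y)\subset\dcl_\cL(aS_a)$. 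If the sorts are VF and RV, elements $b$ in $\RV$ are handled the same way, since the definition allows $X\subset\VF^n\times\RV^m$.

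For $\acl$, the same construction works with a formula $\phi(a,x)$ that has $k$ solutions, one of them being $b$, restricted to the $\emptyset$-definable locus of $z$ where $\phi(z,\cdot)$ has exactly $k$ solutions. Now $y=\chi(a,b)$ is only algebraic over $a$: it ranges over the images of the $k$ points $(a,b')$. Thus $y\in\acl_{\cL'}(a)\cap\Inj_\emptyset=\widetilde S_a$. The fibre $X'_y$ above $a$ is a finite $\cL(ay)$-definable set containing $b$, so $b\in\acl_\cL(a\widetilde S_a)$.

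The main obstacle is bookkeeping the parameters. Definition \ref{def_RV_partition} is stated for $\dcl_{\cL'}$-closed parameter sets $A$, and the fibres are $\cL(Ay)$-definable, so one must check that the parameters needed are in $S_a$, respectively $\widetilde S_a$, and not merely in $\dcl_{\cL'}(a)$. Partitioning the family $X'$ uniformly over $\emptyset$, with $a$ as a variable, avoids this: the only parameters are $\dcl_{\cL'}(\emptyset)$, which lies in $\Inj_\emptyset$ via the empty product, $l=0$. For elements in VF it lies in $\Inj_\emptyset$ as singletons that map into $\RV^0$.
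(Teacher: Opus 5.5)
Your proposal is correct and is essentially the paper's argument. You apply an $\emptyset$-definable $\RV$-partition to the graph of the map defining $b$ from $a$ (resp.\ to a relation with finite fibres over the $a$-coordinate), observe that $\chi(a,b)$ lies in $S_a$ (resp.\ $\widetilde S_a$), and read off $b$ from the $\cL$-definable fibre. Your tracking of the parameters $\dcl_{\cL'}(\emptyset)$ is more careful than the paper, which simply treats the fibres as $\cL(c)$-definable. However, only the $\VF$-sort part of $\dcl_{\cL'}(\emptyset)$ literally lies in $\Inj_\emptyset$, since $\Inj_\emptyset$ lives in the $\VF$-sorts. So $\RV$-sort constants must be allowed as base parameters, a subtlety already implicit in the statement itself.
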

\begin{proof}
We first deal with $\dcl$. Let $f:X\to Y$ be an $\cL'(\emptyset)$-definable map, $a\in X$, and let $\Gamma_f$ be its graph, $\pi_1:\Gamma_f\to X$ the first projection. Let $\chi:\Gamma_f\to Z$ be an $\cL'(\emptyset)$-definable $\RV$-partition, as well as $g:Z\to\RV^l$ an $\cL'(\emptyset)$-definable injection. Let $c=\chi(a,f(a))$. Then the fibre $\chi^{-1}(c)$ is $\cL(c)$-definable, \emph{i.e.} the restriction of $f$ to $\pi_1(\chi^{-1}(c))$ is $\cL(c)$-definable; on the other hand, $c\in S_a$. In particular, $f(a)\in\dcl_\cL(ac)\subset\dcl_\cL(aS_a)$. The converse inclusion is straightforward.

For $\acl$, we proceed in a similar fashion. Let $\Xi\subset X\times Y$ be an $\cL'(\emptyset)$-definable relation with $\pi_1:\Xi\to X$ having finite fibres, and let $\chi:\Xi\to Z$ be an $\cL'(\emptyset)$-definable $\RV$-partition of $\Xi$ as well as $g:Z\to\RV^l$ an $\cL'(\emptyset)$-definable injection. Let $b\in Y$ be such that $(a,b)\in \Xi$, and set $c=\chi(a,b)$. Then the fibre $\chi^{-1}(c)$ is $\cL(c)$-definable, and $c\in \widetilde S_a$. In particular, $\pi_1:\chi^{-1}(c)\to X$ has finite fibres, whence $b\in\acl_\cL(ac)\subset\acl_\cL(a\widetilde S_a)$. The reciprocal inclusion is, again, straightforward.
\end{proof}

So the set $\Inj_\emptyset$ (or $\Inj_A$, if one uses parameters inside $A$) plays an important role in the theory. Here is an example of a pathological situation:
\begin{example}
Take $\cT=\ACVF(0,0)$ and let $K$ be the algebraic closure of $\bQ(t)$ together with the $t$-adic valuation, which is a model of $\cT$. Both the sorts $\VF(K)$ and $\RV(K)$ are countable; let $f:\RV(K)\to\VF(K)$ be a (set-theoretic) bijection. We then define $\cL'=\cL\cup\{f\}$, and $\cT'=\mathrm{Th}_{\cL'}(K)$. For any definable set $X$, the function $\id_X$ trivially gives an $\RV$-partition.

Thus the existence of $\RV$-partitions is not at all a guarantee for good geometric behavior. Nonetheless, in the example, we note that $\Inj_\emptyset=\VF$, and that this seems quite directly connected to our problems.
\end{example}
This gives motivation for the following definition:
\begin{dfn}\label{def_theorie_fine}
The theory $\cT'$ is \emph{$\RV$-thin} if, for every cardinal $\kappa>|\cT|$, every $\kappa$-saturated model $M\models\cT'$ and every set of parameters $A\subset M$ with $|A|<\kappa$, the set $\Inj_A$ contains no ball.
\end{dfn}

In the next section, we will work from our hypotheses to deduce good dimension theory, cellular decompositions and motivic integration. From now on, when we take a parameter set $A$, we always assume $A=\dcl_{\cL'}(A)$. We we write "$A$-definable", we always mean $\cL'(A)$-definable, with no more parameters; while "definable" without further precision means $\cL'(A)$-definable for some set of parameters $A$ we do not want to make precise.

From now on and until the end of section \ref{ssec_intmot_vol}, we always assume that $\cT'$ has $\RV$-partitions relative to $\cT$.

\subsection{Dimension on \texorpdfstring{$\VF$}{VF} : definitions and properties}
In this section we assume that $\cT'$ is $\RV$-thin. Now we want to endow subsets $\VF$ with a notion of dimension. This dimension will satisfy van den Dries axioms \cite{dries_dimension_1989} and a few other properties, see Theorem \ref{thm_dim}. To define it, we will make use of the $\RV$-partitions, as explained in the appendix \ref{sec_appendice_dim}. We will give several definitions and prove their equivalence and they  satisfy a number of good properties. They will obviously not satisfy all the properties of dimension in a 1-h-minimal theory; for instance, it would be unreasonable to give a non-zero dimension to a set that maps into $\RV$, while such a set could very well be infinite as will the examples show.

Recall the definition from the appendix:

\begin{dfn}
For a definable subset $X\subset\VF^n\times\RV^m$ and $\chi:X\to Y$ an $\RV$-partition, we define $\dim_\VF (X,\chi) = \max_{y\in Y}\dim_{\cT} \chi^{-1}(y)$, where $\dim_\cT$ is the dimension in the sense of the 1-h-minimal theory $\cT$.
\end{dfn}
Obviously this is well defined, by virtue of the properties of $\RV$-partitions and the dimension in $\cT$, and $\dim_\VF(X,\chi)\leq n$.

In order to apply the results from the appendix, we only need proving equality \ref{egalite_dim_sec}. To do so, we will use the algebraic closure. The following lemma is a classical fact, see for instance \cite{cluckers_hensel_2022}, Lemma 2.5.3. With respect to this reference, we avoid $\exists^\infty$-elimination, which does not hold in our situation, by means of a saturation hypothesis.

\begin{lem}\label{lem_ens_fini_int_RV}
Let $X\subset\VF^n$ be a finite $\emptyset$-definable set. Then there is a $\emptyset$-definable subset $Y\subset\RV^m$ such that $X$ and $Y$ are in $\emptyset$-definable bijection.

This holds for families: if $Z$ is a $\emptyset$-definable set, and $f:X\to Z$ is a $\emptyset$-definable map with finite fibres (in a sufficiently saturated model), then there is a $\emptyset$-definable set $Y\subset\RV^m\times Z$ and a $\emptyset$-definable bijection $g:X\to Y$ such that $f\circ g^{-1}$ is the projection.
\end{lem}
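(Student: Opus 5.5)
The plan is to reduce to the corresponding fact for the 1-h-minimal theory $\cT$ (\cite{cluckers_hensel_2022}, Lemma 2.5.3). The $\RV$-partition machinery does the reduction, and compactness in a sufficiently saturated model replaces $\exists^\infty$-elimination. We treat the family version directly, since the absolute case is the special case $Z$ a point.

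First fix an $\emptyset$-definable $\RV$-partition $\chi:X\to Y$ and an $\emptyset$-definable injection $h:Y\to\RV^l$. Consider the refined map $x\mapsto(\chi(x),f(x))$. For each $y\in Y$, the fibre $X_y=\chi^{-1}(y)$ is $\cL(y)$-definable. The restriction $f|_{X_y}$ is also $\cL(y)$-definable, because we may first take an $\RV$-partition of the graph of $f$, as in the proof of Lemma \ref{lem_dcl_acl}, and intersect. Hence $X_y\to Z$ is an $\cL(y)$-definable map with finite fibres inside a model of $\cT$. Lemma 2.5.3 of \cite{cluckers_hensel_2022}, in its family form (valid in $\cT$ because 1-h-minimal theories eliminate $\exists^\infty$ on $\VF$), then gives an $\cL(y)$-definable injection $g_y:X_y\to\RV^{k}\times Z$ over $Z$.

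The crux is uniformity in $y$. In a $\kappa$-saturated model, I would argue by compactness as follows. Consider the type in $y$ expressing that $y\in Y$ and that, for each $\cL$-formula $\theta(x,w,y)$ in a fixed enumeration, $\theta(-,-,y)$ does not define the graph of $f|_{X_y}$ together with an injection of $X_y$ into $\RV^k\times Z$ over $Z$. The previous step shows this type is not realised, so it is inconsistent. Hence finitely many $\cL$-formulas $\theta_1,\dots,\theta_r$ suffice, with a common $k$ after padding with a constant.

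Each condition ``$\theta_i(-,-,y)$ works'' is $\cL'(\emptyset)$-definable in $y$, because $X_y$ and $f$ are $\cL'$-definable. So the function $i(y)=\min\{i:\theta_i(-,-,y)\text{ works}\}$ is $\emptyset$-definable. We then set
\[g(x)=\bigl(h(\chi(x)),\, i(\chi(x)),\, \theta_{i(\chi(x))}\text{-image of }x\bigr)\in\RV^{l}\times\RV^{k'}\times Z,\]
encoding the finite index $i$ by constants of $\RV$, for instance tuples of $0$s and $1$s. This $g$ is injective, since $h$ is injective and each $g_y$ is injective. It also commutes with the projection to $Z$, and its image $Y'$ is $\emptyset$-definable. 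Reordering coordinates so that $Y'\subset\RV^m\times Z$ with $m=l+k'$ gives the statement.

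The main obstacle I expect is the compactness step. One must check that ``$\theta$ defines an injection of the fibre over $Z$'' is really a first-order condition on $y$ in $\cL'$. One must also check that the saturation needed to apply Lemma 2.5.3 in $\cT$ pointwise (to parameters $y$ in a $\kappa$-saturated model) is compatible with finiteness of $f$'s fibres holding only in a saturated model. If this is delicate, I would first pass to an elementary extension that is saturated for both $\cL$ and $\cL'$, and run the argument there; the resulting $g$ is $\emptyset$-definable and so descends.
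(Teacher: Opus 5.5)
Your argument is correct, but it takes a different route from the paper.

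\textbf{Your route.} You transfer the result from $\cT$. You take an $\RV$-partition of $X$; it is cleanest to define it as $x\mapsto\chi_0(x,f(x))$ for an $\RV$-partition $\chi_0$ of the graph of $f$, so that both $X_y$ and $f|_{X_y}$ are $\cL(y)$-definable. You then apply the family version of \cite{cluckers_hensel_2022}, Lemma 2.5.3, fibrewise in the 1-h-minimal theory. Finally you glue the finitely many $\cL$-formulas obtained by compactness, using the injection $Y\to\RV^l$ to keep the glued map injective.

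\textbf{The paper's route.} It never leaves $\cL'$ and reproves the lemma directly by the barycentre trick:
\begin{enumerate}
\item Saturation bounds the cardinality of the fibres of $f$, so one may reduce to fibres of constant size $N$.
\item For $N\geq 2$, let $h(z)$ be the average of $f^{-1}(z)$. Because the residue characteristic is $0$, the map $x\mapsto\rv(x-h(f(x)))$ is non-constant on each fibre.
\item Hence $(f,\rv(x-h\circ f(x)))$ has fibres of size less than $N$, and induction on $N$ finishes.
\end{enumerate}

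\textbf{What each buys.} The paper's proof is self-contained and more general. It uses neither $\RV$-partitions nor h-minimality, only that one works in an expansion of a valued field of residue characteristic $0$, with saturation replacing $\exists^\infty$-elimination. Your proof treats the cited lemma as a black box. It follows the ``reduce to $\cT$ along $\RV$-partitions, then glue'' template that the paper uses later, for cell decomposition and Lemma \ref{lem_fL_surj}. The cost is that the lemma then depends on the standing hypothesis that $\cT'$ has $\RV$-partitions.

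One small point about your enumeration of $\theta$: under the paper's convention, ``$\cL(y)$-definable'' allows parameters from $\dcl_{\cL'}(\emptyset)$. These parameters are $\cL'(\emptyset)$-definable, so the conditions on $y$ remain $\cL'(\emptyset)$-definable and nothing changes.

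The worry in your last paragraph is unfounded. In $\cT(y)$ the cited lemma needs no saturation. The compactness step only needs $M$ to be $|\cT'|^+$-saturated, and saturation passes to the $\cL$-reduct automatically.
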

\begin{proof}
By saturation and compactness, the cardinal of the fibres of $f$ is bounded. Up to $\emptyset$-definably partitioning $Z$ according to this cardinal, we may even assume that it is constant, equal to $N$. Reason by induction on $N$.

If $N=0$, there is nothing to do. If $N=1$, take $Y=Z$ and $g=f$. In general, let $h:Z\to\VF^n$ be given by $h(z) = \frac 1N\sum_{x\in f^{-1}(z)}x$. The map $\chi:x\in f^{-1}(z)\mapsto \rv(x-h\circ f(x))\in\RV^n$ is non-constant; so for all $(z,\xi)\in Z\times\RV^m$, the fibre of $(f,\chi):X\to Z\times\RV^n$ at $(z,\xi)$ contains strictly less than $N$ elements. By induction hypothesis, we find $Y\subset\RV^{m+n}\times Z$ such that $X$ and $Y$ are in $\emptyset$-definable bijection relative to $Z\times\RV^n$, and in particular relative to $Z$.
\end{proof}

\begin{lem}\label{lem_acl_echange}
Let $M\models\cT$ be a model. The algebraic closure relative to $\Inj_\emptyset(M)$ has the exchange property: if $a,b\in\VF$ are such that $b\in\acl_{\cL'}(ac\cup\Inj_\emptyset(M))\setminus\acl(c\cup\Inj_\emptyset(M))$, with $c\in\VF^n$, then also $a\in\acl(bc\cup\Inj_\emptyset(M))$.
\end{lem}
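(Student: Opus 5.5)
The plan is to reduce the exchange property for $\acl_{\cL'}$ over $\Inj_\emptyset(M)$ to the exchange property for $\acl_\cL$ in the 1-h-minimal theory $\cT$, using Lemma \ref{lem_dcl_acl}. Recall that in a 1-h-minimal theory, $\acl_\cL$ on the $\VF$-sort satisfies exchange even with $\RV$-parameters added; this is part of the dimension theory of \cite{cluckers_hensel_2022}. (Here $M$ should be read as a model of $\cT'$, and the unsubscripted $\acl$ in the statement as $\acl_{\cL'}$.) Write $I=\Inj_\emptyset(M)$.

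First I will show that for any tuple $d$ from $\VF$,
\[\acl_{\cL'}(d\cup I)\cap\VF=\acl_\cL(d\cup I)\cap\VF\,.\]
The inclusion $\supseteq$ is trivial. For $\subseteq$, take $e\in\acl_{\cL'}(d\,i)$ for some finite tuple $i\in I$. By Lemma \ref{lem_dcl_acl} applied to the tuple $a=(d,i)$, we get $e\in\acl_\cL(d\,i\,\widetilde S_{di})$ with $\widetilde S_{di}=\acl_{\cL'}(di)\cap\Inj_\emptyset$. I then need $\widetilde S_{di}\subset I$, i.e. $\Inj_\emptyset(M)$ is closed under this operation. Since $\Inj_\emptyset$ is defined over $\emptyset$, an element of $\Inj_\emptyset$ lying in $M$ (or in an elementary extension, algebraic over parameters from $M$) lies in $\Inj_\emptyset(M)$. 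Because $\acl$ of a subset of $M$ is contained in $M$, it follows that $\widetilde S_{di}\subseteq\Inj_\emptyset(M)=I$. Hence $e\in\acl_\cL(d\cup I)$.

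Next, the elements of $I$ are $\VF$-tuples, but each one is $\cL'$-interdefinable with an $\RV$-tuple via the injections $Y\to\RV^l$. To apply the $\cL$-exchange, I will write $I'=\acl_\cL(I)$. Then $\acl_\cL(d\cup I)=\acl_\cL(d\cup I')$ since $\acl_\cL$ is a closure operator. So the problem becomes exchange for $\acl_\cL$ over the fixed parameter set $I'$.

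Now apply this with $d=ac$ and $d=c$. The hypothesis gives $b\in\acl_\cL(acI)\setminus\acl_\cL(cI)$. Exchange for $\acl_\cL$ in models of the 1-h-minimal theory $\cT$ (over arbitrary parameter sets, on the $\VF$-sort) yields $a\in\acl_\cL(bcI)$, and hence $a\in\acl_{\cL'}(bcI)$.

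The main obstacle is the closure step $\widetilde S_{di}\subseteq I$, together with making sure Lemma \ref{lem_dcl_acl} is applied over parameters in $M$ rather than in a saturated extension. If it turns out that $\acl_{\cL'}(di)\cap\Inj_\emptyset$ may contain elements not witnessed by an injection defined over $\emptyset$, I would instead run Lemma \ref{lem_dcl_acl} relative to $A=\dcl_{\cL'}(I)$, using that adding parameters preserves $\RV$-partitions. I would then note that $\Inj_A\subseteq\dcl_{\cL'}(\RV)$, so the extra parameters still sit inside the algebraic closure of $I$ and $\RV$-type data. This keeps the reduction to $\acl_\cL$-exchange intact.
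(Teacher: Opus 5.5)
Your proposal is correct and follows the paper's proof. You use Lemma \ref{lem_dcl_acl} to identify $\acl_{\cL'}(d\cup\Inj_\emptyset(M))$ with $\acl_{\cL}(d\cup\Inj_\emptyset(M))$, then apply exchange for $\acl_\cL$ in the 1-h-minimal theory $\cT$ (\cite{cluckers_hensel_2022}, Lemma 5.3.6). Your inclusion $\widetilde S_{di}\subseteq\Inj_\emptyset(M)$ holds because algebraic closures of parameters from $M$ stay in $M$, and it spells out what the paper leaves implicit. The detour through $I'=\acl_\cL(I)$ and the fallback over $\dcl_{\cL'}(I)$ are unnecessary: $\Inj_\emptyset(M)$ already consists of $\VF$-tuples, and exchange is finitary, so exchange over finitely many added $\VF$-constants suffices.
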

\begin{proof}
By Lemma \ref{lem_dcl_acl}, we have $\acl_{\cL'}(ac\cup\Inj_\emptyset(M)) = \acl_{\cL}(ac\cup\Inj_\emptyset(M))$ and $\acl_{\cL'}(c\cup\Inj_\emptyset(M)) = \acl_{\cL}(c\cup\Inj_\emptyset(M))$. Consequently, exchange property for 1-h-minimal theories (\cite{cluckers_hensel_2022}, Lemma 5.3.6) ensures that $a\in\acl_{\cL}(bc\cup\Inj_\emptyset(M))$, hence that $a\in\acl_{\cL'}(bc\cup\Inj_\emptyset(M))$.
\end{proof}

Write $\mathrm{rg}^{\acl}_{\Inj}$ for the rank we deduce from this Lemma (corresponding to the algrbraic closure relative to $\Inj_\emptyset(M)$). This rank is invariant by elementary extension. Also write $\dim^{\acl}_{\Inj}$ for the dimension it induces. Remark that this dimension can obviously only be computed in $|\cT'|^+$-saturated models, for generic points to exist.

The proof of Lemma \ref{lem_acl_echange} did not involve $\cT'$ being $\RV$-thin. This will be used in the next lemma:

\begin{lem}\label{lem_acl_sn}
Assume that $\cT'$ is $\RV$-thin relative to $\cT$. Let $M\models\cT'$ be a model, which we assume to be saturated enough. Let $B_1,\dots, B_n$ be (open or closed) balls in $M$. Then there are $b_i\in B_i, 1\leq i\leq n$, such that the $b_i$ are algebraically independent over $\Inj_\emptyset(M)$.
\end{lem}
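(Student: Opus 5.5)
We argue by induction on $n$, choosing the $b_i$ one at a time. The key fact to establish is the case of a single ball relative to a small set of parameters. Fix $\kappa>|\cT'|$ with $M$ $\kappa$-saturated. By Lemma \ref{lem_acl_echange}, $\acl_{\cL'}(\cdot\cup\Inj_\emptyset(M))$ satisfies exchange, so algebraic independence over $\Inj_\emptyset(M)$ is well behaved. It therefore suffices to show the following: given $b_1,\dots,b_{i-1}$ already chosen, there exists $b_i\in B_i$ with
\[b_i\notin\acl_{\cL'}(b_1\dots b_{i-1}\cup\Inj_\emptyset(M)).\]
Exchange then gives independence of the whole tuple.

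First we reduce to a statement about $\Inj_A$. Let $A=\dcl_{\cL'}(b_1\dots b_{i-1})$; it is a small set. We may also add to $A$ a parameter defining $B_i$ (a centre and radius), since this keeps $A$ small. We claim that
\[\acl_{\cL'}(A\cup\Inj_\emptyset(M))\cap\VF\subseteq\Inj_{A}.\]
To see this, take $x$ in the left-hand side. Then $x$ is algebraic over $A\,s$ for some finite tuple $s\in\Inj_\emptyset(M)$. Since $s$ maps into $\RV$ over $\emptyset$, we may replace $s$ by a tuple $r\in\RV$, so that $x\in\acl_{\cL'}(Ar)$. Let $\phi(x,r)$ be an $\cL'(A)$-formula witnessing this, with the number of solutions bounded by $N$. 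Now apply Lemma \ref{lem_ens_fini_int_RV}, in its family version over $A$ and with $Z$ the $\RV$-parameter space. The finite set $\phi(M,r)$ is then in $A$-definable bijection, fibrewise in $r$, with a subset of $\RV^m$. Hence $x$ is the image of an $\RV$-tuple under an $A$-definable injective map. Equivalently, $x$ lies in an $A$-definable set that maps into $\RV$, so $x\in\Inj_A$.

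The main step is then to produce a point of $B_i$ outside $\Inj_A$. Since $\cT'$ is $\RV$-thin and $|A|<\kappa$, the set $\Inj_A$ contains no ball, and so no $A$-definable set mapping into $\RV$ contains $B_i$. We want a single $b_i\in B_i$ avoiding all such sets simultaneously. Consider the partial type
\[\{x\in B_i\}\cup\{x\notin Y : Y\ A\text{-definable, mapping into }\RV\}.\]
It has fewer than $\kappa$ formulas. It is finitely satisfiable, because a finite union of sets mapping into $\RV$ again maps into $\RV$, and so cannot contain $B_i$. By $\kappa$-saturation the type is realised, and any realisation is the required $b_i$.

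The delicate point is the reduction in the claim, which passes from algebraicity over $\Inj_\emptyset(M)$ to membership in $\Inj_A$. We must be careful in two places. First, Lemma \ref{lem_ens_fini_int_RV} has to be applied uniformly in the $\RV$-parameter $r$ and over $A$ rather than over $\emptyset$; its proof relativises verbatim. Second, $B_i$ itself must be $A$-definable, which we ensured by enlarging $A$ by a small set.
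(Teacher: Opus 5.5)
Your proof has a genuine gap in the claim $\acl_{\cL'}(A\cup\Inj_\emptyset(M))\cap\VF\subseteq\Inj_A$. It sits exactly at the sentence ``Hence $x$ is the image of an $\RV$-tuple under an $A$-definable injective map''.

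Apply Lemma \ref{lem_ens_fini_int_RV} to $X=\{(r',x'):\phi(x',r')\wedge\exists^{\leq N}z\,\phi(z,r')\}$. You get an $A$-definable bijection $G$ from $X$ onto a subset of $\RV^l\times\RV^m$, but it codes the \emph{pair} $(r,x)$, not $x$. The set that must map into $\RV$ is the projection $\pi_{\VF}(X)\subseteq\VF$, and $\pi_{\VF}\circ G^{-1}$ is not injective: many $r'$ can give the same $x'$. To inject $\pi_{\VF}(X)$ into $\RV$ you would need to recover an $\RV$-code $A$-definably from $x'$ alone, and nothing provides this. The paper only records $\Inj_A(M)\subseteq\dcl_{\cL'}(A\cup\RV(M))$, not the converse.

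What your argument actually shows is weaker: $x$ lies in the image of an $A$-definable map from a subset of some $\RV^N$. Turning such a surjection into an injection is precisely the nontrivial point. If you knew that no such image contains a ball, then an $\RV$-partition $\chi:\pi_{\VF}(X)\to Y$ would have $\cL(Ay)$-definable fibres containing no ball. These fibres are finite by 1-h-minimality, and the family version of Lemma \ref{lem_ens_fini_int_RV} then injects $\pi_{\VF}(X)$ into $\RV^m\times Y$. Without that information, however, your claim carries essentially all the content of the lemma and cannot come out of coding alone.

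The paper avoids this detour. After naming parameters, it assumes $B\subseteq\acl_{\cL'}(\Inj_\emptyset(M))$ and uses saturation to reduce to finitely many formulas. It merges these into one uniformly finite family indexed by an $\RV$-set whose projection covers $B$, and applies Lemma \ref{lem_ens_fini_int_RV}. It then stops at the resulting definable surjection from a subset of $\RV^{l+m}$ onto $B$ and declares a contradiction.

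So the paper uses $\RV$-thinness in the form ``no small-parameter definable map from a subset of $\RV^N$ has image containing a ball''. This is what the cardinality argument of Corollary \ref{cor_section} actually verifies. You can keep your architecture by invoking thinness in that form to prove your claim via the $\RV$-partition argument above.

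The rest of your proof is fine: the exchange step and the enlargement of $A$ work. Your final saturation step is simply superfluous, since $B_i(M)\not\subseteq\Inj_A(M)$ already gives the required point directly.
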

\begin{proof}
We prove this by induction on $n$. It is enough to show that, for every finite $A\subset \VF(M)$ and every ball $B$, there is $b\in B$ which is not $\cL'$-algebraic over $S_M$. Then, adding the parameters $A$ to the language, we may assume $A=\emptyset$. One can also assume that $B$ is definable.

Assume by contradiction that $B(M)\subset\acl_{\cL'}(\Inj_\emptyset(M))$. Then, for all $b\in B(M)$, there are $s_b\in \Inj_\emptyset(M)$, $\varphi_b(x,y)$ an $\cL$-formula and $N_b\in\bN$ such that \[M\models \exists^{\leq N_b}x,\varphi_b(x,s_b)\wedge\varphi_b(b,s_b)\,.\] If $\varphi$ is an $\cL$-formula, $N\in\bN$ a natural integer and $Y$ an $\cL'(\emptyset)$-definable set which maps into $\RV$ ($\cL'(\emptyset)$-definably), we define the formula $\psi_{\varphi,N,Y}(x)$ by setting \[\psi_{\varphi,N,\iota}(x) = \exists y\in Y, [\varphi(x,y)\wedge \exists^{\leq N}z,\varphi(z,y)]\,,\]
so that for each $b\in B(M)$ it holds that $M\models\psi_{\varphi_b,N_b,Y_b}(b)$ for some $Y_b$. By compactness (using that $M$ is sufficiently saturated), there is a finite number of formulas $(\varphi_i(x,y_i),N_i,Y_i)_{i=1}^n$ such that, for all $b\in B$, we have $M\models\bigvee_{i=1}^n\psi_{\varphi_i,N_i,Y_i}(x)$. We may assume that all $y_i$ have the same arity, and that $N_1=\dots=N_n=N$. Then we replace $\varphi_i(x,y)$ by
\[\widetilde\varphi_i(x,y):\varphi_i(x,y)\wedge \exists^{\leq N}z,\varphi_i(z,y)\,,\]
and define $\varphi(x,y)$ as the disjunction of the $\widetilde\varphi_i(x,y)$. If we let $Y=Y_1\cup\dots\cup Y_n$, we then find:
\[M\models\forall x\in B, \psi_{\varphi,nN,Y}(x)\,.\]

Let $g:Y\to R$ be an $\cL'$-definable bijection, with $R\subset\RV^l$. The formula $\varphi(x,g^{-1}(r))$ defines a family $X$ of (uniformly) finite sets indexed by $r\in R$, whose projection on the coordinate $x$ equals $B$. By Lemma \ref{lem_ens_fini_int_RV}, this set is in bijection with a subset $Z\subset R\times\RV^m$. Then we find a surjective map from $Z\subset\RV^l\times\RV^m$ onto $B$, contradiction.
\end{proof}

Now we are able to prove the following fact:

\begin{lem}\label{lem_dim_T-def}
Let $M\models\cT'$, and let $X\subset\VF^n\times\RV^m$ be an $\cL$-definable subset (with parameters). Then, for any $\RV$-covering $(Y_r)_{r\in R}, R\subset\RV^l$ of $X$, the equalities
\[\max_{r\in R}\dim_\cT Y_r = \dim_\cT X = \dim^{\acl}_{\Inj} X\]
hold.
\end{lem}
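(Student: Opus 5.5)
The plan is to prove the chain of inequalities
\[\max_{r\in R}\dim_\cT Y_r\leq \dim_\cT X\leq \dim^{\acl}_{\Inj}X\leq \max_{r\in R}\dim_\cT Y_r\,,\]
working in a sufficiently saturated model. I will first add to the language the finitely many parameters used to define $X$ and the covering. The reason is that the existence of $\RV$-partitions, the $\RV$-thin hypothesis and Lemmas \ref{lem_dcl_acl}, \ref{lem_acl_echange}, \ref{lem_acl_sn} are all stable under naming parameters. This lets me assume everything is $\emptyset$-definable. I will also ignore the $\RV$-coordinates. In the 1-h-minimal theory $\cT$ they do not contribute to $\dim_\cT$. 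For $\dim^{\acl}_{\Inj}$, any $\RV$-coordinate lies in $\Inj_\emptyset$ (its own injection into $\RV$), so adding it changes nothing.

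The first inequality is immediate: each $Y_r$ is an $\cL(r)$-definable subset of $X$, so $\dim_\cT Y_r\leq\dim_\cT X$.

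For the second inequality, let $d=\dim_\cT X$. By the 1-h-minimal dimension theory (\cite{cluckers_hensel_2022}, Proposition 5.3.4), some coordinate projection $\pi:X\to\VF^d$ has image containing a box $B_1\times\dots\times B_d$. By Lemma \ref{lem_acl_sn}, I pick $b=(b_1,\dots,b_d)$ in this box with the $b_i$ algebraically independent over $\Inj_\emptyset(M)$. I then pick $x\in X$ with $\pi(x)=b$. Its rank over $\Inj_\emptyset(M)$ is at least $d$, so $\dim^{\acl}_{\Inj}X\geq d$.

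For the third inequality, take any $x\in X$ and, using that $(Y_r)$ covers $X$, some $r\in R$ with $x\in Y_r$. Since $r$ is a tuple from $\RV$, it lies in $\Inj_\emptyset(M)$. By Lemma \ref{lem_dcl_acl}, applied over the parameter set $\Inj_\emptyset(M)$, we have $\acl_{\cL'}(x\cup\Inj_\emptyset(M))=\acl_\cL(x\cup\Inj_\emptyset(M))$. Hence the $\cL'$-rank of $x$ over $\Inj_\emptyset(M)$ equals its $\cL$-rank over this set. This $\cL$-rank is at most the $\cL$-rank of $x$ over $r$, which is at most $\dim_\cT Y_r$, because in a 1-h-minimal theory the acl-rank of a point of an $\cL(r)$-definable set is bounded by that set's dimension (\cite{cluckers_hensel_2022}, Lemma 5.3.6 and Proposition 5.3.4). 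Taking the maximum over $x$ gives the third inequality and closes the chain. This route avoids the more delicate direct argument that some single $Y_r$ has full-dimensional projection.

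The main obstacle I expect is the bookkeeping of the closures. First, the $\cL$-rank used here is taken over the large, non-small set $\Inj_\emptyset(M)$, and the bound from \cite{cluckers_hensel_2022} has to be applied over the small set $r$, then passed to a larger parameter set, where rank can only drop. Second, the $\cL'$-rank defining $\dim^{\acl}_{\Inj}$ must be checked to coincide with this $\cL$-rank, which is exactly what Lemma \ref{lem_dcl_acl} gives once one notes that $\acl_{\cL'}$ over $\Inj_\emptyset(M)$ adds no new $\Inj$-elements. I would also verify that naming the parameters of $X$ at the start keeps Lemma \ref{lem_acl_sn} applicable, using that $\Inj_A$ contains no ball for small $A$.
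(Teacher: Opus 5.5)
Your proof is correct and uses the same ingredients as the paper: Lemma \ref{lem_acl_sn} on a polydisc in a coordinate projection gives $\dim_\cT X\le\dim^{\acl}_{\Inj}X$, and monotonicity of the acl-rank in the base (using $r\in\Inj_\emptyset(M)$ and the $\cL'$/$\cL$ identification from Lemma \ref{lem_dcl_acl}) gives the remaining bounds. Your cyclic chain simply merges two steps the paper keeps separate, namely $\dim^{\acl}_{\Inj}X\le\dim_\cT X$ and the choice of an $r$ such that $Y_r$ contains a point of maximal rank, into the single bound $\mathrm{rk}(x/\Inj_\emptyset(M))\le\mathrm{rk}_\cL(x/r)\le\dim_\cT Y_r$.
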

\begin{proof}
Start with the second equality, and set $d=\dim_\cT X$. We assume that $M$ is $|\cT'|^+$-saturated, in order to be able to compute $\dim^{\acl}_{\Inj} X$. Then \[\dim^{\acl}_{\Inj}X\leq \dim^{\acl}_{\cT} X = d\]
according to 1-h-minimal dimension theory (\cite{cluckers_hensel_2022}, Section 5.3). On the other hand, by \cite{cluckers_hensel_2022}, Proposition 5.3.4.(1), there is a subset of the $\VF$-sort coordinates such that the projection along these coordinates contains a $d$-dimensional polydisc. By Lemma \ref{lem_acl_sn}, there is in this polydisc a point $b$ whose coordinates are algebraically independent relative to $\Inj_\emptyset(M)$. By taking any point in the preimage of $b$ inside $X$, we deduce the converse inequality.

We now turn to the first equality. By 1-h-minimal dimension theory for $\cT$, we have $\dim_\cT Y_r\leq\dim_\cT X$ for all $r$, and on the other hand by the second equality $\dim_\cT Y_r=\dim_\cT X$ for some $r$ containing a point of maximal rank.
\end{proof}

As a consequence, the results from Appendix \ref{sec_appendice_dim} apply to this setting:

\begin{prop}
Let $X\subset\VF^n\times\RV^m$ be a definable subset. Then the dimension of $X$ does not depend on the choice of an $\RV$-partition; denote it by $\dim_\VF X$. Moreover, this dimension coincides with $\dim^{\acl}_{\Inj}$.
\end{prop}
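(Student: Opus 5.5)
We show directly that for any $\RV$-partition $\chi:X\to Y$ one has $\dim_\VF(X,\chi)=\dim^{\acl}_{\Inj}X$. Since the right-hand side does not mention $\chi$, this gives both independence of the choice and the stated coincidence. This is presumably the equality the appendix calls \ref{egalite_dim_sec}, so one could alternatively feed it into the appendix. We work in a $|\cT'|^+$-saturated (indeed $\kappa$-saturated, $\kappa$ larger than the parameters) model $M$, so that $\dim^{\acl}_{\Inj}$ is computable via generic points. Adding the finitely many parameters of $X$ and $\chi$ to the language keeps the hypotheses ($\RV$-partitions and $\RV$-thinness), so we may assume everything is $\emptyset$-definable.

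\textbf{Step 1: $\dim^{\acl}_{\Inj}$ is attained on a single fibre.} Write $X=\bigcup_{y\in Y}X_y$ with $X_y=\chi^{-1}(y)$ $\cL(y)$-definable. Fix an injection $g:Y\to\RV^l$; then $Y\subset\Inj_\emptyset$, so each $y\in Y$ lies in $\Inj_\emptyset(M)$. For a point $x\in X$, its rank over $\Inj_\emptyset(M)$ is realised inside the fibre $X_{\chi(x)}$. Hence
\[\dim^{\acl}_{\Inj}X=\max_{y\in Y}\dim^{\acl}_{\Inj}X_y,\]
where the $\geq$ direction is monotonicity and the $\leq$ direction holds because a generic point of $X$ lies in some fibre. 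Computing the rank of $X_y$ over $\Inj_\emptyset(M)$ is legitimate with the parameter $y$ absorbed, since $y\in\Inj_\emptyset(M)$ and $\acl$ over $\Inj_\emptyset(M)$ is unchanged. One subtlety is that rank is taken over $\Inj_\emptyset(M)$ while $\Inj$ relative to new parameters could grow. Since $y\in\dcl(\Inj_\emptyset)$, I expect $\Inj_{y}\subset\Inj_\emptyset$ up to $\dcl$, because an injection into $\RV$ over $y$ composed with $g(y)$ gives one over $\emptyset$.

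\textbf{Step 2: compare on each fibre.} Each $X_y$ is $\cL(y)$-definable, so Lemma \ref{lem_dim_T-def} applies (its proof uses Lemma \ref{lem_acl_sn}, i.e.\ $\RV$-thinness, and Lemma \ref{lem_acl_echange} via Lemma \ref{lem_dcl_acl}). It gives $\dim_\cT X_y=\dim^{\acl}_{\Inj}X_y$. Taking the maximum over $y$ yields $\dim_\VF(X,\chi)=\max_y\dim_\cT X_y=\dim^{\acl}_{\Inj}X$.

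\textbf{Main obstacle.} The delicate point is Step 1's handling of parameters. Lemma \ref{lem_dim_T-def} is stated for arbitrary parameters, but $\dim^{\acl}_{\Inj}$ must be read relative to the base used in Lemma \ref{lem_acl_sn}. I would check that the proof of Lemma \ref{lem_acl_sn} goes through with base $\Inj_\emptyset(M)$ after naming $y$. This relies on saturation relative to $|A|<\kappa$ in Definition \ref{def_theorie_fine}, and on the observation that $\Inj_y(M)=\Inj_\emptyset(M)$ whenever $y\in\Inj_\emptyset(M)$: given an injection over $y$, adjoin $g(y)$ as extra $\RV$-coordinates and quantify $y$ out using $g^{-1}$. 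Once this is verified, invariance of the rank under elementary extension handles the saturation assumption, and the proposition follows.
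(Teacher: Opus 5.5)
Your proof is correct, but it establishes independence from the choice of $\RV$-partition by a different route than the paper. The paper derives independence from Proposition \ref{prop_appendice}. Given two $\RV$-partitions, it passes to their common refinement. It then applies the covering hypothesis \eqref{egalite_dim_sec}, i.e.\ the first equality of Lemma \ref{lem_dim_T-def}, inside each $\cL(y)$-definable fibre of the coarser partition. The coincidence with $\dim^{\acl}_{\Inj}$ is then, implicitly, the fibrewise use of the second equality of that lemma, which is exactly your Step 2.

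You reverse the order. You prove the coincidence for an arbitrary $\chi$, and independence follows for free because $\dim^{\acl}_{\Inj}X$ mentions no partition. So you need neither refinements nor the covering form of the hypothesis; both routes ultimately rest on Lemma \ref{lem_acl_sn}. Yours is shorter. The paper's route has the advantage of working in the abstract framework of the appendix, which does not use the pregeometry. There, the same argument also yields the van den Dries axioms, notably definability of the sets $X(0)$ and $X(1)$, which a rank function does not give on its own; Theorem \ref{thm_dim} relies on these.

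The obstacle you single out is not really one, and you do not need $\Inj_y(M)=\Inj_\emptyset(M)$. Monotonicity of rank in the base suffices, as follows.
\begin{itemize}
\item Upper bound: for $x\in X_y$, its rank over $\Inj_\emptyset(M)$ is at most its $\cL$-rank over $y$, hence at most $\dim_\cT X_y$, since $y\in\Inj_\emptyset(M)$.
\item Lower bound: Lemma \ref{lem_acl_sn}, applied with $y$ and the parameters of the relevant polydisc named, gives a point whose coordinates are independent over $\Inj_y(M)\supseteq\Inj_\emptyset(M)$, hence a fortiori over $\Inj_\emptyset(M)$.
\end{itemize}
This gives $\dim_\cT X_y=\dim^{\acl}_{\Inj}X_y$ with the single base $\Inj_\emptyset(M)$.
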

\begin{proof}
This follows from the above and Proposition \ref{prop_appendice}.
\end{proof}

Now we may state the following theorem, which summarises the main properties of dimension.

\begin{thm}\label{thm_dim}
Assume that $\cT'$ is $\RV$-thin and has $\RV$-partitions relative to $\cT$. Let $X,Y\subset\VF^n\times \RV^m$ be definable sets, and let $Z\subset\VF^{n'}\times \RV^{m'}$ be definable. Let $f:X\to Z$ be a definable map. Then:
\begin{enumerate}
\item\label{item_thm_dim_1} For all $d\leq n$, we have $\dim_\VF X\geq d$ if and only if there is a projection $\VF^n\to\VF^d$ along a subset of the coordinates such that the image of $X$ in $\VF^d\times \RV^m$ has nonempty interior.
\item\label{item_thm_dim_2} $\dim_\VF(X\cup Y)=\max(\dim_\VF X,\dim_\VF Y)$.
\item\label{item_thm_dim_3} For all $d\leq n$, the set of $z\in Z$ such that $\dim_\VF f^{-1}(z)=d$ is definable over the same parameters as $f$.
\item\label{item_thm_dim_4} If all the fibres have the same dimension $d$, then $\dim_\VF X=d+\dim_\VF Z$.
\item\label{item_thm_dim_5} There is $x\in X$ such that the local dimension of $X$ at $x$ equals $\dim_\VF X$, \emph{i.e.} for every open ball $B\subset\VF^n$ containing $x$, we have $\dim_\VF(X\cap B)=\dim X$.
\item\label{item_thm_dim_6} If $X\subset\VF^n$, then $\dim_\VF(\overline X\setminus X)<n$.
\item\label{item_thm_dim_7} Dimension is invariant upon adding parameters.
\item\label{item_thm_dim_8} In a sufficiently saturated model, dimension $A$-definables sets coincides with $\acl$-dimension relative to $\Inj_A$.
\end{enumerate}
\end{thm}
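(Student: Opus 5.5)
The proof will rest on two descriptions of $\dim_\VF$ supplied by the Proposition. The first is the $\RV$-partition description: $\dim_\VF X=\max_y \dim_\cT\chi^{-1}(y)$, independent of the partition $\chi$. The second is the $\acl$-description: in a sufficiently saturated model, $\dim_\VF X=\dim^{\acl}_{\Inj} X$, relative to $\Inj_A$ after naming $A$. Item (\ref{item_thm_dim_8}) is exactly this second description once the parameters $A$ are added to the language; adding parameters preserves the existence of $\RV$-partitions and $\RV$-thinness, since $\Inj_A$ is again a set as in Definition \ref{def_theorie_fine}. Item (\ref{item_thm_dim_7}) then follows because $\dim_\cT$ is invariant under adding parameters, and a $B$-definable $\RV$-partition of an $A$-definable set, with $A\subset B$, computes the same maximum. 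Concretely, I will refine both partitions by taking the product $(\chi_A,\chi_B)$. This is still an $\RV$-partition: the product of two injections into $\RV$ is an injection, and a fibre is the intersection of two $\cL$-definable fibres. Lemma \ref{lem_dim_T-def}, applied to each $\cL$-definable fibre, shows that refining does not change the maximum.

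For item (\ref{item_thm_dim_1}), I will fix an $\RV$-partition $\chi$ and a fibre $X_y$ of maximal $\dim_\cT$. If $\dim_\cT X_y\ge d$, then by \cite{cluckers_hensel_2022}, Proposition 5.3.4.(1), some coordinate projection of $X_y$, and hence of $X$, has nonempty interior. Conversely, suppose a projection $\pi(X)$ contains a polydisc. I will partition $\pi(X)$ along the $\RV$-partition induced by $\chi$ (images of the fibres), so $\pi(X)=\bigcup_y\pi(X_y)$ with each piece $\cL(y)$-definable. By Lemma \ref{lem_acl_sn} there is a point of the polydisc with coordinates algebraically independent over $\Inj$ (after adding parameters), and it lies in some $\pi(X_y)$. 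Using $y\in\Inj$, that piece then has $\acl$-rank, hence $\dim_\cT$, at least $d$, and therefore so does $X_y$.

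Item (\ref{item_thm_dim_2}) comes from the common refinement $\chi_X\sqcup\chi_Y$ together with the 1-h-minimal union property on the fibres. Item (\ref{item_thm_dim_4}) is the additivity of $\acl$-rank. I will take a generic point $z$ of $Z$ over $\Inj_A$ and then a generic point $x$ of $f^{-1}(z)$ over $A z\,\Inj$. The rank of $x$ is the rank of $z$ plus that of $x$ over $z$, which gives $\ge d+\dim Z$; the reverse inequality holds since every point has rank bounded in the same way. The only delicate point is that $\Inj_{Az}$ may be larger than $\Inj_A\cup\{z\}$. I will handle this using Lemma \ref{lem_dcl_acl}: $\acl_{\cL'}(Az\cup\Inj)=\acl_\cL(Az\cup\Inj)$, computed over the full $\Inj(M)$ as in Lemma \ref{lem_acl_echange}.

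I expect item (\ref{item_thm_dim_3}), definability of dimension in families, to be the main obstacle, because $\cT'$ lacks $\exists^\infty$-elimination. My first attempt will take an $\RV$-partition $\chi$ of the graph of $f$. For each $(z,y)$, the fibre $\chi^{-1}(y)\cap(f^{-1}(z)\times\{z\})$ is $\cL(yz)$-definable, and after rewriting $y$ via the injection into $\RV^l$ it is uniformly given by one $\cL'$-formula. Then $\dim_\VF f^{-1}(z)\ge d$ iff there is a $y$ whose fibre has $\dim_\cT\ge d$. The condition $\dim_\cT\ge d$ is $\cL$-definable in the parameters uniformly: by item (\ref{item_thm_dim_1}) applied in $\cT$, it says that some coordinate projection contains a ball, which is first order. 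Quantifying over $y$ in the image then gives definability. The care needed is that $\chi$ restricted to a fibre over $z$ is again an $\RV$-partition of $f^{-1}(z)$ with parameter $z$, so the partition-independence of the Proposition applies.

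Finally, item (\ref{item_thm_dim_5}) reduces to a maximal fibre $X_y$ and its 1-h-minimal local dimension point, since $X_y\subset X$ and every subset has dimension at most $\dim X$. For item (\ref{item_thm_dim_6}), I will apply item (\ref{item_thm_dim_1}). If $\overline X\setminus X$ had nonempty interior, a ball inside it would miss $X$ while lying in $\overline X$, which is impossible since $X$ would have to be dense in that ball.
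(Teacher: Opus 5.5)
Your proposal is correct. On items (\ref{item_thm_dim_1}), (\ref{item_thm_dim_5}), (\ref{item_thm_dim_6}) and (\ref{item_thm_dim_8}) it matches the paper:
\begin{itemize}
\item for (\ref{item_thm_dim_1}), \cite{cluckers_hensel_2022}, Proposition 5.3.4.(1) on a fibre of maximal dimension, and Lemma \ref{lem_acl_sn} for the converse;
\item for (\ref{item_thm_dim_5}), the local-dimension point of a maximal fibre;
\item for (\ref{item_thm_dim_6}), the fact that $\overline X\setminus X$ has empty interior (the paper applies this to each fibre of an $\RV$-partition rather than through item (\ref{item_thm_dim_1}), which is the same content);
\item for (\ref{item_thm_dim_8}), Lemma \ref{lem_dim_T-def}.
\end{itemize}

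The routes differ on (\ref{item_thm_dim_2}), (\ref{item_thm_dim_3}), (\ref{item_thm_dim_4}) and (\ref{item_thm_dim_7}), which the paper does not argue directly. It observes that Proposition \ref{prop_appendice} makes $\dim_\VF$ satisfy the van den Dries axioms, and cites \cite{dries_dimension_1989}, where definability and additivity for arbitrary definable maps are derived from the one-coordinate axiom. Your (\ref{item_thm_dim_2}) and (\ref{item_thm_dim_7}) rerun the refinement argument inside Proposition \ref{prop_appendice}. Your (\ref{item_thm_dim_3}) is the appendix's compactness argument for the definability half of axiom 4, applied directly to the graph of $f$. Both are fine. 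Your (\ref{item_thm_dim_4}) is genuinely different: it uses additivity of rank in the pregeometry of Lemma \ref{lem_acl_echange} together with (\ref{item_thm_dim_8}). This explains the formula rather than inheriting it, but it brings in bookkeeping that the axiomatic route hides, namely comparing rank over $\Inj_{Az}$ with rank over $Az\cup\Inj_A$.

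Two places need tightening.

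In (\ref{item_thm_dim_2}), you cannot literally glue $\chi_X$ and $\chi_Y$ on $X\cup Y$. Restricting $\chi_Y$ to the $\cL'$-definable set $Y\setminus X$ need not leave $\cL$-definable fibres. Instead, glue $\RV$-partitions of $X\cap Y$, $X\setminus Y$ and $Y\setminus X$, as the appendix does.

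In (\ref{item_thm_dim_4}), Lemma \ref{lem_dcl_acl} by itself does not compare $\Inj_{Az}$ with $Az\cup\Inj_A$.
\begin{itemize}
\item The lower bound is harmless: $\Inj_A\subset\Inj_{Az}$ gives $\mathrm{rk}(x/Az\Inj_A)\geq \mathrm{rk}(x/Az\Inj_{Az})$.
\item For the upper bound, reuse the $A$-definable $\RV$-partition $\chi:\Gamma_f\to Y$ from your (\ref{item_thm_dim_3}). Each $x\in f^{-1}(z)$ lies in an $\cL(Ayz)$-definable slice of $\dim_\cT\leq d$ with $y\in\Inj_A$. Hence $\mathrm{rk}(x/Az\Inj_A)\leq d$, and Lemma \ref{lem_dcl_acl} is needed only to identify $\acl_{\cL'}$ with $\acl_\cL$ over $Az\cup\Inj_A$.
\item When points carry $\RV$-coordinates, you also need that $\RV$-parameters create no new $\VF$-algebraic points. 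This is built into the definition of 1-h-minimality, and it transfers to $\acl_{\cL'}$ over $\Inj_A$ by Lemma \ref{lem_dcl_acl}.
\end{itemize}
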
\begin{proof}
Items \ref{item_thm_dim_2}, \ref{item_thm_dim_3}, \ref{item_thm_dim_4}, \ref{item_thm_dim_7} follow from van den Dries axioms, see \cite{dries_dimension_1989}. Item \ref{item_thm_dim_8} comes from Lemma \ref{lem_dim_T-def}.
As for Item \ref{item_thm_dim_6}, let $\chi:\overline X\setminus X\to Y$ be an $\RV$-partition of $\overline X\setminus X$. Each fibre of $\chi$ necessarily has empty interior, so has dimension strictly less than $n$ (for $\cT$), proving the result.

We finally prove items \ref{item_thm_dim_1} and \ref{item_thm_dim_5}; they follow  from analogous statements for $\cT$-dimension in \cite{cluckers_hensel_2022} via $\RV$-partitions. Let $\chi:X\to R$ be an $\RV$-partition of $X$. Assume $\dim_\VF X\geq d$; then there is $r\in R$ such that $\dim_\VF \chi^{-1}(r)\geq d$. By \cite{cluckers_hensel_2022}, Proposition 5.3.4.(1), there is a projection $\VF^n\to\VF^d$ along a subset of the coordinates such that the image of $\chi^{-1}(r)$ inside $\VF^d\times\RV^m$, and \emph{a fortiori} that of $X$, has nonempty interior. Conversely, assume we have such a projection that the image of $X$ has nonempty interior; let $\pi$ be this projection. Then $\dim_\VF X \geq \dim_\VF\pi(X)=d$, where the inequality comes from \cite{dries_dimension_1989} and the equality from Lemma \ref{lem_acl_sn}.

Now let $r\in R$ be such that $\dim_\VF X = \dim_\VF \chi^{-1}(r)$. According to \cite{cluckers_hensel_2022}, Proposition 5.3.4.(5), there is $x\in \chi^{-1}(r)$ such that the local dimension of $\chi^{-1}(r)$ at $x$ equals $\dim_\VF \chi^{-1}(r)$. In particular, the local dimension of $X$ at $x$ equals $\dim_\VF X$.
\end{proof}
\begin{rmk}
The first five items are almost \emph{verbatim} those from \cite{cluckers_hensel_2022}, Proposition 5.3.4, the only difference being that it is no longer true that dimension $0$ sets are automatically finite, and that here we chose to allow auxiliary sorts. Indeed, the main reason why Cluckers, Halupczok and Rideau \cite{cluckers_hensel_2022} do not include these is that it would make wrong their reinforced conclusion for Item 1, but this has no chance to hold anyway in our context.

However, we have had to weaken Item 6 from \cite{cluckers_hensel_2022} Proposition 5.3.4, about the dimension of the boundary, by only taking the "easy" case where the definable subset $X\subset\VF^n$ has maximal dimension $n$. The stronger version is here false without further hypothesis: if we let $\sn:\RV^\times\to\VF^\times$ be a section of $\rv$, we will show in Corollary \ref{cor_section} that the theory we obtain by adding $\sn$ to the language satisfies the above assumptions; nevertheless, $\sn(\RV^\times)$ is not closed since $0$ lies in its closure. The following question provides a conjectural hypothesis under which the stronger conclusion could hold:
\end{rmk}
\begin{question}
Assume that, for every definable subset $X\subset\VF^n$ of dimension $0$, we have $\overline X = X$. Is it then true that, for all definable subsets $X\subset\VF^n$, we have $\dim_\VF(\overline X\setminus X)<\dim_\VF X$?

In other words, does the obstruction to this result entirely lie in dimension $0$?
\end{question}
Note that the hypothesis that dimension $0$ sets are closed is reasonable: it is in particular the case if we take $\cL=\cL_{val}$ and $\cL'=\cL\cup\{\sn\}$ and we interpret $\sn$ as a section $\sn:\Kb^\times\to\VF^\times$ of $\rv$, compatible to sum and product. More generally, if $\Delta\subset\Gamma$ is a bounded subgroup (for instance, a proper convexe subgroup), we may take $R=\{r\in\RV^\times:|r|\in\Delta\}$ and a section $\sn:R\to\VF^\times$ of $\rv$. The fact that these theories satisfy the hypothesis of the question will be a consequence of the results from Section \ref{sec_henselien}, more precisely of Corollary \ref{cor_section}.

Another way to motivate this question would be to notice that, if the theory $\cT'$ have $t$-stratifications (in the sense of \cite{halupczok_non-archimedean_2014}, Theorem 4.12 et Lemma 3.17), then the same argument as in \cite{cluckers_hensel_2022} applies and we find the result. However, the theory of $t$-stratifications is unlikely to be well-behaved in the aforementionned examples. For instance, while a $0$-dimensional subset (which is, a finite subset) only had the identity map as a risometry from itself to itself (let us call these autorisometries) in \cite{halupczok_non-archimedean_2014}, now $0$-dimensional subsets as simple as $\sn(\Kb)$ violate this principle in the above example. Even worse: if $|t|<1$, the set of autorisometries of $\sn(\Kb)+t\sn(\Kb)$ is properly $\mathrm{ind}$-definable, \emph{i.e.} there are such autorisometries with arbirary complexity.

On the opposite, if we rather add sections from the value group $\Gamma$ (with compatibility to the product), then we can prove elementarily that $0$-dimensional subsets no longer have other autorisometries that the identity map. It seems then reasonable to believe that $t$-stratifications could exist and be well-behaved in that context (in which, conversely, the hypothesis of the question is clearly false). Studying these theories and proving directly a kind of Hensel minimality (with all its consequences) will be the topic of a future article.

\subsection{Cell decomposition}\label{ssec_dec_cell}
Here we write the results on cell decompositions we will need to construct motivic integration. Note that we weakened a bit the notion of a cell decomposition adapted to a subset of $\VF^n\times\RV^m$, by working in $\cL$ relative to a definable set that maps into $\RV$. The results of this section will be immediate from 1-h-minimality and compactness. We also introduce the last important hypothesis for motivic integration, which is the notion of refined theories, in Definition \ref{def_theorie_raffinee}.

First let us recall the classical definitions of cells and cell decomposition.

\begin{dfn}
Let $A$ be some set of parameters (which may be imaginary), and let $X\subset\VF^n$ be an $A$-definable nonempty set. For $i=1,\dots,n$ we set some $\cL(A)$-definable maps $c_i:\pi_{<i}(X)\to\VF$. We say that $X$ is a \emph{twisted $\cL(A)$-definable box} of \emph{center} $c=(c_i)_{1\leq i\leq n}$ if there is $s\in\RV^n$ such that
\[X=\{x\in\VF^n:(\rv(x_i-c_i(x_{<i})))_{1\leq i\leq n}= s\}\,.\]
In this situation, we write $X=\rtimes (c,s)$. 
\end{dfn}

\begin{dfn}
An $\cL(A)$-definable \emph{cell decomposition} of $\VF^n$ is the datum of an $\cL(A)$-definable map $\chi:\VF^n\to\RV^N$ such that, for all $\xi\in\RV^N$, the fibre $\chi^{-1}(\xi)$ is a twisted $\cL(A\xi)$-definable box, whose center $c(\xi)$ we ask to depend $\cL(A)$-definably of $\xi$. The maps $c$ are part of the data of the cell decomposition.

Then a \emph{twisted box of $\chi$} is merely a fibre of $\chi$.
\end{dfn}
Remark that, in that case, the $n$-tuple $s$ also depends $\cL(A)$-definably of $\xi$.

We introduce the notion of cell decomposition convenient for our situation:
\begin{dfn}
Let $X\subset\VF^n\times\RV^k$ be an $A$-definable set, for some parameter set $A$. An $A$-definable \emph{cell decomposition} adapted to $X$ then is a couple of $A$-definable maps $(\psi:X\to Y, \chi:\VF^n\times Y\to \RV^N)$ where $Y$ maps into $\RV$, $psi$ is an $\RV$-partition and, for all $y\in Y$, the map $\chi_y:\VF^n\to\RV^N$ is an $\cL(yA)$-definable cell decomposition adapted to $X_y = \psi^{-1}(y)$ (in the classical 1-h-minimal sense), \emph{i.e.} for all $u\in\RV^k$ the fibre $X_{y,u}$ is a union of twisted boxes of $\chi_y$.

If $(\psi_i:X_i\to Y_i,\chi_i)$ are two cell decompositions adapted to $X$, we say that $(\psi_1,\chi_1)$ \emph{refines} $(\psi_2,\chi_2)$ if:
\begin{enumerate}
\item the $\RV$-partition $\psi_1$ refines $\psi_2$, \emph{i.e.} there is $\pi:Y_1\to Y_2$ such that $\psi_2=\pi\circ\psi_1$;
\item for all $y\in Y_1$, every fibre of the map $\chi_2(\cdot,\pi(y))$ is a union of fibres of $\chi_1(\cdot,y)$.
\end{enumerate}
\end{dfn}

Thanks to the existence of $\RV$-partitions, we immediatly get that of cell decompositions from the 1-h-minimal cell decomposition theorem (\cite{cluckers_hensel_2022}, Theorem 5.7.3; voir aussi \cite{stout_integration_2025}, Theorem 2.5.5):

\begin{thm}\label{thm_dec_cell}
Let $A$ be some parameters, and let $X\subset\VF^n\times\RV^m$ be an $A$-definable set. Then there is a cell decomposition $(\psi:X\to Y,\chi:\VF^N\times Y\to \RV^N)$ adapted to $X$. Moreover, we can ask for $(\psi,\chi)$ to satisfy the following additional properties:
\begin{enumerate}
\item\label{item_thm_dec_cell_1} (Preparing functions) if there is an $A$-definable map $\xi:X\to\RV^l$ given, we may assume $\xi$ to be constant on each $\chi_y^{-1}(r), y\in Y, r\in\RV^N$;
\item\label{item_thm_dec_cell_2} (Compatible domain and image preparation) let us assume $n=1$, and let $X_i\subset\VF\times\RV^{m_i},i=1,2$ be $A$-definable, with $Y_i$ mapping into $\RV$. Let $f:X_1\to X_2$ be an $A$-definable bijection. Then there are cell decompositions $(\psi_i:X_i\to Y, \chi_i:\VF\times Y\to\RV^{N_i})$ adapted to $X_i,i=1,2$, such that $f$ descends to a bijection between the sets \[\{(\chi_i(x,y),y,r):(x,r)\in X_i, y=\psi_i(x,r)\}, i=1,2\,.\]
\end{enumerate}

Finally, if $(\psi_i:X\to Y_i,\chi_i),i=1,2$ are two cell decompositions adapted to $X$, there is $(\psi,\chi)$ a cell decomposition adapted to $X$ which refines both of them.
\end{thm}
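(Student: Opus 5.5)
The idea is to reduce everything to the 1-h-minimal cell decomposition theorem (\cite{cluckers_hensel_2022}, Theorem 5.7.3) applied fibrewise over an $\RV$-partition, and then to glue the fibrewise data using compactness.

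First, take an $A$-definable $\RV$-partition $\psi:X\to Y$; it exists by hypothesis. For each $y\in Y$, the fibre $X_y=\psi^{-1}(y)$ is $\cL(Ay)$-definable. The 1-h-minimal theorem then gives an $\cL(Ay)$-definable cell decomposition $\chi_y:\VF^n\to\RV^{N}$ adapted to $X_y$, with $\cL(Ay)$-definable centers.

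The main obstacle is uniformity in $y$: we need a single $A$-definable $\chi$ on $\VF^n\times Y$. I would handle it with a compactness argument in a sufficiently saturated model.
\begin{enumerate}
\item For each $y$, the decomposition $\chi_y$ is given by an $\cL$-formula $\varphi(x,y,a)$ with $a\in A$, together with formulas defining the centers. The property ``$\varphi(\cdot,y,a)$ defines a cell decomposition with the given centers, adapted to $X_y$'' is expressible.
\item Adaptedness to $X_y$ is expressible because $X_y$ is defined by an $\cL'(A)$-formula in $y$. The cell conditions themselves are first-order in $y$.
\item Each formula therefore cuts out an $A$-definable subset of $Y$. These subsets cover $Y$, so finitely many suffice.
\item Partition $Y$ accordingly, encoding the index of the piece as an extra $\RV$-coordinate (e.g.\ a finite tuple of constants in $\RV$). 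Adjust $N$ by padding.
\end{enumerate}
This yields an $A$-definable $\chi$ on $\VF^n\times Y$. Each $\chi(\cdot,y)$ is $\cL(Ay)$-definable because it is given by an $\cL$-formula in $y$ and $A$. The subtlety is that parameters from $A$ in the $\cL$-formulas are $\cL'$-definable. This is fine: the definition only requires the fibre maps $\chi_y$ to be $\cL(Ay)$-definable, and $A=\dcl_{\cL'}(A)$.

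For item \ref{item_thm_dec_cell_1}, first refine $\psi$ by $(\psi,\xi)$; this is still an $\RV$-partition, with target $Y\times\RV^l$ mapping into $\RV$. On each new fibre $\xi$ is constant, so any adapted decomposition prepares it.

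For the refinement statement, take $\psi=(\psi_1,\psi_2)$ into $Y_1\times Y_2$, which maps into $\RV$. The fibres are intersections of $\cL(Ay_1)$- and $\cL(Ay_2)$-definable sets, hence $\cL(Ay_1y_2)$-definable. Then apply the 1-h-minimal theorem fibrewise to obtain a cell decomposition adapted to all fibres of $\chi_1(\cdot,y_1)$ and $\chi_2(\cdot,y_2)$ simultaneously. Such fibres are finitely many sets in families indexed by $\RV^N$, and 1-h-minimal cell decomposition handles families parametrized by $\RV$. Uniformize by the same compactness argument.

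For item \ref{item_thm_dec_cell_2}, take $\RV$-partitions of $X_1$, $X_2$ and of the graph of $f$. Transport them so that both $X_i$ are partitioned by the common target $Y$: use the partition of the graph composed with the projections, where $f$ and $f^{-1}$ are bijections. On each fibre, $f$ restricts to an $\cL(Ay)$-definable bijection between $\cL(Ay)$-definable subsets of $\VF\times\RV^{m_i}$. The one-variable compatible domain–image preparation for 1-h-minimal theories (\cite{stout_integration_2025}, Theorem 2.5.5 and its consequences) then gives fibrewise decompositions under which $f$ descends. Uniformizing again by compactness completes the proof.
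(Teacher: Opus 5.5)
Your main assertion, item 2 and the common refinement follow the paper's argument: an $\RV$-partition, then the 1-h-minimal results applied fibrewise, then gluing by compactness. For the refinement, using $(\psi_1,\psi_2)$ directly is legitimate, since its fibres are $\cL(Ay_1y_2)$-definable and it lands in $Y_1\times Y_2\subset\VF^{d_1+d_2}$.

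The gap is in item 1, where you claim that $(\psi,\xi):X\to Y\times\RV^l$ is still an $\RV$-partition. This fails in two ways.
\begin{enumerate}
\item The fibres $\psi^{-1}(y)\cap\xi^{-1}(s)$ need not be $\cL(Ays)$-definable. You chose $\psi$ for $X$ alone, and $\xi$ is only $\cL'(A)$-definable, so $\xi_{|\psi^{-1}(y)}$ can genuinely use the new symbols. For instance, with a section $\sn$ of $\rv$, take $X=\VF$, $\psi$ constant, and $\xi(x)=\rv(x-\sn(\rv(x)))$ with $\xi(0)=0$. Here $\psi$ is an $\RV$-partition, since $\VF$ is $\cL$-definable. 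But $\xi^{-1}(0)=\sn(\RV^\times)\cup\{0\}$ is infinite with empty interior, so in a 1-h-minimal theory it is not $\cL$-definable with any parameters.
\item Even when the fibres are fine, the target has the wrong shape. An $\RV$-partition must land in a subset of some $\VF^d$ mapping into $\RV$, because $\chi_y$ has to be \emph{one} cell decomposition of $\VF^n$ adapted to the whole fibre $X_y$. Splitting by the value $s$ of $\xi$ instead gives one decomposition per pair $(y,s)$. For fixed $y$ these are infinitely many overlapping decompositions of $\VF^n$, and nothing lets you merge them. When $\cL'=\cL$, sets mapping into $\RV$ are finite, so the index in $\RV^l$ cannot be absorbed into $Y$.
\end{enumerate}

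The repair is the paper's. Take $\psi$ to be the restriction to $X$ of an $\RV$-partition of the graph $\Gamma_\xi$; then $\xi_{|X_y}$ is $\cL(Ay)$-definable for each $y$. Now prepare $\xi$ inside the fixed fibre, in one of two ways:
\begin{itemize}
\item by Addendum~1 to \cite{cluckers_hensel_2022}, Theorem~5.7.3; or,
\item closer to your idea, by applying your fibrewise construction to the $\cL(Ay)$-definable set $\{(x,u,\xi(x,u)):(x,u)\in X_y\}\subset\VF^n\times\RV^{m+l}$. Adaptedness of $\chi_y$ to this set means that for every $u$, $X_{y,u}$ is a union of boxes of $\chi_y$ on each of which $\xi(\cdot,u)$ is constant.
\end{itemize}
Then glue by compactness exactly as in your main step.
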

\begin{proof}
We prove the main assertion together with \ref{item_thm_dec_cell_1}. Let $\psi_0:\Gamma_\xi\to Y$ be an $\RV$-partition of the graph of $\xi$. In particular, the composition of $\psi_0$ with the inclusion $i:X\to\Gamma_\xi$ defines an $\RV$-partition $\psi=\psi_0\circ i$ of $X$. For all $y\in Y$, set $X_y=\psi^{-1}(y)$. The restriction $\xi_{|X_y}:X_y\to\RV^l$ is $\cL(y)$-definable; so by \cite{cluckers_hensel_2022}, Theorem 5.7.3, Addendum 1, there is an $\cL(y)$-definable cell decomposition $\chi_y:\VF^n\to\RV^{N_y}$ adapted to $X_y$ and $\xi_{|X_y}$. We glue by compactness, yielding the result.

Now let us show compatible domain and image preparation. Let $\Gamma_f$ be the graph of $f$, and let $\pi_i:\Gamma_f\to X_i$ be the projections. Let $\psi_0:\Gamma_f\to Y$ be an $A$-definable $\RV$-partition. For every $y\in Y$, we take $\chi_{1,y}:\VF\to \RV^{N_{1,y}},\chi_{2,y}:\VF\to\RV^{N_{2,y}}$ to be $\cL(yA)$-definable cell decompositions that satisfy compatible domain and image preparation for $f_y:X_{1,y}\to X_{2,y}$, which exist by \cite{stout_integration_2025}, Lemma 5.4.6. Again we glue by compactness.

Finally we turn to the additional item. Let $W$ be the graph of $(\psi_1,\psi_2)$, and let $\iota:X\to W$ be the injection. Let $\widetilde\psi:W\to Y$ be an $\RV$-partition of $W$ refining the projection $W\to Y_1\times Y_2$, and set $\psi=\widetilde\psi\circ\iota$ which is an $\RV$-partition of $X$. Write $\pi_i:Y\to Y_i$ for the canonical map factorising $\psi_i$. For $y\in Y$, the map $\chi_i(\cdot,\pi_i(y))$ is $\cL(y)$-definable; conclude like the main assertion.
\end{proof}

\begin{rmk}\label{rmk_dec_cell}
\begin{enumerate}
\item We have actually proven that, for every $\RV$-partition $\psi:X\to Y$, there is $\chi:\VF^n\times Y\to \RV^N$ such that $(\psi,\chi)$ is a cell decomposition adapted to $X$.
\item The objects of the categories we will use for motivic integration will be of the form $X\subset\VF^d\times Y\times\RV^m$, where $Y\subset\VF^l$ maps into $\RV$. Since $d$ is the dimension, given the particular role of dimension $1$, it would be quite inconvenient to see $X$ as a subset of $\VF^{d+l}\times\RV^m$. We will rather take advantage of the fact that $Y$ maps into $\RV$ by saying that a cell decomposition adapted to $X$ is a couple $(\psi,\chi)$, where $\psi:X\to Y'$ is an $\RV$-partition which additionally refines the projection $X\to Y$, and $\chi:\VF^d\times Y'\to\RV^N$ is an $\emptyset$-definable map such that, for all $y'\in Y'$, the restriction $\chi_{y'}:\VF^d\to\RV^N$ is an $\cL(y')$-definable cell decomposition, and moreover for all $(y,\xi)\in y\times\RV^m$ the set $\{x\in\VF^d:(x,y,\xi)\in X\wedge\psi(x,y,\xi)=y'\}$ is a union of fibres of $\chi$. In other words, instead of seeing $Y$ a variables in $\VF$, we see it a variables in $\RV$.
\item Actually, the objects we will look at will be of the aforementioned form with the additional property that $X\to Y$ be an $\RV$-partition. Sometimes, by abuse of language, we will just say "let $\chi:\VF^d\times Y\to\RV^N$ be a cell decomposition adapted to $X$", meaning (in a legitimate way, by the first point of this remark) that the underlying $\RV$-partition is the projection $X\to Y$.
\end{enumerate}
\end{rmk}

\subsubsection{Refined theories}\label{sssec_raffine}
Let us take a look forward and examine now how is built Hrushovski-Kazhdan motivic integral, and more precisely how we get to control the kernel of the lifting map $\fL$. In the original version \cite{ginzburg_integration_2006}, this is done through the means of so-called special bijections (Section 7.1), which have a very concrete description, whereas in the more recent version \cite{stout_integration_2025} it is done by using cell decompositions (Section 5.4). The key argument to connect these to versions comes from the fact that a definable map $\RV^m\to\VF$ must have finite image; this is crucal in \emph{loc. cit.}, Lemma 5.4.5, with an argument mostly taken from \cite{cluckers_constructible_2008}, Lemma 9.1.3. This is of course not anymore the case in our situation; that being said, we will anyway be capable of connecting them in the situation we are interested in, thanks to a synctactical argument inspired from \cite{yin_integration_2013}, Section 4.

\begin{dfn}
Let $X\subset\VF$ be an $\emptyset$-definable set, and let $(\psi:X\to Y,\chi:\VF\times Y\to\RV^N)$ be a cell decomposition adapted to $X$. Set $I = \{(\chi(x,y),y):x\in X, y=\psi(x)\}$. Then the partition of $X$ associated to $(\psi,\chi)$ is
\[X = \bigsqcup_{(\xi,y)\in I}\chi_y^{-1}(\xi)\,.\]
\end{dfn}
It is a partition of $X$ into points and open balls indexed by some set $I$ in $\emptyset$-definable bijection with a subset of $\RV$. For $\sigma\in I$, we let $B_\sigma = \chi_y^{-1}(\xi)$ and $r(\sigma)\in\Gamma_0$ be the radius of $B_\sigma$.

A partition satisfying these hypotheses will be called a \emph{cell partition}.

\begin{dfn}\label{def_theorie_raffinee}
Let $X\subset\VF$ be an $\emptyset$-definable set, and let $X = \bigsqcup_{\sigma\in I}B_\sigma$ be a cell partition of $X$. Let $I_0\subset I$ be an $\emptyset$-definable set such that $r(\sigma)>0$ for all $\sigma\in I_0$, and let $c:I_0\to\VF$ be an $\emptyset$-definable map such that, for every $\sigma\in I_0$, we have $c(\sigma)\in B_\sigma$. Write $I_1=I\setminus I_0$.

The \emph{refinement of length $1$} of the cell partition $X=\bigsqcup_{\sigma\in I}B_\sigma$ associated to these data is the cell partition
\[X=\bigsqcup_{\sigma\in I_1}B_\sigma\sqcup\bigsqcup_{\sigma\in I_0}\bigsqcup_{\xi\in\RV, |\xi|<r(\sigma)}(c(\sigma)+\rv^{-1}(\\xi))\,.\]

A \emph{refinement of length $n$} is the composition of $n$ refinements of length $1$. A \emph{refinement of finite length} is a refinement of length $n$ for some $n\in\bN$.

Say that a cell partition $X=\bigsqcup_{\sigma\in I}B_\sigma$ has \emph{finite length} if it can be extended to a cell partition $\VF=\bigsqcup_{\sigma\in I'}B'_\sigma$ of all $\VF$, which is in turn a refinement of finite length of the elementary cell partition $\VF=\bigsqcup_{\xi\in\RV}\rv^{-1}(\xi)$.
\end{dfn}
Let us explain more concretely what we have done. We had a partition of $X$ in points and open balls, and inside some of those balls we have added a center, then refined the partition by taking the $\rv$-balls centered at that new point.

\begin{rmk}
If the original cell partition corresponds to a cell decomposition, then the one we find after a refinement of length $1$ can be written in a canonical way as the partition corresponding to such a decomposition. However, we do not want to keep track of these decompositions, which would only make things more obscure here.

As a contrary, those cell partitions correspond to a maybe more natural notion of cell decomposition, in which we do not ask for $\cL$-definability. Nonetheless, we rather keep our notion of cell decomposition, more suitable to what we are doing in general.
\end{rmk}

Let us justify the terminology introduced above:

\begin{lem}\label{lem_raff_fini}
Let $X\subset\VF$ be an $\emptyset$-definable set, and let $\cP:X=\bigsqcup_{\sigma\in I}B_\sigma$ be a cell partition of $X$. Also let $\cP':X=\bigsqcup_{\sigma\in I'}B'_\sigma$ be a cell partition which refines $\cP$. We assume that $\cP'$ has finite length. Then $\cP'$ is a refinement of finite length of $\cP$.
\end{lem}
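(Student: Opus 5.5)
The plan is to induct on the length $n$ of a presentation of the extension of $\cP'$. By hypothesis $\cP'$ extends to a cell partition $\cQ_n$ of $\VF$, and there is a chain $\cE=\cQ_0,\cQ_1,\dots,\cQ_n$ with $\cE:\VF=\bigsqcup_{\xi\in\RV}\rv^{-1}(\xi)$, where each $\cQ_k$ is the refinement of length $1$ of $\cQ_{k-1}$ given by data $(I_0^{(k)},c_k)$. I will build a chain of cell partitions $\cP=\cP_0,\cP_1,\dots$ of $X$, each a refinement of length $1$ of the previous one (or a bounded number of such steps), ending at $\cP'$. The invariant to maintain is:

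\begin{quote}
$(\ast)_k$: every piece of $\cP_k$ is either a piece of $\cP$ contained in a single piece of $\cQ_k$, or equal to $B_\sigma\cap D$ for some piece $D$ of $\cQ_k$ with $D\cap X\subset B_\sigma$ — in fact a piece of $\cQ_k$ contained in $X$.
\end{quote}

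Equivalently, $\cP_k$ is the coarsest common refinement of $\cP$ and $\cQ_k|_X$. Since $\cP'$ refines $\cP$ and every piece of $\cP'$ is a piece of $\cQ_n$, this common refinement at $k=n$ is exactly $\cP'$, which finishes the proof.

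The basic geometric fact I will use is that pieces are points or open balls, so any two of them are either disjoint or nested. Moreover, every piece of $\cQ_k$ is contained in a piece of $\cE$, so it has the form $\rv^{-1}(\xi)$ translated by a centre. Thus $\cQ_k$ is a finite-depth tree of $\rv$-balls around the centres $c_1,\dots,c_k$.

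For the inductive step, fix a piece $\tau\in I_0^{(k)}$ of $\cQ_{k-1}$ that is split at $c_k(\tau)$. Look at the pieces $B$ of $\cP_{k-1}$ meeting $\tau$. If $B\subseteq \tau$ is a ball containing $c_k(\tau)$, I refine $B$ at the centre $c_k(\tau)$. The resulting pieces $B\cap(c_k(\tau)+\rv^{-1}(\xi))$ are exactly the $\rv$-balls around $c_k(\tau)$ inside $B$, and these are pieces of $\cQ_k$ or unions of them. If $B\subseteq\tau$ does not contain $c_k(\tau)$, then $B$ already lies in a single $\rv$-ball around $c_k(\tau)$ by the ultrametric inequality, so nothing needs to be done. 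If $B\supsetneq\tau$, then by $(\ast)_{k-1}$ the ball $B$ is an original piece of $\cP$. In this case one refinement step of $B$ at $c_k(\tau)$ produces $\rv$-balls around $c_k(\tau)$ of all radii below $r(B)$, which is too fine outside $\tau$. I plan to handle this case by refining $B$ at $c_k(\tau)$ anyway and then checking that the extra pieces still coincide with pieces of $\cQ_k$. This works because outside $\tau$, the $\cQ$-structure inside $B$ was itself produced by earlier centres, and these lie in the same $\rv$-balls. If that check fails, I will instead postpone and reorder the centres, using that the centres of the $\cQ$-chain meeting $B$ can be processed from the outermost inwards.

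In all cases the set of pieces to be split and the choice of centre are $\emptyset$-definable from $I_0^{(k)}$, $c_k$ and the index set of $\cP_{k-1}$, so each step is a legitimate refinement of length $1$ of $\cP_{k-1}$.

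I expect the main obstacle to be precisely this mismatch when a $\cP$-ball strictly contains the $\cQ$-ball being split, together with the bookkeeping needed to show that the number of steps stays finite. The finiteness should follow because every new centre used for $\cP$ is one of the $c_k$, so at most $n$ (or some function of $n$) steps occur. Definability of the index sets should follow from $\cP,\cP',\cQ_k$ all being $\emptyset$-definable.
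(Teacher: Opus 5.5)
There is a genuine gap at the very start. Your overall plan is the right one: build length-$1$ refinements of $\cP$ that track the chain $\mathcal Q_0=\mathcal E,\dots,\mathcal Q_n$, aiming at the coarsest common refinement of $\cP$ and $\mathcal Q_k|_X$. Your treatment of pieces $B\subseteq\tau$ is also correct. But your invariant $(\ast)_0$ is false. The partition $\mathcal Q_0=\mathcal E$ already carries an implicit centre, namely $0$, and $\cP$ need not refine $\mathcal E|_X$.

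For a concrete failure, take $X=\mathfrak m=-1+\rv^{-1}(1)$, let $\cP$ be the single cell $X$, and let $\cP'=\mathcal E|_X$, so $n=0$. Then $\cP'$ refines $\cP$ and has finite length, yet your chain has no steps and ends at $\cP\neq\cP'$. The same example refutes your finiteness mechanism ("every new centre used for $\cP$ is one of the $c_k$"): the centre $0$ is indispensable.

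The missing idea is one extra refinement of length $1$ of $\cP$ at the centre $0$, splitting the unique piece of $\cP$ containing $0$ if it is a ball. The paper does this by setting $C^{(0)}=\{0\}$. Since a ball not containing $0$ lies in a single fibre of $\rv$, this step produces exactly the common refinement of $\cP$ and $\mathcal E|_X$.

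From then on you get inductively that $\cP_{k-1}$ refines $\mathcal Q_{k-1}|_X$. Two consequences follow:
\begin{itemize}
\item Each piece of $\cP_{k-1}$ contains at most one point of $\mathrm{im}(c_k)$, so the splitting data are well defined and $\emptyset$-definable.
\item The case $B\supsetneq\tau$ cannot occur, since distinct pieces of $\mathcal Q_{k-1}$ are disjoint. Your own invariant, as stated, would already exclude it; its appearance is the symptom of the failed base case.
\end{itemize}
After $n+1$ steps you reach the common refinement of $\cP$ and $\mathcal Q_n|_X=\cP'$, which equals $\cP'$ because $\cP'$ refines $\cP$.

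Your proposed handling of that case also cannot be repaired as described. Suppose $\tau=c'+\rv^{-1}(\xi)$, with $c'=0$ if $\tau$ is elementary, and $c=c_k(\tau)$. Splitting $B\supsetneq\tau$ at $c$ creates the piece $c+\rv^{-1}(-\xi)=\{x:|x-c'|<|\xi|\}$. This piece lies in $B$ and contains the point piece $\{c'\}$ of $\mathcal Q_k$ together with other pieces, so it is never a single piece of $\mathcal Q_k$. Moreover, $B$ may strictly contain several members of $I_0^{(k)}$, while a length-$1$ refinement splits $B$ at only one centre.
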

\begin{proof}
We still denote by $\cP'$ an extension of $\cP'$ to $\VF$ which is a refinement of finite length of the elementary cell partition, and let $(I_0^{(1)},c^{(1)}),\dots, (I_0^{(n)},c^{(n)})$ be a sequence of data making $\cP'$ a refinement of finite length of the elementary cell partition. Let $C^{(i)}$ be the image of $c^{(i)}, 1\leq n$, and also let $c^{(0)}=\{0\}$. We inductively define a sequence of cell partitions $\cP_i:X=\bigsqcup_{\sigma\in I_i}B^{(i)}_\sigma, 0\leq i\leq n+1$ such that:
\begin{itemize}
\item $\cP_0=\cP$;
\item for all $0\leq i\leq n$, each of the $B^{(i)}_\sigma,\sigma\in I_i$ contains at most one point of $C^{(i)}$. Then we define $I_0^{(i)}$ as the subset of those $\sigma$ such that moreover $r(\sigma)>0$, as well as $c'^{(i)}$ the map $I_0^{(i)}\to \VF$ which sends such a $\sigma$ to the only point of $C^{(i)}$ contained in $B^{(i)}\sigma$; 
\item for all $0\leq i\leq n$, the cell partition $\cP_{i+1}$ is the refinement of length $1$ of $\cP_i$ associated to $(I_0^{(i)},c'^{(i)})$.
\end{itemize}
Denote by $\widetilde\cP_i:\VF=\bigsqcup_{\sigma\in\widetilde I^{(i)}}\widetilde B_\sigma^{(i)}, 2\leq i\leq n+1,$ the cell partition of length $i$ defined using the data $(I_0^{(j)},c^{(j)})_{1\leq j<i}$ starting from $\widetilde\cP_1:\VF=\bigsqcup_{\xi\in\RV}\rv^{-1}(\xi)$. By induction, the partition $\cP_i$ refines the restriction of $\widetilde\cP_i$ to $X$ (which is only a partition, not necessarily a cellular one), for all $1\leq i\leq n+1$. In particular, $\cP_{n+1}$ refines $(\widetilde\cP_{n+1})_{|X}=\cP'$. Conversely, by induction, since $\cP'$ refines $\cP=\cP_0$, then $\cP'$ refines $\cP_i$ for every $0\leq i\leq n+1$. Finally $\cP_{n+1}=\cP'$ as we want.

The only thing left to prove is to justify the second statement. But this follows from the fact that $\cP_i$ refines $(\widetilde\cP_i)_{|X}$, which satisfies this assumption. 
\end{proof}

Now we introduce our new hypothesis:

\begin{dfn}
Say that $\cT'$ is \emph{refined} if, for every set of parameters $A$, every $A$-definable subset $X\subset\VF$, and every cell partitions $\cP_i,i=1,2$ of $X$, there is a cell partition $\cP$ of $X$ which is a refinement of finite length of each of the two partitions.
\end{dfn}
Probably less natural than the previous ones, this hypothesis is mostly an \emph{ad hoc} statement designed to easily yield control of the kernel. We will prove it in the examples using quantifier elimination relative to the theory $\cL$. The point of Lemma \ref{lem_raff_fini} is to reduce the proof of this hypothesis to that of the existence of a cell partition refining both given partition and of finite length (in an absolute sense), rather than of finite length relative to each of the two given partitions.

\subsection{Motivic integration without volume forms}
Here we will define the natural categories between which to do Hrushovski-Kazhdan motivic integration in $\cT'$, then as usual prove that lifting induces a surjection between Grothendieck semigroups, whose kernel is generated by a single element, thus proving a motivic integration theorem \ref{thm_int_sans}. For now we limit ourselves to the case without volume forms, which corresponds to a generalised Euler characteristic. Recall that we assume that $\cT'$ has $\RV$-partitions, but not that it is $\RV$-thin. From now on, we also assume that $\cT'$ is refined; we will use it only once, in Lemma \ref{lem_int1_ind_chi}.

\begin{dfn}\label{dfn_cat_VF}
For $d\in\bN$, let $\VF_{\cT'}[d]$ be the category whose objects are $\emptyset$-definable subsets $X\subset \VF^d\times Y\times\RV^m$, where $m\in\bN$ and $Y\subset\VF^l$ maps into $\RV$, such that the projection $X\to\VF^d\times Y$ has finite fibres and the projection $X\to Y$ is an $\RV$-partition. Morphisms are the $\emptyset$-definable maps.

We have inclusions $\VF_{\cT'}[d_1]\subset\VF_{\cT'}[d_2]$ for $d_1\leq d_2$; denote by $\VF_{\cT'}$ the union of all these categories.
\end{dfn}
The definition may look artificial or \emph{ad hoc}. We provide the reader with the following alternative definitions, less amenable but probably more natural-looking.

\begin{dfn}
For $d\in\bN$, let $\VF_{\cT'}^*[d]$ be the category whose objets are $\emptyset$-definable subsets $X\subset\VF^d\times Y\times\RV^m$, where $m\in\bN$ and $Y\subset\VF^l$ maps into $\RV$, such that the projection $X\to\VF^d\times Y$ has finite fibres. Morphisms are the $\emptyset$-definable maps.

Assume $\cT'$ to be $\RV$-thin. For $d\in\bN$, let $\VF_{\cT'}^\circ[d]$ be the category whose objects are $\emptyset$-definable subsets of $\VF^n\times\RV^m, n,m\in\bN$, of $\VF$-dimension at most $d$ and such that the projection to $\VF^n$ has finite fibres. Morphisms in this category are the $\emptyset$-definable maps.

Assume $\cT'$ to be $\RV$-thin and effective (in the sense of \cite{stout_integration_2025}, Definition 3.1.1). For  $d\in\bN$, let $\VF_{\cT'}^\dagger[d]$ be the category whose objects are $\emptyset$-definable subsets of $\VF^n$ with $\VF$-dimension at most $d$. Morphisms in this category are the $\emptyset$-definable maps.
\end{dfn}
\begin{lem}\label{lem_eq_cat}
For all $d\in\bN$, the inclusions $\VF_{\cT'}^\dagger[d]\subset\VF_{\cT'}^\circ[d]$ and $\VF_{\cT'}[d]\subset \VF_{\cT'}^*[d]\subset \VF_{\cT'}^\circ[d]$ are equivalences of categories.
\end{lem}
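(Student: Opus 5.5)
To show these inclusions are equivalences of categories, I would check that each is fully faithful and essentially surjective. Full faithfulness is immediate: in every category the morphisms are all $\emptyset$-definable maps between the objects. Hence everything reduces to essential surjectivity, i.e.\ showing that every object of the larger category is $\emptyset$-definably isomorphic to an object of the smaller one. First, though, I would check that the claimed inclusions are well-defined. In particular, an object $X\subset\VF^d\times Y\times\RV^m$ of $\VF^*_{\cT'}[d]$, viewed inside $\VF^{d+l}\times\RV^m$, has $\dim_\VF X\le d$. This follows from Theorem \ref{thm_dim}: $Y$ maps into $\RV$, so $\dim_\VF Y=0$, since its $\RV$-partition by the injection has singleton fibres. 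The fibres of $X\to Y$ have dimension at most $d$, and additivity (item \ref{item_thm_dim_4}, after partitioning by fibre dimension using item \ref{item_thm_dim_3} and \ref{item_thm_dim_2}) gives the bound.

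\textbf{$\VF^*_{\cT'}[d]\to\VF_{\cT'}[d]$.} Given $X\subset\VF^d\times Y\times\RV^m$, take an $\RV$-partition $\psi:X\to Y'$ refining the projection to $Y$ (partition the graph of the projection). I would then replace $X$ by the image of $X$ under $(x,y,\xi)\mapsto(x,\psi(x,y,\xi),\xi)$, i.e.\ by the graph-style set
\[X'=\{(x,y',\xi):(x,y,\xi)\in X,\ y'=\psi(x,y,\xi)\}\subset\VF^d\times Y'\times\RV^m ,\]
where $y$ is recovered from $y'$. This map is an $\emptyset$-definable bijection. Its projection to $Y'$ is an $\RV$-partition, because its fibres are $\cL(y')$-definable. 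The projection of $X'$ to $\VF^d\times Y'$ still has finite fibres. So $X'$ lies in $\VF_{\cT'}[d]$.

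\textbf{$\VF^\circ_{\cT'}[d]\to\VF_{\cT'}[d]$.} Given $X\subset\VF^n\times\RV^m$ with finite fibres over $\VF^n$ and $\dim_\VF X\le d$, take an $\RV$-partition $\chi:X\to Y$. On each fibre $X_y$, which is $\cL(y)$-definable of $\cT$-dimension at most $d$, the 1-h-minimal theory provides a finite partition into pieces on which some coordinate projection $\VF^n\to\VF^d$ has finite fibres (\cite{cluckers_hensel_2022}, Prop.\ 5.3.4, cell decomposition). By compactness this is uniform in $y$, indexed by a finite definable label that can be absorbed into $Y$.

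On a piece, consider the map $X\to\VF^d\times Y$, $z\mapsto(\pi(z),\chi(z))$. It has finite fibres, and Lemma \ref{lem_ens_fini_int_RV} in families turns each finite fibre into $\RV$-data. This yields an injection into $\VF^d\times Y\times\RV^{m'}$ whose image has finite fibres over $\VF^d\times Y$. The pieces are glued by a disjoint union coded with an extra $\RV$-coordinate. Finally, the previous step is applied to reach $\VF_{\cT'}[d]$.

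\textbf{$\VF^\dagger\subset\VF^\circ$.} Here, given $X\subset\VF^n\times\RV^m$ with finite fibres, I would use effectivity (\cite{stout_integration_2025}, Def.\ 3.1.1) to definably embed the finite $\RV$-fibres into $\VF$. Concretely, I would choose definable points in the relevant $\RV$-balls: effectivity gives $\emptyset$-definable representatives for finite sets of $\RV$-tuples over $\VF$-parameters. This yields an injection $X\to\VF^{n+m}$ over the $\VF^n$ base. Its image has the same dimension, since the map has finite fibres over $\VF^n$. A point to check is that effectivity, stated for $\cT$, transfers to $\cT'$. I would handle this via $\RV$-partitions: work fibrewise in $\cL(y)$ and glue by compactness.

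\textbf{Main obstacle.} The main obstacle is the uniformity and compactness gluing in the $\VF^\circ$ step. There we need the coordinate projection and the finite-fibre coding to be chosen $\emptyset$-definably across all $y\in Y$, even though $Y$ may be infinite, using only $\cL(y)$-definability of the fibres.
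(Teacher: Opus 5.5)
Your proposal is correct and follows essentially the paper's route. Full faithfulness is immediate. The $\VF^\dagger$ case lifts the finite $\RV$-fibres fibrewise by effectivity and glues by compactness. $\VF\subset\VF^*$ replaces $X$ by its image under an $\RV$-partition refining the projection to $Y$: the paper takes the graph of an arbitrary $\RV$-partition $\chi$, i.e.\ it uses $(\pi_Y,\chi)$, which is the correct way to get such a refinement, since an $\RV$-partition of the graph of the projection need not refine it. The $\VF^\circ$ case reduces to the $\cL(y)$-definable fibres of an $\RV$-partition.

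The differences are minor. You unpack Stout--Vermeulen's Section 5.1 (finite-to-one coordinate projections, then a global coding of the finite fibres into $\RV$ via Lemma \ref{lem_ens_fini_int_RV}) instead of citing it fibrewise. Your worry about transferring effectivity from $\cT$ to $\cT'$ is moot, because $\VF^\dagger_{\cT'}[d]$ is only defined when $\cT'$ itself is effective.
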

\begin{proof}
Full faithfullness is clear, so we only have to prove essential surjectivity. For the first inclusion, let $X\subset\VF^n\times\RV^m$ be an object in $\VF_{\cT'}[d]$. For $a\in\VF^n$, the theory $\cT'(a)$ is effective and the fibre $X_a\subset\RV^m$ is a finite $a$-definable set. By lifting property for finite sets from $\RV$ to $\VF$ in effective theories (\cite{stout_integration_2025}, Lemma 3.1.2), there is $X'_a\subset\VF^{N_a}$ in $a$-definable bijection with $X_a$. By compactness, we may glue them into an $\emptyset$-definable set $X'\subset\VF^{n+N}$ which is in $\emptyset$-definable bijection with $X$; in particular $\dim_\VF X'\leq d$, hence $X'\in\Ob\VF_{\cT'}^\dagger[d]$, which proves essential surjectivity for the first inclusion.

Let $X\subset\VF^d\times Y\times\RV^m$ be an object of $\VF_{\cT'}^*[d]$. Let $\chi:X\to Y'\subset\VF^{l'}$ be an $\RV$-partition of $X$. Then there is an $\emptyset$-definable bijection between $X$ and the graph of $\chi$, which is $\Gamma_\chi\subset \VF^d\times (Y\times Y')\times\RV^m$. This is indeed an object from $\VF_{\cT'}[d]$.

For the last equivalence, let us again take an $\RV$-partition, and reduce to the analogous fact for 1-h-minimal theories. Let $X\subset\VF^n\times\RV^m$ be an object of $\VF_{\cT'}[d]$. Let $\chi:X\to Y$ be an $\RV$-partition. For all $y\in Y$, the fibre $\chi^{-1}(y)$ is a subset of $\VF^n$ which is $\cL(y)$-definable and has dimension at most $d$; by \cite{stout_integration_2025}, Section 5.1, there is an $\cL(y)$-definable bijection $f_y:\chi^{-1}(y)\to X'_y\subset\VF^d\times\RV^k$, where the projection $X'_y\to \VF^d$ has finite fibres. We glue these by compactness to get an $\emptyset$-definable bijection $f:X\to X'\subset\VF^d\times Y\times\RV k$ such that $X'\to \VF^d$ has finite fibres, so in particular $X'\in\Ob\VF_{\cT'}^*[d]$.
\end{proof}

\begin{rmk}
Unlike what happens in the classical case, here the category $\VF_{\cT'}[d]$ (or $\VF_{\cT'}^*[d], \VF_{\cT'}^\circ[d]$) is not stable under definable bijections: for instance, if $Y\to R$ is an $\emptyset$-definable bijection between two infinite sets $Y\subset\VF^l$ and $R\subset\RV^m$, only $Y$ is an object of $\VF_{\cT'}[d]$ for all $d\in\bN$.
\end{rmk}

Taking inspiration from Definition \ref{dfn_cat_VF}, we may introduce our categories on $\RV$ and the lifting map $\fL$.

\begin{dfn}
For $d\in\bN$, let $\RV_{\cT'}[d]$ be the category whose objects are $\emptyset$-definable subsets $R\subset (\RV^\times)^d\times Y\times\RV^m,$ where $m\in\bN$ and $Y\subset\VF^l$ maps into $\RV$, such that the projection $R\to(\RV^\times)^d\times Y$ has finite fibres and the projection $X\to Y$ is an $\RV$-partition. Morphisms in this category are the $\emptyset$-definable maps.

Denote by $\RV_{\cT'}[\leq d]$ the direct sum of the $\RV_{\cT'}[d'],d'\leq d$, and by $\RV[*]$ the union of the $\RV_{\cT'}[\leq d]$.

We define a map $\fL:\Ob\RV_{\cT'}[d]\to\Ob\VF_{\cT'}[d]$ by setting, for $X\subset(\RV^\times)^d\times Y\times\RV^m$:
\[\fL X = \{(x,y,u)\in(\VF^\times)^d\times Y\times \RV^m:(\rv(x),y,u)\in X\}\,.\]
We again denote by $\fL$ the maps $\Ob\RV_{\cT'}[\leq d]\to\VF_{\cT'}[d]$ and $\Ob\RV_{\cT'}[*]\to\VF_{\cT'}[*]$ induced by the above definition.
\end{dfn}

\begin{rmk}
\begin{enumerate}
\item We may define, for $d\in\bN$, the category $\RV_{\cT'}^*[d]$ whose objects are $\emptyset$-definable subsets $R\subset(\RV^\times)^d\times Y\times\RV^m$, where $m\in\bN$ and $Y\subset VF^l$ maps into $\RV$, such that the projection $R\to(\RV^\times)^d\times Y$ has finite fibres.

Then again, as in Lemma \ref{lem_eq_cat}, the inclusion $\RV_{\cT'}[d]\subset\RV_{\cT'}^*[d]$ is an equivalence of categories.
\item We may see $\RV_{\cT'}[\leq d]$ as a category whose objects are $\emptyset$-definable subsets $R\subset\RV^d\times Y\times\RV^m$ such that $Y$ maps into $\RV$, $R\to Y$ is an $\RV$-partition and $R\to \RV^d\times Y$ has finite fibres. Morphisms are not anymore all the $\emptyset$-definable bijections, we have to add an additional "compatibility to the zeroes" condition, \emph{cf.} \cite{stout_integration_2025}, Remark 5.1.3.
\end{enumerate}
\end{rmk}

Recall that, to get a Hrushovski-Kazhdan motivic integral, we need three main results:
\begin{enumerate}
\item show that each object on $\VF$ is isomorphic to the lift of an object on $\RV$, which gives surjectivity;
\item show that, if the lifts of two objects on $\RV$ are in definable bijection, then these two objects on $\RV$ are congruent, which gives control of the kernel;
\item lifting definable bijections from $\RV$ to $\VF$, which gives good definition.
\end{enumerate}
As in \cite{stout_integration_2025}, we will prove the first two results without needing any further hypothesis on the theory. Note that, whereas the first one will be (as usual) rather straightforward, the second is not a direct consequence of their results via $\RV$-partitions. As in Yin \cite{yin_integration_2013}, it will be necessary to go through the main steps of their proof. As for the last result, we will make it a hypothesis which we will prove in a number of cases. That being said, as in \cite{stout_integration_2025}, we will be able to define the motivic integral without needing this hypothesis; this morphism will just not have to be injective.

We now prove the first result as announced. For the sake of future proofs, we rather show the following reinforced version:
\begin{lem}\label{lem_fL_surj}
Let $X\subset\VF^d\times Y\times\RV^m$ be an of $\VF_{\cT'}[d]$, and let $(\psi:X\to Y',\chi:\VF^d\times Y'\to \RV^N)$ be a cell decomposition adapted to $X$ (in the sense of Remark \ref{rmk_dec_cell}). Then there is an object $R$ of $\RV_{\cT'}[\leq d]$ and an $\emptyset$-definable bijection $g:X\to\fL R$ which descends to a bijection
\[\{(\chi(x,y'),y',\xi):(x,y,\xi)\in X, y'=\psi(x,y,\xi)\}\to R\,.\]
\end{lem}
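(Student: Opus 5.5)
The plan is to reduce to the 1-h-minimal statement fibrewise over $Y'$ and then glue by compactness, exactly as in the proof of Theorem \ref{thm_dec_cell}.

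\emph{Fibrewise reduction.} Fix $y'\in Y'$ and let $y$ be its image in $Y$. By Remark \ref{rmk_dec_cell}, the map $\chi_{y'}:\VF^d\to\RV^N$ is an $\cL(y')$-definable cell decomposition. Moreover, for every $\xi\in\RV^m$, the set $X_{y',\xi}=\{x:(x,y,\xi)\in X,\ \psi(x,y,\xi)=y'\}$ is a union of twisted boxes $\rtimes(c(r),s(r))$ of $\chi_{y'}$, where the center $c$ and the tuple $s$ depend $\cL(y')$-definably on $r\in\RV^N$.

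\emph{Straightening a single box.} On each box, consider the map
\[x\mapsto \bigl(x_i-c_i(r)(x_{<i})\bigr)_{1\le i\le d}.\]
It is an $\cL(y'r)$-definable bijection from $\rtimes(c(r),s(r))$ onto $\rv^{-1}(s(r))$, and it depends uniformly on $r$. The coordinates with $s_i(r)=0$ are constant in $\VF$. I drop them and record their positions, and this is the reason $R$ lies in $\RV_{\cT'}[\leq d]$ rather than in $\RV_{\cT'}[d]$. For the remaining coordinates, $\rv^{-1}(s_i(r))\subset\VF^\times$ is the lift of $s_i(r)\in\RV^\times$.

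\emph{Definition of $R$ and $g$.} I define
\[R=\{(s'(r),\, y',\, (r,\xi)) : r=\chi(x,y'),\ (x,y,\xi)\in X,\ y'=\psi(x,y,\xi)\},\]
where $s'(r)$ denotes the nonzero part of $s(r)$. The auxiliary $\RV$-coordinates $(r,\xi)$ are kept, after partitioning $X$ according to the zero pattern of $s$. The map $g$ sends $(x,y,\xi)$ to the straightened point, together with $y'$ and $(r,\xi)$. Because every ingredient ($\chi$, $c$, $s$, $\psi$) is $\emptyset$-definable over $Y'$, the fibrewise maps glue into an $\emptyset$-definable bijection $g:X\to\fL R$. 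The construction recovers $r=\chi(x,y')$ from $g(x,y,\xi)$, and the box is exactly the fibre of the lift, so $g$ descends to the required bijection from the set of triples $(\chi(x,y'),y',\xi)$ to $R$. Injectivity of $g$ holds because $x$ is recovered from $(r,\text{straightened point})$ by adding back the centers, and $y$ is recovered from $y'$.

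\emph{Checking that $R$ is an object.} The parameter set $Y'$ maps into $\RV$. The projection $R\to Y'$ is an $\RV$-partition, since its fibres are $\cL(y')$-definable. The projection to $(\RV^\times)^{d'}\times Y'$ must have finite fibres. This is the main point requiring care, because the auxiliary coordinate $r\in\RV^N$ is not bounded. I would handle it by noting that $X\to\VF^d\times Y$ has finite fibres and that $g$ preserves fibres over $Y'$. Alternatively, I would reindex so that only $\xi$ and the finitely many relevant $r$ appear. If either of these fails, I would replace $Y'$ by $Y'\times\{\text{datum } r\}$: the set of $r$'s maps into $\RV$, so this enlargement is harmless. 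Finally, the disjoint union over zero patterns assembles into an object of $\RV_{\cT'}[\leq d]$.
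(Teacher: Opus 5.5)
\paragraph{Gap: finiteness of the fibres of $R$.}
Your architecture matches the spirit of the paper: work over each $y'\in Y'$, straighten each twisted box by subtracting its centers, then glue. The paper applies Stout--Vermeulen's Lemma 5.3.1 to the fibre $X_{y'}$, viewed in the 1-h-minimal theory $\cT(y')$, and glues by compactness. The step you flag yourself is not established: the fibres of $R\to(\RV^\times)^{d'}\times Y'$ must be finite.

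The $\xi$-coordinates cause no trouble. For a given cell, the set of attached $\xi$ is finite by the finiteness of $X\to\VF^d\times Y$. What is needed is that, for fixed $y'$, only finitely many cells $r$ of $\chi_{y'}$ have a given shape $s'(r)$. None of your three fixes gives this:
\begin{enumerate}
\item The finiteness of $X\to\VF^d\times Y$ says nothing about the number of cells of a given shape.
\item ``Reindexing to the finitely many relevant $r$'' presupposes the claim.
\item You cannot absorb $r$ into the $Y'$-component. That component must be a subset of some $\VF^l$ mapping into $\RV$, and in general there is no definable way to lift $r\in\RV^N$ to $\VF$. This would require something like a section of $\rv$, which is not assumed here.
\end{enumerate}

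\paragraph{The missing idea.}
This is exactly the place where the $\cL(y')$-definability of $\chi_{y'}$ and of its centers must be used. Your construction uses it only to check that $R\to Y'$ is an $\RV$-partition. In the 1-h-minimal theory $\cT(y')$, a definable map from a subset of $\RV^N$ to $\VF$ has finite image, even with arbitrary additional $\VF$-parameters. Suppose infinitely many cells had shape $s$. Then:
\begin{enumerate}
\item keep infinitely many with the same $c_1(r)$, and choose $a_1\in c_1+\rv^{-1}(s_1)$;
\item keep infinitely many with the same $c_2(r)(a_1)$, and choose $a_2$;
\item continue through all $d$ coordinates.
\end{enumerate}
The resulting point $a$ lies in infinitely many pairwise disjoint cells, a contradiction.

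This is not a technicality: for an $\cL'$-definable decomposition the finiteness genuinely fails. With a section $\sn$ of $\rv$, the map $\chi(x)=(\rv(x),\rv(x-\sn(\rv(x))))$ on $\VF^\times$ has fibres $\sn(r_1)+\rv^{-1}(r_2)$. Each shape $r_2\neq 0$ is then realised by infinitely many cells, one for each $r_1$ with $|r_2|<|r_1|$, so your $R$ would not be an object. Once you supply this finiteness argument, or invoke Stout--Vermeulen's Lemma 5.3.1 fibrewise (which contains it), the rest of your construction goes through.
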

\begin{proof}
Let $y'\in Y'$. Since $X\to Y'$ refines $X\to Y$ by assumption, the fibre $X_{y'}$ is an object of $\VF_{\cT(y')}[d]$. As $\chi_{y'}$ is $\cL(y')$-definable, by \cite{stout_integration_2025}, Lemma 5.3.1, there is an object $R_{y'}$ of $\RV_{\cT(y')}[\leq d]$ and an $\cL(y')$-definable bijection $g_{y'}:X_{y'}\to\fL R_{y'}$ which descends to a bijection
\[\{(\chi_{y'}(x),\xi):(x,\xi)\in X_{y'}\}\to R_{y'}\,.\]
We conclude by glueing by compactness.
\end{proof}

We also introduce the following definition:
\begin{dfn}
Say that $\cT'$ has the bijection lifting property if, for every model $M\models\cT'$, every set of parameters $A\subset\VF(M)$ and every $A$-definable bijection $h:R\to S$ between two objects of $\RV_{\cT'(A)}[*]$, there is an $A$-definable bijection $f:\fL R\to \fL S$ lifting $h$.
\end{dfn}
\begin{rmk}
The bijection lifting properties is obviously not preserved in general upon adding parameters from $\RV$, but it is preserved by adding parameters in $\VF$.
\end{rmk}

\subsubsection{Integration in dimension \texorpdfstring{$1$}{1}}
We proceed analogously to Stout and Vermeulen \cite{stout_integration_2025}, by constructing the converse.

\begin{dfn}
Let $\Isp^{\cT'}$ be the semiring congruence on $\Kp\RV_{\cT'}[*]$ generated by the relation $([\RV^\times_{<1}]_1+[1]_0,[1]_1)$.

We denote by $\Isp^{\cT'}[n]$ the induced semigroup congruence by $\Isp^{\cT'}$ on $\Kp\RV_{\cT'}[\leq n]$, or $\Isp^{\cT'}$ when it is clear from the context.

For $A\subset \VF\cup\RV$, let $I_{\sp,A}^{\cT'}$ be the same relation, but relative to the language $\cL'(A)$.
\end{dfn}

Certainly $\Isp^{\cT'}$ is in the kernel of $\fL$ as soon as this makes sense. We show that there is nothing else. To do so, we closely follow \cite{stout_integration_2025}, reusing their results as much as possible. We were unable to deduce directly the final results from theirs, but nonetheless we are able to avoid quite a number of lemmas, such as going through relatively unary maps (Lemma 5.5.3) or Fubini (Lemma 5.5.14).

\begin{lem}\label{lem_Isp_ponctuel}
Let $R,S\in\Ob\RV_{\cT'}[\leq n]$, and let $f:R\to \RV^N,g:S\to \RV^N$. Assume that, for all $t\in \RV^N$, we have $([f^{-1}(t)],[g^{-1}(t)])\in I_{\sp,t}^{\cT'}$. Then $([R],[S])\in\Isp^{\cT'}$.
\end{lem}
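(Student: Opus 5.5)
The strategy is the standard compactness argument for "pointwise" congruences. First I would unwind what membership in a semiring congruence generated by one relation means concretely. Two classes $[R],[S]$ in $\Kp\RV_{\cT'}[\leq n]$ satisfy $([R],[S])\in\Isp^{\cT'}$ if and only if there are objects $U,U'$ and a finite chain of elementary steps. Each step replaces a summand of the form $[W]\cdot([\RV^\times_{<1}]_1+[1]_0)$ by $[W]\cdot[1]_1$, or the converse. Each step is witnessed by $\emptyset$-definable bijections between the intermediate objects, which are decomposed as disjoint unions. Equivalently, $[R]$ and $[S]$ become equal after adding finitely many such elementary pairs on each side. Concretely, the relation is witnessed by a finite list of objects $W_1,\dots,W_k$, $W'_1,\dots,W'_{k'}$ and an $\emptyset$-definable bijection
\[R\sqcup\textstyle\bigsqcup_i W_i\times\RV^\times_{<1}\sqcup\bigsqcup_{i'}W'_{i'}\times\{1\}\;\to\; S\sqcup\bigsqcup_i W_i\times\{1\}\sqcup\bigsqcup_{i'} W'_{i'}\times\RV^\times_{<1}\times\{\ast\}\]
in the appropriate graded categories, with the grading shifts handled as in the definition. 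The same description holds over parameters $t$, with $\cL'(t)$-definable data.

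Next, for each $t\in\RV^N$ (in a sufficiently saturated model), the hypothesis gives such witnesses for $f^{-1}(t)$ and $g^{-1}(t)$. These are finitely many $\cL'(t)$-definable objects $W_{i,t},W'_{i',t}$ and an $\cL'(t)$-definable bijection $h_t$, all given by formulas $\varphi(\cdot,t)$ with parameter $t$. The condition "these formulas define objects of $\RV_{\cT'(t)}[\leq n]$ and $h_t$ is a bijection between the corresponding disjoint unions lying over $f^{-1}(t)$ and $g^{-1}(t)$" is first-order in $t$. This uses that the projection to $Y$ is an $\RV$-partition, that $Y$ maps into $\RV$, and that the projection to $(\RV^\times)^d\times Y$ has finite fibres.

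The finiteness of fibres is the delicate point. It is a condition bounded by compactness and saturation, as in Lemma \ref{lem_ens_fini_int_RV}, so it can be captured by a formula with a bound. I would make this explicit by fixing a uniform bound in each formula.

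Then by compactness I would find finitely many formula-tuples covering all $t$. I would partition $\RV^N$ into $\emptyset$-definable pieces $T_1,\dots,T_p$, for instance "the first formula-tuple working" and so on. On each piece I would glue the fibrewise data into $\emptyset$-definable objects
\[W_j=\{(w,t):t\in T_j, w\in W_{j,t}\}\]
and a global bijection $h=\bigsqcup_t h_t$. Here $t$ is added as extra $\RV$-coordinates. This is harmless because $\RV^m$ coordinates are allowed freely, and the projection to $Y$ remains an $\RV$-partition: the fibre over $y$ is now $\cL(y)$-definable only after including $t$. To fix that, I would use $Y\times\{t\}$ via the obvious injection into $\RV$, or refine the $\RV$-partition using the existence of $\RV$-partitions as in Lemma \ref{lem_eq_cat}.

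The glued data give exactly a witness that $[R]=\sum_j[R\cap f^{-1}(T_j)]$ and $[S]$ are $\Isp^{\cT'}$-congruent. Here I would use that $W\times\RV^\times_{<1}$ with $W$ varying in a definable family over $t$ glues to $(\bigsqcup_t W_t)\times\RV^\times_{<1}$.

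The main obstacle I expect is ensuring that the glued objects are genuine objects of $\RV_{\cT'}[\leq n]$. This means checking uniformly that the projection to the $Y$-part is an $\RV$-partition after absorbing the parameter $t$. The existence of $\RV$-partitions for $\cT'(t)$ versus $\cT'$ needs care, since fibres must be $\cL(y)$-definable rather than $\cL(yt)$-definable. My plan for this is to enlarge $Y$ by a $\VF$-coordinate-free copy of $t$: replace $Y$ by the graph of $y\mapsto(y,t)$, noting that $t\in\RV$ does not break "maps into $\RV$". Then I would invoke Remark \ref{rmk_dec_cell} and the equivalence $\RV_{\cT'}[d]\simeq\RV^*_{\cT'}[d]$ to return to the required form.
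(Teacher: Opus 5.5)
\paragraph{The gap: gluing with $t$ as an extra coordinate.}
Your gluing step does not produce objects of $\RV_{\cT'}[\leq n]$. In these categories the $\RV^m$-coordinates are \emph{not} free. An object $W\subset(\RV^\times)^d\times Y\times\RV^m$ must have finite fibres over $(\RV^\times)^d\times Y$, and $Y$ must lie in $\VF^l$.

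Suppose you take arbitrary $t$-definable witnesses $W_{i,t}$ and glue them as $\{(w,t): w\in W_{i,t}\}$, with $t$ as new $\RV$-coordinates. Then the fibre over a point $(r,y)$ contains an element for every $t$ such that $(r,y)$ lies in the projection of $W_{i,t}$. This is typically infinitely many $t$. For instance, if for each $t$ the witness adds the elementary pair with $W_t$ a point in degree $0$, the glued set is $T\subset\RV^N$ infinite sitting in degree $0$. That is not an object of $\RV_{\cT'}[0]$, so "$[T]_0\cdot([\RV^\times_{<1}]_1+[1]_0)$" is not even an element of $\Kp\RV_{\cT'}[*]$.

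Your proposed repair does not touch this:
\begin{itemize}
\item You cannot move $t$ into $Y$, since $Y\subset\VF^l$ and there is no section of $\rv$ in general. Allowing $\RV$-coordinates in the base is what the coarse categories do; it takes you out of $\RV_{\cT'}[*]$.
\item The obstacle you single out, $\cL(y)$- versus $\cL(yt)$-definability of fibres, is harmless. The inclusion $\RV_{\cT'}[d]\subset\RV^*_{\cT'}[d]$ is an equivalence, and $\RV^*_{\cT'}[d]$ imposes no $\RV$-partition condition.
\item The uniform bound obtained by compactness only controls finiteness for each fixed $t$, not across different $t$.
\end{itemize}
The finite-fibre condition is the real obstruction.

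\paragraph{The missing idea: anchor the witnesses in $R$.}
This is how the paper proceeds. Arrange that $t$ is a definable function of the point rather than a stored coordinate.
\begin{enumerate}
\item Write the fibrewise witness as a chain $T^{(0)}_{i,j,t},\dots,T^{(k)}_{i,j,t}$, with $[R_t]=\sum_{i,j}[T^{(0)}_{i,j,t}]([\RV^\times_{<1}]_1+[1]_0)^i[1]_1^j$, and so on along the chain.
\item The bijection realising this first equality identifies $T^{(0)}_{i,j,t}\times\{1\}_0^i\times\{1\}_1^j$ with a $t$-definable subset of $R_t=f^{-1}(t)$.
\item The $R_t$ partition $R$. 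So after compactness the glued $T^{(0)}_{i,j}$ is an $\emptyset$-definable subset of $R$. It inherits finite fibres and is an object, via $\RV^*_{\cT'}\simeq\RV_{\cT'}$, and $t$ is recovered as $f$ of the point.
\item Inductively, each $T^{(l)}_{i,j,t}$ is realised inside the already glued $T^{(l-1)}$-objects, times copies of $\RV^\times_{<1}$ and points. So it too glues to a genuine object.
\item Only then glue the bijections.
\end{enumerate}

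\paragraph{Keep the chain, not the stabilised form.}
You assert that congruence membership is equivalent to $[R]+c\,a+c'\,b=[S]+c\,b+c'\,a$, where $(a,b)$ is the generating pair. That form only yields $([R]+e,[S]+e)\in\Isp^{\cT'}$ with $e=c\,b+c'\,a$. Without cancellation in $\Kp\RV_{\cT'}[*]$, which you do not have, this does not give $([R],[S])\in\Isp^{\cT'}$. So glue the chain of elementary steps, not a single stabilised bijection.
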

\begin{proof}
We follow the exact same proof as \cite{stout_integration_2025}, Lemma 5.4.2, with a correction indicated to us by the authors, who we thank for that. For $t\in\RV^N$, we will write $R_\xi=f^{-1}(t)\subset R, S_t = g^{-1}(t)\subset S$. By definition of the congruence generated by a relation, there are for each $t\in\RV^N$ an integer $k\in\bN$ and objects $T^{(l)}_{i,j,t}\in\Ob\RV_{\cT'}[\leq n], i,j\leq n, l\leq k$, such that, in the semiring $\Kp\RV_{\cT'}[*]$, we have the equalities
\[\begin{cases}
[R_t] = \sum_{i,j\leq n} [T^{(0)}_{i,j,t}]([\RV^\times_1]_1+[1]_0)^i([1]_1)^j\,, \\
[S_t] = \sum_{i,j\leq n} [T^{(k)}_{i,j,t}]([\RV^\times_1]_1+[1]_0)^j([1]_1)^i\,, \\
\sum_{i,j\leq n} [T^{(l)}_{i,j,t}] ([\RV^\times_1]_1+[1]_0)^i([1]_1)^j = \sum_{i,j\leq n} [T^{(l-1)}_{i,j,t}] ([\RV^\times_1]_1+[1]_0)^j([1]_1)^i\,, 1\leq l\leq k\,.
\end{cases}\] 
The sets $T^{(l)}_{i,j,t}$ are $t$-definable; by compactness, we may glue them together into sets $T^{(l)}_{i,j}$, and assume that $k$ does not depend on $t$. Moreover, the first equality allows us to see $T^{(0)}_{i,j,t}\times\{1\}_0^i\times\{1\}_1^j$ as a $t$-definable subset of $R_t$, hence after glueing we may see $T^{(0)}_{i,j}$ as an $\emptyset$-definable subset of $R$, and in particular as an object of $\RV_{\cT'}[\leq n]$. Following the same reasoning, by induction each $T^{(l)}_{i,j}$ is an object of $\RV_{\cT'}[\leq n]$.

By compactness, we can glue the bijections which were giving the above equalities for all $t\in\RV^N$, and deduce
\[\begin{cases}
[R] = \sum_{i,j\leq n} [T_{i,j}]([\RV^\times_1]_1+[1]_0)^i([1]_1)^j\,, \\
[S] = \sum_{i,j\leq n} [T_{i,j}]([\RV^\times_1]_1+[1]_0)^j([1]_1)^i\,, \\
\sum_{i,j\leq n} [T^{(l)}_{i,j}] ([\RV^\times_1]_1+[1]_0)^i([1]_1)^j = \sum_{i,j\leq n} [T^{(l-1)}_{i,j}] ([\RV^\times_1]_1+[1]_0)^j([1]_1)^i\,, 1\leq l\leq k\,.
\end{cases}\]
So $([R],[S])\in\Isp^{\cT'}$.
\end{proof}

\begin{dfn}
Let $X\subset\VF\times Y\times\RV^m$ be an object of $\VF_{\cT'}[1]$, and let $(\psi:X\to Y',\chi:\VF\times Y'\to \RV^N)$ be a cell decomposition adapted to $X$ (in the sense of Remark \ref{rmk_dec_cell}). Then we let
\[I_{\psi,\chi} X = \{(\chi(x,y'),y',\xi):(x,y,\xi)\in X, y'=\psi(x,y,\xi)\}\,,\]
which according to Lemma \ref{lem_fL_surj} is isomorphic to an object of $\RV_{\cT'}[\leq 1]$, thus induces a well-defined isomorphism class inside $\RV_{\cT'}[\leq 1]$.
\end{dfn}

Actually this does not depend on the choice of $(\psi,\chi)$. This crucial fact explains simultaneously the quotient by $\Isp^{\cT'}$ and the hypothesis that $\cT'$ be refined. It is the only place in the article where we will use this hypothesis (together with the analogous fact in the case with volume forms).

\begin{lem}\label{lem_int1_ind_chi}
Let $X\in\VF_{\cT'}[1]$, $X\subset\VF\times Y\times\RV^m$, and let $(\psi_i:X\to Y_i,\chi_i),i=1,2$ be two cell decompositions adapted to $X$. Then $[I_{\psi_1,\chi_1}X]=[I_{\psi_2,\chi_2}X]$ in $\Kp\RV_{\cT'}[\leq 1]/\Isp^{\cT'}$.
\end{lem}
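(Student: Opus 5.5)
First I reduce to the situation where the two cell decompositions are comparable in the sense of the refined hypothesis, and then I check that a refinement of length $1$ changes the class of $I_{\psi,\chi}X$ only by the relation generating $\Isp^{\cT'}$.

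\emph{Step 1: reduction to a single fibre.} By the last assertion of Theorem \ref{thm_dec_cell}, there is a cell decomposition $(\psi,\chi)$ adapted to $X$ which refines both $(\psi_i,\chi_i)$. It therefore suffices to compare each $(\psi_i,\chi_i)$ with $(\psi,\chi)$; so I may assume that $(\psi_2,\chi_2)$ refines $(\psi_1,\chi_1)$, with $\psi_1=\pi\circ\psi_2$. I then use Lemma \ref{lem_Isp_ponctuel} to work fibrewise. Fix $(y_1,\xi)$ with $y_1\in Y_1$ and $\xi\in\RV^m$ (transported to $\RV$ through the injection of $Y_1$). Over this parameter, $X_{y_1,\xi}\subset\VF$ is a $(y_1\xi)$-definable set carrying two cell partitions $\cP_1$ and $\cP_2$, with $\cP_2$ refining $\cP_1$. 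By Lemma \ref{lem_Isp_ponctuel}, equality modulo $I_{\sp,(y_1,\xi)}^{\cT'}$ of the corresponding index sets then gives equality modulo $\Isp^{\cT'}$ globally. One point needs care here: the functions $f,g$ to $\RV^N$ required by Lemma \ref{lem_Isp_ponctuel} must be read off from the coordinates $y_1,\xi$ of $I_{\psi_i,\chi_i}X$, where for $i=2$ the coordinate $y_1$ is obtained via $\pi$.

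\emph{Step 2: use of refinedness.} Working over $A=\dcl(y_1\xi)$, the theory is refined, so there is a cell partition $\cP$ of $X_{y_1,\xi}$ which is a refinement of finite length of both $\cP_1$ and $\cP_2$. It is therefore enough to show that a refinement of length $1$ preserves the class of the index set modulo $\Isp$; induction on the length then finishes the argument. So let $\cP'$ be the refinement of $\cP:X=\bigsqcup_{\sigma\in I}B_\sigma$ associated to data $(I_0,c)$. The index set of $\cP'$ is $I_1\sqcup\{(\sigma,\zeta):\sigma\in I_0,\ |\zeta|<r(\sigma)\}$. Inside the object of $\RV[\leq 1]$, each $\sigma\in I_0$ contributes one $\RV^\times$-coordinate $s_\sigma$ with $B_\sigma=\rv^{-1}(s_\sigma)$ translated. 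After the refinement, this contributes instead the points $\zeta$ with $|\zeta|<|s_\sigma|$, namely $\zeta\neq 0$ in degree $1$ and $\zeta=0$ in degree $0$. A definable rescaling $\zeta\mapsto\zeta/s_\sigma$ (using $\RV^\times$ multiplication) turns this piece into $[I_0]\cdot([\RV^\times_{<1}]_1+[1]_0)$, while the old piece is $[I_0]\cdot[1]_1$ after the analogous normalisation $s\mapsto s/s_\sigma$. These two classes are congruent modulo $\Isp^{\cT'}$ by definition. The $I_1$ part is unchanged.

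\emph{Main obstacle.} I expect the main difficulty to be bookkeeping rather than any single estimate. The index sets must be matched with actual objects of $\RV_{\cT'}[\leq 1]$ in the sense of Lemma \ref{lem_fL_surj}, which requires tracking which coordinate is the $\RV^\times$ ``degree one'' coordinate and checking that the bijections realising the rescaling are definable over $A$. This matters because $c$ and $r$ are only $\cL'(A)$-definable, so the classes live in $\Kp\RV_{\cT'(A)}[\leq 1]$, and Lemma \ref{lem_Isp_ponctuel} is exactly what transfers them back to $\emptyset$. I would first check that the refined partitions $\cP$ provided by the refined hypothesis can be realised by cell decompositions in the sense of Remark \ref{rmk_dec_cell}, which holds by the remark following Definition \ref{def_theorie_raffinee}. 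I would then carry out the length-$1$ computation above.
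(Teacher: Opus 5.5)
Your architecture matches the paper's: a fibrewise reduction to $X\subset\VF$ via Lemma \ref{lem_Isp_ponctuel}, then refinedness to get a common refinement of finite length, then induction on refinements of length $1$ using the generator of $\Isp^{\cT'}$. Your preliminary common refinement via Theorem \ref{thm_dec_cell} is harmless but unnecessary, since both $\psi_i$ already refine $X\to Y$ and you can fibre directly over $Y\times\RV^m$, as the paper does.

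However, the length-$1$ computation has a real gap. Keep $I_0\subset\RV^N\times Y'$ in its original coordinates. Then the sets you obtain after rescaling, written $[I_0]\cdot[1]_1$ and $[I_0]\cdot([\RV^\times_{<1}]_1+[1]_0)$, are not objects of $\RV_{\cT'}[\leq 1]$. For fixed $y'$, a cell decomposition typically has infinitely many balls of positive radius (take $\chi=\rv$ on $\VF$). So $\{1\}\times I_0$, $\RV^\times_{<1}\times I_0$, and the degree-$0$ piece $I_0$ itself all have infinite fibres over $(\RV^\times)^d\times Y'$. The original piece $\{(s_\sigma,\sigma)\}$ is an object precisely because $s_\sigma$ varies with $\sigma$. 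Dividing by $s_\sigma$ destroys the finite-fibre condition, so the generating relation of $\Isp^{\cT'}$, which lives in $\Kp\RV_{\cT'}[*]$, cannot be applied to these sets. The new index set $I'_0$ has the same defect, since its degree-$0$ part is $I_0$. This is not hypothetical: with a section one can take $I_0=\RV^\times$ and $c=\sn$.

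The missing idea is the one the paper uses. The map $c\colon I_0\to\VF$ is injective, because $c(\sigma)\in B_\sigma$ and the $B_\sigma$ are pairwise disjoint. Hence $I_0$ is in definable bijection with $c(I_0)\subset\VF$, a set that maps into $\RV$ and can occupy the $Y$-slot. Re-index as follows:
\begin{itemize}
\item the old piece as $\{(s_\sigma,c(\sigma))\}$;
\item the new degree-$1$ piece as $\{(\zeta,c(\sigma)):\zeta\in\RV^\times,\ |\zeta|<r(\sigma)\}$;
\item the new degree-$0$ piece as $c(I_0)$.
\end{itemize}
All fibres over $(\RV^\times)^d\times Y$ are now singletons. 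Your rescaling then produces the genuine classes $[c(I_0)]_0[1]_1$ and $[c(I_0)]_0([\RV^\times_{<1}]_1+[1]_0)$, which are congruent modulo $\Isp^{\cT'}$.

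The same trick is what makes the new partition realisable as a cell decomposition: the new centres must be $\cL$-definable over the $\RV$-partition. You invoke that realisability only through the remark, and your ``main obstacle'' paragraph looks at degree coordinates and definability over $A$ rather than this finiteness condition. With this correction, your argument goes through.
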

\begin{proof}
Since $\psi_1,\psi_2$ refine the projection map $X\to Y$ by assumption, we are given maps
\[f_i: I_{\psi_i,\chi_i} X\to Y\times\RV^m\]
such that $f_i^{-1}(y,\xi) = I_{\psi_i,\chi_i} X_{y,\xi}$ for all $(y,\xi)\in Y\times\RV^m$. By Lemma \ref{lem_Isp_ponctuel}, we may therefore assume $X\subset\VF$. In that case, note that $I_{\chi_i,\psi_i}X$ is exactly the set $I$ from Section \ref{sssec_raffine}, and that the corresponding partition is the same; so in particular, since we are assuming $\cT'$ to be refined, there is a cell partition of $X$ which is a finite length refinement of both the partitions of $X$ associated to $(\chi_i,\psi_i)$. Note that, to such a refinement, we may associate an object of $\RV_\cT'[\leq 1]$: indeed, it is enough to prove it by induction. For the base case, it is a consequence of Lemma \ref{lem_fL_surj}. Then, when we take a refinement of length $1$, by using the notation of Section \ref{sssec_raffine}, it is enough to show that we may indeed see $I'_0=\{(\sigma,\xi)\in I_0\times\RV:|\xi|<r(\sigma)\}$ as an object of $\RV_{\cT'}[\leq 1]$. But we have at our disposal an injective map $c:I_0\to \VF$; by identifying $I_0$ to $c(I_0)$ we find a set that maps into $\RV$. Then the result is rather clear. Moreover, remark that $[I'_0] = [I_0]_0\cdot[\RV_{<1}] = [I_0]_0([\RV^\times_{<1}]_1+[1]_0)$. As a consequence, modulo $\Isp^{\cT'}$, we have $[I'_0]=[I_0]$, so taking a refinement of length $1$ (and, by induction, also of finite length) preserves the class inside $\Kp\RV_{\cT'}[\leq 1]/\Isp^{\cT'}$. In particular, $I_{\psi_1,\chi_1}X= I_{\psi_2,\chi_2}X$.
\end{proof}

\begin{lem}\label{lem_int1_ind_bij}
Let $X_i\subset\VF\times Y_i\times\RV^{m_i}, i=1,2,$ be two objects of $\VF_{\cT'}[1]$, and let $f:X_1\to X_2$ be a definable bijection. Then there are $(\psi_i:X_i\to Y'_i,\chi_i),i=1,2,$ two cell decompositions adapted to $X_i,i=1,2$, such that moreover $f$ descends to a bijection $I_{\psi_1,\chi_1}X_1\to I_{\psi_2,\chi_2}X_2$.
\end{lem}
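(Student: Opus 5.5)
The plan is to reduce to compatible domain and image preparation, Theorem \ref{thm_dec_cell}(\ref{item_thm_dec_cell_2}), which already gives the conclusion for sets of the form $X_i\subset\VF\times\RV^{m_i}$ with a common $\RV$-partition. The only extra work is the bookkeeping with the auxiliary sets $Y_i$ that map into $\RV$, and making the resulting decompositions cell decompositions in the sense of Remark \ref{rmk_dec_cell}, i.e.\ with $\RV$-partitions refining the projections $X_i\to Y_i$.

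First I remove the $Y_i$. Let $\iota_i:Y_i\to\RV^{l_i}$ be $\emptyset$-definable injections. Then
\[X'_i=\{(x,\iota_i(y),\xi):(x,y,\xi)\in X_i\}\subset\VF\times\RV^{l_i+m_i}\]
is in $\emptyset$-definable bijection with $X_i$ over $\VF$. The map $f$ transports to a bijection $f':X'_1\to X'_2$.

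Next I apply Theorem \ref{thm_dec_cell}(\ref{item_thm_dec_cell_2}) to $f'$. Before gluing, I refine the $\RV$-partition $\psi_0$ of the graph $\Gamma_{f'}$ so that it also refines the maps $\Gamma_{f'}\to Y_1$ and $\Gamma_{f'}\to Y_2$. To do this, I take an $\RV$-partition of the graph of $(\psi_0,\pi_{Y_1},\pi_{Y_2})$, exactly as in the last paragraph of the proof of Theorem \ref{thm_dec_cell}; the result still maps into $\RV$. This gives a common $Y$ and cell decompositions $(\psi'_i,\chi_i)$ with $\psi'_i$ factoring through $Y_i$. Transporting back along $X_i\cong X'_i$, the maps $\psi_i=\psi'_i\circ(\text{bijection})$ are $\RV$-partitions of $X_i$ refining $X_i\to Y_i$. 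The fibre condition of Remark \ref{rmk_dec_cell} is unchanged, because the $\VF$-coordinate is untouched.

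Then I unwind the conclusion. It says that $f$ descends to a bijection between $\{(\chi_i(x,y),y,r)\}$, and these are precisely $I_{\psi_i,\chi_i}X_i$ up to the identification of $Y_i$-coordinates with their images under $\iota_i$. Since $Y_i$ is recoverable from $Y$ through the factorization, no information is lost.

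I expect the main obstacle to be the uniform gluing by compactness inside Theorem \ref{thm_dec_cell}(\ref{item_thm_dec_cell_2}) once the partition is required to refine both $Y_1$ and $Y_2$. The fibrewise $\cL(y)$-definable decompositions from \cite{stout_integration_2025}, Lemma 5.4.6, must still be applicable; they are, because each fibre of $\psi_0$ is $\cL(y)$-definable with $y$ now determining $y_1$ and $y_2$. If there were a problem, I would first try redoing the compactness argument directly with parameters $y$ from the refined partition.
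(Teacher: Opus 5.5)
Your proposal is correct and matches the paper, whose entire proof is the observation that the statement is Theorem \ref{thm_dec_cell}, item \ref{item_thm_dec_cell_2}. Your extra bookkeeping makes explicit what that citation leaves implicit: you replace $Y_i$ by $\iota_i(Y_i)$, and you refine the $\RV$-partition of the graph so that it factors through $Y_1$ and $Y_2$, as Remark \ref{rmk_dec_cell} requires and as in the last paragraph of that theorem's proof. That refinement must be taken explicitly to refine the projection, which your construction does.
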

\begin{proof}
We have actually already proved it: it is Theorem \ref{thm_dec_cell}, item \ref{item_thm_dec_cell_2}.
\end{proof}

\begin{dfn}
For $[X]\in\Kp\VF_{\cT'}[1]$, we set $I[X]=[I_{\psi,\chi} X]\in\Kp\RV_{\cT'}[1]/\Isp^{\cT'}$, where $X$ is any representative of $X$ and $(\psi,\chi)$ any cell decomposition adapted to $X$.
\end{dfn}
Lemmas \ref{lem_int1_ind_chi} and \ref{lem_int1_ind_bij} prove that this is well-defined.

\subsubsection{Integration in dimension \texorpdfstring{$d$}{d}}

Now we may turn to higher dimensions.

\begin{dfn}
Let $k,l\in\bN$. We define $\VFR_{\cT'}[k,l]$ to be the category whose objects are the $\emptyset$-definable subsets $X\subset \VF^k\times\RV^l\times Y\times\RV^m$ with $m\in\bN$ and $Y\subset\VF^n$ an $\emptyset$-definable set mapping into $\RV$, such that the projection $X\to\VF^k\times\RV^l\times Y$ has finite fibres and the projection $X\to Y$ is an $\RV$-partition. Morphisms are $\emptyset$-definable functions relative to $\VF^k$ that preserve the zeroes.
\end{dfn}
\begin{rmk}
For $k=0$, there is an equivalence $\VFR_{\cT'}[0,l]\simeq\RV_{\cT'}[\leq l]$. For $l=0,k>0$, there is no equivalence $\VFR_{\cT'}[k,0]\simeq\VF_{\cT'}[k]$, because the morphisms are relative to $\VF^k$.
\end{rmk}

\begin{dfn}
Let $\Isp^{\cT'}$ be the equivalence relation on $\Kp\VFR_{\cT'}[k,l]$ given by:
\[\forall X,Y\in\Ob\VFR_{\cT'}[k,l], ([X],[Y])\in\Isp^{\cT'}\iff \forall a\in\VF^k, ([X_a],[Y_a])\in I_{\sp,a}^{\cT'}\,.\]
\end{dfn}

\begin{dfn}
Let $X\in\Ob\VFR_{\cT'}[k,l]$, say $X\subset\VF^k\times\RV^l\times Y\times\RV^m$, and let $1\leq j\leq k$ be an integer. Let $\psi:X\to Y'$ be an $\RV$-partition refining the projection $X\to Y$ and $\chi:\VF^k\times Y\to\RV^N$ an $\emptyset$-definable map such that, for all $a\in\VF^k$, the couple $(\psi_{\hat a_j},\chi(\hat a_j,\cdot):K\times Y'\to\RV^N)$ is a cell decomposition adapted to $X_{\hat a_j}$. Then we set
\[I^j_{\psi,\chi} X= \{(\hat x_j,\xi,\chi(x,y'),y',\xi')\in\VF^{k-1}\times\RV^l\times\RV^N\times Y'\times \RV^m:(x,\xi,y,\xi')\in X, y'=\psi(x,\xi,y,\xi')\}\,,\]
which is in bijection relative to $\VF^{k-1}\times\RV^l$ with an object of $\VFR_R[k-1,l+1]$ (of well-defined isomorphism class) according to Lemma \ref{lem_fL_surj}.
\end{dfn}
We have denoted by $\hat x_j$ the $(k-1)$-tuple obtained from $x$ by deleting the $x_j$ coordinate.
\begin{lem}\label{lem_int_VFR}
$I^j$ induces a well-defined homomorphism
\[I^j:\Kp\VFR_{\cT'}[k,l]\to\Kp\VFR_{\cT'}[k-1,l+1]/\Isp^{\cT'}\,.\]
\end{lem}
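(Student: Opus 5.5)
We prove this by reducing fibrewise to the dimension-one integral $I$ and then using the definition of $\Isp^{\cT'}$ on $\VFR_{\cT'}[k-1,l+1]$, which is pointwise in the $\VF^{k-1}$-coordinates. Three things must be checked: that adapted data $(\psi,\chi)$ exist for a given $X$; that the class of $I^j_{\psi,\chi}X$ modulo $\Isp^{\cT'}$ does not depend on the choice of $(\psi,\chi)$; and that isomorphic objects of $\VFR_{\cT'}[k,l]$ have congruent images. Additivity is then immediate, since disjoint unions are sent to disjoint unions.

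\textbf{Existence.} Fix $j$ and treat $\VF^{k-1}\times\RV^l$ (the coordinates other than $x_j$) as parameters. By the first point of Remark \ref{rmk_dec_cell} and Theorem \ref{thm_dec_cell}, applied to $X$ seen as a subset of $\VF\times(\VF^{k-1}\times\RV^l\times Y)\times\RV^m$, there is an $\emptyset$-definable $\RV$-partition $\psi$ refining $X\to Y$. There is also an $\emptyset$-definable $\chi$ such that each fibre over $\hat a_j$ is a cell decomposition adapted to $X_{\hat a_j}$. For this we take cell decompositions in the single variable $x_j$ uniformly in the parameters and glue them by compactness, exactly as in the proof of Theorem \ref{thm_dec_cell}. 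Lemma \ref{lem_fL_surj}, applied fibrewise over $\VF^{k-1}$ and glued, shows that $I^j_{\psi,\chi}X$ is isomorphic, relative to $\VF^{k-1}\times\RV^l$, to an object of $\VFR_{\cT'}[k-1,l+1]$.

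\textbf{Independence of the choices.} Let $(\psi_1,\chi_1)$ and $(\psi_2,\chi_2)$ be two choices. For each $a\in\VF^{k-1}$, the fibres $(I^j_{\psi_i,\chi_i}X)_a$ are exactly $I_{\psi_{i,a},\chi_{i,a}}(X_a)$, computed in the theory $\cT'(a)$. Adding $\VF$-parameters preserves $\RV$-partitions, and I expect it also preserves refinedness, since the definition quantifies over all parameter sets $A$. By Lemma \ref{lem_int1_ind_chi} over $a$, the two fibres are congruent modulo $I^{\cT'}_{\sp,a}$ as objects of $\RV_{\cT'(a)}[\leq l+1]$. Here the $\RV^l$-coordinates ride along as extra $\RV$-parameters; Lemma \ref{lem_Isp_ponctuel} lets us treat them fibrewise and reassemble. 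By the definition of $\Isp^{\cT'}$ on $\VFR$, this is exactly congruence of the two images.

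\textbf{Invariance under isomorphism.} Let $f:X_1\to X_2$ be a morphism of $\VFR_{\cT'}[k,l]$. Since $f$ is relative to $\VF^k$, it fixes $x_j$ and the other $\VF$-coordinates. Hence for each $a\in\VF^{k-1}$ it restricts to a definable bijection $f_a:(X_1)_a\to(X_2)_a$ that is relative to $x_j$ and preserves zeroes. Cell decompositions adapted to the graph of $f$, given by Theorem \ref{thm_dec_cell} with a common $\RV$-partition, make $f$ descend to a bijection of the sets $I^j$. More precisely, we take $\psi$ an $\RV$-partition of the graph $\Gamma_f$ and pull it back to both $X_1$ and $X_2$. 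We then take for $\chi$ a single cell decomposition in $x_j$ adapted simultaneously to both pullbacks; this is possible because $f$ does not move $x_j$. With these choices $f$ descends to an isomorphism $I^j_{\psi_1,\chi}X_1\cong I^j_{\psi_2,\chi}X_2$, relative to $\VF^{k-1}\times\RV^l$. Combined with the independence step, this gives a well-defined map on classes.

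\textbf{Main obstacle.} I expect the independence step to be the delicate one. First, Lemma \ref{lem_int1_ind_chi} is stated over $\emptyset$, and we must make sure it holds uniformly over parameters $a\in\VF^{k-1}$ together with the auxiliary $\RV^l$-coordinates; this requires checking that refinedness and the congruence manipulations are stable under naming $\VF$-parameters. Second, the codomain relation $\Isp^{\cT'}$ is defined pointwise, so no uniform gluing of the congruences across $a$ is needed. That is precisely why it was defined that way, and it lets us avoid compactness at this step.
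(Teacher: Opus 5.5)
Your proposal is correct and takes essentially the same route as the paper. You work fibrewise over the other $\VF$-coordinates, where the paper says ``by compactness assume $k=1$''. You make an isomorphism $f$ descend via an $\RV$-partition of $\Gamma_f$ and a one-variable cell decomposition. You handle the change of $(\psi,\chi)$ by Lemma \ref{lem_Isp_ponctuel} over $\RV^l$ followed by Lemma \ref{lem_int1_ind_chi}.

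One point needs tightening in your ``more precisely'' step. There, $\chi$ must be adapted to the graph $\Gamma_f$ itself, viewed as a subset of $\VF\times\RV^{2l}\times Y_1\times Y_2\times\RV^{m_1+m_2}$, as your own first sentence and the paper say. A decomposition adapted only to $X_1$ and $X_2$ separately need not make the $\RV$-part of $f$ constant on cells, so $f$ would not descend.
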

\begin{proof}
This is essentially a consequence of dimension $1$, as explained in \cite{stout_integration_2025}, Lemma 5.5.10. First, since everything (objects, morphisms, \emph{etc}) is relative to $\VF^{k-1}$, by compactness we may assume $k=1$. Let $X_i\subset\VF\times\RV^l\times Y_i\times\RV^{m_i},i=1,2$ be two isomorphic objects in $\VFR_{\cT'}[1,l]$, as well as $(\psi_i:X_i\to Y'_i,\chi_i),i=1,2$ two couples as in the definition, which is, since $k=1$, cell decompositions adapted to $X_i$. Let $f:X_1\to X_2$ be an isomorphism, \emph{i.e.} an $\emptyset$-definable bijection which commutes to the projections $X_i\to\VF$ and preserves the zeroes on $\RV^l$. Let $\psi':\Gamma_f\to Y$ be an $\RV$-partition of the graph of $f$ refining the projection $\Gamma_f\to Y'_1\times Y'_2$, and $\psi'_i:X_i\to Y'$ be the $\RV$-partition of $X_i$ deduced from $\psi'$, which refines $\psi_i$. For $y\in Y'$, let $f_y:X_{1,y}\to X_{2,y}$ be the isomorphism of $\VFR_{\cT(y)}[1,l]$ deduced from $f$.

Since $f$ commutes to the projection to $\VF$, we may see $\Gamma_f$ as a subset of $\VF\times\RV^{2l}\times Y_1\times Y_2\times \RV^{m_1+m_2}$, of which $\psi'$ is an $\RV$-partition. Also take $\chi':\VF\times Y'\to \RV^N$ such that $(\psi',\chi')$ is a cell decomposition adapted to the graph of $f$. For $y\in Y'$ and $\xi\in\RV^N$, the restriction of $f_y$ to $\chi'^{-1}_{y}(\xi)\subset\VF$ defines a $(y,\xi)$-definable bijection $X_{1,a}\to X_{2,a}$ for each $a\in \chi'^{-1}_y(\xi)$, which does not depend on $a$. In particular, $(\psi'_i,\chi')$ is a cell decomposition adapted to $X_i$, and $I_{\psi'_1,\chi'}X_1\simeq I_{\psi'_2,\chi'}X_2$. The only thing left to prove is that $[I^1_{\psi'_1,\chi'}X_1] = [I^1_{\psi_1,\chi_1}X_1$ modulo $\Isp^{\cT'}$.

Let $\xi\in\RV^l$, so that the fibre $X_\xi$ is an object of $\VFR_{\cT'(\xi)}[1,0]$. We have projections $I^1_{\psi_1,\chi_1}X, I^1_{\psi'_1,\chi'}X\to \RV^l$ such that the fibre at $\xi$ is respectively $I^1_{\psi_1,\chi_1}X_\xi, I^1_{\psi'_1,\chi'}X_\xi$. By Lemma \ref{lem_Isp_ponctuel}, it is enough to deal with the case of $X_\xi$, \emph{i.e.} $l=0$. But this is a straightforward consequence of Lemma \ref{lem_int1_ind_chi}.
\end{proof}

The following lemma is the analogue for our case of \cite{stout_integration_2025}, Lemma 5.5.12, and is proved in the same way:

\begin{lem}\label{lem_int_VFR_RV}
Let $X\in\Ob\VFR_{\cT'}[k,l]$, and let $f:X\to\RV^m$ be an $\emptyset$-definable map. Then, for all $1\leq j\leq k$,
\[I^j[X]=\sum_{\xi\in\RV^m}I^j[X_\xi]\,.\]
\end{lem}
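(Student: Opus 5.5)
We need to make sense of $\sum_{\xi\in\RV^m}I^j[X_\xi]$. Since $\RV^m$ is infinite, the plan is to read the right-hand side as the class of the disjoint union glued over $\xi$. Concretely, $X_\xi$ is an object of $\VFR_{\cT'(\xi)}[k,l]$. The sum is the class of the $\emptyset$-definable set obtained by gluing the $I^j[X_\xi]$ along $\xi$, or equivalently the element of $\Kp\VFR_{\cT'}[k-1,l+1]/\Isp^{\cT'}$ whose fibre over each $\xi$ is $I^j[X_\xi]$. The statement then says that $I^j$ computed on $X$ and computed fibrewise agree. The whole proof amounts to choosing the cell decomposition defining $I^j$ compatibly with $f$.

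First, I would reduce to choosing a good pair $(\psi,\chi)$. By Lemma \ref{lem_int_VFR}, the class $I^j[X]$ does not depend on the pair $(\psi,\chi)$ used to compute it, so we may pick a convenient one. Take the graph $\Gamma_f\subset X\times\RV^m$ and choose an $\RV$-partition $\psi$ of $X$ that refines both the projection $X\to Y$ and the map $f$. This is possible by taking an $\RV$-partition of $\Gamma_f$ refining $\Gamma_f\to Y$; since $f$ lands in $\RV^m$, it factors through the partition after composing with an injection into $\RV$. Then, relative to $\VF^{k}$ with coordinate $j$ singled out, choose $\chi$ by Theorem \ref{thm_dec_cell}, with item \ref{item_thm_dec_cell_1} applied to $f$. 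Glued by compactness over $\hat a_j\in\VF^{k-1}$, this gives $(\psi_{\hat a_j},\chi(\hat a_j,\cdot))$ as a cell decomposition adapted to $X_{\hat a_j}$ with $f$ constant on each twisted box. Consequently $f$ descends to a map $\bar f:I^j_{\psi,\chi}X\to\RV^m$.

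Next, I would identify the fibres. For each $\xi$, the fibre $\bar f^{-1}(\xi)$ is exactly $I^j_{\psi_\xi,\chi_\xi}X_\xi$, where $(\psi_\xi,\chi_\xi)$ is the restriction of $(\psi,\chi)$ to $X_\xi$. This restriction is a legitimate $\xi$-definable choice for computing $I^j$ of $X_\xi$ in $\cT'(\xi)$, because $f$ is constant on boxes, so the boxes meeting $X_\xi$ are entirely contained in it. Hence, as sets, $I^j_{\psi,\chi}X=\bigsqcup_\xi I^j_{\psi_\xi,\chi_\xi}X_\xi$. This gives the equality at the level of classes before quotienting. Independence of the choice of $(\psi_\xi,\chi_\xi)$ on each fibre then follows from Lemma \ref{lem_int_VFR} applied over parameters $\xi$, together with Lemma \ref{lem_Isp_ponctuel}, used to pass from fibrewise $\Isp$-equivalences to a global one. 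The latter is also what makes the right-hand side a well-defined class modulo $\Isp^{\cT'}$.

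The main obstacle I expect is bookkeeping rather than substance. One point is making sure that the fibrewise equality modulo $I_{\sp,\xi}^{\cT'}$ glues, which needs uniformity in $\xi$; Lemma \ref{lem_Isp_ponctuel} provides exactly this. The other is checking that the restricted pairs are genuine cell decompositions in the sense of Remark \ref{rmk_dec_cell}, with $\psi_\xi$ refining the projection of $X_\xi$.
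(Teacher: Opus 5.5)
This is correct and is essentially the paper's argument: take an $\RV$-partition of the graph of $f$ and a cell decomposition adapted to that graph (your ``preparing $f$''), observe that $I^j_{\psi,\chi}X$ is the disjoint union over $\xi$ of the restricted $I^j_{\psi_\xi,\chi}X_\xi$, and conclude with Lemmas \ref{lem_Isp_ponctuel} and \ref{lem_int_VFR}; the only cosmetic difference is that the paper first reduces to $k=1$ by compactness, whereas you glue over $\hat a_j$. One small slip: an $\RV$-partition of $\Gamma_f$ does not in general make $f$ factor through $\psi$. It only makes $f$ $\cL(y')$-definable on each fibre of $\psi$, but that is all you actually use, since the constancy of $f$ on boxes comes from $\chi$.
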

\begin{proof}
By compactness, since everything is relative to the coordinates of $\VF^n$ other than $x_j$, we assume that $k=1$. Let $\widetilde\psi:\Gamma_f\to Y$ be an $\RV$-partition of the graph of $f$, and denote by $\psi:X\to Y$ the induced map. Let $\chi:\VF\times Y\to \RV^N$ be such that $(\psi,\chi)$ is a cell decomposition adapted to the graph of $f$. Then, for all $\xi\in\RV^m$, the restriction $(\psi_{f^{-1}(\xi)},\chi)$ is a cell decomposition adapted to $X_\xi$. Now we have, by Lemmas \ref{lem_Isp_ponctuel} and \ref{lem_int_VFR},
\[I^1[X] = [I^1_{\psi,\chi}X] = \sum_{\xi\in\RV^N}[I^1_{\chi,\psi}X_\xi] = \sum_{\xi\in\RV^N}I^1[X_\xi]\,.\]
\end{proof}

Again, the following lemma is exactly analogous to \cite{stout_integration_2025}, Lemma 5.5.13:

\begin{lem}\label{lem_int_rv}
Let $X,X'\in\Ob\VFR_{\cT'}[k,l]$ be such that $[X]=[X']$ modulo $\Isp^{\cT'}$. Then for all $1\leq j\leq k$, the equality $I^j[X]=I^j[X']$ holds modulo $\Isp^{\cT'}$.

In other words, the map $I^j$ induces an homomorphism
\[I^j:\Kp\VFR_{\cT'}[k,l]/\Isp^{\cT'}\to\Kp\VFR_{\cT'}[k-1,l+1]/\Isp^{\cT'}\,.\]
\end{lem}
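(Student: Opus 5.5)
By definition of $\Isp^{\cT'}$ on $\Kp\VFR_{\cT'}[k,l]$, the hypothesis says that $([X_a],[X'_a])\in I_{\sp,a}^{\cT'}$ for every $a\in\VF^k$. The conclusion concerns objects of $\VFR_{\cT'}[k-1,l+1]$, and there the congruence is again checked fibrewise over $\VF^{k-1}$. So I will fix $\hat a_j\in\VF^{k-1}$ and add it to the parameters, which is harmless for the construction of $I^j$ since everything is relative to the coordinates other than $x_j$. This reduces the problem to $k=1$: given $X,X'\in\Ob\VFR_{\cT'}[1,l]$ with $([X_a],[X'_a])\in I_{\sp,a}^{\cT'}$ for all $a\in\VF$, I must show $I^1[X]=I^1[X']$ modulo $\Isp^{\cT'}$ in $\Kp\VFR_{\cT'}[0,l+1]\simeq\Kp\RV_{\cT'}[\leq l+1]$.

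Since $\Isp^{\cT'}$ on the target is a congruence generated by a single relation, and $I^1$ is additive by Lemma \ref{lem_int_VFR}, it suffices to handle one elementary step of the generated congruence. Here Lemma \ref{lem_Isp_ponctuel} is the model. As in its proof, I will write, for each $a$, the chain of equalities witnessing $([X_a],[X'_a])\in I_{\sp,a}^{\cT'}$, using objects $T^{(s)}_{i,j,a}$ and products with powers of $[\RV^\times_{<1}]_1+[1]_0$ and $[1]_1$. By compactness I will bound the length $k$ of the chain and the indices uniformly in $a$, and glue the $T^{(s)}_{i,j,a}$ into $\emptyset$-definable families $T^{(s)}_{i,j}\in\Ob\VFR_{\cT'}[1,\cdot]$ over $\VF$. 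I will also glue the fibrewise bijections into isomorphisms in $\VFR_{\cT'}[1,l]$, that is, bijections relative to $\VF$. This yields equalities in $\Kp\VFR_{\cT'}[1,l]$ of the form
\[[X]=\sum[T^{(0)}_{i,j}]\,([\RV^\times_{<1}]_1+[1]_0)^i([1]_1)^j\]
and so on through the chain, where each product means a fibre product over $\VF$ with a constant $\RV$-set.

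I will then apply $I^1$, which is well defined on these classes by Lemma \ref{lem_int_VFR}, to each equality in the chain. The key compatibility is
\[I^1\big([T]\cdot[S]\big)=I^1[T]\cdot[S]\]
for $S$ a constant object of $\RV_{\cT'}[*]$. To prove it, I will take a cell decomposition $(\psi,\chi)$ adapted to $T$ and pull it back along the projection $T\times S\to T$. This is still a cell decomposition adapted to $T\times S$, since the $\RV$-coordinates of $S$ lie in the finite-fibre/$\RV^m$ part, and it directly gives $I^1_{\psi,\chi}(T\times S)=I^1_{\psi,\chi}T\times S$. Lemma \ref{lem_int_VFR_RV} can also be used to split along the $S$-coordinates. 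Once this holds, each link of the chain maps to a link of the congruence $\Isp^{\cT'}$ on $\Kp\VFR_{\cT'}[0,l+1]$, and transitivity finishes the argument.

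The main obstacle is the gluing step. Compactness does not literally apply to congruence membership, which is an infinite disjunction over chain lengths and over the shapes of the witnessing objects. I expect to handle it as in Lemma \ref{lem_Isp_ponctuel}: by saturation only finitely many formula-shapes occur, so $\VF$ can be definably partitioned into finitely many pieces on each of which the witnesses are uniformly definable. I will then use additivity of $I^1$ from Lemma \ref{lem_int_VFR} to sum over this partition. A further point to check is that the glued $T^{(s)}_{i,j}$ satisfy the $\RV$-partition requirement; as in Lemma \ref{lem_Isp_ponctuel}, I will realise them as definable subsets of $X$, of $X'$, or of intermediate objects.
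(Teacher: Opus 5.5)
Your proposal is correct and follows essentially the paper's argument: glue the fibrewise witnesses of $\Isp$ by compactness into objects of $\VFR_{\cT'}$, then use that $I^j$ commutes with products by constant $\RV$-objects, $I^j(R\times S)=(I^jR)\times S$, which is immediate once you pull back a cell decomposition. The differences are only organisational. The paper applies this product formula cell by cell, using a cell decomposition adapted to $T=\bigsqcup T_{i,j}$, concludes with Lemma \ref{lem_int_VFR_RV}, and writes out only a single swap step. You instead first reduce to $k=1$, using that the target congruence is fibrewise over $\VF^{k-1}$, then apply the product formula globally and carry the whole chain of witnesses as in Lemma \ref{lem_Isp_ponctuel}.
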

\begin{proof}
By assumption, for every $x\in\VF^k$, we have $([X_x],[X'_x])\in\Isp^{\cT'(x)}$. As in Lemma \ref{lem_Isp_ponctuel}, we find objects $T_{i,j,x}\in\RV_{\cT'(x)}[\leq l], i,j\leq l$ that give equations attesting to this fact. By compactness, we may see $T_{i,j,x}$ as the fibre at $x$ of an object $T_{i,j}$ of $\VFR_{\cT'}[k,l]$, for all $i,j\leq k$. Moreover, we have injective maps $f_{i,j}:T_{i,j}\times\RV^i_{<1}\times \{0\}^j\to X_1, g_{i,j}:T_{i,j}\times\RV^j_{<1}\times\{0\}^j\to X_2$ which are morphisms in $\VFR_{\cT'}[k,l]$ and whose images define a partition of $X_1,X_2$ respectively. In particular, we may replace $X_1$ by $\bigsqcup_{i,j}T_{i,j}\times\RV^i_{<1}\times\{0\}^j$ and $X_2$ by $\bigsqcup_{i,j}T_{i,j}\times\RV^j_{<1}\times\{0\}^i$. Let $(\psi,\chi)$ be a cell decomposition adapted to $T=\bigsqcup_{i,j} T_{i,j}$, and let us denote by $Y$ the codomain of $\psi$, $\RV^N$ that of $\chi$. The composition of $\psi$ with the canonical maps $X_1,X_2\to T$ then gives $\RV$-partitions of $X_1,X_2$, which we denote by $\psi_1,\psi_2$ respectively, and $(\psi_1,\chi)$ is a cell decomposition adapted to $X_1$ while $(\psi_2,\chi)$ is a cell decomposition adapted to $X_2$. Take some $y\in Y$ and $\xi\in\RV^N$; over $R=\chi_y^{-1}(\xi)$, the sets $T_{i,j,y}$ are constant relative to $\VF$, hence of the form $R\times T'_{i,j,y,\xi}$. Consequently, the sets $X_{i,y}$ over $R$ are themselves of the form $R\times X'_{i,y,\xi}$, and moreover $[X'_{1,y,\xi}]=[X'_{2,y,\xi}]$ in $\Kp\RV_{\cT'(y,\xi)}[\leq l]/\Isp^{\cT'(y,\xi)}$. Finally, $I^j_{\psi_i,\chi}(R\times X'_{i,y,\xi}) = (I^j_\chi R)\times X'_{i,y,\xi}$ by construction, so $I^j[R\times X'_{1,y,\xi}]=I^j[R\times X'_{2,y,\xi}]$ in $\Kp\VFR_{\cT'(y,\xi)}[k-1,l+1]/\Isp^{\cT'(y,\xi)}$. We conclude using Lemma \ref{lem_int_VFR_RV}.
\end{proof}

So we may define $I=I^1\circ\dots I^d:\Kp\VFR_{\cT'}[d,0]\to\Kp\VFR_{\cT'}[0,d]/\Isp = \Kp\RV_{\cT'}[\leq d]$. The next lemma states that the integration morphism defined in this way is well defined. Note that we do not need all the intermediary steps of \cite{stout_integration_2025}, we are able to skip some of them (Lemmas 5.5.3 and 5.5.14) and to reuse the final result.

\begin{lem}
Let $X_1,X_2\in\Ob\VF_{\cT'}[\leq d]$ be such that $[X_1]=[X_2]$ in $\Kp\VF_{\cT'}[d]$. Then $I[X_1]=I[X_2]$ in $\Kp\RV_{\cT'}[\leq d]/\Isp^{\cT'}$.
\end{lem}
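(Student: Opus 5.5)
The plan is to reduce to the 1-h-minimal statement, \cite{stout_integration_2025}, Lemma 5.5.15 (the well-definedness of $I=I^1\circ\dots\circ I^d$, proved there via Fubini, Lemma 5.5.14), using $\RV$-partitions. We apply it fibrewise over the $\RV$-partition and glue by compactness. Fibrewise equalities in $\Kp\RV/\Isp$ are transferred to global ones by Lemma \ref{lem_Isp_ponctuel}.

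\textbf{Reduction to fixed fibres of an $\RV$-partition.} Let $f:X_1\to X_2$ be an $\emptyset$-definable bijection. Take an $\RV$-partition $\psi:\Gamma_f\to Y'$ of the graph of $f$ that refines the projection $\Gamma_f\to Y_1\times Y_2$. Let $\psi_i:X_i\to Y'$ be the induced $\RV$-partitions, which refine $X_i\to Y_i$. For each $y'\in Y'$, the fibres $X_{1,y'},X_{2,y'}$ are objects of $\VF_{\cT(y')}[d]$, and $f$ restricts to an $\cL(y')$-definable bijection $f_{y'}$ between them.

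\textbf{Choosing the cell decompositions.} I would choose all the cell decompositions used to compute $I^1,\dots,I^d$ on $X_i$ so that the underlying $\RV$-partition refines $\psi_i$. This is allowed: Lemma \ref{lem_int_VFR} says the classes do not depend on the choice, and Remark \ref{rmk_dec_cell} lets us add the coordinate $y'$ to the $\RV$-side. With these choices, iterating Lemma \ref{lem_int_VFR_RV} (applied to the map to $Y'$, which we read as a map into $\RV$ via the injection $Y'\to\RV^l$) gives
\[I[X_i]=\sum_{y'\in Y'} I_{\cT(y')}[X_{i,y'}].\]
Here each summand is the iterated integral computed in the 1-h-minimal theory $\cT(y')$, and it agrees with the $\cT'$-integral of the fibre. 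The two integrals agree because, over $y'$, every cell decomposition produced along the way is $\cL(y')$-definable.

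\textbf{Applying the 1-h-minimal result.} By \cite{stout_integration_2025}, for each $y'$ we have $I_{\cT(y')}[X_{1,y'}]=I_{\cT(y')}[X_{2,y'}]$ modulo $I^{\cT(y')}_{\sp}$. A fortiori this holds modulo $I_{\sp,y'}^{\cT'}$, since $\cL(y')$-definable objects and bijections are $\cL'(y')$-definable. Lemma \ref{lem_Isp_ponctuel}, applied to the projections of the two integrals onto $Y'\hookrightarrow\RV^l$, then yields $([I[X_1]],[I[X_2]])\in\Isp^{\cT'}$.

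\textbf{Main obstacle.} The hard step is the identity $I[X_i]=\sum_{y'}I_{\cT(y')}[X_{i,y'}]$, namely that the $\cT'$-iterated integral really decomposes along $Y'$ and that each piece is the $\cT(y')$-integral. The difficulty is that at each stage $I^j$ the objects of $\VFR_{\cT'}[k,l]$ acquire new $\RV$-partition coordinates, and we must check these keep refining $y'$. I would handle this by induction on $j$, using the refinement clause of Theorem \ref{thm_dec_cell}. Once that is settled, the rest is bookkeeping with compactness, exactly as in Lemma \ref{lem_int_rv}.
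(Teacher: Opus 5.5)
Your proposal is correct and is essentially the paper's argument: take an $\RV$-partition of the graph of the bijection $f$, pass to the $\cL(y')$-definable fibres, apply \cite{stout_integration_2025}, Lemma 5.5.15 there, and glue back. The paper compresses the fibrewise reduction into a one-line citation of Lemma \ref{lem_int_rv}, whereas you make it explicit through Lemmas \ref{lem_int_VFR_RV} and \ref{lem_Isp_ponctuel}; the step you flag as the main obstacle is in fact automatic, since every later $\RV$-partition in the iteration must by definition refine the projection onto a $Y$-coordinate that already determines $y'$.
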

\begin{proof}
Let $f:X_1\to X_2$ be an isomorphism, and let $\chi:\Gamma_f\to Y$ be an $\RV$-partition of the graph of $f$. For $y\in Y$, denote by $f_y:X_{1,y}\to X_{2,y}$ the $\cL(y)$-definable bijection induced by $f$. By Lemma \ref{lem_int_rv}, since $Y$ maps into $\RV$, it is enough to verify that, for all $y\in Y$, the equality $I[X_y]=I[Y_y]$ holds in $\Kp\RV_{\cT(y)}[\leq d]$. But this is a consequence of \cite{stout_integration_2025}, Lemma 5.5.15.
\end{proof}

From that we immediatly deduce the main theorem, thanks to the (elementary) fact that $I[\fL U]=[U]$.

\begin{thm}\label{thm_int_sans}
Assume that $\cT'$ has $\RV$-partitions relative to $\cT$, that it is refined, and has the bijection lifting property.

For all $d\in\bN$, there is an isomorphism of Grothendieck semigroups
\[\int:\Kp\VF_{\cT'}[d]\to\Kp\RV_{\cT'}[\leq d]/\Isp\,,\]
as well as a semirings isomorphism
\[\int:\Kp\VF_{\cT'}\to\Kp\RV_{\cT'}[*]/\Isp\,,\]
given by \[\int[X]=[R]\iff [X]=[\fL R]\,.\]
\end{thm}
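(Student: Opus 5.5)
The plan is to build $\int$ as the inverse of the map induced by $\fL$. First I will check that $\fL$ gives a well-defined semiring morphism
\[\Kp\RV_{\cT'}[*]/\Isp\to\Kp\VF_{\cT'}\,.\]
This needs three inputs.
\begin{enumerate}
\item \emph{Well-definedness on classes.} This is the bijection lifting property. An isomorphism $h:R\to S$ in $\RV_{\cT'}[\leq d]$ lifts to an $\emptyset$-definable bijection $\fL R\to\fL S$. If zeroes occur, I would use the convention of \cite{stout_integration_2025}, Remark 5.1.3, and treat each graded piece $\RV_{\cT'}[d']$ separately.
\item \emph{Compatibility with the operations.} Disjoint unions are clearly respected. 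Products are respected because $\fL(R\times S)=\fL R\times\fL S$ up to reordering coordinates; the $Y$-factors multiply, and products of sets mapping into $\RV$ still map into $\RV$.
\item \emph{Compatibility with $\Isp$.} The congruence $\Isp$ is killed because $\fL([\RV^\times_{<1}]_1+[1]_0)$ is $\mathfrak m$, and $\fL([1]_1)=1+\mathfrak m$. These are in $\emptyset$-definable bijection via $x\mapsto x+1$.
\end{enumerate}
Surjectivity of $\fL$ on $\Kp\VF_{\cT'}[d]$ is Lemma \ref{lem_fL_surj}, applied with any cell decomposition given by Theorem \ref{thm_dec_cell}. This is where the equivalence $\VF_{\cT'}[d]\simeq\VF_{\cT'}^*[d]$ of Lemma \ref{lem_eq_cat} lets us use the $\RV$-partition $X\to Y$.

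Next comes injectivity, via the map $I=I^1\circ\dots\circ I^d$ constructed above. The preceding lemma says $I$ is well defined on $\Kp\VF_{\cT'}[d]$, with values in $\Kp\RV_{\cT'}[\leq d]/\Isp$. I will check that $I[\fL R]=[R]$. For this, take $(\psi,\chi)$ to be the trivial cell decomposition of $\fL R$. Its $\RV$-partition is the projection to $Y$, and its map is $\chi=\rv$ on each $\VF$-coordinate; the twisted boxes are then the fibres $\rv^{-1}(r)$ with center $0$. Applying $I^1,\dots,I^d$ successively recovers $R$ coordinate by coordinate, so $I[\fL R]=[R]$.

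Now suppose $[\fL R]=[\fL S]$ in $\Kp\VF_{\cT'}[d]$. Then $[R]=I[\fL R]=I[\fL S]=[S]$ modulo $\Isp$. Hence $\fL$ is injective modulo $\Isp$, and is therefore a bijection whose inverse is $I$. Defining $\int=I$ gives exactly the characterisation $\int[X]=[R]\iff[X]=[\fL R]$. Additivity of $\int$ follows from bijectivity together with additivity of $\fL$.

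For the graded statement, I will use the inclusions $\VF_{\cT'}[d]\subset\VF_{\cT'}[d+1]$ and $\RV_{\cT'}[\leq d]\subset\RV_{\cT'}[\leq d+1]$. They are compatible with $\fL$, so the isomorphisms glue into an isomorphism of the colimits. Multiplicativity transfers through $\fL$ because $\fL$ is a semiring map.

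The main obstacle I expect is the compatibility of $\fL$ with morphisms and with the grading. Morphisms in $\RV_{\cT'}[\leq d]$ are required to preserve zeroes, so one must check that the bijection lifting property, stated for $\RV_{\cT'(A)}[*]$, yields lifts in $\VF_{\cT'}[d]$ whose $\RV$-partition requirement (that $X\to Y$ is an $\RV$-partition) still holds. If it fails, I would replace the lift by its graph composed with an $\RV$-partition. Lemma \ref{lem_eq_cat} makes this harmless up to isomorphism.
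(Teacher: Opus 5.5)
Your proposal is correct and is the paper's argument spelled out: $\fL$ descends to $\Kp\RV_{\cT'}[\leq d]/\Isp$ by the bijection lifting property, it is onto by Lemma~\ref{lem_fL_surj}, and it is injective because the previously built $I=I^1\circ\dots\circ I^d$ is well defined on $\Kp\VF_{\cT'}[d]$ and satisfies $I[\fL R]=[R]$, so $\int=I$. The worry in your last paragraph is unnecessary: morphisms of $\VF_{\cT'}[d]$ are arbitrary $\emptyset$-definable maps, and $\fL R$ is already an object (its projection to $Y$ is an $\RV$-partition because $R\to Y$ is one), so any lift given by the bijection lifting property is directly an isomorphism there.
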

\begin{rmk}
Actually all these morphisms $\int$ are well defined even without the bijection lifting property, thanks to the Stout-Vermeulen construction to compute the kernel of $\fL$ -- which we just realised. They are given by $\int = I$. Only, in that case, there is no reason for the motivic integral to be injective. Surjectivity holds, on the contrary, as $\int [\fL R] = [R]$ for all $R\in\Ob\RV_{\cT'}[*]$.
\end{rmk}

\subsection{Motivic integration with volume forms}\label{ssec_intmot_vol}
Now let us deal with the case of motivic integration with volume forms, the main result being Theorem \ref{thm_int_avec}. We still assume that $\cT'$ has $\RV$-partitions and is refined, and now also that $\cT'$ is $\RV$-thin, in order to have a dimension theory.

\begin{dfn}
Let $X\subset \VF^d\times Y\times\RV^m$ be an $\emptyset$-definable set, where $Y$ maps into $\RV$. We say that a property holds for almost every $x\in X$ if it holds for every $x$ away from an $\emptyset$-definable subset $Z\subset X$ of dimension strictly smaller than $d$.

Let $X_i\subset\VF^d\times Y_i\times \RV^{m_i},i=1,2$ be $\emptyset$-definable sets. An $\emptyset$-definable \emph{essential bijection} $f:X_1\to X_2$ is an $\emptyset$-definable bijection $f:\widetilde X_1\to \widetilde X_2$, where $\widetilde X_i\subset X_i, \dim_\VF(X_i\setminus \widetilde X_i)<d$.

Two essential bijections $f,g:X_1\to X_2$ are \emph{equivalent} if there is a subset $\widetilde X_1$ contained in the intersection of the domains of $f$ and $g$ such that $\dim_\VF(X_1\setminus\widetilde X_1)<d$ and $f_{|\widetilde X_1}=g_{\widetilde X_1}$. We always consider essential bijections up to equivalence.
\end{dfn}
Sometimes we will say that $f$ is an essential bijection in dimension $d$.

\begin{dfn}
Let $M\models\cT'$ be a model, $A\subset\VF(M)$ be a set of parameters and $R,S\in\Ob\RV_{\cT'(A)}[d]$, and let $h:R\to S$ be an $A$-definable bijection, which we write as $h=(h_0,h_1,h_2)$. A \emph{jacobian lifting} of $h$ is an $A$-definable map $f:\fL R\to\fL S$ such that:
\begin{enumerate}
\item $f$ is bijective;
\item for each $(\xi,\zeta_1,\zeta_2)\in R$, $f$ induces a $\cC^1$ map
\[f_{\zeta_1,\zeta_2}:\rv^{-1}(\xi)\to\rv^{-1}(h_0(\xi,\zeta_1,\zeta_2))\,;\]
\item $\rv(\Jac f_{\zeta_1,\zeta_2})$ is constant on $\rv^{-1}(\xi)$ for all $(\xi,\zeta_1,\zeta_2)\in R$;
\item for $(\xi,\zeta_1,\zeta_2)\in R$ and $x\in\rv^{-1}(\xi)$, we have
\[|\Jac f_{\zeta_1,\zeta_2}(x)|\cdot |\xi|=|h_0(\xi,\zeta_1,\zeta_2)|\,.\]
\end{enumerate}
The map $\Jac_\RV h:R\to\RV^\times$ deduced from $\rv(\Jac f)$ is an \emph{$\RV$-jacobian} of $h$.

We say that $\cT'$ has the jacobian bijection lifting property if, for every model $M\models\cT'$, every set of parameters $A\subset\VF(M)$ and every $A$-definable bijection $h:R\to S$ between two objects of $\RV_{\cT'(A)}[*]$, there is a map $f:\fL R\to \fL S$ which is an $A$-definable jacobian lifting of $h$.
\end{dfn}
\begin{rmk}
Certainly the jacobian bijection lifting property implies the bijection lifting propery. In all the examples we will give, the first one will hold.
\end{rmk}

Now we prove that the differentiability condition is actually quite weak:

\begin{lem}\label{lem_differentiable}
Let $d,k\in\bN$ be two integers. Let $f:X\subset\VF^d\times\RV^m\to X'\subset\VF^{d'}\times\RV^{m'}$ be a definable map with open domain $X$, and let $U$ be the set of $(x,r)\in X$ such that there is $B\subset\VF^d$ with:
\begin{enumerate}
\item $B$ is a nonempty open ball containing $x$;
\item $B\times\{r\}\subset X$ and $f(B\times\{r\})\subset \VF^{d'}\times\{s\}$ for some $s\in\RV^{m'}$;
\item the map $y\in B\mapsto \mathrm{pr}_\VF f(y,r)$ is of class $\cC^k$, where $\mathrm{pr}_\VF$ is the projection $X'\to\VF^d$.
\end{enumerate}
Then $U$ is a dense open definable subset of $X$ and $\dim_{\VF}X\setminus U<d$.
\end{lem}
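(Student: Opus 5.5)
The strategy is the one used throughout this section: reduce to the 1-h-minimal theory $\cT$ through an $\RV$-partition, invoke the generic differentiability results of \cite{cluckers_hensel_2022}, and then transfer back via Theorem \ref{thm_dim}.

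First, $U$ is definable over the parameters of $f$. Indeed, each of the three conditions can be written with a quantifier over balls, i.e.\ over centres and radii. Moreover, $U$ is open by construction: if $(x,r)\in U$ is witnessed by $B$, then every $(x',r)$ with $x'\in B$ is witnessed by the same $B$. This assumes that the topology on $\VF^d\times\RV^m$ is discrete in the $\RV$-coordinates, which is the convention here.

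Next I would take an $\RV$-partition $\chi:\Gamma_f\to Y$ of the graph of $f$, and let $\psi:X\to Y$ be the induced map. For $y\in Y$, the restriction $f_y$ of $f$ to $X_y=\psi^{-1}(y)$ is then $\cL(y)$-definable. Fix $r\in\RV^m$ and $y$. The map $x\mapsto f_y(x,r)$ is $\cL(y,r)$-definable, and by 1-h-minimality (\cite{cluckers_hensel_2022}, Theorem 5.1.5 together with its higher-dimensional $\cC^k$ version, Proposition 5.3.4 / Theorem 5.6.?) it is $\cC^k$, with locally constant $\RV$-part, on a definable open subset $V_{y,r}$ of the interior of $(X_y)_r$. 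The complement of $V_{y,r}$ in $(X_y)_r$ has $\cT$-dimension $<d$.

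Any point $x$ lying in the interior of $(X_y)_r$ and in $V_{y,r}$ belongs to $U_r$: take a small ball $B$ inside both. Hence
\[
X_r\setminus U_r\subset \bigcup_y \bigl((X_y)_r\setminus \mathrm{int}(X_y)_r\bigr)\cup\bigl((X_y)_r\setminus V_{y,r}\bigr).
\]
The first type of piece has dimension $<d$ by the 1-h-minimal boundary property (item 6 of Proposition 5.3.4 in \cite{cluckers_hensel_2022}, applied to the $\cL(y,r)$-definable set, which has empty interior in its frontier part). The second type has dimension $<d$ by the above. Since these pieces are the fibres of an $\RV$-indexed family, Theorem \ref{thm_dim} (via the definition of $\dim_\VF$ through $\RV$-partitions, with items \ref{item_thm_dim_2} and \ref{item_thm_dim_4} to deal with the $\RV$-coordinate $r$) gives $\dim_\VF(X\setminus U)<d$.

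Density then follows from item \ref{item_thm_dim_1}. Any nonempty open subset of $X$ has dimension $d$, so it cannot be contained in $X\setminus U$.

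The main obstacle is the interior issue. $X$ is open, but the pieces $(X_y)_r$ need not be, so the boundaries between partition pieces must be shown to be small. I would handle this by noting that $\bigcup_y \partial (X_y)_r$ is covered by an $\RV$-indexed family of $\cL$-definable sets, each of empty interior in $\VF^d$. Lemma \ref{lem_dim_T-def} and the definition of $\dim_\VF$ then bound its dimension by $d-1$, using the argument of item \ref{item_thm_dim_6} of Theorem \ref{thm_dim}.
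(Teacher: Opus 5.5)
Your proposal is correct and follows the paper's argument. The paper likewise takes an $\RV$-partition of the graph of $f$, applies \cite{cluckers_hensel_2022}, Theorem 5.1.5 on the interiors of the $\cL(y)$-definable pieces, and bounds the non-interior and non-differentiable parts, which form $\RV$-indexed families of sets of $\cT$-dimension $<d$. You make explicit several points the paper leaves implicit (definability and openness of $U$, local constancy of the $\RV$-part of $f$, density via item \ref{item_thm_dim_1}); note that the bound on the infinite union over $y$ is properly justified as in your last paragraph, by Lemma \ref{lem_dim_T-def} applied to the fibres of an $\RV$-partition of $X\setminus U$, not by items \ref{item_thm_dim_2} and \ref{item_thm_dim_4} alone.
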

\begin{proof}
Let $\chi:\Gamma_f\to Y$ be an $\RV$-partition of the graph of $f$. For all $y\in Y$, let $X_y$ be the interior of $\pi(\chi^{-1}(y))$, with $\pi:\Gamma_f\to \VF^d$ the projection map, so that $\dim_\VF \VF^d\setminus (\bigcup_{y\in Y} X_y) < d$. By \cite{cluckers_hensel_2022}, Theorem 5.1.5, the open subset $U_y\subset X_y$ of points near which $f$ has class $\cC^k$ is dense in $X_y$, so that $\dim_\VF (X_y\setminus U_y)<d$. The result follows. 
\end{proof}
\begin{dfn}
With the notations from Lemma \ref{lem_differentiable}, if $X=U$, we say that $f$ has class $\cC^k$.
\end{dfn}
By Lemmae \ref{lem_differentiable}, if $f:X_1\to X_2$ is a definable map with $X_i\subset\VF^d\times\RV^{m_i}$ of dimension $d$, we may always define its jacobian $\Jac(f):X_1\to \VF$, which is well-defined up to a $(d-1)$-dimensional set.

\begin{dfn}
\begin{enumerate}
\item Let $\mu\VF_{\cT'}[d]$ (resp. $\mu_\Gamma\VF_{\cT'}[d]$) be the category of pairs $(X,\omega)$ where $X\subset\VF^d\times Y\times\RV^m$ is an object of $\VF_{\cT'}[d]$ and $\omega:X\to\RV^\times$ (resp. $\omega:X\to\Gamma^\times$) is an $\emptyset$-definable map.

Morphisms $(X,\omega)\to (X',\omega')$ in this category are essential $\emptyset$-definable bijections $f:X\to X'$ such that, for almost all $x\in X$, we have
\[\omega(x) = \omega'(f(x))\rv(\Jac f(x)) \text{ (resp. }\omega(x) = \omega'(f(x))|\Jac f(x)|\text{ )}\,.\]

\item Let $\mu\RV_{\cT'}[d]$ be the category of pairs $(R,\omega)$ where $R\subset\RV^d\times Y\times\RV^m$ is an object of $\RV_{\cT'}[d]$ and where $\omega:R\to\RV^\times$ is an $\emptyset$-definable map.

The morphisms $(R,\omega)\to (S,\rho)$ in this category are the $\emptyset$-definable bijections $h:R\to S$ such that there is an $\RV$-jacobian $\Jac_\RV h$ of $h$ with
\[\omega(r) = \rho(h(r))\Jac_\RV h(r)\]
for every $r\in R$.

\item Let $\mu_\Gamma\RV_{\cT'}[d]$ be the category of pairs $(R,\omega)$ where $R\subset\RV^d\times Y\times\RV^m$ is an object of $\RV_{\cT'}[d]$ and where $\omega:R\to\Gamma$ is an $\emptyset$-definable map.

The morphisms $(R,\omega)\to (S,\rho)$ in this category are the $\emptyset$-definable bijections $h:R\to S$ such that
\[\omega(\xi)\cdot |\xi| = \rho(h(\xi))\cdot |h(\xi)|\]
for every $\xi\in R$, where if $\xi = (\xi_1,\dots,\xi_d,\zeta,\psi)$, then $|\xi| = \prod_{i=1}^d|\xi_i|$.
\end{enumerate}
If $(R,\omega)$ is an object of $\mu\RV_{\cT'}[d]$ (resp. of $\mu_\Gamma\RV_{\cT'}[d]$), we denote by $\fL(R,\omega)$ its lift, which is an object of $\mu\VF_{\cT'}[d]$ (resp. of $\mu_\Gamma\VF_{\cT'}[d]$).

We also denote by $\mu\VF_{\cT'}[*],\mu_\Gamma\VF_{\cT'}[*],\mu\RV_{\cT'}[*],\mu_\Gamma\RV_{\cT'}[*]$ the direct sums of these categories.
\end{dfn}
Unlike the case without volume forms, we now take direct sums over $\VF$, because we make a distinction between the different dimensions.

\begin{lem}\label{lem_relevement_mesure}
For all $d\in\bN\cup\{*\}$, the previous construction induces a morphism of semigroups (and even of semirings if $d=*$)
\[\fL:\Kp\mu\RV_{\cT'}[d]\to\Kp\mu\VF_{\cT'}[d]\,.\]
If $\cT'$ has the jacobian bijection lifting property, it also induces a morphism
\[\fL:\Kp\mu_\Gamma\RV_{\cT'}[d]\to\Kp\mu_\Gamma\VF_{\cT'}[d]\,.\]
\end{lem}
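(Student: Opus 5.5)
The plan is to define $\fL$ on objects by $\fL(R,\omega)=(\fL R,\omega\circ\rv)$, where $\omega\circ\rv$ means $(x,y,u)\mapsto\omega(\rv(x),y,u)$. Then show that isomorphic objects have isomorphic lifts, and that the construction respects disjoint unions (and products when $d=*$). On objects nothing new is needed: $\fL R$ is an object of $\VF_{\cT'}[d]$ as in the case without volume forms, and $\omega\circ\rv$ is $\emptyset$-definable with values in $\RV^\times$ (resp. $\Gamma$). So the real content is lifting morphisms compatibly with the volume forms.

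\emph{The case $\mu\RV_{\cT'}[d]$.} Let $h:(R,\omega)\to(S,\rho)$ be a morphism. By definition there is an $\RV$-jacobian $\Jac_\RV h$ with $\omega=(\rho\circ h)\cdot\Jac_\RV h$. An $\RV$-jacobian is by definition $\rv(\Jac f)$ for some $\emptyset$-definable jacobian lifting $f:\fL R\to\fL S$ of $h$. So here no lifting hypothesis is required: the lift comes with the morphism. I will check that this $f$ is a morphism $\fL(R,\omega)\to\fL(S,\rho)$ in $\mu\VF_{\cT'}[d]$, in three steps.
\begin{enumerate}
\item $f$ is an $\emptyset$-definable bijection, hence an essential bijection.
\item One must identify the jacobian of $f$, taken with respect to the $\VF^d$ coordinates as in Lemma \ref{lem_differentiable}, with $\Jac f_{\zeta_1,\zeta_2}$ on each fibre $\rv^{-1}(\xi)\times\{(\zeta_1,\zeta_2)\}$. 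By condition (2), $f$ maps the open set $\rv^{-1}(\xi)\times\{(y,u)\}$ into $\rv^{-1}(h_0(\xi,\zeta))\times\{h_1,h_2\}$ and is $\cC^1$ there. Hence every point of $\fL R$ lies in the set $U$ of Lemma \ref{lem_differentiable}, so $f$ is of class $\cC^1$ and $\Jac f=\Jac f_{\zeta_1,\zeta_2}$ everywhere, not just almost everywhere.
\item For $x\in\rv^{-1}(\xi)$ this gives
\[\omega(\rv x,\zeta)=\rho(h(\xi,\zeta))\,\Jac_\RV h(\xi,\zeta)=\rho(\rv f(x),\ldots)\,\rv(\Jac f(x)),\]
using condition (3) (constancy of $\rv\Jac$ on the fibre) and the fact that $f$ lifts $h$. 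This is exactly the morphism condition in $\mu\VF_{\cT'}[d]$.
\end{enumerate}

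\emph{The case $\mu_\Gamma\RV_{\cT'}[d]$.} Here a morphism $h$ only satisfies $\omega(\xi)|\xi|=\rho(h\xi)|h\xi|$, with no lift given. I invoke the jacobian bijection lifting property with $A=\emptyset$ to get a jacobian lifting $f$. The argument of step 2 above again identifies $\Jac f$ fibrewise. Condition (4) then gives $|\Jac f(x)|\cdot|\xi|=|h_0(\xi,\zeta)|$, which is $|h(\xi)|$ in the notation of the definition of $\mu_\Gamma\RV_{\cT'}[d]$. Dividing the morphism identity by $|\xi|$ gives $\omega(\rv x)=\rho(\rv f(x))\,|\Jac f(x)|$, so $f$ is a morphism in $\mu_\Gamma\VF_{\cT'}[d]$.

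\emph{Algebraic structure.} Both lifts commute with disjoint unions, so $\fL$ descends to semigroup morphisms on $\Kp$. For $d=*$ I will check that
\[\fL\big((R,\omega)\times(S,\rho)\big)=\fL(R,\omega)\times\fL(S,\rho),\]
with volume form $\omega\rho$ on the product, and that the unit is preserved. If $f,g$ are jacobian liftings, then $f\times g$ is a jacobian lifting of $h\times h'$, since its jacobian is block-diagonal: $\Jac(f\times g)=\Jac f\cdot\Jac g$, and $\rv$ and $|\cdot|$ are multiplicative.

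I expect the main obstacle to be step 2: making sure the jacobian of $f$, seen as a map on $\fL R\subset\VF^d\times Y\times\RV^m$, is computed only in the $\VF^d$-variables. The subtlety is that $f$ may move the $Y$ and $\RV$ coordinates via $h_1,h_2$, and $Y\subset\VF^l$ must be treated as $\RV$-like variables, as in Remark \ref{rmk_dec_cell}. One also has to check that the genericity set of Lemma \ref{lem_differentiable} is the whole set here, so that no dimension argument (and hence no $\RV$-thinness beyond what is assumed) is needed.
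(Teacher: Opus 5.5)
Your proposal is correct and is exactly the unpacking of the paper's one-line proof (``this follows at once from the definitions''). For $\mu\RV_{\cT'}[d]$, the $\RV$-jacobian in the definition of a morphism is by definition witnessed by an $\emptyset$-definable jacobian lifting, which serves as the lifted morphism; for $\mu_\Gamma\RV_{\cT'}[d]$, you obtain the lift from the jacobian bijection lifting property and convert the morphism condition using item (4). Your extra checks — that the jacobian is defined on all of $\fL R$, so no genericity argument is needed, and that products are preserved when $d=*$ — correctly fill in details the paper leaves implicit.
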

\begin{proof}
This follows at once from the definitions.
\end{proof}

Now we compute its image and its kernel. Actually, we will even define a converse (without any hypothesis on the jacobian bijection lifting property), as in the case without volume forms.

\begin{lem}
The morphisms of Lemma \ref{lem_relevement_mesure} are onto (if they are defined).

More precisely, let $(X,\omega)$ be an object of $\mu\VF_{\cT'}[d]$  (resp. $\mu_\Gamma\VF_{\cT'}[d]$), with $X\subset \VF^d\times Y\times\RV^m$, and let $\chi:\VF^d\times Y\to \RV^N$ be a cell decomposition adapted to $(X,\omega)$ and which only comprises dimension $d$ cells. Then there is $(R,\rho)$ an object of $\mu\RV_{\cT'}[d]$ (resp. $\mu_\Gamma\RV_{\cT'}[d]$) and a definable bijection $f:X\to \fL R$ which is also an isomorphism $f:(X,\omega)\to\fL(R,\rho)$ and descends to a bijection
\[\{h:(\chi(x,y),y,\xi):(x,y,\xi)\in X\}\to R\,,\]
preserving volume forms.
\end{lem}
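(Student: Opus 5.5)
The proof follows the pattern of Lemma \ref{lem_fL_surj}: work fibrewise over an $\RV$-partition, invoke the corresponding 1-h-minimal statement of Stout and Vermeulen with volume forms, and glue by compactness. The ``more precisely'' part implies surjectivity, since by Theorem \ref{thm_dec_cell} (item \ref{item_thm_dec_cell_1}, applied to $\omega$, which is $\RV$-valued) there is always a cell decomposition adapted to $(X,\omega)$. Its cells of dimension $<d$ form an $\emptyset$-definable subset of $X$ of $\VF$-dimension $<d$ (Theorem \ref{thm_dim}). Discarding it does not change the class in $\Kp\mu\VF_{\cT'}[d]$, because morphisms there are essential bijections. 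So we may assume only dimension $d$ cells occur.

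\textbf{Fibrewise construction.} Let $\psi:X\to Y'$ be the $\RV$-partition underlying the decomposition (Remark \ref{rmk_dec_cell}), and fix $y'\in Y'$. The fibre $X_{y'}$ with $\omega_{|X_{y'}}$ is then $\cL(y')$-definable, since $\omega$ was prepared. The map $\chi_{y'}$ is an $\cL(y')$-definable cell decomposition adapted to it, with only top-dimensional twisted boxes. On each twisted box $\rtimes(c,s)$ we use the translation $x\mapsto (x_i-c_i(x_{<i}))_i$. It maps the box bijectively onto $\rv^{-1}(s)$, is triangular with unit diagonal, and hence has Jacobian $1$ almost everywhere; by Lemma \ref{lem_differentiable} it is $\cC^1$ off a lower-dimensional set. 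Thus the volume form is preserved exactly, with $\rho=\omega$ transported, in both the $\RV$ and the $\Gamma$ versions. If some $s_i=0$ the box is not of full dimension, which is excluded. This is precisely what the $\mu$-version of \cite{stout_integration_2025}, Lemma 5.3.1 (the lemma we already invoked in Lemma \ref{lem_fL_surj}, now applied with volume forms) provides. It yields an object $(R_{y'},\rho_{y'})$ of $\mu\RV_{\cT(y')}[d]$ and an $\cL(y')$-definable isomorphism $g_{y'}:(X_{y'},\omega)\to\fL(R_{y'},\rho_{y'})$. This isomorphism descends to the bijection $\{(\chi_{y'}(x),\xi)\}\to R_{y'}$.

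\textbf{Gluing.} The construction is uniform in $y'$: it is given by the centres $c(y',\xi)$, which depend $\cL'(\emptyset)$-definably on $(y',\xi)$ by definition of a cell decomposition. So we can set $R=\{(s,y',\xi,\dots)\}\subset(\RV^\times)^d\times Y'\times\RV^{m'}$ and $\rho(s,y',\dots)=\omega$ on the corresponding cell. Here $\omega$ is constant on cells by preparation. The glued map $f$ is then $\emptyset$-definable, by compactness or even directly. Next we check that $R$ is an object of $\mu\RV_{\cT'}[d]$: $Y'$ maps into $\RV$; the projection $R\to Y'$ is an $\RV$-partition because its fibres are $\cL(y')$-definable; and the finiteness of fibres over $(\RV^\times)^d\times Y'$ comes from that of $X$ over $\VF^d\times Y$. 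Finally, the Jacobian condition holds almost everywhere on each fibre. We must also see that the union of the exceptional sets has $\VF$-dimension $<d$. This follows from Theorem \ref{thm_dim}: via the $\RV$-partition, the exceptional set is $\emptyset$-definable, and its fibres over $Y'$ have $\cT$-dimension $<d$.

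\textbf{Main obstacle.} The main obstacle is this last bookkeeping point: making sure that ``almost everywhere'' survives gluing over an infinite set $Y'$ mapping into $\RV$. Fibrewise negligible sets need not obviously glue to a negligible one, so this is the only step that genuinely uses the $\RV$-thinness and the dimension theory of Theorem \ref{thm_dim} (item \ref{item_thm_dim_4}, or the definition of $\dim_\VF$ via $\RV$-partitions). I would handle it by taking the exceptional set to be the $\emptyset$-definable complement of the set $U$ from Lemma \ref{lem_differentiable} for $f$, intersected appropriately.
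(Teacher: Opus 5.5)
Your proposal is correct and follows the paper's proof. In both, you discard the lower-dimensional cells, which is harmless since morphisms are essential bijections; you argue fibrewise over the $\RV$-partition exactly as in Lemma \ref{lem_fL_surj}, using that the Stout--Vermeulen construction (translation by the centres, with Jacobian $1$) preserves volume forms; and you glue by compactness. Your extra bookkeeping is more detailed than the paper but consistent with it. On the negligible exceptional sets, note that finiteness of the fibres of $R$ over $(\RV^\times)^d\times Y'$ is really inherited from the fibrewise objects $R_{y'}$, because the cell indices $\chi(x,y')$ also enter $R$, and not from the finiteness of $X$ over $\VF^d\times Y$.
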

\begin{proof}
The second part implies the first one: take a cell decomposition adapted to $(X,\omega)$, then restrict to $X'$ the union of dimension $d$ cells. The complementary subset $X\setminus X'$ has dimension strictly less than $d$, so the inclusion $(X',\omega)\subset (X,\omega)$ is an isomorphism in our category.

For the second point, proceed just as in Lemma \ref{lem_fL_surj}, noticing after \cite{stout_integration_2025} that the construction in this lemma preserves volume forms.
\end{proof}

Write once more $I_\chi X' = \{(\chi(x,y),y,\xi):(x,y,\xi)\in X'\}$ and $\omega:I_\chi X'\to \RV^\times$ (resp. $\omega:I_\chi X'\to \Gamma$) well defined, with $X'$ as in the proof of the lemma. The class $[I_\chi X', \chi]\in\Kp\RV_{\cT'}[d]$ is well defined.

To compute the kernel, we proceed as in the case without volume forms: we again have to do what Stout and Vermeulen do \cite{stout_integration_2025}.

\begin{dfn}
Let $\Isp^{\cT',\mu}$ be the congruence on $\Kp\mu\RV_{\cT'}[*]$ ou $\Kp\mu_\Gamma\RV_{\cT'}[*]$ generated by $([1]_1,[\RV_{<1}^\times]_1)$. Denote by $\Isp^{\cT',\mu}[d]$ (or $I^{\cT',\mu}_\sp$ if it is clear from the context) the semigroup congruence it induces on $\Kp\mu\RV_{\cT'}[d]$ ou $\Kp\mu_\Gamma\RV_{\cT'}[d]$.
\end{dfn}

We first deal with dimension $1$:

\begin{lem}
The kernel of the morphism $\fL:\Kp\mu\RV_{\cT'}[1]\to\Kp\mu\VF_{\cT'}[1]$ is the congruence $I^\mu_\sp[1]$. The same assertion holds for $\mu_\Gamma$ if the jacobian bijection lifting property is satisfied.

More precisely, the map $[X,\omega]\in\Kp\mu\VF_{\cT'}[1]\mapsto [I_\chi X,\omega]\in\Kp\mu\RV_{\cT'}[1]$ is well defined and depends neither on the choice of the cell decomposition $\chi$, nor on that of the representative $(X,\omega)$. The same assertion holds for $\mu_\Gamma$.
\end{lem}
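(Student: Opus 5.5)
The plan is to follow the proof of the case without volume forms (Lemmas \ref{lem_int1_ind_chi} and \ref{lem_int1_ind_bij}), now keeping track of volume forms. The map $[X,\omega]\mapsto[I_\chi X,\omega]$ is the candidate inverse of $\fL$ modulo $\Isp^{\cT',\mu}$. It is a right inverse: taking for $\fL R$ the cell decomposition induced by $\rv$ gives $[I_\chi\fL(R,\rho)]=[R,\rho]$. So once it is well defined on $\Kp\mu\VF_{\cT'}[1]$ and kills $\Isp^{\cT',\mu}$, the kernel of $\fL$ is exactly $\Isp^{\cT',\mu}[1]$.

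\emph{Independence of the cell decomposition.} I first refine both decompositions simultaneously, using the last assertion of Theorem \ref{thm_dec_cell}. As in Lemma \ref{lem_int1_ind_chi}, Lemma \ref{lem_Isp_ponctuel} (whose proof goes through verbatim for the $\mu$-categories, since it only glues witnessing bijections by compactness) reduces us to $X\subset\VF$. We may discard lower-dimensional cells (points), since they are negligible in $\mu\VF_{\cT'}[1]$. Refinedness of $\cT'$ together with Lemma \ref{lem_raff_fini} then gives a common finite-length refinement of the two cell partitions, and it suffices to treat one refinement of length $1$. Over $\sigma\in I_0$, with $c(\sigma)\in B_\sigma$ and $\omega$ constant on $B_\sigma$ (say equal to $\omega_\sigma$), the ball $B_\sigma$ splits as $\bigsqcup_{|\xi|<r(\sigma)}c(\sigma)+\rv^{-1}(\xi)$. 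After translating by $c(\sigma)$ and rescaling by a definable element of radius $r(\sigma)$, this reads as $[I_0]\cdot[\RV^\times_{<1}]_1$ against $[I_0]\cdot[1]_1$, with the volume form twisted compatibly by the rescaling factor. Hence the two classes agree modulo $\Isp^{\cT',\mu}$. The precise point to check is that the $\RV$-jacobian of the relevant bijections is $1$ (resp. that $|\xi|$-weights match in $\mu_\Gamma$), so that this identification is a morphism in $\mu\RV_{\cT'}[1]$. I would set up the rescaling via the center $c$ so that the bijection is a translation on each $\rv$-fibre.

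\emph{Independence of the representative.} Given an isomorphism $f:(X_1,\omega_1)\to(X_2,\omega_2)$, I discard lower-dimensional sets and apply Theorem \ref{thm_dec_cell}, item \ref{item_thm_dec_cell_2}, to get decompositions for which $f$ descends to a bijection $h:I_{\chi_1}X_1\to I_{\chi_2}X_2$. I refine further using Lemma \ref{lem_differentiable} and cell preparation of $\rv(\Jac f)$ (item \ref{item_thm_dec_cell_1}), so that on each cell $f$ is $\cC^1$ and $\rv(\Jac f)$ is constant. Then $f$ itself is a jacobian lifting of $h$ in the sense of the definition, and $\omega_1=\omega_2\circ f\cdot\rv(\Jac f)$ shows that $h$ is a morphism in $\mu\RV_{\cT'}[1]$. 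For $\mu_\Gamma$ the same argument works with $|\Jac f|$. The relation $|\Jac f|\cdot|\xi|=|h_0(\xi)|$ should follow on each cell from 1-h-minimal results on $\cL(y)$-definable functions (the valuative Jacobian property in \cite{stout_integration_2025}), applied fibrewise over the $\RV$-partition and glued by compactness.

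\emph{Main obstacle.} I expect the hardest step to be the Jacobian bookkeeping in the refinement step: checking that the decomposition of a ball into $\rv$-balls around $c(\sigma)$ yields exactly the generator $([1]_1,[\RV^\times_{<1}]_1)$ with correct volume forms, rather than a twisted version of it. For the kernel statement in the $\mu_\Gamma$ case, the jacobian bijection lifting property is used only to know that $\fL$ is defined, via Lemma \ref{lem_relevement_mesure}.
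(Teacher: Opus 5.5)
Your proposal is correct and follows the paper's route. The paper's proof only says that the argument of the volume-free case goes through once the cell decompositions are chosen adapted to the volume forms, citing \cite{stout_integration_2025}, Lemma 6.3.3. That argument is Lemmas \ref{lem_int1_ind_chi} and \ref{lem_int1_ind_bij}: refinedness for independence of the decomposition, and compatible domain--image preparation for independence of the representative. Your proposal spells out exactly this.

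One correction to your bookkeeping: write $B_\sigma=c'_\sigma+\rv^{-1}(s_\sigma)$. The rescaling $\xi\mapsto\xi/s_\sigma$ then has $\RV$-jacobian $s_\sigma^{-1}$, not $1$. So the old and new contributions are $[I_0,\omega_\sigma s_\sigma]_0\cdot[1]_1$ and $[I_0,\omega_\sigma s_\sigma]_0\cdot[\RV^\times_{<1}]_1$ (with $\omega_\sigma|s_\sigma|$ in the $\mu_\Gamma$ case). In the $\RV$-valued case, the definable jacobian lifting you need is $x\mapsto x/(c(\sigma)-c'_\sigma)$.
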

\begin{proof}
As explained in \cite{stout_integration_2025}, Lemma 6.3.3, the proof is essentially the same as in the case without a volume forms, the only difference being that we now have to choose cell decompositions so that they are adapted to volume forms.
\end{proof}

We continue on the same way. We do not give details, since it is enough to do the same as before, with only some small attention to the subtlety pointed out in \cite{stout_integration_2025}, Remark 6.3.7.

We eventually get:
\begin{thm}\label{thm_int_avec}
There is an isomorphism of graded semirings
\[\int:\Kp\mu\VF_{\cT'}[*]\to\Kp\mu\RV_{\cT'}[*]\,,\]
such that $\int(\fL (R,\omega)) = [R,\omega]$ for every object $(R,\omega)$ in $\mu\RV_{\cT'}[d]$.

There is also a morphism of graded semirings
\[\int:\Kp\mu_\Gamma\VF_{\cT'}[*]\to\Kp\mu_\Gamma\RV_{\cT'}[*]\,,\]
such that $\int(\fL(R,\omega))=(R,\omega)$ for every object $(R,\omega)$ in $\mu_\Gamma\RV_{\cT'}[d]$, and which is an isomorphism if $\cT'$ satisfies the jacobian bijection lifting property.
\end{thm}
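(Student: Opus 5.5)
The proof will follow the same three-step scheme as Theorem \ref{thm_int_sans}, now tracking volume forms. I will construct the integration map directly as a converse $I$ of $\fL$. Surjectivity of $\fL$ is then the preceding lemma. Injectivity (modulo $\Isp^{\cT',\mu}$) will follow from $I\circ\fL=\mathrm{id}$, which holds because $\fL R$ comes with the obvious cell decomposition whose index set is $R$ itself. Dimension one is already settled by the lemma on $\Kp\mu\RV_{\cT'}[1]$. Unlike the statement as printed, I expect the target to be the quotient by $\Isp^{\cT',\mu}$, since $[1]_1$ and $[\RV^\times_{<1}]_1$ have the same lift. In that dimension-one lemma, Lemma \ref{lem_int1_ind_chi} still uses refinedness. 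The only new point is that the refinement of length one in Definition \ref{def_theorie_raffinee} preserves volume forms. Indeed, replacing an open ball $B_\sigma$ of radius $r(\sigma)$ by the balls $c(\sigma)+\rv^{-1}(\xi)$, $|\xi|<r(\sigma)$, is measure-preserving after translating by $c(\sigma)$, whose Jacobian is $1$. This yields exactly the relation $([1]_1,[\RV^\times_{<1}]_1)$ in $\mu\RV$.

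Next I will introduce volume-form analogues $\mu\VFR_{\cT'}[k,l]$ of the categories $\VFR_{\cT'}[k,l]$. Objects carry a form $\omega$ with values in $\RV^\times$ (resp.\ $\Gamma$). Morphisms are essential bijections relative to $\VF^k$ that respect $\omega$ up to the $\RV$-Jacobian of the $\RV^l$-part, and I take care of zero coordinates as in \cite{stout_integration_2025}, Remark 6.3.7. I will then define $I^j:\Kp\mu\VFR_{\cT'}[k,l]\to\Kp\mu\VFR_{\cT'}[k-1,l+1]/\Isp^{\cT',\mu}$ using a cell decomposition in the $j$-th variable adapted to the form. For this, Theorem \ref{thm_dec_cell}, item \ref{item_thm_dec_cell_1}, is applied to $\omega$ and to $\rv$ of the relevant partial Jacobians, which are defined almost everywhere by Lemma \ref{lem_differentiable}. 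The analogues of Lemmas \ref{lem_Isp_ponctuel}, \ref{lem_int_VFR}, \ref{lem_int_VFR_RV} and \ref{lem_int_rv} then go through verbatim. In each of them, the argument reduces via an $\RV$-partition to fibres over $y\in Y$, which are $\cL(y)$-definable, and quotes the corresponding statements of \cite{stout_integration_2025}, Section 6. The fibres are glued by compactness, and the fact that $Y$ maps into $\RV$ lets the $\RV$-sum lemma absorb the gluing. Negligible sets of dimension $<d$ are harmless because dimension is well behaved by Theorem \ref{thm_dim}.

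The main obstacle is independence of the order of integration and invariance under morphisms in $\mu\VF_{\cT'}[d]$, i.e.\ the analogue of the last lemma before Theorem \ref{thm_int_sans}. Given an essential isomorphism $f:(X_1,\omega_1)\to(X_2,\omega_2)$, I will take an $\RV$-partition $\chi:\Gamma_f\to Y$ of its graph. The volume forms and $\rv(\Jac f)$ should be constant on pieces, so I will first partition according to these $\RV$-valued data, again by item \ref{item_thm_dec_cell_1}. For each $y$, the restriction $f_y$ is an $\cL(y)$-definable measure-preserving essential bijection. There I invoke the 1-h-minimal statement (the volume analogue of \cite{stout_integration_2025}, Lemma 5.5.15, in Section 6) to obtain $I[X_{1,y},\omega_1]=I[X_{2,y},\omega_2]$ modulo $I_{\sp,y}$. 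I then conclude with the volume analogue of Lemma \ref{lem_int_rv}.

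The delicate check is that exceptional sets of dimension $<d$ chosen fibrewise glue to an $\emptyset$-definable set of dimension $<d$. This follows from the definability of dimension in families together with additivity over fibres, items \ref{item_thm_dim_3} and \ref{item_thm_dim_4} of Theorem \ref{thm_dim}. It is the only place where $\RV$-thinness is used.

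For $\mu_\Gamma$, the same construction gives a surjective morphism. Injectivity requires that $\fL$ be well defined on $\Kp\mu_\Gamma\RV_{\cT'}$, which is Lemma \ref{lem_relevement_mesure} under the jacobian bijection lifting property; injectivity then follows again from $I\circ\fL=\mathrm{id}$.
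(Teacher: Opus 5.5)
Your proposal is correct and follows the paper's route. The paper proves this theorem only by saying that one repeats the construction behind Theorem \ref{thm_int_sans}, i.e.\ cell decompositions adapted to the forms, the $\VFR$-type categories, and reduction via $\RV$-partitions to \cite{stout_integration_2025}, Section 6, which is exactly your plan; you are also right that the target must be read modulo $\Isp^{\cT',\mu}$. One small overstatement: $\RV$-thinness is used not only to glue the exceptional sets but already to make $\dim_\VF$ well defined, and hence to make sense of the categories $\mu\VF_{\cT'}[d]$ and of Lemma \ref{lem_differentiable}.
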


\subsection{Lifting bijections}
Here we give examples of some situations in which the (jacobian) bijection lifting property holds. Let us start with a first case, which is more or less trivial:

\subsubsection{Section of \texorpdfstring{$\RV^\times$}{RV}}

\begin{prop}\label{prop_rel_bij_RV}
Assume there is an $\emptyset$-definable map $\sn:\RV^\times\to\VF$ such that $\rv\circ\sn=\id_\RV$. Then $\cT'$ has the jacobian bijection lifting property.
\end{prop}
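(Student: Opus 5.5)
The section $\sn$ lets us build the lift by hand: on each $\rv$-fibre we use an affine map sending the chosen base point $\sn(\xi)$ of the source fibre to the chosen base point $\sn(h_0(\cdot))$ of the target fibre.

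\textbf{Setup.} Let $h=(h_0,h_1,h_2):R\to S$ be an $A$-definable bijection between objects of $\RV_{\cT'(A)}[d]$. Write $r=(\xi,\zeta_1,\zeta_2)\in R$, with $\xi\in(\RV^\times)^d$. Define $f:\fL R\to\fL S$ by
\[f(x,\zeta_1,\zeta_2)=\Bigl(\bigl(\sn(h_{0,i}(r))\,\sn(\xi_i)^{-1}x_i\bigr)_{1\leq i\leq d},\,h_1(r),\,h_2(r)\Bigr),\qquad \text{where } r=(\rv(x),\zeta_1,\zeta_2).\]
This map is $A$-definable, since $\sn$ is $\emptyset$-definable and $h$ is $A$-definable.

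\textbf{Step 1: $f$ lands in $\fL S$ and is bijective.}
\begin{itemize}
\item The target is correct because $\rv$ is multiplicative on $\VF^\times$ and $\rv\circ\sn=\id$. Hence $\rv(\sn(h_{0,i}(r))\,\sn(\xi_i)^{-1}x_i)=h_{0,i}(r)\,\xi_i^{-1}\,\xi_i=h_{0,i}(r)$, so $f(x,\zeta_1,\zeta_2)$ lies over $h(r)\in S$.
\item Bijectivity follows from $h^{-1}$. Apply the same recipe to $h^{-1}$ to get a map $g:\fL S\to\fL R$. Composing $g\circ f$ on the fibre over $r$ multiplies $x_i$ by $\sn(h_{0,i}(r))\sn(\xi_i)^{-1}$ and then by $\sn(\xi_i)\sn(h_{0,i}(r))^{-1}$, giving the identity. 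The composite $f\circ g$ is symmetric.
\end{itemize}

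\textbf{Step 2: the jacobian conditions.}
\begin{itemize}
\item For fixed $(\xi,\zeta_1,\zeta_2)\in R$, the induced map $f_{\zeta_1,\zeta_2}$ on $\rv^{-1}(\xi)$ is a diagonal linear map with constant coefficients $\lambda_i=\sn(h_{0,i}(r))\sn(\xi_i)^{-1}$. It is therefore $\cC^1$, indeed $\cC^\infty$, and maps onto $\rv^{-1}(h_0(r))$.
\item Its jacobian is the constant $\prod_i\lambda_i$. Hence $\rv(\Jac f_{\zeta_1,\zeta_2})=\prod_i h_{0,i}(r)\,\xi_i^{-1}$ is constant on $\rv^{-1}(\xi)$.
\item Taking absolute values gives $|\Jac f_{\zeta_1,\zeta_2}|\cdot|\xi|=|h_0(r)|$, with the convention $|\xi|=\prod_i|\xi_i|$.
\end{itemize}
So $f$ is a jacobian lifting of $h$.

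\textbf{Remaining checks.} The steps above cover objects of $\RV_{\cT'(A)}[d]$ for a single $d$. For $\RV_{\cT'(A)}[*]$, a bijection between objects of $\RV_{\cT'(A)}[\leq d]$ is a finite sum of such pieces over the graded summands. We apply the construction componentwise, using the "compatibility with zeroes" convention on morphisms. The only points needing care, and the main possible obstacle, are this bookkeeping and the extra coordinates $Y$ that map into $\RV$. Those coordinates are simply carried along by $h_1$, so they should not cause trouble. No genuine difficulty arises, because $\sn$ trivialises every $\rv$-fibre uniformly and definably.
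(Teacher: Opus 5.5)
Your proof is correct and is essentially the paper's: the paper also multiplies each $\rv$-fibre by a scalar whose $\rv$ is $h_0(\xi)\xi^{-1}$, namely $x\mapsto x\cdot\sn(\rv(x)^{-1}h_0(\rv(x)))$. This differs from your multiplier $\sn(h_0)\sn(\xi)^{-1}$ only cosmetically, and neither choice needs $\sn$ to be multiplicative. Your version is in fact slightly more careful than the paper's one-line argument: you send the auxiliary coordinates to $h_1,h_2$ (the paper's formula leaves them unchanged), and you check bijectivity and the jacobian conditions explicitly, where the paper just states that the formula clearly works.
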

\begin{proof}
Let $R,S\in\Ob\RV_{\cT'(A)}[d]$ be two objects for $A$ some set of parameters in $\VF$, and let $h:R\to S$ be an $A$-definable bijection. We define a lifting $f:\fL R\to \fL S$ of $h$ in the following manner:
\[\begin{array}{rrcl}
f : & \fL R & \to & \fL S \\
& (x,y,\xi) & \mapsto & (x\cdot \sn(\rv(x)^{-1}h(\rv(x))), y, \xi)\,.
\end{array}\]
This clearly proves the result.
\end{proof}
\begin{rmk}
We did not use that the parameter set $A$ was in $\VF$; here, $A$ could have been anywhere inside $\VF^{\mathrm{eq}}$.
\end{rmk}

\subsubsection{Effective theory}

We now turn to a second case, inspired in Stout and Vermeulen's work \cite{stout_integration_2025}. We will need the following definition:
\begin{dfn}
Say that $\cT'$ is \emph{$\RV$-minimal} (relative to $\cT$) if, for every model $M\models\cT'$, every set of parameters $A\subset \VF(M)$ such that $A=\dcl_{\cL'}(A)\cap\VF$, and every $\cL'(A)$-definable $X\subset\RV^r$, then $X$ is already $\cL(A)$-definable.
\end{dfn}
In other words, besides the obvious effect on parameters, going from $\cL$ to $\cL'$ has no effet on the structure of $\RV$. Then:
\begin{prop}\label{prop_rel_bij_eff}
Assume the theory $\cT$ to be effective (in the sense of \cite{stout_integration_2025}) and the theory $\cT'$ to be $\RV$-minimal relative to $\cT$. Then $\cT'$ has the jacobian bijection lifting property.
\end{prop}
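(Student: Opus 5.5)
The plan is to reduce to the jacobian bijection lifting property for the 1-h-minimal effective theory $\cT$, which Stout and Vermeulen establish in \cite{stout_integration_2025}, using $\RV$-minimality to show that all data on $\RV$ involved are already $\cL$-definable.

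Let $M\models\cT'$, $A\subset\VF(M)$, $R,S\in\Ob\RV_{\cT'(A)}[d]$ and $h:R\to S$ an $A$-definable bijection. First replace $A$ by $A'=\dcl_{\cL'}(A)\cap\VF$. This changes neither which sets are $\cL'(A)$-definable nor the definability of the map we construct, since $A'$ is $\cL'(A)$-definable pointwise; by compactness the lift can then be taken $\cL'(A)$-definable.

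The key point concerns $R$ and $S$, which are subsets of $(\RV^\times)^d\times Y\times\RV^m$ with $Y$ mapping into $\RV$. I will use an $A$-definable injection $Y\to\RV^l$ to transport $R$, $S$ and $h$ to genuine subsets of $\RV$-powers. For $\fL$ this is harmless: the $Y$-coordinate is just carried along, so a lift of the transported bijection composed with the injections gives a lift of $h$. After this reduction, $R$, $S$ and the graph of $h$ are $\cL'(A)$-definable subsets of $\RV^r$. By $\RV$-minimality they are therefore $\cL(A)$-definable. Thus $h$ is an $\cL(A)$-definable bijection between objects of $\RV_{\cT(A)}[d]$.

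Since $\cT(A)$ is effective (effectivity survives adding $\VF$-parameters), \cite{stout_integration_2025} provides an $\cL(A)$-definable jacobian lifting $f:\fL R\to\fL S$ of $h$. This $f$ is in particular $\cL'(A)$-definable. The four conditions of a jacobian lifting (bijectivity, $\cC^1$ on each fibre $\rv^{-1}(\xi)$, constancy of $\rv(\Jac)$, and the norm identity) are properties of the map alone and do not depend on the language, so they transfer directly. I expect this to be Stout and Vermeulen's result in the 1-h-minimal effective setting, which they use to show that their integral preserves volume forms.

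The main obstacle is the handling of $Y$. The transport via $Y\to\RV^l$ must stay compatible with the $\RV$-partition condition, and the condition "$R\to Y$ is an $\RV$-partition" is not itself $\cL$-definable. This matters only in making sure that the transported objects lie in the right $\RV_{\cT(A)}[d]$. Once $Y$ is replaced by its $\RV$-image, this condition becomes vacuous, and the finiteness of the fibres over $(\RV^\times)^d\times\RV^l$ is preserved. A secondary point is checking that "$A=\dcl_{\cL'}(A)\cap\VF$" matches exactly the hypothesis of $\RV$-minimality, which is why $A$ is closed first.
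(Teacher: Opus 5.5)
There is a genuine gap at the step where, after transporting $Y$ into $\RV^l$, you conclude that $h$ is ``an $\cL(A)$-definable bijection between objects of $\RV_{\cT(A)}[d]$''.

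An object of $\RV_{\cT(A)}[d]$ must project finite-to-one onto $(\RV^\times)^d$ alone. Your transport only preserves finiteness over $(\RV^\times)^d\times\RV^l$, which is not enough, because the image of $Y$ in $\RV^l$ is typically infinite. Nor is this a formality. The $Y$-coordinates are $\VF$-points that are $\cL'$-definable but not $\cL$-definable from their $\RV$-images, so the transported bijection may have no $\cL$-definable lift at all.

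For example, take $\cT=\ACVF(0,0)$ with a section $\sn:\RV^\times\to\VF^\times$ as in Section~\ref{ssec_hens_RV}. Let $Y=\sn(\RV^\times)$, $R=S=\RV^\times\times Y\in\Ob\RV_{\cT'}[1]$ and $h(\xi,y)=(\rv(y),\sn(\xi))$. The transported map is the swap of $(\RV^\times)^2$, which is indeed $\cL$-definable. But any lift of it has the form $(x,t)\mapsto(g(x,t),\rv(x))$ with $\rv(g(x,t))=t$. Then $g(1,\cdot)$ would be an $\cL$-definable section of $\rv$ on $\RV^\times$, which is impossible because definable maps from $\RV$ to $\VF$ have finite image in a 1-h-minimal theory. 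Yet $h$ itself has the $\cL'$-definable jacobian lift $(x,y)\mapsto(yx/\sn(\rv(x)),\sn(\rv(x)))$. You also cannot repair this by fibring over the transported $\RV^l$-coordinates: $\RV$-minimality and effectivity only survive adding $\VF$-parameters.

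What is missing is to keep the points of $Y$ as $\VF$-parameters and argue fibrewise, as the paper does. Write $R\subset(\RV^\times)^d\times Y_1\times\RV^{m_1}$ and $S\subset(\RV^\times)^d\times Y_2\times\RV^{m_2}$. For $u\in Y_1$ and $v\in Y_2$, put $R_{u,v}=R_u\cap h^{-1}(S_v)$ and $S_{u,v}=h(R_u)\cap S_v$, and let $h_{u,v}$ be the induced restriction of $h$.

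These sets lie in $(\RV^\times)^d\times\RV^{m_1}$ and $(\RV^\times)^d\times\RV^{m_2}$ respectively, with finite fibres over $(\RV^\times)^d$. They are $\cL'(A,u,v)$-definable, hence $\cL(w)$-definable for some $w\in\dcl_{\cL'}(A,u,v)\cap\VF$ by $\RV$-minimality. Since $w$ lies in $\VF$, $\cT(w)$ is still effective, so Stout and Vermeulen provide an $\cL(w)$-definable jacobian lift $f_{u,v}$ of $h_{u,v}$. Compactness then glues these into the required $\cL'(A)$-definable lift. In the example, $R_{u,v}=\{\rv(v)\}$ and $f_{u,v}(x)=xu/v$, and gluing gives exactly the lift displayed above.

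The remaining parts of your argument are fine: closing $A$ under $\dcl_{\cL'}\cap\VF$, and noting that the jacobian-lifting conditions do not depend on the language.
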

\begin{proof}
Let $R,S\in\Ob\RV_{\cT'(A)}[d]$ and $h:R\to S$ be an $A$-definable bijection, with $A$ a set of parameters in $\VF$. Write $R\subset(\RV^\times)^d\times Y_1\times\RV^{m_1}, S\subset(\RV^\times)^d\times Y_2\times\RV^{m_2}$. For $u\in Y_1,v\in Y_2$, set $R_u,S_v$ to be the respective fibres of $R$ and $S$ over $u$ and $v$, and $R_{u,v}=R_u\cap h^{-1}(S_v), S_{u,v}=h(R_u)\cap S_v$, as well as $h_{u,v}:R_{u,v}\to S_{u,v}$ the restriction of $h$. By doing so, $R_{u,v}, S_{u,v}$ and $h_{u,v}$ are $\cL'(A,u,v)$-definable, hence $\cL(w)$-definable for some $w\in\dcl_{\cL'}(A,u,v)\cap\VF$ by $\RV$-minimality. But the theory $\cT(w)$ is 1-h-minimal and effective, because $\cT$ is and $w$ is in $\VF$. We may use effectivity to find a jacobian lifting $f_{u,v}:\fL R_{u,v}\to \fL S_{u,v}$ of $h_{u,v}$, which is $\cL(w)$-definable -- note that the projection of $R_{u,v}, S_{u,v}$ on $\RV^d$ has finite fibres. Then $f_{u,v}$ is in particular $\cL'(A,u,v)$-definable, and we can glue by compactness.
\end{proof}

\subsubsection{Section of \texorpdfstring{$\Kb$}{K}}

The last case we are going to deal with is slightly more complex, it is some kind of mix of the first two properties.
\begin{prop}\label{prop_rel_bij_Kb}
Assume there is an $\emptyset$-definable section $\sn:\Kb\to\VF$ and we have on $\RV^\times$ an angular component $\ac:\RV^\times\to\Kb^\times$, such that:
\begin{enumerate}
\item $\rv\circ\sn = \id_{\Kb}$;
\item $\ac$ is a group morphism and its restriction to $\Kb^\times$ is the identity;
\item\label{item_prop_rel_bij_Kb_3} the definable subsets $\Gamma$ and $\Kb^\times$ are orthogonal, in the sense that for every model $M\models\cT'$, every set of parameters $A$ and every $A$-definable subset $X\subset\Kb^n\times\Gamma^m$, the set $X$ is a finite union of $A$-definable sets of the form $Y\times Z$, with $Y\subset\Kb^n$ and $Z\subset\Gamma^m$;
\item the induced structure on $\Gamma$ is that of an orderd abelian group, that is for every $M\models\cT'$, every set of parameters $A$ such that $A=\dcl_{\cL'}(A)$ and every $A$-definable subset $X\subset\Gamma^m$, the set $X$ is already definable in the language of ordered abelian groups with parameters in $A\cap\Gamma$.
\item the theory $\cT'$ is $\Gamma$-effective, in the sense that for every $M\models\cT'$ and every set of parameters $A\subset\VF(M)$, for every $\gamma\in\dcl_{\cL'}(A)\cap\Gamma$, there is $x\in\dcl_{\cL'}(A)\cap\VF$ such that $|x|=\gamma$.
\end{enumerate}
Then the theory $\cT'$ has the jacobian bijection lifting property.
\end{prop}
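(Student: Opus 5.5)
The plan is to reduce the lifting problem to the section case of Proposition \ref{prop_rel_bij_RV}, using the angular component to split $\RV^\times$ into a residue part and a value-group part. Fix parameters $A\subset\VF$ and an $A$-definable bijection $h:R\to S$ between objects of $\RV_{\cT'(A)}[d]$. For $\xi\in(\RV^\times)^d$ we write $\xi=(\ac(\xi),|\xi|)$ coordinatewise; since $\ac$ is a group morphism which is the identity on $\Kb^\times$, the map $\RV^\times\to\Kb^\times\times\Gamma$, $\xi\mapsto(\ac\xi,|\xi|)$, is an isomorphism of groups. Hence $\RV^\times\simeq\Kb^\times\times\Gamma$ definably, and $h$ becomes a definable bijection between subsets of $(\Kb^\times)^d\times\Gamma^d\times(\text{auxiliary})$. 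We aim to produce a \emph{definable} partial section $\RV^\times\supset D\to\VF^\times$ defined on the relevant values, of the form $\xi\mapsto\sn(\ac\xi)\cdot t(|\xi|)$. Given such an $s$, we take $f(x,y,\zeta)=(x\cdot s(\rv(x))^{-1}s(h_0(\rv x,\dots)),\dots)$ exactly as in Proposition \ref{prop_rel_bij_RV}. On each $\rv^{-1}(\xi)$ this is multiplication by a constant $\lambda$ with $\rv(\lambda)=h_0(\xi)/\xi$, which is a jacobian lifting: the jacobian has constant $\rv$ and satisfies $|\Jac f|\cdot|\xi|=|h_0(\xi)|$.

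The main work is to construct $t$ on the values of $\Gamma$ that actually occur, since we only have a section of $\Kb$. We do this by cutting $h$ into pieces. Using orthogonality (item \ref{item_prop_rel_bij_Kb_3}) together with the $\RV$-partition and compactness machinery, we decompose $R$, $S$ and $h$ into finitely many $A$-definable pieces, uniformly in auxiliary parameters $y\in Y$ mapping into $\RV$. On each piece, $h$ acts on the $\Gamma$-coordinates by a map definable in the language of ordered abelian groups over $A\cap\Gamma$, by item 4, hence piecewise of the form $\gamma\mapsto M\gamma+\delta$ with $M$ an integer matrix and $\delta\in\dcl(A)\cap\Gamma$. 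By $\Gamma$-effectivity, each such constant $\delta$ (and each other constant appearing) is $|c_\delta|$ for some $c_\delta\in\dcl_{\cL'}(A)\cap\VF$.

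It remains to handle the affine-in-$\Gamma$ part. Since $h$ is a bijection preserving the fibres over the $\Kb$-data up to a definable bijection of $\Kb$-parts, the $\Gamma$-component of $h$ must be a bijection of the $\Gamma$-parts. We would then lift $\gamma\mapsto M\gamma+\delta$ on the ball $\rv^{-1}(\xi)$ by
\[
x\mapsto \sn(\text{new }\ac)\,\sn(\ac x)^{-1}\,c_\delta\,x^{M},
\]
suitably normalised by $\ac(c_\delta)$ and $\ac$ of the monomial. The pure-$\Gamma$ constraints come from Presburger-type definable sets, namely congruence conditions and intervals with endpoints in $\dcl(A)\cap\Gamma$. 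These are preserved by monomial maps, and the interval endpoints are again lifted via $\Gamma$-effectivity. The jacobian condition then has to be checked for monomial maps: the jacobian of $x\mapsto c\,x^M$ on $\rv^{-1}(\xi)$ has $\rv$ constant and absolute value $|c|\,|x^M|\,|\det M|/|x|$ coordinatewise. Since the residue characteristic is $0$ and $M$ is invertible on the piece (as $h$ is bijective there), $|\det M|=1$, so condition (4) holds. Finally we glue over $y$ by compactness.

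The main obstacle is the middle step: showing that $h$ is, piecewise and definably, a product of a $\Kb$-bijection and a $\Gamma$-affine bijection with invertible integer linear part, uniformly in the auxiliary data. Orthogonality alone gives product decompositions of sets, not of graphs of functions in mixed form. I would first apply item \ref{item_prop_rel_bij_Kb_3} to the graph of $h$ after splitting coordinates, and then argue that a definable function from $\Gamma$-variables to $\Kb$ is locally constant on product pieces, and conversely. This is where I expect the technical difficulty to concentrate.
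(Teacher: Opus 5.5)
\paragraph{Overall assessment.}
Your outline follows the paper's route, but there is one genuine gap: you treat the $\Gamma$-part of $h$ as integer-affine, whereas definable maps on $\Gamma$ are only $\bQ$-affine, and lifting those needs root extraction in $1+\mathfrak m$. The shared route is: rewrite $\RV^\times\simeq\Kb^\times\times\Gamma$ via $(\ac,|\cdot|)$, split $h$ by orthogonality, lift the $\Kb$-part with $\sn$, lift translations by $\Gamma$-effectivity, and lift linear parts by monomials normalised by $\sn\circ\ac\circ\rv$.

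\paragraph{Two points on the set-up.}
First, drop your opening plan of a definable section $\xi\mapsto\sn(\ac\xi)\,t(|\xi|)$ on all relevant values of $\Gamma$. Only a section of $\Kb$ is available, so such a $t$ need not exist. Moreover, the monomial lift you actually use is not multiplication by a constant on each $\rv^{-1}(\xi)$.

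Second, the step you call the main obstacle is immediate, and the paper does it in one line.
\begin{itemize}
\item Remove zero coordinates by a finite partition, write each coordinate as $(\ac,|\cdot|)$, and apply item \ref{item_prop_rel_bij_Kb_3} to the graph of $h$.
\item Refine to a disjoint union of products $G^{\Kb}\times G^{\Gamma}$.
\item Such a product lies in the graph of a bijection. So for each domain point, its set of images is a product of two nonempty sets, hence both factors are singletons.
\item Therefore each factor is the graph of a bijection between its projections.
\end{itemize}
Also, $h$ need not preserve the $Y$-coordinates. When gluing, fibre over pairs $(y_1,y_2)$ with $y_2$ the $Y_2$-component of $h$, or conjugate by $\eta_R,\eta_S$ as the paper does.

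\paragraph{The gap: rational coefficients.}
You claim $h^\Gamma$ is piecewise $\gamma\mapsto M\gamma+\delta$ with $M$ an \emph{integer} matrix. Definable maps on $\Gamma$ are only piecewise $\bQ$-affine (Cluckers--Halupczok, which the paper invokes).

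For example, take
\begin{itemize}
\item $R=\{\xi\in\RV^\times:\ac(\xi)=1,\ |\xi|\in\Gamma^2\}$ and $S=\{\eta\in\RV^\times:\ac(\eta)=1\}$;
\item $h(\xi)$ the unique $\eta\in S$ with $\eta^2=\xi$.
\end{itemize}
This is an $\emptyset$-definable bijection in $\RV_{\cT'}[1]$ whose $\Gamma$-part is halving. No map $x\mapsto c\,x^M$ with integral $M$ lifts it.

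The missing ingredient is root extraction. If $u\in\VF^\times$ has $\ac(\rv(u))=1$ and $|u|\in\Gamma^q$, there is a unique $v$ with $\ac(\rv(v))=1$ and $v^q=u$. This holds because $1+\mathfrak m$ is uniquely divisible (Hensel's lemma, residue characteristic $0$). Applied to $u=x/\sn(\ac(\rv(x)))$, it makes rational powers of $u$ definable, which is exactly how the paper lifts rational homotheties.

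\paragraph{Related checks you skipped.}
\begin{itemize}
\item The same unique divisibility is needed to see that your monomial maps send $\rv^{-1}(\xi)$ bijectively onto $\rv^{-1}(h(\xi))$ when $\det M\neq\pm1$. You do not check this.
\item Bijectivity of $h$ on a piece does not force $\det M\neq 0$ if the piece lies in a proper affine subspace of $\Gamma^d$. You must first choose the affine formula, or partition further, so that its linear part is nonsingular. Only then does residue characteristic $0$ give $|\det M|=1$ and hence the jacobian condition.
\end{itemize}
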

\begin{rmk}
We used the fact that, in presence of an angular component, we may see $\Gamma$ as a subset of $\RV^\times$, which is the set of points with angular component $1$. This is necessary for the statement of Item \ref{item_prop_rel_bij_Kb_3} to make sense.
\end{rmk}
\begin{proof}
Let $h:R\to S$ be an isomorphism in $\RV_{\cT'(A)}[d]$, with $A$ some parameters in $\VF$. Let us write $R\subset (\RV^\times)^d\times\RV^{m_1}\times Y_1, S\subset (\RV^\times)^d\times\RV^{m_2}\times Y_2$, where the projection maps $R\to(\RV^\times)^d\times Y_1, S\to(\RV^\times)^d\times Y_2$ have finite fibres and $Y_i\subset\VF^{l_i}$ maps into $\RV$. First we reduce to the case $Y_1=Y_2$ with $h$ relative to these coordinates. To do so, write $h=(h^\RV, h^Y)$, then set $\eta_R : R\to (\RV^\times)^d\times\RV^{m_1}\times Y_1\times Y_2, (x,y)\mapsto (x,y,h^Y(x,y))$, whose image we denote by $\widetilde R$. Define analogously $\eta_S:S\to \widetilde S$. Then set $\widetilde h:\widetilde R\to\widetilde S, (x,y_1,y_2)\mapsto (h^{\RV}(x,y_1),y_1,y_2)$. We have $h=\eta_S^{-1}\circ\widetilde h\circ\eta_R$, and it is straightforward to lift $\eta_R,\eta_S$. We have reduced to the desired case. Now, by compactness, it is enough to prove the jacobian bijection lifting property in every fibre of the projection to $Y_1=Y_2$. Note that here we did not assume that the projection to $Y_i$ was an $\RV$-partition.

Assume that $R\subset(\RV^\times)^d\times\RV^{m_1}, S\subset(\RV^\times)^d\times\RV^{m_2}$. The graph of $h$ is an $\emptyset$-definable subset $\Gamma_h\subset (\RV^\times)^{2d}\times\RV^{m_1+m_2}$. Without loss, assume that $\Gamma_h\subset(\RV^\times)^{2d+m_1+m_2}$. By orthogonality, we find $A$-definable partitions $R=\bigsqcup_{i=1}^N R_i^{\Kb}R_i^\Gamma, S=\bigsqcup_{i=1}^N S_i^{\Kb}S_i^{\Gamma}$, with $R_i^{\Kb}\subset(\Kb^\times)^{d+m_1}, R_i^\Gamma\subset\Gamma^{d+m_1}$, and similarly for $S_i^{\Kb}, S_i^\Gamma$. We also get $A$-definable bijections $h_i^{\Kb}:R_i^{\Kb}\to S_i^{\Kb}$, $h_i^\Gamma:R_i^\Gamma\to S_i^\Gamma$ such that, if $yz\in R_i^{\Kb}R_i^\Gamma$, then $h(yz) = h_i^{\Kb}(y)h_i^\Gamma(z)$. We restrict ourselves without loss to the case $N=1$, and stop writing the indices $i$. It is enough to deal with the two cases $h^{\Gamma}=\mathrm{id}$ and $h^{\Kb}=\mathrm{id}$ separately. We keep denoting by $x=yz$ the elements of $R$, with $y\in R^{\Kb}$ and $z\in R^\Gamma$; in other words, $y=\ac(x)$ and $z=|x|=xy^{-1}$.

If $h^{\Gamma}=\mathrm{id}$, just take

\[\begin{array}{rrcl}
f:&\fL R &\to & \fL S\\
& x & \mapsto & x\cdot \sn\left(\frac{h^{\Kb}\circ\ac\circ\rv(x)}{\ac\circ\rv(x)}\right)
\end{array}\,.\]

If $h^{\Kb}=\mathrm{id}$, then after a finite partition $h^\Gamma$ is given by linear combinations with coefficients in $\bQ$ and translations by definable valuation, according to Cluckers et Halupczok \cite{cluckers_quantifier_2011}, Corollary 1.10. Therefore it suffices to treat those cases separately.

If $\gamma$ is a definable valuation, then by assumption there is $x\in\dcl_{\cL'}(A)$ with valuation $\gamma$. Now lift the translation by $\gamma$ to multiplication by $x$.

Now we deal with the case of linear combinations with coefficients in $\bQ$. To simplify again, we limit ourselves to the cases of linear combinations with integer coefficients and homotheties of rational factor, which is enough. We start with $h^\Gamma$ a linear combination with integer coefficients. We lift by taking
\[x\mapsto \left (\frac{x}{\sn\circ\ac\circ\rv (x)}\right )^{k-1}\cdot x\,,\]
where we used multi-index notations.

Finally, when $h^\Gamma$ is a homothety of rational factor $r=\frac pq$, let $x\in\VF^\times$ have angular component $1$, \emph{i.e.} $\ac\circ\rv(x)=1$. Then there is a unique $y\in\VF^\times$ with angular component $1$ such that $x^p=y^q$; denote it by $y=x^r$. Set
\[x\mapsto \left(\frac{x}{\sn\circ\ac\circ\rv(x)}\right)^r\cdot x\,\]
which is clearly $\emptyset$-definable and lifts $h$.

For the jacobian version, just note that this construction does provide a jacobian lift.
\end{proof}

\begin{rmk}
Later on we will deal with a last case, very similar to this one but rather assuming that we have a section of $\Gamma$ and that $\Kb$ is algebraically bounded (so that we may lift bijections by effectivity).
\end{rmk}

\section{Example : henselian valued fields}\label{sec_henselien}
Now we show that the above results indeed holds in the case of residue characteristic zero henselian valued fields with a section of the residue field, of $\RV$ (Corollary \ref{cor_int_hens_1}) or of the value groupe (Corollary \ref{cor_int_hens_2}), with a tameness hypothesis on $\Kb$ in that last case.

\subsection{Section of \texorpdfstring{$\RV$}{RV} or the residue field}\label{ssec_hens_RV}
Let $\cL$ be a language obtained from $\cL_{val}$ by possibly adding structure on $\RV$ and let $\cT$ be an $\cL$-theory of henselian valued fields with characteristic zero residue field. In particular, $\cL$ may be $\cL_{val}$ or $\cL_{\mathrm{DP}}$ the Denef-Pas language. Let $\Gamma_0\subset\Gamma$ be a definable subgroup of $\Gamma$, and set $R$ to be the set of points of $\RV^\times$ with valuation in $\Gamma_0$. Let $\cL'$ be the language obtained from $\cL$ by adding a (partial) map $\sn:R\to\VF$, and let $\cT'$ be the theory asserting that $\sn$ is a section from $R$ to the valued field $\VF$, \emph{i.e.} a group morphism (for multiplication), so that moreover if $x,y\in R$ have the same valuation and $y\neq -x$, we have $\sn(x+y)=\sn(x)+\sn(y)$. We denote by $R_\VF$ the image of $R$ in $\VF$ via $\sn$.

\begin{rmk}
This level of generality allows for simultaneous proofs for the two particular cases most likely interesting in concrete situations: the case $\Gamma_0=1$, which corresponds to adding a section for the residue field, and the case $\Gamma_0=\Gamma$, which corresponds to adding a section of all $\RV^\times$.
\end{rmk}

We will prove that the axioms of Section \ref{sec_axiomes} do hold in that setting, in Corollary \ref{cor_section} for most of them and in Proposition \ref{prop_theorie_raffinee} for the refined character.

\begin{prop}\label{prop_section}
Let $M,N\models\cT'$ be two models such that $N$ is $|M|^+$-saturated, and let $A\subset M$ be a sub-$\cL'$-structure. Let $f:A\to N$ be a partial $\cL'$-embedding, which we additionally assume to be $\cL$-elementary. Then $f$ is a partial $\cL'$-elementary embedding. Actually, for any extension of $f$ to an $\cL$-elementary embedding  $g:A\cup\RV(M)\to N$, we may extend $g$ to the whole of $M$ in an $\cL'$-elementary way.
\end{prop}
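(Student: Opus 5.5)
The plan is a back-and-forth argument that reduces everything to $\cL$-elementarity. The key observation is that $\sn$ is a map from the $\RV$-sort to the $\VF$-sort. So once a sub-$\cL'$-structure contains all of $\RV(M)$ and all of $R_\VF(M)=\sn(R(M))$, adding further $\VF$-elements creates no new $\sn$-values. Beyond that point, any $\cL$-elementary extension of the map is automatically an $\cL'$-embedding.

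\emph{Step 1: extend to the $\RV$-sort.} First extend $f$ to an $\cL$-elementary $g:A\cup\RV(M)\to N$. This uses only that $f$ is $\cL$-elementary and that $N$ is $|M|^+$-saturated, realising types one at a time. The statement already allows an arbitrary such $g$, so from now on $g$ is fixed. The first claim (that $f$ itself is $\cL'$-elementary) will follow from the second, because $\cL'$-elementarity can be tested on the restriction of an $\cL'$-elementary map $M\to N$.

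\emph{Step 2: adjoin the section.} Next extend $g$ to $D:=A\cup\RV(M)\cup R_\VF(M)$, and the field it generates, by setting $\sn(r)\mapsto\sn(g(r))$. The task is to show this extension is still $\cL$-elementary; this is the heart of the proof.

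The tool is relative quantifier elimination for henselian valued fields of residue characteristic $0$ resplendently relative to $\RV$ (any $\RV$-expansion, as $\cL$ allows). The consequence we need: over a substructure $D_0$ whose $\RV$-part is all of $\RV(M)$, the $\cL$-type of a field element $x$ is determined by the $\cL$-type over $D_0$ of the family $(\rv(x-d))_{d\in\VF(D_0)}$. More generally, a valued-field isomorphism commuting with $\rv$ that is $\cL$-elementary on $\RV$ is $\cL$-elementary.

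I would well-order $R(M)$ and add the elements $s=\sn(r)$ one at a time to the current field $K\supseteq\VF(A)$. For $d\in K$ there are two cases.
\begin{itemize}
\item If $\rv(d)\neq r$, then $\rv(s-d)$ is computed in $\RV$ from $r$ and $\rv(d)$ alone: it is the one of smaller valuation, or $r-\rv(d)$ if the valuations agree. Hence it is transported correctly by $g$.
\item If $\rv(d)=r$, write $d$ as a rational expression in elements of $A$ and previously added $\sn(r_i)$. Then use the additivity of $\sn$ on elements of equal valuation, its multiplicativity, and the fact that $A$ is closed under $\sn$. These should express $\rv(s-d)$ as an $\RV$-term in data already in the domain of $g$.
\end{itemize}
For instance, elements of $\bZ[R_\VF]$ can be regrouped by valuation into sums $\sum\sn(r_i)$ of distinct valuations. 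Their $\rv$ is then the $r_i$ of minimal valuation, so $\bQ(R_\VF(M))\cup\RV(M)$ is already handled.

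\emph{Main obstacle.} The genuinely delicate point is the second case when $d$ mixes elements of $A$ and of $R_\VF$. Here I would first try to prove a linear-disjointness type statement: $\VF(A)$ and $\bQ(R_\VF(M))$ interact only through $R_\VF(A)$, in the sense that $\rv$ of an element of their compositum is determined by the $\rv$-values of its summands after a suitable normal form. The fallback is to choose a cleverly ordered enumeration so that $\rv(d)=r$ forces $\sn(r)$ to be already present.

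\emph{Step 3: finish.} Once $g$ is defined on $D$, extend to all of $M$ by the usual back-and-forth. Given $a\in\VF(M)$, realise $g(\mathrm{tp}_\cL(a/D))$ in $N$ by saturation. The resulting $\cL$-elementary extension is automatically an $\cL'$-embedding: every $\RV$-point of $M$ already lies in $D$, together with its $\sn$-value, so no new instance of the symbol $\sn$ needs checking. A map between the whole of $M$ and a subset of $N$ that is $\cL$-elementary and preserves $\sn$ is then $\cL'$-elementary. Indeed, its image is an $\cL'$-substructure $\cL$-elementarily embedded in $N$ on which $\sn$ agrees, hence an $\cL'$-elementary substructure, which one checks by Tarski–Vaught applied through the same back-and-forth.
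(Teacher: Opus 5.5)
Your architecture matches the paper's: extend to $\RV(M)$, adjoin the values of $\sn$, check $\cL$-elementarity via quantifier elimination relative to $\RV$ by verifying compatibility with $\rv$, then extend to all of $M$. But Step 2 carries the entire content of the proposition, and it is not proved: you flag the mixed case as the ``main obstacle'' and only list strategies. The reduction you set up would also not suffice even if both bullets were settled. Relative quantifier elimination requires $g$ to commute with $\rv$ on \emph{all} polynomial expressions in the new elements. Your claim that the type of $x$ over $D_0\supseteq\RV(M)$ is determined by $(\rv(x-d))_{d\in\VF(D_0)}$ is false when $\VF(D_0)$ is not algebraically closed. For instance, in $\bC((t))$ over $D_0=\bQ\cup\RV$, both $i$ and $i+t$ satisfy $\rv(x-d)=i-d$ for all $d\in\bQ$, but only $i$ is a root of $X^2+1$. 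In your own setting, take $\Gamma_0$ trivial and add $s=\sn(i)$ first: every $d\in\bQ$ falls into your first bullet, yet the relation $s^2=\sn(-1)=-1$ is never examined. The ``clever enumeration'' fallback is not an argument either. It is true that $\rv(d)=r$ for $d$ in the current field forces $\sn(r)$ to lie in that field already, but proving this requires exactly the computation you are missing.

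That computation is the paper's key step, and it treats all of $R_\VF(M)$ at once rather than one element at a time.
\begin{itemize}
\item After replacing $\VF(A)$ by its fraction field, both $\VF(A)$ and $R_\VF(M)$ are multiplicatively closed, since $\sn$ is a group morphism on the group $R$. So every element of the generated ring is a sum $\sum a_ix_i$ with $a_i\in\VF(A)$ and $x_i\in R_\VF(M)$. One must show $g(\rv(\sum a_ix_i))=\rv(\sum g(a_i)g(x_i))$.
\item Normalise so that $\max_i|a_ix_i|=|a_1x_1|=1$, and let $I$ be the set of $i$ with $|a_ix_i|=1$. For $i\in I$ one has $|a_i|=|x_i|^{-1}\in\Gamma_0$. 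Hence $b_i:=\sn(\rv(a_i))$ lies in $A$, which as an $\cL'$-substructure is closed under $\rv$ and $\sn$, and $g(b_i)=\sn(\rv(g(a_i)))$.
\item Moreover $\sum_i a_ix_i\equiv\sum_{i\in I}b_ix_i\pmod{\mathfrak m}$, and likewise after applying $g$.
\item Now $\sum_{i\in I}b_ix_i$ is a sum of elements of $R_\VF(M)$ of the same absolute value. If it is nonzero, additivity of $\sn$ makes it an element of $R_\VF(M)$, where $g$ is already prescribed; this gives the required equality.
\item If it is zero, some $x_i$ is an $R_\VF(A)$-linear combination of the others. The same relation holds after applying $g$, and you conclude by induction on the number of terms.
\end{itemize}
Replacing the leading coefficients by their $\sn\circ\rv$-images inside $A$ is precisely the ``linear disjointness'' you were looking for.

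In Step 3, ``hence an $\cL'$-elementary substructure'' is also not automatic. It follows from the standard embedding test for quantifier elimination relative to $\cL$, once you know that every such partial map extends to an $\cL$-elementary $\cL'$-embedding of $M$.
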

\begin{proof}
The existence of $g$ follows immediatly from the fact that $f$ is $\cL$-elementary and saturation of $N$, and clearly the second part implies the first one. So let us take such a map $g$. We may extend $g$ to $\mathrm{Frac} \VF(A)$; this is again an $\cL$-elementary embedding. In the following we replace $\VF(A)$ by $\mathrm{Frac}\VF(A)$. We denote again by $g$ the map from $A\cup\RV(M)\cup R_\VF(M)$ obtained by setting $g(\sn(x))=\sn(g(x))$ for all $x\in R(M)$; there is no conflict between those notations.

\begin{fact}
This map $g$ can be extended to an $\cL'$-partial embedding on the generated substructure.
\end{fact}
\begin{proof}
The map $g$ is already defined on all $\RV(M)$, its compatibility with $\sn$ is automatic and the sort $\VF$ is only endowed with the language of rings and $\rv$. Thus it only remains to check that, for every polynomial expression $P(a_1,\dots,a_r,x_1,\dots,x_s)$ with integer coefficients and variables inside $A\cup R_\VF(M)$, it holds that
\[\rv(P(g(a_1),\dots,g(a_r),g(x_1),\dots,g(x_s))) = g(\rv(P(a_1,\dots,a_r,x_1,x_s)))\,.\]
Since $A$ and $R_\VF$ are stable under product and we already know that $g$ is compatible to it, we see by developing the expression that we are only left with linear combinations of elements of $R_\VF(M)$ with coefficients in $A$. So let $a_1,\dots,a_r\in A, x_1,\dots,x_r\in R_\VF(M)$ be elements. Without loss, assume $|a_1x_1|=\max_i|a_ix_i|$. We may replace each $a_i$ by $a_ia_1^{-1}$ and each $x_i$ by $x_ix_1^{-1}$; so it is enough to show the equality when all the $a_ix_i$ are integers, and $a_1x_1$ is an invertible integer. Let $I$ be the set of indices $1\leq i\leq r$ such that $|a_ix_i|=1$. For $i\in I$, we have $|a_i|=|x_i|^{-1}\in\Gamma_0$; if we set $b_i=\sn(\rv(a_i))$, then $\sum_{i=1}^r a_ix_i=\sum_{i\in I}b_ix_i$ modulo the maximal ideal. Moreover, $I$ is also the set of $i$'s such that $|g(a_i)g(x_i)|=1$, and since $f$ was compatible to $\sn$, we find that $g(b_i)=\sn(\rv(g(a_i)))$. Consequently, it also holds that $\sum_{i=1}^rg(a_i)g(x_i)=\sum_{i\in I}g(b_i)g(x_i)$ modulo the maximal ideal.

If $\sum_{i\in I}b_ix_i\neq 0$, then as $b_i\in R_\VF(M)$, by compatibility of $\sn$ to the sum, we actually obtain the following equality:
\[g(\rv(\sum_{i=1}^r a_ix_i))=g(\rv(\sum_{i\in I}b_ix_i))=\rv(g(\sum_{i\in I}b_ix_i))=\rv(\sum_{i\in I}g(b_i)g(x_i))=\rv(\sum_{i=1}^r g(a_i)g(x_i))\]
where the second equality uses the fact that $g$ is already defined on $\sum_{i\in I}b_ix_i\in R_\VF(M)$.

If not, then $\sum_{i\in I}b_ix_i=0$, so we can write one of the $x_i$'s in terms of the others, with coefficients in $R_\VF(A)$ -- guaranteeing that the equality also holds after applying $g$. Thus we can re-write the quantities we are studying with strictly less terms, and conclude by induction.
\end{proof}

Let us keep writing this extension $g$. It immediatly follows from Ax-Kochen-Ershov principle that $g$ is again $\cL$-elementary, because every $\cL$-formula is equivalent to an $\RV$-formula evaluated in the $\RV$-variables and the image via $\rv$ of some polynomial expressions in the $\VF$-variables and we have already defined $g$ over all $\RV(M)$. So we may extend $g$ to an $\cL$-elementary embedding, which is now in addition an embedding for $\cL'$. This extension is $\cL'$-elementary, by classical arguments: for instance, let $L$ be an elementary extension of  $M$, which is moreover $|N|^+$-saturated. By the preceding argument, there is an $\cL'$-embedding of $N$ into $L$ which extends the embedding of $M$ inside $L$. This last embedding is $\cL'$-elementary, and it is now well-known that in such a situation the embedding of $M$ into $N$ is $\cL'$-elementary.
\end{proof}

\begin{cor}\label{cor_section}
The theory $\cT'$ is complete if $\cT$ is, has quantifier elimination relative to $\cL$, is $\RV$-minimal and has $\RV$-partitions relative to $\cT$, and is $\RV$-thin.
\end{cor}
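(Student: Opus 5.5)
Everything will be extracted from Proposition \ref{prop_section}, which says that any partial $\cL'$-embedding that is $\cL$-elementary is $\cL'$-elementary, and that it extends after fixing its values on $\RV(M)$.

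\emph{Completeness and relative quantifier elimination.} Take $M,N\models\cT'$ with $N$ sufficiently saturated, and let $A$ be the prime $\cL'$-substructure (generated by $\emptyset$). The map $A\to N$ is an $\cL'$-embedding. Since $\cT$ is complete, it is also $\cL$-elementary: every $\cL$-formula with parameters in $A$ is an $\cL(\emptyset)$-formula composed with $\cL'$-terms, and these terms are preserved. Proposition \ref{prop_section} then makes it $\cL'$-elementary, so $M\equiv N$. The same argument applied to arbitrary substructures gives the semantic criterion for quantifier elimination relative to $\cL$: two tuples with the same $\cL$-type of the generated $\cL'$-substructure, and the same quantifier-free $\cL'$-type, have the same $\cL'$-type. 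A standard compactness argument then shows that every $\cL'$-formula is equivalent to a boolean combination of quantifier-free $\cL'$-formulas and $\cL$-formulas applied to $\cL'$-terms.

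\emph{$\RV$-minimality.} Let $A\subset\VF(M)$ satisfy $A=\dcl_{\cL'}(A)\cap\VF$, and let $X\subset\RV^r$ be $\cL'(A)$-definable. Any $\cL$-elementary self-map of a saturated model that fixes $A$ pointwise is an $\cL'$-embedding on $A$. This is where the assumption on $A$ is used: $\sn(R)\cap\dcl(A)$ lies inside $A$, so we need $\sn$ to commute with $\rv$ on $A$. Indeed, if $a\in A$ and $\rv(a)\in R$, then $\sn(\rv(a))\in A$ is fixed. By the second part of Proposition \ref{prop_section}, each $\cL(A)$-elementary map on $\RV$ extends to an $\cL'$-automorphism over $A$. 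So $X$ is invariant under all $\cL(A)$-automorphisms of $\RV$, and the usual compactness argument shows that $X$ is $\cL(A)$-definable.

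\emph{$\RV$-partitions.} This is the main step. By relative quantifier elimination, an $\cL'(A)$-definable $X\subset\VF^n\times\RV^m$ is a boolean combination of sets defined by $\cL$-formulas in terms built from $x$, $A$ and $\sn$. Terms in $x$ can involve $\sn$ only through expressions of the form $\sn(t(x))$ with $t(x)\in R$. Inductively on term depth, these nested values $\sn(\cdot)$ are finitely many elements of $R_\VF$, which maps into $\RV$ via $\rv$. Define $\chi(x,\xi)$ to be the tuple of these values (padded by $0$ where undefined), with image $Y\subset\VF^d$. The map $\rv$ is injective on $Y$ coordinatewise, and the fibre $\chi^{-1}(y)$ is defined by replacing each $\sn$-subterm with the constant $y$, hence is $\cL(Ay)$-definable. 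The delicate point is handling nested $\sn$ and case splits on whether arguments lie in $R$; I would use an induction on terms, absorbing the conditions into $\cL(y)$-formulas.

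\emph{$\RV$-thinness.} Suppose $\Inj_A$ contains a ball $B$, so there is an $\cL'(A)$-definable injection $B\to\RV^l$. Take an $\RV$-partition of its graph. Some fibre over $y\in Y$ is an $\cL(Ay)$-definable injection from an infinite subset of $B$ into $\RV^l$; moreover, by $\kappa$-saturation and compactness we can choose a fibre whose domain contains a ball. This contradicts 1-h-minimality of $\cT$ (via Lemma \ref{lem_dcl_acl} and the fact that definable maps from $\RV$ to $\VF$ have finite image in $\cT$, \cite{cluckers_hensel_2022}). To get the ball, I expect to apply Lemma \ref{lem_dim_T-def}-style reasoning in $\cT(Ay)$: since $Y$ maps into $\RV$, we have $\dim_\cT B=1$ split across fibres indexed by $\RV$, so some fibre has dimension $1$.
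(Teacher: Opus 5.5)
Your treatment of completeness, relative quantifier elimination, $\RV$-minimality and $\RV$-partitions follows the paper's route. Everything is read off from Proposition \ref{prop_section}, and the partition is given by the tuple of values of the $\sn$-subterms.

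The $\RV$-thinness step, however, has a genuine gap. After taking an $\RV$-partition of the graph of the injection $B\to\RV^l$, each fibre is the graph of an $\cL(Ay)$-definable injection of some $D_y\subset B$ into $\RV^l$. By the very 1-h-minimality fact you quote, \emph{every} $D_y$ is then finite. So ``some fibre has infinite domain, even containing a ball'' cannot be extracted from the fibres themselves. What actually has to be shown is that a ball is not a union of finite sets $D_y$ indexed by a set mapping into $\RV$, and that is exactly $\RV$-thinness.

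Your proposed justification is circular. Lemma \ref{lem_dim_T-def} rests on Lemma \ref{lem_acl_sn}, whose proof concludes by contradicting $\RV$-thinness. Moreover, 1-h-minimal dimension theory in $\cT(Ay)$ does not apply to the family $(D_y)_{y\in Y}$, because its index set $Y$ is only $\cL'$-definable.

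In fact no argument using only $\RV$-partitions and 1-h-minimality of $\cT$ can work. In the paper's example of a bijection $\RV(K)\to\VF(K)$ added to $\ACVF(0,0)$, the identity map is an $\RV$-partition. Yet every ball is covered by singletons indexed by $\RV$, and $\Inj_\emptyset=\VF$.

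The missing ingredient has to use the specific nature of the section $\sn$; the paper uses a cardinality argument.
\begin{enumerate}
\item Take $K\models\cT'$ with $|\VF(K)|=|\RV(K)|=\kappa$, and a spherically complete immediate extension $\widetilde K$ of $K$ \cite{kaplansky_maximal_1942}.
\item Since $\RV(\widetilde K)=\RV(K)$, the same $\sn$ makes $\widetilde K$ a model of $\cT'$. Moreover $K\prec\widetilde K$, by Ax--Kochen--Ershov together with Proposition \ref{prop_section}.
\item Balls of $\widetilde K$ are strictly larger than $\RV(\widetilde K)$ for a suitable $\kappa$. For instance, if $\kappa$ has countable cofinality, balls have cardinality at least $\kappa^{\aleph_0}>\kappa$.
\item Hence no definable map from a subset of $\RV^m$ onto a ball exists in $\widetilde K$. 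This first-order statement, with the parameters quantified, transfers to every model of $\cT'$.
\item Compactness in the $\kappa$-saturated model then reduces ``$\Inj_A$ contains a ball'' to a single $A$-definable set mapping into $\RV$ that contains a ball, which is excluded by the previous step.
\end{enumerate}

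A smaller point concerns your completeness step. ``$\cT$ is complete and the terms are preserved'' does not by itself make the embedding of the prime substructure $\cL$-elementary. The reason is that $\psi(t)$, with $\psi$ an $\cL$-formula and $t$ a closed $\cL'$-term, is an $\cL'$-sentence, which completeness of $\cT$ does not decide. You need the computation in the Fact inside the proof of Proposition \ref{prop_section}, which reduces $\rv$ of polynomials in $\sn$-values to $\RV$-terms, to bring it back to $\cL$.
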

\begin{proof}
Completeness is an straightforward consequence of the previous proposition. Its statement also clearly implies quantifier elimination relative to $\cL$ (recall that "quantifier elimination" exactly means that every embedding is elementary, and here the "relative to $\cL$" part means that we have to assume that the embedding is already $\cL$-elementary). As for $\RV$-minimality, it corresponds to us being able to choose $g$ freely -- compatibility to $\cL$ on $\RV$ and being $\cL'$-elementary on the parameters is enough to find that the embedding is $\cL'$-elementary on $\RV$.

Now we show that $\cT'$ has $\RV$-partitions. Let $X$ be an $A$-definable set. Since $\cT'$ has quantifier elimination relative to $\cL$, the set $X$ is definable by a boolean combination of $\cL$-formulas and quantifier free $\cL'$-formulas. Then it is enough to show the existence of $\RV$-partitions for subsets defined by a single $\cL$-formula (in which case it is obvious) and those defined by a quantifier free $\cL'$-formula. In that case, we just take the partition mapping the variable to the tuple made of all terms starting with $\sn$, which gives the desired result.

Finally we prove that $\cT'$ is $\RV$-thin. Let $K\models\cT'$ be a model such that $|\VF(K)|=|\Kb(K)| = |\Gamma(K)|=\kappa$ for some cardinal $\kappa$. As a valued field, $K$ embeds into a spherically complete extension $\widetilde K$ by \cite{kaplansky_maximal_1942}. Then $\RV(\widetilde K)=\RV(K)$, with $\kappa$. On the other hand, $|\VF(\widetilde K)|=2^\kappa>\kappa$. In particular, the cardinal of a ball of $\widetilde K$ is also $2^\kappa$, which shows that there can be no surjection from $\RV^m$ to a ball of $\VF$. But $K\prec \widetilde K$ by Ax-Kochen-Ershov theorem, so there cannot be such a definable surjection in $K$ either. 
\end{proof}

\begin{rmk}
In fact, we have shown that, in that language, quantifier elimination relative to $\cL$ implies that the theory has $\RV$-partitions.
\end{rmk}

We put next point apart, since its proof is more technical.

\begin{prop}\label{prop_theorie_raffinee}
The theory $\cT'$ is refined.
\end{prop}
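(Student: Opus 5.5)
By Lemma \ref{lem_raff_fini}, it suffices to prove the following. Given an $A$-definable $X\subset\VF$ and two cell partitions $\cP_1,\cP_2$ of $X$, there is a single cell partition $\cP$ of $X$ that refines both and has finite length in the absolute sense. That is, $\cP$ should extend to a partition of $\VF$ obtained from the elementary partition $\VF=\bigsqcup_{\xi\in\RV}\rv^{-1}(\xi)$ by finitely many refinements of length $1$.

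First I would find a common refinement. Theorem \ref{thm_dec_cell} gives a cell decomposition $(\psi,\chi)$ refining the two decompositions underlying $\cP_1,\cP_2$, and its associated partition refines both $\cP_i$. The remaining work is to show that the partition attached to any cell decomposition $(\psi:X\to Y,\chi)$ can be refined further to a partition of finite length. Since the fibres $\chi_y$ are $\cL(y)$-definable cell decompositions, their centers $c(\xi,y)$ range over a set
\[C=\{c(\xi,y)\}\subset\VF,\]
and $C$ is $A$-definable and maps into $\RV$. The key step is to describe $C$ concretely, using relative quantifier elimination (Proposition \ref{prop_section}) and the construction of $\RV$-partitions in Corollary \ref{cor_section}, where the partition maps a point to the tuple of terms involving $\sn$. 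With these, every element of $\dcl_{\cL'}(A\cup\RV)\cap\VF$ that is relevant here should be $\cL(A)$-definable from finitely many elements of $R_\VF$ together with $A$. So each center has the form $t(\sn(\zeta_1),\dots,\sn(\zeta_k))$, where $t$ is $\cL(A)$-definable, in fact algebraic over $A$ together with the $\sn(\zeta_j)$. By compactness only finitely many such terms $t$ occur.

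Next I would organise the terms by complexity and mimic the argument of \cite{yin_integration_2013}, Section 4. The idea is to build the points of $C$ one $\sn$-coordinate at a time, by induction on $k$ and on the term structure. At each stage, the new centers needed are parametrised $A$-definably by an $\RV$-indexed family. Adding them as the data $(I_0,c)$ of a refinement of length $1$ separates each new center into its own $\rv$-balls. The definable choice of center inside each ball comes from the fact that, after the earlier refinements, every ball contains at most one point of the relevant family. Where it contains several, I would use Lemma \ref{lem_ens_fini_int_RV} to split the finite family over $\RV$, first taking the average and then the $\rv$-differences, exactly as in that lemma's proof. After finitely many steps, each cell of $\chi_y$ is a union of cells of the resulting partition. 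This is because a twisted box is $c+\rv^{-1}(s)$, and once $c$ has been used as a refinement center, that set is one of the refined balls or a union of them.

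The main obstacle is this middle step: showing that the centers, which are $\cL$-definable over $\sn$-values and hence possibly algebraic rather than rational combinations, can be reached by a bounded number of length-$1$ refinements, each with an $\emptyset$-definable center map. For rational expressions in the $\sn(\zeta_j)$, I would use the additivity and multiplicativity of $\sn$ to rewrite sums of section values of equal valuation as a single section value. This is the same reduction used in the Fact inside Proposition \ref{prop_section}, and it lets a center be approached by successive $\rv$-balls around partial sums. For genuinely algebraic centers, I would rely on the 1-h-minimal finiteness of $\cL(y)$-definable finite sets and again on Lemma \ref{lem_ens_fini_int_RV}, which makes their elements definable after a finite-length splitting.
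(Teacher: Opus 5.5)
Your opening reduction is sound. By Lemma \ref{lem_raff_fini} it suffices to produce one finite-length partition refining both $\cP_i$. Moreover, a finite-length partition of $\VF$ in which every point of the center set $C$ is a singleton cell does refine the partition attached to $(\psi,\chi)$, because an open cell containing no center lies inside a single twisted box $c+\rv^{-1}(s)$.

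\textbf{The gap.} This reduction only relocates the whole difficulty: you now need that a $0$-dimensional definable set $C\subset\VF$ is a union of cells of some finite-length partition. That is precisely the step you leave as ``the main obstacle''. Your sketch rests on the claim that, after the refinements for $\sn(\zeta_1),\dots,\sn(\zeta_{j-1})$, each ball contains at most one (or finitely many) points of the next batch. Nothing in the proposal justifies this when the centers are $\cL$-algebraic functions $t$ of the section values. There are three specific problems:
\begin{enumerate}
\item Points of $C$ can accumulate infinitely in one ball. For instance, when $R=\RV^\times$, all $1+\sn(\zeta)$ with $|\zeta|<1$ lie in $\rv^{-1}(1)$. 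So Lemma \ref{lem_ens_fini_int_RV}, which concerns finite families, is not the right tool.
\item 1-h-minimality only yields a finite set of centers for each \emph{fixed} value of the parameters $\sn(\zeta_{<j})$. Here those parameters range over an infinite $\RV$-indexed family.
\item Even if you prepared $t$ in its last variable for each fixed $\zeta_{<j}$, you would obtain parameter-dependent partitions of overlapping regions of $\VF$. Your sketch has no mechanism for merging them into one partition of bounded length with $\emptyset$-definable center maps.
\end{enumerate}

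\textbf{How the paper avoids this.} It never analyses definable points directly.
\begin{enumerate}
\item By relative quantifier elimination (Corollary \ref{cor_section}), a definable $X\subset\VF$ is a boolean combination of $\cL$-definable and quantifier-free $\cL'$-definable sets.
\item The $\RV$-valued terms $\tau(x)$ in the $\VF$-variable are handled by induction on the number of occurrences of $\sn$ (Lemma \ref{lem_raff_sans_RV}). The innermost term is an $\cL$-term, made constant on the cells of a finite-length partition by Lemma \ref{lem_cL_raff}; termination there comes from the finiteness of the centers of a single $\cL$-cell decomposition. On each cell one substitutes the constant value, which lowers the width. The new partitions live inside \emph{disjoint} cells, so they glue by compactness.
\item For $X\subset\VF\times\RV^m$ (Lemma \ref{lem_raff_avec_RV}), all centers of the fibrewise partitions are collected into one $0$-dimensional $\emptyset$-definable set, to which the previous lemma applies. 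This is exactly the merging step your proposal lacks.
\end{enumerate}
Applying Lemma \ref{lem_raff_avec_RV} to the graph of the map $X\to\RV^N$ encoding $\cP_1$ and $\cP_2$ then gives the proposition. Your argument needs a substitute for these two lemmas.
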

We split the proof into several lemmas.

\begin{lem}\label{lem_raff_commun}
Let $\cP_1,\cP_2$ be two cell partitions of finite length of $\VF$. There exists a cell partition of finite length $\cP$ of $\VF$ which refines $\cP_1$ and $\cP_2$.
\end{lem}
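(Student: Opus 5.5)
The plan is to build $\cP$ by \emph{merging} the two sequences of refinement data. Write $\cP_1$ as obtained from the elementary partition $\cP_{\mathrm{el}}:\VF=\bigsqcup_{\xi\in\RV}\rv^{-1}(\xi)$ by data $(I_0^{(1)},c^{(1)}),\dots,(I_0^{(n)},c^{(n)})$, and $\cP_2$ by data $(J_0^{(1)},e^{(1)}),\dots,(J_0^{(m)},e^{(m)})$. Let $C_k$ denote the image of $c^{(k)}$ and $E_k$ that of $e^{(k)}$. These are $\emptyset$-definable sets of centers, and each maps into $\RV$ via the injective index maps. Set $C_0=\{0\}$, the implicit center of $\cP_{\mathrm{el}}$.

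I will then run the same inductive mechanism as in the proof of Lemma \ref{lem_raff_fini}, with the set of all centers $C=\bigcup_k C_k\cup\bigcup_k E_k$ as input. Start from $\cQ_0=\cP_{\mathrm{el}}$. At stage $i$, look at the balls of $\cQ_i$ of positive radius that contain at least one point of $C$. In each such ball $B_\sigma$, choose a center $\emptyset$-definably in $\sigma$. For instance, take the unique point of $C_k$ lying in $B_\sigma$ for the least $k$ such that it exists, treating the $C$'s before the $E$'s. Uniqueness has to be checked: a ball of the current partition contains at most one point of each $C_k$ at the relevant stage, which should follow from the corresponding property inside the construction of $\cP_1$, since $\cQ_i$ refines the $i$-th stage of $\cP_1$'s construction. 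The resulting refinement of length $1$ is $\cQ_{i+1}$.

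The main claim is that after $n+m$ steps, or some explicit bound such as $2(n+m)$, every ball of $\cQ_N$ contains no point of $C$ in its interior except possibly as an already used center, and that $\cQ_N$ refines both $\cP_1$ and $\cP_2$. To see that $\cQ_N$ refines $\cP_1$, I will prove by induction that $\cQ_{i}$ refines the partition $\widetilde\cP_1^{(j)}$ obtained after $j$ steps of $\cP_1$'s data, for suitable $j=j(i)$. The key point is that once a ball of $\cQ_i$ sits inside a ball $B$ of $\widetilde\cP_1^{(j)}$ and contains the center $c^{(j+1)}(B)$, refining at that point yields the $\rv$-balls around it, which are exactly the pieces of $\widetilde\cP_1^{(j+1)}$ or finer. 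If a ball of $\cQ_i$ already lies strictly inside $B$ and misses the center, then it is automatically contained in one of the $\rv$-balls $c+\rv^{-1}(\xi)$. This is an ultrametric fact: a ball not containing $c$ lies in a single $\rv$-class of $x-c$. The same argument applies to $\cP_2$.

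I expect the main obstacle to be definability and termination with a uniform bound. By compactness one needs a fixed number of steps for all balls simultaneously, and each step must be a legitimate refinement of length $1$. That requires $I_0$ to be $\emptyset$-definable with an $\emptyset$-definable center map, which is why the ``least $k$'' choice is used. It also requires the indexing set to still map into $\RV$, which holds because $C$ does. For termination, I would measure progress by the number of pairs (ball, center of $C$ lying in it and not yet used). Each point of $C_k$ becomes a center after at most $k$ rounds of the $\cP_1$-chain interleaved with the $\cP_2$-chain, giving the bound $n+m$ on the total length.
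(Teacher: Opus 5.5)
Your merged construction can be completed, and with your priority rule the bound $N=n+m$ is correct. However, the two justifications you sketch do not work as stated.

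\textbf{Uniqueness of the centre.} It is not true that a cell of the current partition contains at most one point of each $C_k$. An elementary ball recentred at $c^{(1)}(\xi)$ can contain a point of $C_2$ in each of the cells $c^{(1)}(\xi)+\rv^{-1}(\zeta)$ that are recentred at the second step. Also, ``$\mathcal{Q}_i$ refines the $i$-th stage of $\cP_1$'' says nothing when the least index $k$ exceeds $i+1$.

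What you need is the following claim: a ball contained in an elementary cell and disjoint from $C_1\cup\dots\cup C_j$ lies in a single cell of $\widetilde\cP_1^{(j)}$. This follows by induction on $j$ from your ultrametric observation. Since a cell of $\widetilde\cP_1^{(k-1)}$ contains at most one point of $C_k$, the least-$k$ choice is then unique. The same holds for the $E_k$ in cells containing no point of $C$.

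\textbf{Termination.} Your progress measure fails, because the number of pairs (ball, unused centre) can be infinite. The uniform bound comes instead from a two-phase induction that uses your priority rule.
\begin{enumerate}
\item Let $B$ be a positive-radius cell of $\mathcal{Q}_i$ inside a cell $D$ of $\widetilde\cP_1^{(i)}$. Then $B$ contains no point of $C_1\cup\dots\cup C_i$. So either $B$ is recentred at the point of $C_{i+1}$ in $D$, or it misses that point and lies in one $\rv$-class around it. Hence $\mathcal{Q}_i$ refines $\widetilde\cP_1^{(i)}$ for $i\le n$.
\item After round $n$, all points of $C$ are singletons and only $E$-points are used. The same induction then shows that $\mathcal{Q}_{n+j}$ refines $\widetilde\cP_2^{(j)}$.
\end{enumerate}
During the first $n$ rounds the $\cP_2$-chain does not advance in lockstep, because a cell containing $e^{(j+1)}$ may be recentred at a point of $C$. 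So ``each point of $E_k$ is used within $k$ rounds'' is false; the correct bound is $n+k$.

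\textbf{Comparison with the paper.} The paper avoids this bookkeeping by not merging. It starts from $\cP_1$ itself, which already has finite length. It then applies to $\cP_1$ the procedure of Lemma \ref{lem_raff_fini} with the data of $\cP_2$: at stage $i$, every positive-radius cell containing a point of $E_i$ is recentred at that point. Refining $\cP_1$ and having finite length $n+m$ are then automatic. The single induction ``stage $i$ refines $\widetilde\cP_2^{(i)}$'' gives both uniqueness of the centres and refinement of $\cP_2$.

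Once repaired, your argument is in effect an accelerated variant of this. Phase one rebuilds a refinement of $\cP_1$, and phase two is essentially the paper's procedure. The symmetric treatment buys nothing extra and costs you the priority rule and the additional uniqueness lemma.
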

\begin{proof}
This fact is analogous to Lemma \ref{lem_raff_fini}: take the sequence of data $(I_0^{(i)},c^{(i)}$ defining $\cP_2$, then refine $\cP_1$ using these data following the procedure described in the proof of that lemma. Clearly this yields the result.
\end{proof}

The following lemma holds for any 1-h-minimal theory.

\begin{lem}\label{lem_cL_raff}
For any $\cL(\emptyset)$-definable subset $\widetilde X\subset\VF\times\RV^N$, there is a cell partition of finite length $\cP$ of $\VF$ such that, for every cell $B_\sigma$ of $\cP$, the fibre $\widetilde X_x \subset \RV^N$ is  constant when $x\in B_\sigma$.
\end{lem}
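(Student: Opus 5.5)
The plan is to use the 1-h-minimal structure of $\cT$ to get a finite $\emptyset$-definable set controlling $\widetilde X$, and then build the partition by successive refinements centred at points of that set. The natural tool is the finite-set version of 1-h-minimality (\cite{cluckers_hensel_2022}, definition of 1-h-minimality and Lemma 2.5.3/Theorem 5.7.3). It gives a finite $\emptyset$-definable set $C\subset\VF$ such that every ball $1$-next to $C$ has constant fibre $\widetilde X_x$. Such a ball is one of the form $c+\rv^{-1}(\xi)$ with $c\in C$, $\xi\in\RV^\times$, and with no element of $C$ closer to it. Here we use that the $\RV$-coordinates of $\widetilde X$ can be absorbed by compactness into one formula in $x$ with $\RV$-parameters. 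So it suffices to produce a cell partition of finite length of $\VF$ in which every ball is contained in a ball $1$-next to $C$, and which separates the points of $C$.

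The construction goes by induction on $|C|$, or better on the tree of $C$. Following Lemma~\ref{lem_ens_fini_int_RV}, start from the elementary cell partition $\VF=\bigsqcup_{\xi\in\RV}\rv^{-1}(\xi)$. If $|C|=N\geq 1$, first refine with centre $0$ replaced by the average $a=\frac1N\sum_{c\in C}c$. Concretely, take the elementary partition translated to $a$; this is itself a refinement of length $1$ of the elementary one, using $I_0=\{\rv^{-1}(\rv(a))\}$, $c=a$, or trivially if $a=0$. Then in each cell $a+\rv^{-1}(\xi)$ the set $C$ has fewer than $N$ points, since $\rv(c-a)$ is non-constant on $C$ when $N>1$. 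Iterate inside each open ball meeting $C$ with centre the average of $C\cap B_\sigma$. The family of these averages, indexed by the cells, is $\emptyset$-definable as a map $c:I_0\to\VF$, because $C$ is $\emptyset$-definable and the cells are indexed definably. Each step strictly decreases the maximal number of points of $C$ in a cell, so after at most $N$ steps every cell contains at most one point of $C$. One more step, centred at that point, yields the balls $c+\rv^{-1}(\xi)$ and the singletons $\{c\}$.

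It remains to check the result. In the final partition each open ball is of the form $c'+\rv^{-1}(\xi)$ where $c'$ is either in $C$ or an average whose containing cell avoids $C$ at the relevant scale. In either case it lies in a ball $1$-next to $C$, hence the fibre of $\widetilde X$ is constant on it. Points of $C$ are singleton cells, where constancy is trivial. Every intermediate partition is a cell partition coming from an $\cL(\emptyset)$-definable cell decomposition, and by construction the result has finite length.

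The main obstacle is the first step: extracting a finite $\emptyset$-definable $C$ that works uniformly for all of $\widetilde X\subset\VF\times\RV^N$, since 1-h-minimality prepares sets $\VF\times\RV^N$ only after adding $\RV$-parameters. I would handle this via \cite{cluckers_hensel_2022}, Theorem 5.7.3 (cell decomposition preparing $\widetilde X$ with the map $\chi$ and centres $\emptyset$-definable, finitely many centres by 1-h-minimality). If the centres are only finitely many over each $\RV$-tuple, the finiteness of definable maps $\RV^m\to\VF$ in 1-h-minimal theories (\cite{stout_integration_2025}, Lemma 5.4.5 style) bounds the union. A secondary care point is ensuring that the ball containing an averaged centre but no point of $C$ is still $1$-next to $C$; this holds because averages of points in a ball stay in that ball.
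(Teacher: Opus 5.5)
Your proposal is correct and follows the paper's proof. The paper takes the set of centres of an $\cL(\emptyset)$-definable cell decomposition adapted to $\widetilde X$, which is finite by 1-h-minimality; this is the fallback route you give for obtaining $C$. It then refines the elementary partition repeatedly at the barycentres of $C\cap B_\sigma$, using that the maximal number of points of $C$ in a positive-radius cell strictly drops, until every point of $C$ is a singleton cell.

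Two small remarks, neither of which affects correctness. Your preliminary step with the global average $a$ is superfluous, and the refinement of length $1$ at $a$ is not literally the translated elementary partition (the translate is not even a refinement of the elementary partition). Also, the clean reason the final open-ball cells have constant fibre is simply that an open ball disjoint from $C$ is contained in a ball $1$-next to $C$; the fact that averages stay in their ball is what makes the refinements legitimate, not what gives constancy.
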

\begin{proof}
This is a consequence of 1-h-minimality, with the technique of the proof of \cite{stout_integration_2025}, Lemma 5.4.5. More precisely, let $\chi:\VF\to\RV^m$ be an $\cL(\emptyset)$-definable cell decomposition adapted to $\widetilde X$. Take $\cP_0$ to be the elementary cell partition of $\VF$. For every ball $B_\sigma$ in $\cP_0$, let $D_\sigma\subset B_\sigma$ be the set of the centers of $\chi$ which are inside $B_\sigma$. It must be finite by 1-h-minimality, and if it is nonempty then its barycenter $c_\sigma$ gives a definable point of $B_\sigma$. So we define $I_0$ as the set of $\sigma$ for which $D_\sigma\neq\emptyset$ and $r(\sigma)>0$, with $c:I_0\to\VF, \sigma\mapsto c_\sigma$, and take $\cP_1$ to be the refinement of $\cP_0$ corresponding to these data. Now let $N_0 = \max_{r(\sigma)>0}|D_\sigma|$ and $N_1$ be the analogous quantity for partition $\cP_1$. If $N_0>0$, we must have $N_1<N_0$ by construction. In particular, this process stops after a finite number of steps. But if this stops at $\cP_n$, it means that the centers of $\chi$ are all contained in points of $\cP_n$. As a consequence, $\cP_n$ refines the partition given by $\chi$, thus satisfies the requirements.
\end{proof}

The next result is at the heart of the proof. There lie the syntactical arguments.

\begin{lem}\label{lem_raff_sans_RV}
For every $\cL'$-term $\tau(x):\VF\to\RV$, there is a cell partition of finite length $\cP$ of $\VF$ such that, for any cell $B_\sigma$ of $\cP$, the term $\tau$ is constant on $B_\sigma$.

In particular, for all $\cL'$-formulas without quantifier $\varphi(x)$, there is a cell partition of finite length $\cP$ of $\VF$ such that, for every cell $B_\sigma$ of $\cP$, the formula $\varphi$ is true on all $B_\sigma$ or false on $B_\sigma$.

Consequently, for all $\emptyset$-definable subsets $X\subset\VF$, there is a cell partition of finite length $\cP$ of $\VF$ such that $X$ is a union of cells of $\cP$.
\end{lem}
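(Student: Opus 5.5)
We argue by induction on the complexity of the term $\tau$, the formula and set statements then following from the term statement together with Lemma \ref{lem_raff_commun}. For the second statement, a quantifier-free $\cL'$-formula $\varphi(x)$ in one $\VF$-variable is a boolean combination of atomic formulas. Each atomic formula can be rewritten as a condition on finitely many $\RV$-valued terms: for instance $t_1(x)=t_2(x)$ in $\VF$ is equivalent to $\rv(t_1(x)-t_2(x))=0$, and atomic $\RV$-relations of $\cL$ are applied to $\RV$-terms. Applying the first statement to each of these finitely many terms, and taking a common refinement by Lemma \ref{lem_raff_commun}, gives a partition on whose cells $\varphi$ has constant truth value. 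For the third statement, relative quantifier elimination (Corollary \ref{cor_section}) writes $X$ as a boolean combination of $\cL(\emptyset)$-definable sets and quantifier-free $\cL'$-definable sets. Lemma \ref{lem_cL_raff} handles the former (take $N=0$), the second statement handles the latter, and Lemma \ref{lem_raff_commun} again gives a common refinement.

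For the term statement, first normalise $\cL'$-terms $\VF\to\RV$. Since $\sn$ lands in $\VF$ and only $\rv$ goes back to $\RV$, every such term has the form $\tau(x)=\theta(\rv(P_1(x,\bar s)),\dots,\rv(P_k(x,\bar s)))$. Here $\theta$ is an $\cL$-term on $\RV$, the $P_i$ are polynomials over $\bZ$ (or rational functions), and $\bar s$ is a tuple of subterms of the form $\sn(\tau'(x))$ with $\tau'$ of strictly smaller complexity. By induction on nesting depth, we may take a finite-length cell partition $\cP'$ on whose cells all the $\tau'$ are constant, hence all the $s_j$ are constant. Restricting to one cell $B_\sigma$, the values $s_j=\sn(\tau'_j(\sigma))$ become constants depending $\emptyset$-definably on $\sigma$ (in fact on the $\RV$-index of $\sigma$, through $\sn$). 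The remaining task is to refine each $B_\sigma$ so that $\rv(P_i(x,\bar s(\sigma)))$ becomes constant.

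This refinement step is the main obstacle, because the coefficients $\bar s(\sigma)$ vary with $\sigma$ and are not $\cL$-definable. For this reason Lemma \ref{lem_cL_raff} cannot be applied over $\emptyset$ directly. I would apply it uniformly in a parameter. Consider the $\cL(\emptyset)$-definable family $\widetilde X\subset\VF\times\VF^{m}\times\RV^k$ given by $(x,\bar s,\rv(P_i(x,\bar s))_i)$. In $\cL(\bar s)$, the zeros of the $P_i(\cdot,\bar s)$ form a finite set, and Lemma \ref{lem_ens_fini_int_RV} lets us index them by $\RV$. The barycentre-iteration of Lemma \ref{lem_cL_raff}, run on these finitely many centres, then produces centres of the form $c(\sigma)=$ barycentre of the roots of the $P_i(\cdot,\bar s(\sigma))$ lying in $B_\sigma$. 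This is an $\cL'(\emptyset)$-definable function of $\sigma$, and so it is legitimate data $(I_0,c)$ for a refinement of length $1$ in the sense of Definition \ref{def_theorie_raffinee}. One must check that the number of iterations is bounded independently of $\sigma$. This holds because the number of roots is bounded by the degrees of the $P_i$, and the measure $N_0$ from the proof of Lemma \ref{lem_cL_raff} strictly decreases uniformly. Once all roots sit in point-cells, 1-h-minimality (the standard fact that $\rv(P(x))$ is constant on $\rv$-balls around roots avoiding them, i.e.\ the polynomial case of preparation) makes each $\rv(P_i)$ constant on every cell. Applying $\theta$ then makes $\tau$ constant, and composing with $\cP'$ gives a finite-length partition.
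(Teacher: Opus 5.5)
Your architecture is the paper's. You induct on the nesting of $\sn$, freeze the inner $\sn$-values as $\VF$-parameters on the cells of an already constructed finite-length partition, and apply Lemma \ref{lem_cL_raff} uniformly in those parameters. The formula and set statements then follow from Lemma \ref{lem_raff_commun} and relative quantifier elimination. Your explicit per-cell barycentre refinement, with a uniform bound on the number of steps, is actually a more careful account of the paper's ``glue by compactness''.

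\textbf{The gap.} The key step uses the wrong finite set. The zeros of $P_i(\cdot,\bar s)$ in $\VF$ do not prepare $\rv(P_i(\cdot,\bar s))$ unless the field is algebraically closed. So the ``standard fact'' you invoke is false in the setting of Section \ref{ssec_hens_RV}, including the discretely valued case of Section \ref{sec_integration_CL}.

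\emph{Counterexample to the fact.} In $K=\bC((t))$, the polynomial $(x-t)^2-t^5$ has no zero in $K$, since $v(t^5)$ is odd. Yet on the ball $\rv^{-1}(\rv t)$, which contains no zero, its $\rv$-value is $\rv(t^4)$ at $x=t+t^2$ and $\rv(-t^5)$ at $x=t+t^3$.

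\emph{Why this breaks your argument.} Take the section $\sn(\rv(at^n))=at^n$ for $a\in\bC^\times$, and the $\emptyset$-definable term $\tau(x)=\rv((x-\sn(\rv x))^2-\sn(\rv x)^5)$. Your first step gives the elementary partition, on which $\sn(\rv x)$ is constant. On the cell $\rv^{-1}(\rv t)$ the frozen polynomial is $(x-t)^2-t^5$, which has no zero in $K$. So your procedure never refines this cell, and $\tau$ is not constant on it. The centre that must be added is $t$, the barycentre of the conjugate roots $t\pm t^{5/2}\in K^{\mathrm{alg}}\setminus K$.

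\textbf{The repair.} Use the finite sets that Lemma \ref{lem_cL_raff} actually uses.
\begin{enumerate}
\item By 1-h-minimality and compactness, take an $\cL(\emptyset)$-definable family of cell decompositions $\chi_{\bar s}$ adapted to the fibres $\widetilde X_{\bar s}$.
\item Let $D_{\bar s}$ be the set of centres of $\chi_{\bar s}$. It is finite by 1-h-minimality and, by saturation, of cardinality bounded uniformly in $\bar s$.
\item Run your iteration with $D_{\bar s(\sigma)}\cap B_\sigma$ in place of the roots. The centres $c(\sigma)$ remain $\cL'(\emptyset)$-definable in $\sigma$, and the strict decrease of $N_0$ from the proof of Lemma \ref{lem_cL_raff} still applies.
\item Once all points of $D_{\bar s(\sigma)}$ are point-cells, every open cell $B$ of radius $r$ avoids all centres. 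Let $x\in B$ lie in the box $\rv(y-c)=\rv(x-c)$ of $\chi_{\bar s(\sigma)}$. Then $c\notin B$, and that box is the open ball of radius $|x-c|\geq r$ around $x$, so it contains $B$.
\item Hence $B$ lies in a single box, and each $\rv(P_i)$, and therefore $\tau$, is constant on $B$.
\end{enumerate}

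Using all roots in $K^{\mathrm{alg}}$ would also work. The barycentre of those lying in the ball over $K^{\mathrm{alg}}$ is Galois-invariant, hence in $K$, but its definability would then need a separate argument. With this change, the rest of your proof goes through.
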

\begin{proof}
We prove this in a similar fashion to the existence of $\RV$-partitions. Let $\tau_i(x),1\leq i\leq n,$ be the terms with values in $\RV$ occurring in $\tau$. We call \emph{width} of $\tau_i(x)$ the number of occurrences of the function symbol $\sn$ inside $\tau_i(x)$. Without loss, we assume that the $\tau_i$ are orderded by increasing width. Then we proceed as following. First, $\tau_1$ must have width zero, meaning that it is an $\cL$-term. Then apply Lemma \ref{lem_cL_raff} to its graph, which gives a cell partition $\cP_1$ of finite length such that $\tau_1$ is constant on each cell. By compactness, now we can restrict ourselves to a single cell $B_\sigma$, and add the value $\xi_{1,\sigma}$ of $\tau_1$ on this cell to the parameters. Let us substitute to the terms $\tau_i(x), i\geq 2$, the term $\tau_i^{\xi_{1,\sigma}}(x)$ obtained by replacing $\sn(\tau_1(x))$ by $\sn(\xi_{1,\sigma})$ everywhere it appears. At this point the term $\tau_2^{\xi_{1,\sigma}}$ must have zero width, so that we are able to find a cell partition $\cP_{2,\xi_{1,\sigma}}$ of $\VF$ such that $\tau_2^{\xi_{1,\sigma}}$ is constant on the cells of $\cP_{2,\xi_{1,\sigma}}$. Using Lemma \ref{lem_raff_commun}, we even assume that $\cP_{2,\xi_{1,\sigma}}$ refines $\cP_1$, and in particular that it induces a cell partition of finite length of $B_\sigma$. We glue by compactness into a cell partition $\cP_2$ such that $\tau_1,\tau_2$ are both constant on every cell, and iterate the process.

Now let $\varphi(x)$ be a quantifier-free $\cL'$-formula. Assume that it is a boolean combination of atomic formulas, each of which being given by some relation on $\RV$. Let $\tau_i(x)$ be all the terms that occur in those relations, and $\cP$ be a cell partition of finite length adapted to these terms in the sense of the first part of the lemma (which exist, by Lemma \ref{lem_raff_commun}). Then $\cP$ is as desired.

Finally let $X$ be such a set. By relative quantifier elimination, it is a boolean combination of $\cL$-definable and quantifier-free $\cL'$-definable sets. We just take a cell partition of finite length adapted to each of these sets, by Lemmas \ref{lem_raff_commun} and \ref{lem_cL_raff} and the previous result.
\end{proof}

At last we are able to prove the result we are interested in:

\begin{lem}\label{lem_raff_avec_RV}
For every $\emptyset$-definable subset $X\subset\VF\times\RV^m$, there is a cell partition of finite length $\cP$ of $\VF$ such that, for every cell $B_\sigma$, the fibre $X_x\subset\RV^m$ is constant as $x$ varies in $B_\sigma$.
\end{lem}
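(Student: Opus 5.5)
We reduce to Lemma \ref{lem_raff_sans_RV} by relative quantifier elimination (Corollary \ref{cor_section}), treating the $\RV^m$-coordinates as a parameter. Fix $\xi\in\RV^m$. The fibre $X^\xi=\{x\in\VF:(x,\xi)\in X\}$ is $\cL'(\xi)$-definable. Since adding parameters from $\RV$ preserves all the hypotheses used so far (relative quantifier elimination, 1-h-minimality of $\cT(\xi)$), the last assertion of Lemma \ref{lem_raff_sans_RV}, applied over $\xi$, yields a cell partition of finite length $\cP_\xi$ of $\VF$, definable over $\xi$, such that $X^\xi$ is a union of cells of $\cP_\xi$. This alone is not enough. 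It handles each $\xi$ separately, and there are infinitely many $\xi$, so we cannot simply take a common refinement via Lemma \ref{lem_raff_commun}.

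We therefore redo the proof of Lemma \ref{lem_raff_sans_RV} uniformly in the $\RV$-variables. By relative quantifier elimination, $X$ is a boolean combination of $\cL$-definable subsets of $\VF\times\RV^m$ and of sets defined by quantifier-free $\cL'$-formulas $\varphi(x,\eta)$. For each $\cL$-definable piece $\widetilde X$, Lemma \ref{lem_cL_raff} gives directly a finite-length cell partition of $\VF$ on whose cells the fibre $\widetilde X_x\subset\RV^m$ is constant. For a quantifier-free $\varphi(x,\eta)$, we list the $\RV$-valued terms $\tau_i(x,\eta)$ occurring in it, ordered by width. The key observation is that the atomic relations only involve $\RV$-sorted relations applied to the tuple $(\tau_i(x,\eta))_i$ together with $\eta$. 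Hence it suffices to find one finite-length cell partition on each cell of which the map $x\mapsto(\eta\mapsto(\tau_i(x,\eta))_i)$ is constant, that is, the graph-like set $\{(x,\eta,(\tau_i(x,\eta))_i)\}$ has constant fibre over $x$.

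We run the width induction of Lemma \ref{lem_raff_sans_RV}. A width-zero term $\tau_1(x,\eta)$ is an $\cL$-term, so its graph is an $\cL(\emptyset)$-definable subset of $\VF\times\RV^{m+1}$. Lemma \ref{lem_cL_raff} then gives a finite-length partition $\cP_1$ on whose cells $\tau_1(x,\cdot)$ is constant as a function of $\eta$. On a cell $B_\sigma$ we replace $\sn(\tau_1(x,\eta))$ by $\sn(\theta_\sigma(\eta))$, where $\theta_\sigma$ is the common function. The terms of next width then become, over the parameters $\sigma$, $\cL$-terms in $x$ with extra $\VF$-parameters depending on $(\sigma,\eta)$. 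We again apply Lemma \ref{lem_cL_raff} over the parameter $\sigma$ to the graph in $B_\sigma\times\RV^{m+\ast}$, and refine using Lemma \ref{lem_raff_commun}. Finally we glue over $\sigma$ by compactness, exactly as in Lemma \ref{lem_raff_sans_RV}. Iterating over the finitely many terms and then taking a common refinement of all the partitions obtained (Lemma \ref{lem_raff_commun}) gives $\cP$.

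The main obstacle is the inductive step. After substituting $\sn(\theta_\sigma(\eta))$, the new parameters in $\VF$ depend on $\eta$, so the resulting set is $\cL$-definable only over $\sigma$ together with the $\VF$-elements $\sn(\theta_\sigma(\eta))$, which vary with $\eta$. To use Lemma \ref{lem_cL_raff} uniformly, I would first try to absorb this dependence. The map $\eta\mapsto\sn(\theta_\sigma(\eta))$ factors through $\RV$, and $\sn$ is a multiplicative section compatible with sums, so these parameters lie in $R_\VF$. I would then argue that the set $\{(x,\eta,\zeta):\zeta=\tau_2(x,\eta)\}$ is $\cL(\sigma)$-definable after adding the $\RV$-variable $\theta_\sigma(\eta)$ and allowing finitely many $\cL(\sigma)$-definable $\VF$-values, by rewriting polynomial expressions in $x$ and $\sn(\cdot)$ as $\rv$ of polynomials. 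Its $\rv$ is then controlled by finitely many $\cL$-definable centres, as in the proof of Lemma \ref{lem_cL_raff}. If this fails, the fallback is to perform the finite-centre barycentre argument of Lemma \ref{lem_cL_raff} directly on the set of centres arising uniformly in $\eta$, using that by 1-h-minimality these form, inside each ball, a set whose relevant part is finite.
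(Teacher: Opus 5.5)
There is a genuine gap, and it sits exactly at the step you flag as the main obstacle: the uniform width induction is never completed, and neither of your proposed ways out works.

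\begin{itemize}
\item The first way out fails because, once $\eta$ is a variable, the substituted terms are not $\cL$-definable over $\sigma$ and $\theta_\sigma(\eta)$. For instance, $\{x:\rv(x-\sn(\theta))=0\}=\{\sn(\theta)\}$. So if the graph of $(x,\theta)\mapsto\rv(x-\sn(\theta))$ were $\cL$-definable over a small parameter set, then $\sn$ itself would be $\cL$-definable. That is impossible: in the 1-h-minimal theory $\cT$, definable maps from $\RV$ to $\VF$ have finite image, as recalled in Section~\ref{sssec_raffine}. Hence Lemma~\ref{lem_cL_raff} cannot be applied there.
\item The fallback fails because the centres arising as $\eta$ ranges over $\RV^m$ are in general infinitely many inside a single ball. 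For the term $\rv(x-\sn(\eta_1)-\sn(\eta_2))$, every point $\sn(\zeta)+\sn(\eta_2)$ with $|\eta_2|<|\zeta|$ lies in $\rv^{-1}(\zeta)$ and must be a singleton cell. The barycentre iteration of Lemma~\ref{lem_cL_raff} needs finitely many centres per ball and a strictly decreasing count, so it cannot be run.
\end{itemize}

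The missing idea is that uniformity is only needed at the level of the centres. The fibrewise partitions $\cP_\xi$ you discarded in your first paragraph are the right starting point; this is the paper's argument.

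\begin{itemize}
\item By compactness, the length $n$ and the data $(I_0^{(i,\xi)},c^{(i,\xi)})_{1\le i\le n}$ defining $\cP_\xi$ can be taken uniformly definable in $\xi$.
\item Hence $C=\bigcup_{i}\bigcup_{\xi\in\RV^m}\mathrm{im}(c^{(i,\xi)})$ is an $\emptyset$-definable subset of $\VF$. It has dimension $0$, being the image of a set living in $\RV$.
\item The last assertion of Lemma~\ref{lem_raff_sans_RV} holds for arbitrary $\emptyset$-definable subsets of $\VF$, infinite $0$-dimensional ones included. It gives a finite-length cell partition $\widetilde\cP$ of which $C$ is a union of cells. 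Since $C$ contains no ball, every point of $C$ is a singleton cell.
\item Then $\widetilde\cP$ refines every $\cP_\xi$. A ball cell $D$ of $\widetilde\cP$ lies in some $\rv^{-1}(\zeta)$. Each time the cell of the sequence defining $\cP_\xi$ that contains $D$ is split at a centre $c$, we have $c\notin D$. Hence $D$ lies in the single piece $c+\rv^{-1}(\rv(d-c))$ for any $d\in D$, since $|x-d|<\rho\le|d-c|$ forces $\rv(x-c)=\rv(d-c)$.
\end{itemize}

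Consequently each fibre $X^\xi$ is a union of cells of $\widetilde\cP$, which is exactly the constancy of $X_x$ on cells. No syntactic induction in the $\RV$-variables is needed.
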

\begin{proof}
Case $m=0$ corresponds to Lemma \ref{lem_raff_sans_RV}. Let $y\in\RV^m$ be a point; by applying that lemma to the fibre $X_y\subset\VF$, we find a cell partition of finite length $\cP_y$ of $\VF$. Let $(I_0^{(1,y)},c^{(1,y)}), \dots, (I_0^{(n,y)}, c^{(n,y)})$ be the $y$-definable sequence of data defining $\cP_y$. By compactness, we assume that these data are uniformly definable in variablee $y$. Then let $C^{(i)} = \bigsqcup_{y\in\RV^m}\mathrm{im}(c^{(i,y)})$ be the union of the images of $c^{(i,y)}$; it is an $\emptyset$-definable subset of $\VF$ of dimension $0$. By Lemma \ref{lem_raff_sans_RV}, there is a cell partition of finite length $\widetilde\cP_{i+1}$ such that $C^{(i)}$ is a union of cells of $\widetilde\cP_{i+1}$. Since $C^{(i)}$ has dimension $0$, this means that every $\{x\},x\in C^{(i)}$ is a cell of $\cP_{i+1}$; in particular, $\widetilde\cP_{i+1}$ refines $\cP_{i,y}$ for every $y\in\RV^m$, where $\cP_{1,y},\dots,\cP_{n+1,y}=\cP_y$ is the sequence of cell partitions corresponding to the above data. In particular, $\widetilde\cP_{n+1}$ is a cell partition of finite length which refines all the partitions $\cP_y,y\in\RV^m$, so it is as wanted.
\end{proof}

\begin{proof}[Proof of Proposition \ref{prop_theorie_raffinee}]
Let $X\subset\VF$ and $\cP_i=1,2$ be two cell partitions of $X$. We see them as maps $X\to\RV^{N_i}$; by putting them together, we obtain a map $\chi:X\to\RV^N$. Then we just apply Lemma \ref{lem_raff_avec_RV} to the graph of $\chi$.
\end{proof}
\begin{rmk}
Once more, we proved that, in that language, quantifier elimination relative to $\RV$ (together with being $\RV$-thin) yields the fact that the theory is refined.
\end{rmk}

\begin{cor}\label{cor_int_hens_1}
There is an isomorphism of motivic integration, with and without volume forms, in the following cases:
\begin{enumerate}
\item\label{cas_eff} if $\cL=\cL_{val}$ and the residue field of $\cT$ is algebraically bounded over $\emptyset$;
\item\label{cas_RV} if $R=\RV^\times$;
\item\label{cas_Kb}  if $R=\Kb^\times$ and $\cL$ is obtained from $\cL_{\mathrm{DP}}$ by possibly adding structure on $\Kb$.
\end{enumerate}
\end{cor}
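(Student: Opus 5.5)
The plan is to check the hypotheses of Theorems \ref{thm_int_sans} and \ref{thm_int_avec} in each of the three cases. These hypotheses are: $\RV$-partitions, refinedness and $\RV$-thinness, which give the morphisms $\int$, together with the (jacobian) bijection lifting property, which gives injectivity and hence an isomorphism. The first group is uniform in all three cases. Corollary \ref{cor_section} shows that $\cT'$ has $\RV$-partitions relative to $\cT$, is $\RV$-thin, is $\RV$-minimal and eliminates quantifiers relative to $\cL$. Proposition \ref{prop_theorie_raffinee} shows that $\cT'$ is refined. Since $\cT$ is a theory of henselian valued fields of residue characteristic zero in an $\RV$-expansion of $\cL_{val}$, it is 1-h-minimal, so the standing assumptions of Section \ref{sec_axiomes} hold. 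It therefore only remains to verify the jacobian bijection lifting property case by case; by the remark following its definition, this implies the non-jacobian version as well.

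In case \ref{cas_RV}, $R=\RV^\times$, so $\sn$ is an $\emptyset$-definable section of $\rv$ on all of $\RV^\times$. Proposition \ref{prop_rel_bij_RV} then applies directly and gives the jacobian lifting $x\mapsto x\cdot\sn(\rv(x)^{-1}h(\rv(x)))$.

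In case \ref{cas_eff}, $\cL=\cL_{val}$ with algebraically bounded residue field. Here I would invoke Proposition \ref{prop_rel_bij_eff}, which needs two inputs. The first is that $\cT'$ is $\RV$-minimal, which is given by Corollary \ref{cor_section}. The second is that $\cT$ is effective in the sense of \cite{stout_integration_2025}. For this I would quote Stout and Vermeulen's result that, for $\cL_{val}$, algebraic boundedness of the residue field over $\emptyset$ implies effectivity. One point needs care: effectivity has to hold after adding $\VF$-parameters, because Proposition \ref{prop_rel_bij_eff} applies it to $\cT(w)$. I would argue that effectivity is preserved by adding $\VF$-parameters, as is implicitly used in the proof of that proposition, or else that algebraic boundedness is preserved under such parameter additions.

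Case \ref{cas_Kb}, with $R=\Kb^\times$ (so $\Gamma_0$ trivial) in a Denef–Pas-type language, is the main obstacle. I would apply Proposition \ref{prop_rel_bij_Kb}, checking its five items in turn.
\begin{enumerate}
\item The identity $\rv\circ\sn=\id$ holds by definition.
\item The Denef–Pas angular component $\ac$ is a multiplicative morphism restricting to the identity on $\Kb^\times$, because of how $\Kb^\times$ sits inside $\RV^\times$ as the kernel of the valuation.
\item Orthogonality of $\Kb$ and $\Gamma$ in $\cT'$ comes from Pas quantifier elimination for $\cT$ in $\cL_{\DP}$, where $\Kb$ and $\Gamma$ are orthogonal even when $\Kb$ carries extra structure. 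Combining this with $\RV$-minimality, every $\cL'(A)$-definable subset of $\Kb^n\times\Gamma^m$ is $\cL(A')$-definable for suitable parameters, and hence a finite union of boxes.
\item That the induced structure on $\Gamma$ is pure ordered-group structure follows the same way, from relative quantifier elimination and Pas's theorem.
\item For $\Gamma$-effectivity, I would take $\gamma\in\dcl_{\cL'}(A)\cap\Gamma$ with $A\subset\VF$. By $\RV$-minimality, $\gamma$ is $\cL_{\DP}(A')$-definable, and in Denef–Pas $\Gamma$ is orthogonal to $\Kb$, so $\gamma$ is a $\bQ$-linear combination of valuations of elements of $A$ plus $\emptyset$-definable constants. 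The difficulty is division by integers: $\gamma=|a|/n$ need not be the valuation of an $A$-definable field element. I would try to produce an element of valuation $\gamma$ by taking an $n$-th root of $a/\sn(\ac(a))$, which has angular component $1$. Such a root can be chosen canonically using the angular component, as in the homothety step of the proof of Proposition \ref{prop_rel_bij_Kb}, provided it exists, i.e.\ provided $a/\sn(\ac(a))$ is an $n$-th power in $\VF$.
\end{enumerate}
Verifying this last existence, together with the orthogonality transfer to $\cL'$, is the step I expect to require the most care.
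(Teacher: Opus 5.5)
\textbf{Overall.} Your route is the paper's. Its proof cites 1-h-minimality and Propositions~\ref{prop_rel_bij_RV}, \ref{prop_rel_bij_eff}, \ref{prop_rel_bij_Kb}, with Corollary~\ref{cor_section} and Proposition~\ref{prop_theorie_raffinee} supplying the other axioms. Your treatment of cases (1) and (2), and of items 1--4 in case (3), matches this. The genuine gap is item 5 ($\Gamma$-effectivity) in case (3), and it cannot be closed in general.

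\textbf{The division step is harmless.} Suppose $\gamma\in\Gamma$ with $\gamma^n=|a|$. Put $x=a/\sn(\ac(a))$, so $\ac(x)=1$, and pick $z$ with $|z|=\gamma$. The unit $x/z^n$ has residue $\ac(z)^{-n}$, which is an $n$-th power, so by Hensel's lemma in residue characteristic $0$ it is an $n$-th power. After correcting by a lifted root of unity, $x$ has a \emph{unique} $n$-th root with angular component $1$. That root is therefore $\cL'(A)$-definable.

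\textbf{The translation by $\emptyset$-definable constants is what fails.} You pass over this step. If $\Gamma$ is discrete, its generator $\gamma_0=\max\Gamma_{<1}$, viewed in $\RV^\times$ as the element with $\ac=1$, lies in $\dcl_{\cL'}(\emptyset)\cap\Gamma$. But there need not be any $\cL'(\emptyset)$-definable field element of valuation $\gamma_0$.

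\textbf{Counterexample.} Take $\cT=\mathrm{Th}_{\cL_{\DP}}(\bC((t)))$, with $\sn$ the inclusion $\bC\subset\bC((t))$.
\begin{enumerate}
\item The substitution $\sigma:f(t)\mapsto f(t+t^2)$ is a valued-field automorphism preserving leading terms. It therefore induces the identity on $\RV$ and commutes with $\ac$ and $\sn$, so it is an $\cL'$-automorphism.
\item If $f=ct^n+c't^{n+1}+\cdots$ with $c\neq0$, the $t^{n+1}$-coefficient of $\sigma(f)$ is $c'+nc$. So $\sigma$ fixes no element of nonzero valuation, and item 5 fails already for $A=\emptyset$.
\item Worse, the $\emptyset$-definable bijection $\{1\}\to\{\gamma_0\}$ in $\RV_{\cT'}[1]$ has no $\emptyset$-definable lift $1+\mathfrak m\to\rv^{-1}(\gamma_0)$, since the image of $1$ would be such an element.
\item Hence $[1+\mathfrak m]\neq[\rv^{-1}(\gamma_0)]$ in $\Kp\VF_{\cT'}[1]$, although both integrate to $[\{1\}]_1$. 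The integral without volume forms is not injective, and case (3) is false as stated for discretely valued fields. A fortiori the jacobian lifting property fails, so the volume-form isomorphism cannot be obtained this way either.
\end{enumerate}

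\textbf{Relation to the paper and a repair.} The paper's one-line proof has the same hole: it invokes Proposition~\ref{prop_rel_bij_Kb} without checking item 5. To salvage case (3) you need an extra hypothesis. One option is $\Gamma$ divisible: then, by $\RV$-minimality and Pas's theorem, $\dcl_{\cL'}(A)\cap\Gamma$ is the divisible hull of $|A^\times|$, and your root argument proves $\Gamma$-effectivity. Another is an $\emptyset$-definable uniformizer in the discrete case, or simply using a section of all of $\RV^\times$ as in case (2).
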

\begin{proof}
That $\cT$ is 1-h-minimal is already known, \emph{cf.} \cite{cluckers_hensel_2022}. Jacobian bijection lifting property is given by Propositions \ref{prop_rel_bij_RV}, \ref{prop_rel_bij_eff} and \ref{prop_rel_bij_Kb}.
\end{proof}

\subsection{Rewriting the categories}\label{ssec_reecriture}
In each of the three previous cases, we will rewrite the definition of the $\VF$ and $\RV$-categories, both to make them more explicit and to connect some of them to known examples.

We have seen that, in any case, we could take $\RV$-partitions by subsets $Y\subset\sn(R)^l$. So let us introduce the following:

\begin{dfn}
\begin{enumerate}
\item In case \ref{cas_eff}, let $\VF_{\sn}[d]$ be the category whose objects are $\cL'\emptyset)$-definable subsets $X\subset \VF^d\times R^l\times \RV^m$ such that the projection map $X\to \VF^d\times R^l$ has finite fibres and the composition $X\to R^l\to \sn(R)^l$ is an $\RV$-partition.
\item In case \ref{cas_RV}, let $\VF[d,\cdot]$ be the category whose objects are $\cL'(\emptyset)$-definable subsets $X\subset \VF^d\times \RV^m$ such that the composition $X\to (\RV^\times)^m\to \sn(\RV)^m$ is an $\RV$-partition.
\item In case \ref{cas_Kb}, let $\VF_{\Kb}[d]$ be the category whose objects are $\cL'(\emptyset)$-definable subsets $X\subset \VF^d\times \Kb^l\times\times\Gamma^m$ such that the projection map $X\to\VF^d\times\Kb^l$ has finite fibres and that the composition $X\to\Kb^l\to \sn(\Kb)^l$ is an $\RV$-partition.
\end{enumerate}
Morphisms, in each of these categories, are the $\cL'(\emptyset)$-definable maps.
\end{dfn}
\begin{rmk}
We denote the categories $\VF[d,\cdot]$ like this in reference to Hrushovski-Kazhdan's coarse categories.
\end{rmk}
\begin{lem}
The categories $\VF_{\sn}[d], \VF[d,\cdot],\VF_{\Kb}[d]$ are equivalent to $\VF_{\cT'}[d]$ in each of the cases \ref{cas_eff}, \ref{cas_RV}, \ref{cas_Kb} respectively.
\end{lem}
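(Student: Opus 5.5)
The plan is to show that each of the three categories sits inside $\VF_{\cT'}[d]$ as a full subcategory, up to an obvious identification, and then prove essential surjectivity. Full faithfulness is immediate, since in every category morphisms are all $\cL'(\emptyset)$-definable maps. For the embedding, an object $X\subset\VF^d\times R^l\times\RV^m$ of $\VF_\sn[d]$ is sent to its image under $(x,r,u)\mapsto(x,\sn(r),u)$, which lies in $\VF^d\times\sn(R)^l\times\RV^m$. Since $\sn(R)^l$ maps into $\RV$ via $\rv$, the hypotheses on $X$ say exactly that this image is an object of $\VF_{\cT'}[d]$. Case \ref{cas_Kb} is handled the same way, using $\sn(\Kb)^l$ and viewing $\Gamma$ as a subset of $\RV^\times$ through $\ac$. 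In case \ref{cas_RV} one takes $Y=\sn(\RV^\times)^{m}$, and there is no finiteness condition to check, since coarse objects are allowed arbitrary $\RV$-coordinates. In this case one must also check that the $\VF_{\cT'}[d]$-object contains the $Y$-coordinate separately. Duplicating the $\RV$-coordinates takes care of this: use $(x,\sn(u),u)$, whose projection to $\VF^d\times Y$ is injective.

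For essential surjectivity, let $X\subset\VF^d\times Y\times\RV^m$ be an object of $\VF_{\cT'}[d]$, with $Y\subset\VF^k$ mapping into $\RV$. The key point is to replace $Y$ by a subset of some $\sn(R)^{l}$ through a definable bijection. The proof of Corollary \ref{cor_section} shows that $\RV$-partitions can be taken with codomain made of the $\sn$-terms occurring in a quantifier-free $\cL'$-formula, so their values lie in $R_\VF^{l}=\sn(R)^l$. The steps are as follows. First, take such an $\RV$-partition $\psi:X\to Y'\subset\sn(R)^l$, chosen to refine the projection $X\to Y$ (for instance by partitioning the graph of that projection). Second, observe that $X$ is isomorphic to the graph-like set $\{(x,\psi(x,y,u),u)\}$, which lies in $\VF^d\times\sn(R)^l\times\RV^m$. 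Here the coordinate $y$ is a function of $\psi$ and is therefore dropped. This step uses that $\psi$ refines $X\to Y$: for fixed $y'\in Y'$ we get $y=\pi(y')$, so the map $(x,y,u)\mapsto(x,\psi,u)$ is injective and its inverse is definable. Third, verify that finiteness of fibres over $\VF^d\times\sn(R)^l$ is inherited, since fibres only shrink, and that the projection to $\sn(R)^l$ is again an $\RV$-partition, which holds by construction.

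The main obstacle is case \ref{cas_Kb}, where $R=\Kb^\times$. There the $\RV$-coordinates have to be rewritten as $\Kb\times\Gamma$ coordinates via $\xi\mapsto(\ac\xi,|\xi|)$. In the presence of $\ac$, this is a definable bijection $\RV^\times\to\Kb^\times\times\Gamma$, and the value $0$ needs a separate flag. The $\Kb$-part carrying finiteness must also be separated from $\Gamma$. I would handle this by letting the residue coordinates of the $\RV$-part join the $\Kb^l$ block, with their lifts $\sn(\ac\,\cdot)$ appended to the $\RV$-partition. Finiteness of the projection to $\VF^d\times\Kb^l$ is then exactly what remains to be checked. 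If the original objects only impose finiteness over $\VF^d\times Y$, this may force a rearrangement of which coordinates count as $\Kb^l$ and which as $\Gamma^m$. Case \ref{cas_eff} is the degenerate version in which $R$ is finite-type. There, any injection of $Y$ into $\RV$ composes with $\sn$ only on the $\Kb$-part, and effectivity as in Lemma \ref{lem_eq_cat} lets one remove the other coordinates.
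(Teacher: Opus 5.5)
\textbf{Gap: the $Y$-coordinate cannot always be absorbed into $\sn(R)^l$.} Your embeddings and full faithfulness are fine. The gap is the key point of your essential-surjectivity argument. You assume some $\RV$-partition $\psi:X\to Y'\subset\sn(R)^l$ refines $X\to Y$, i.e.\ that $y$ can be read off from $\sn(R)$-values.

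Partitioning the graph of $X\to Y$ does not give this. It only makes each fibre $G_s=\psi^{-1}(s)$ $\cL(s)$-definable, and inside $G_s$ the $Y$-coordinate still ranges over a (finite) set rather than a single point.

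When $R\subsetneq\RV^\times$ the claim is false in general. In case \ref{cas_eff}, $R=\{\xi:|\xi|\in\Gamma_0\}$ for an arbitrary definable subgroup $\Gamma_0$; it is not a ``degenerate'' case. Take $\cT=\mathrm{Th}(\bC((s)))$ in $\cL_{val}$, $\Gamma_0=2\Gamma$, and $Y=\{y: y^2\in\sn(R),\ |y|\notin\Gamma_0\}$. This $Y$ maps into $\RV$ via $\rv$, so $X=Y$ is an object of $\VF_{\cT'}[0]$. In the model $\bC((s))$ with $\sn(\rv(cs^{2k}))=cs^{2k}$, the automorphism $s\mapsto -s$ fixes $\sn(R)$ pointwise and sends each $y\in Y$ to $-y$. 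Hence no $\emptyset$-definable map $Y\to\sn(R)^l$ is injective, and your $\psi$ cannot exist. Effectivity does not rescue this: lifting the finite sets $\{\rho:\rho^2=r\}$ just returns $\pm y$, which do not lie in $\sn(R)$.

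\textbf{The fix: put the finite residual ambiguity into the auxiliary coordinates.} This is exactly why $\VF_{\sn}[d]$ and $\VF_{\Kb}[d]$ keep an extra $\RV^m$ (resp.\ $\Gamma^m$) block subject only to a finiteness condition over $\VF^d\times R^l$.
\begin{enumerate}
\item Take any $\sn(R)$-valued $\RV$-partition $\psi$ of $X$, and let $G_s=\psi^{-1}(s)$, which is $\cL(s)$-definable.
\item The $Y$-coordinates of $G_s$ form an $\cL(s)$-definable subset of $Y$. Since $Y$ maps into $\RV$, this subset has dimension $0$ by Lemma \ref{lem_dim_T-def}, hence is finite, as in Lemma \ref{lem_inj_RV_rel}.
\item Lemma \ref{lem_ens_fini_int_RV} over $s$, glued by compactness, gives an $\cL(s)$-definable code $c_s(y)\in\RV^k$ separating the points of that finite set.
\item The map $(x,y,u)\mapsto(x,\psi(x,y,u),c_{\psi(x,y,u)}(y),u)$ is an isomorphism onto an object of $\VF_{\sn}[d]$. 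Fibres over $\VF^d\times\sn(R)^l$ remain finite. The fibre over $s$ is $\{(x,c_s(y),u):(x,y,u)\in G_s\}$, which is $\cL(s)$-definable.
\end{enumerate}
In case \ref{cas_RV}, lift the codes by $\sn$ instead. In case \ref{cas_Kb}, split the codes and $u$ via $\xi\mapsto(\ac\xi,|\xi|)$ and move the $\Kb$-parts, lifted by $\sn$, into the $\Kb$ block. Fibres only shrink under this, so your worry about having to rearrange coordinates to preserve finiteness is unfounded.
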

\begin{proof}
Clear.
\end{proof}

We now turn to $\RV$-categories.

\begin{dfn}
\begin{enumerate}
\item In case \ref{cas_eff}, let $\RV_{\sn}[d]$ be the category whose objects are $\cL(\emptyset)$-definable subsets $S\subset (\RV^\times)^d\times R^l \times \RV^m$ such that the projection map $S\to (\RV^\times)^d\times R^l$ has finite fibres.
\item In case \ref{cas_RV}, let $\RV[d,\cdot]$ be the caategory whose objects are $\cL(\emptyset)$-definable subsets $S\subset (\RV^\times)^d\times \RV^m$.
\item In case \ref{cas_Kb}, let $\RV_{\Kb}[d]$ be the category whose objects are $\cL(\emptyset)$-definable subsets $S\subset(\RV^\times)^d\times\Kb^l\times\Gamma^m$ such that the projection map $S\to(\RV^\times)^d\times\Kb^l$ has finite fibres.
\end{enumerate}
Morphisms, in each of these categories, are once more the definable map.
\end{dfn}
\begin{rmk}
In the second case, for $\cT=\ACVF(0,0)$, the category $\RV[d,\cdot]$ really is the coarse category on $\RV$ defined by Hrushovski and Kazhdan.
\end{rmk}
\begin{lem}
The categories $\RV_{\sn}[d], \RV[d,\cdot], \RV_{\Kb}[d]$ are equivalent to $\RV_{\cT'}[d]$ in each of the cases \ref{cas_eff}, \ref{cas_RV} and \ref{cas_Kb} respectively.
\end{lem}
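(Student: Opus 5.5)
We prove the three equivalences by exhibiting explicit inclusion functors $\RV_{\sn}[d]\to\RV_{\cT'}[d]$, $\RV[d,\cdot]\to\RV_{\cT'}[d]$ and $\RV_{\Kb}[d]\to\RV_{\cT'}[d]$ and checking that each is fully faithful and essentially surjective. In each case, an object $S\subset(\RV^\times)^d\times R^l\times\RV^m$ (or the analogous set in the other cases) is sent to the set
\[\widetilde S=\{(\xi,\sn(r),u):(\xi,r,u)\in S\}\subset(\RV^\times)^d\times\sn(R)^l\times\RV^m.\]
This makes sense for the following reasons. The set $\sn(R)^l$ maps into $\RV$ via $\rv$. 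The projection $\widetilde S\to\sn(R)^l$ is an $\RV$-partition, because its fibre over $\sn(r)$ is the $\cL(r)$-definable set $S_r$, and $r=\rv(\sn(r))$. Finally, the finiteness condition on fibres is transported unchanged. In case \ref{cas_RV}, we use the coordinates $\RV^m$ themselves and the partition $u\mapsto\sn(u)$ on the $\RV^\times$-coordinates, splitting off the zero coordinates. On morphisms, the functor composes with $\sn$ and $\rv$. We read "definable" on the $\RV$-side in the sense of $\cL'$. By $\RV$-minimality (Corollary \ref{cor_section}), an $\cL'(\emptyset)$-definable subset of a power of $\RV$ is $\cL(\emptyset)$-definable, so it makes no difference whether the objects and maps of $\RV_{\sn}[d]$ etc. are taken to be $\cL$- or $\cL'$-definable. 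Full faithfulness is then immediate, since morphisms on both sides are just the definable maps, transported by the definable bijection $\sn\times\rv$ between $R^l$ and $\sn(R)^l$.

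Most of the work is in essential surjectivity. Let $S\subset(\RV^\times)^d\times Y\times\RV^m$ be an object of $\RV_{\cT'}[d]$, where $Y\subset\VF^k$ maps into $\RV$ and $S\to Y$ is an $\RV$-partition. The first step is to show that $Y$ is $\emptyset$-definably isomorphic to a subset of $\sn(R)^{l}$ over which the fibres remain $\cL$-definable in the right way. For this we invoke the proof of Corollary \ref{cor_section}: by relative quantifier elimination, the $\RV$-partition can be taken to be $y\mapsto(\text{all }\sn\text{-terms in }y)\in\sn(R)^l$. We then refine the given partition $S\to Y$ by this map, replacing $Y$ by the graph of the combined map $S\to Y\times\sn(R)^l$. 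The second step is to get rid of $Y$ entirely. Since $Y$ maps into $\RV$ with an injection $\iota$, Lemma \ref{lem_dcl_acl} shows that each $y\in Y$ is $\cL$-definable over $\iota(y)$ together with the $\sn$-terms. Hence $y$ is $\cL$-definable from a tuple in $\sn(R)^l\times\RV^{m'}$, and we may push the extra $\RV$-data into the $\RV^m$ coordinates. This produces an isomorphic object $S'\subset(\RV^\times)^d\times\sn(R)^{l}\times\RV^{m+m'}$, which is the image $\widetilde{S''}$ of an object $S''$ of the smaller category. Finiteness of fibres over $(\RV^\times)^d\times\sn(R)^l$ is preserved because $y$ is recovered definably from the new coordinates.

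We expect the main obstacle to be checking that, after these replacements, the finite-fibre condition over $(\RV^\times)^d\times R^l$ still holds; this is needed in cases \ref{cas_eff} and \ref{cas_Kb}, and nothing is needed in case \ref{cas_RV}. The point is that $y$ must be recovered from the $R^l$-coordinates up to finitely many choices, not merely from the $R^l$-coordinates together with arbitrary $\RV$-data. We would first try to arrange, via the explicit form of the $\RV$-partition by $\sn$-terms, that $y$ is $\cL$-algebraic over its $\sn$-terms. This requires that a point $y\in Y$ be $\cL$-algebraic over the $\sn$-terms evaluated at $y$, which holds because $Y$ maps into $\RV$ and $\cT'$ is $\RV$-thin. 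If this fails, we would instead use Lemma \ref{lem_ens_fini_int_RV} to move finite fibres into the $\RV^m$ (respectively $\Gamma^m$) coordinates. In case \ref{cas_Kb}, we additionally use the angular component to split $\RV^m$ into $\Kb$ and $\Gamma$ parts, putting the $\Kb$-parts into $\Kb^l$.
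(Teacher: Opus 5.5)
Your proposal is correct and follows the paper's route. The paper's proof just invokes $\RV$-minimality to identify $\cL(\emptyset)$- and $\cL'(\emptyset)$-definability in the $\RV$-sorts, which gives full faithfulness, and calls the rest clear on the strength of the earlier remark that $\RV$-partitions can be taken into $\sn(R)^l$. Your essential-surjectivity argument spells out the point the paper leaves implicit: each fibre $Y_c$ of the $\sn$-term map is an $\cL(c)$-definable set mapping into $\RV$, hence finite by thinness plus 1-h-minimality.

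Three clean-ups are worth making. First, the appeal to Lemma \ref{lem_dcl_acl} for recovering $y$ from $(\iota(y),c(y))$ is vacuous as stated, since $y$ already lies in $S_{\iota(y)}$. It is also unnecessary, because $\RV$-minimality already makes the image of $(\xi,y,u)\mapsto(\xi,\rv(c(y)),\iota(y),u)$ an $\cL(\emptyset)$-definable set. Second, your sentence attributing preservation of finiteness to the definable recovery of $y$ gives the wrong reason and should be dropped; the finiteness of $Y_c$ is what actually does the work. Third, the fallback via Lemma \ref{lem_ens_fini_int_RV} cannot repair infinite fibres, so it serves no purpose and can go.
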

\begin{proof}
We use the $\RV$-minimality of $\cT'$ relative to $\cT$ in order to replace everywhere $\cL(\emptyset)$-definable by  $\cL'(\emptyset)$-definable. The rest of the proof is clear.
\end{proof}

There is a similar description in the case with volume forms, which we shall not give in further detail.

\begin{rmk}
In the categories $\VF_{\sn}[d],\VF[d,\cdot]$ and $\VF_{\Kb}[d]$, we could restrict our attention to $\cL(\emptyset)$-definable objects (more precisely, the \textbf{full} subcategories they define are equivalent). This comes from the fact that any object is isomorphic to the lift of some object in the corresponding category on $\RV$. Nevertheless, it is crucial to take $\cL'(\emptyset)$-definable bijections.
\end{rmk}

\subsection{Section of the value group}
Let $\cL$ be a language obtained from $\cL_{\mathrm{DP}}$ by possibly adding structure on the residue field and the value group (separately), and let $\cT$ be a theory of henselian valued fields of zero residue characteristic. In that case, $\RV^\times$ identifies with $\Kb^\times\times\Gamma$. Also let $\cL'$ be the language we get from $\cL$ by adding $\sn:\Gamma\to\VF^\times$ a function symbol. Let $\cT'$ be the $\cL'$-theory obtained from $\cT$ by asserting that $\sn$ is a multiplicative group morphism and that, for all $\gamma\in\Gamma$, we have $\rv\circ\sn(\gamma)=\gamma$, where $\Gamma$ is seen as a definable subset of $\RV$.

Note that, under these hypotheses, $\Kb$ and $\Gamma$ are orthogonal. So we will be able to use the same arguments as in Section \ref{ssec_hens_RV}, enventually showing the similar Corollary \ref{cor_axiomes_Gamma}. Denote by $\Gamma_\VF$ the image of $\sn$.

\begin{prop}
Let $M,N\models\cT'$ be two models such that $N$ is $|M|^+$-saturated, and let $A\subset M$ be a sub-$\cL'$-structure. Let $f:A\to N$ be a partial embedding for language $\cL'$, which we additionally assume to be $\cL$-elementary. Then $f$ is a partial $\cL'$-elementary embedding. Actually, for any extension of $f$ to a partial $\cL$-elementary embedding $g:A\cup\RV(M)\to N$, there is an extension of $g$ to all of $M$ which is $\cL'$-elementary.
\end{prop}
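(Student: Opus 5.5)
The plan is to mimic the proof of Proposition \ref{prop_section}, replacing the section of $R\subset\RV^\times$ by the section $\sn:\Gamma\to\VF^\times$, where $\Gamma$ is viewed inside $\RV^\times$ as the elements of angular component $1$. First, as there, the second assertion implies the first. The extension $g$ of $f$ to $A\cup\RV(M)$ that is $\cL$-elementary exists by saturation of $N$. We extend $g$ to $\mathrm{Frac}\,\VF(A)$ and then to $\Gamma_\VF(M)=\sn(\Gamma(M))$ by setting $g(\sn(\gamma))=\sn(g(\gamma))$. This is consistent on $A$ because $f$ is an $\cL'$-embedding and $A$ is an $\cL'$-substructure.

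The key step is the analogue of the Fact: $g$ extends to an $\cL'$-partial embedding of the substructure generated by $A\cup\RV(M)\cup\Gamma_\VF(M)$. Since the $\VF$-sort only carries the ring structure, $\rv$ (and $\ac$ through $\rv$), it suffices to check that $g$ commutes with $\rv$ on polynomial expressions in $A\cup\Gamma_\VF(M)$. Expanding, these are sums $\sum_i a_ix_i$ with $a_i\in\VF(A)$ and $x_i\in\Gamma_\VF(M)$, since $\Gamma_\VF$ is a multiplicative group and $\VF(A)$ a field. This case is actually simpler than the $\RV$-section case, because $\sn$ is only multiplicative, with no additivity to exploit.

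Normalise so that $|a_1x_1|=1$ is maximal. Let $I$ be the set of $i$ with $|a_ix_i|=1$. For $i\in I$, $x_i$ has angular component $1$ and $|x_i|=|a_i|^{-1}$, so $x_i=\sn(|a_i|^{-1})$ lies in $\dcl(A)$ and hence in $\VF(A)$, as $A$ is a substructure. Thus $\sum_{i\in I}a_ix_i=:b\in\VF(A)$. If $b\neq 0$, then
\[\rv\Big(\sum_i a_ix_i\Big)=\rv(b)\]
unless $|b|<1$. In general, write the sum as $b+\sum_{i\notin I}a_ix_i$ with fewer $\Gamma_\VF$-terms and argue by induction on the number of terms, grouping those of the same valuation. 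Each group of equal valuation collapses into an element of $\VF(A)$ times one fixed $x\in\Gamma_\VF(M)$. After grouping, the terms $c_jx_j$ have pairwise distinct $x_j$, hence pairwise distinct $|x_j|$. Since $|c_j|\in\Gamma(A)$, if two terms had the same absolute value then $|x_j|/|x_k|\in\Gamma(A)$ and they would be grouped again. After finitely many regroupings, the terms with nonzero coefficient have pairwise distinct absolute values. Then $\rv$ of the sum is $\rv$ of the dominant term, which is $\rv(c_j)\cdot\rv(x_j)$, and this is compatible with $g$ since $g$ is multiplicative and defined on $\RV(M)$. The same bookkeeping holds on the $N$ side, because $g$ preserves $|\cdot|$ on $\Gamma(M)$ and $\VF(A)$.

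Then, by the Ax--Kochen--Ershov principle (relative quantifier elimination to $\RV$ for henselian fields of residue characteristic $0$, in $\cL_{\DP}$ with extra structure on $\Kb$ and $\Gamma$), $g$ is again $\cL$-elementary, since it is defined on all of $\RV(M)$ and respects $\rv$ of polynomial expressions. We then extend $g$ to an $\cL$-elementary embedding of $M$ into $N$ using saturation. This extension is automatically an $\cL'$-embedding, since $\sn$ takes values in $\Gamma_\VF(M)$, where $g$ already commutes with $\sn$. Finally, $\cL'$-elementarity follows from the same back-and-forth argument through an $|N|^+$-saturated elementary extension $L$ of $M$ as in Proposition \ref{prop_section}.

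I expect the main obstacle to be the regrouping/induction in the compatibility with $\rv$, in particular the cancellation case where a group sums to $0$ or to something of smaller absolute value. What I would try first is to absorb such groups into $\VF(A)$ and apply strict induction on the number of $\Gamma_\VF$-terms.
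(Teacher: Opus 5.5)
Your proposal is correct and follows the paper's proof. The shared steps are: reduction to linear combinations $\sum_i a_ix_i$ with $a_i\in\VF(A)$ and $x_i\in\sn(\Gamma(M))$; the observation that the terms of maximal absolute value have $x_i=\sn(|a_i|^{-1})\in\VF(A)$; then Ax--Kochen--Ershov, extension by saturation, and the back-and-forth through a saturated elementary extension. Your merging of terms of equal absolute value, with induction on the number of terms, actually handles the cancellation case $|\sum_{i\in I}a_ix_i|<1$ more carefully than the paper, whose ``equal modulo the maximal ideal'' step only settles the case where that partial sum is a unit.
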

\begin{proof}
Once more, it is enough to prove the second point, and we may assume $\VF(A)$ to be a field. We still denote by $g$ the extension to $A\cup\RV(M)\cup \Gamma_\VF(M)$ which we obtain by setting $g(\sn(\gamma))=\sn(g(\gamma))$ for all $\gamma\in \Gamma(M)$.

\begin{fact}
The map $g$ defined in that way can be extended to a partial $\cL'$-embedding on the generated structure.
\end{fact}
\begin{proof}
Again, we check that, for every polynomial expression $P(a_1,\dots,a_r,\gamma_1,\dots,\gamma_s)$ with integer coefficients and variables in $A\cup \Gamma_\VF(M)$, it holds that
\[\rv(P(g(a_1),\dots,g(a_r),g(\gamma_1),\dots,g(\gamma_s))) = g(\rv(P(a_1,\dots,a_r,\gamma_1,\gamma_s)))\,.\]
As $A$ and $\Gamma_\VF$ are stable under product, we reduce ourselves to linear combinations of elements of $\Gamma_\VF(M)$ with coefficients in $A$. Let then be $a_1,\dots,a_r\in A, \gamma_1,\dots,\gamma_r\in \Gamma_\VF(M)$. Without loss of generality, assume $|a_1\gamma_1|=\max_i|a_i\gamma_i|$. We replace $a_i$ by $a_ia_1^{-1}$ and $\gamma_i$ by $\gamma_i\gamma_1^{-1}$, so that it suffices to show equality when all $a_i\gamma_i$ are integers, and $a_1\gamma_1$ is an invertible integer. Let $I$ be the set of $1\leq i\leq r$ such that $|a_i\gamma_i|=1$. For $i\in I$, we have $\gamma_i = |a_i|^{-1}\in\Gamma_\VF(A)$; hence $\sum_{i\in I}g(a_i)g(\gamma_i) = g(\sum_{i\in I}a_i\gamma_i)$. Moreover, $\sum_{i=1}^r a_i\gamma_i=\sum_{i\in I}a_i\gamma_i$ modulo the maximal ideal, and similarly $\sum_{i\in I}g(a_i)g(\gamma_i) = \sum_{i=1}^r g(a_i)g(\gamma_i)$ because $I$ is also the set of $i$'s such that $|g(a_i)g(x_i)|=1$. This gives the result.
\end{proof}
From that point we end the proof as in Proposition \ref{prop_section}.
\end{proof}

\begin{cor}\label{cor_axiomes_Gamma}
The theory $\cT'$ is complete if $\cT$ is, has quantifier elimination relative to $\cL$, is $\RV$-minimal and has $\RV$-partitions relative to $\cT'$, and is $\RV$-thin. It is also refined.
\end{cor}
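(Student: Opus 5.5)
The plan is to mirror the proofs of Corollary \ref{cor_section} and Proposition \ref{prop_theorie_raffinee}, now using the proposition just proved for the section $\sn:\Gamma\to\VF^\times$.

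\emph{Completeness and relative quantifier elimination.} If $\cT$ is complete, any two models of $\cT'$ have elementarily equivalent $\cL$-reducts. Take the empty substructure (or the prime substructure generated by $\emptyset$) and the trivial partial map, which is $\cL$-elementary. The proposition then makes it $\cL'$-elementary, so the two models are $\cL'$-elementarily equivalent. The semantic criterion gives relative quantifier elimination directly: any partial $\cL'$-embedding that is $\cL$-elementary is $\cL'$-elementary.

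\emph{$\RV$-minimality.} The second clause of the proposition lets us take any $\cL$-elementary extension $g$ of $f$ to $\RV(M)$ and still extend to an $\cL'$-elementary map. Consequently two tuples of $\RV$ with the same $\cL$-type over $A\subset\VF$ (with $A=\dcl_{\cL'}(A)\cap\VF$) have the same $\cL'$-type. So $\cL'(A)$-definable subsets of $\RV^r$ are $\cL(A)$-definable, by the usual compactness argument.

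\emph{$\RV$-partitions.} By relative quantifier elimination, it suffices to treat $\cL$-definable sets, where the trivial partition works, and sets defined by quantifier-free $\cL'$-formulas. For the latter, map a point to the tuple of values $\sn(\tau(x))$ for all subterms $\tau$ beginning with $\sn$. This tuple lies in $\Gamma_\VF^l$, which injects into $\RV^l$ via $\rv$. On each fibre, replacing these subterms by constants yields an $\cL$-formula over the parameters.

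\emph{$\RV$-thinness.} We use the Kaplansky argument: embed a model of size $\kappa$ into a spherically complete immediate extension of size $2^\kappa$ with the same $\RV$. The delicate point is that the section $\sn$ must extend. This holds automatically, since the value group is unchanged and $\sn$ keeps its values in the smaller field. Elementarity of this extension for $\cL'$ follows from the proposition: the inclusion is $\cL$-elementary by Ax--Kochen--Ershov, and it is an $\cL'$-embedding. Then no definable surjection from $\RV^m$ onto a ball can exist.

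\emph{Refinement.} Repeat Lemmas \ref{lem_raff_commun}, \ref{lem_cL_raff}, \ref{lem_raff_sans_RV}, \ref{lem_raff_avec_RV} verbatim. Only relative quantifier elimination and the induction on the width of $\sn$-terms are used; here the terms $\sn(\tau)$ have $\tau$ valued in $\Gamma\subset\RV$. Lemma \ref{lem_raff_avec_RV} also uses that the union of centres is $0$-dimensional, which holds because $\cT'$ is $\RV$-thin.

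I expect the main obstacle to be $\RV$-thinness, namely checking carefully that the spherically complete extension carries a compatible section with the same image. The other steps are direct transcriptions of the arguments for Corollary \ref{cor_section} and Proposition \ref{prop_theorie_raffinee}.
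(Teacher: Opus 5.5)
Your proposal is correct and is essentially the paper's proof, which just says that the arguments of Corollary~\ref{cor_section} and Proposition~\ref{prop_theorie_raffinee} go through verbatim. Your check that $\sn$ survives in the immediate extension (so that Ax--Kochen--Ershov elementarity upgrades to $\cL'$-elementarity by relative quantifier elimination) spells out a step the paper leaves implicit. One caveat, inherited from the paper: the maximal immediate extension $\widetilde K$ need not have cardinality $2^\kappa$ (under CH, with $\kappa=\aleph_1$ and an archimedean value group, it has cardinality $\aleph_1$), so choose $\kappa$ of countable cofinality with $|\Kb(K)|=|\RV(K)|=\kappa$; then every ball of $\widetilde K$ has at least $\kappa^{\aleph_0}>\kappa$ elements by K\"onig's theorem.
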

\begin{proof}
This is exactly analogous to Corollary \ref{cor_section} (and Proposition \ref{prop_theorie_raffinee}), and the same proof works \emph{verbatim}.
\end{proof}

\begin{lem}
Assume that $\cT$ is a theory of valued fields whose residue field is algebraically bounded over $\emptyset$ and that $\cL$ is obtained from $\cL_{\mathrm{DP}}$ by possibly adding structure on $\Gamma$. Then $\cT'$ has the jacobian bijection lifting property.
\end{lem}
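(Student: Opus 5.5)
We follow the strategy of Proposition \ref{prop_rel_bij_Kb}, exchanging the roles of $\Kb$ and $\Gamma$. The section of $\Gamma$ will lift the $\Gamma$-part of the bijection, and effectivity of an algebraically bounded residue field will lift the $\Kb$-part.

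Let $h:R\to S$ be an isomorphism of $\RV_{\cT'(A)}[d]$, with $A\subset\VF$. As in the proof of Proposition \ref{prop_rel_bij_Kb}, we first make $h$ relative to the $Y$-coordinates (replacing $R,S$ by $\widetilde R,\widetilde S$). By compactness we then reduce to $R\subset(\RV^\times)^d\times\RV^{m_1}$ and $S\subset(\RV^\times)^d\times\RV^{m_2}$, after adding to $A$ a point $y\in Y$. Since $Y$ maps into $\RV$ and lies in $\VF$, this is still a $\VF$-parameter, so the setting is preserved. The $\RV$-minimality of $\cT'$ (Corollary \ref{cor_axiomes_Gamma}) lets us regard $R,S,h$ as $\cL(A')$-definable for some $A'\subset\dcl_{\cL'}(A)\cap\VF$. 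Writing $\RV^\times=\Kb^\times\times\Gamma$ via $\ac$ and $|\cdot|$, orthogonality of $\Kb$ and $\Gamma$ in $\cL_{\DP}$ lets us partition $R$ and $S$ into finitely many $A$-definable products $R^{\Kb}R^\Gamma$. On each piece $h=h^{\Kb}\cdot h^\Gamma$, and it suffices to lift the two cases $h^\Gamma=\id$ and $h^{\Kb}=\id$ separately, composing the lifts afterwards.

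For $h^{\Kb}=\id$ we use the section. By the description of definable maps in ordered abelian groups (\cite{cluckers_quantifier_2011}, Corollary 1.10), after a finite partition $h^\Gamma$ is a composition of $\bQ$-linear maps and translations by $A$-definable elements $\gamma$. A translation is lifted by multiplication by $\sn(\gamma)$, which is $A$-definable; this is where the section replaces the $\Gamma$-effectivity hypothesis. Integer linear combinations $z\mapsto kz$ are lifted by $x\mapsto (x/\sn(|x|))^{k-1}x$. A homothety of factor $p/q$ is lifted by taking the unique $q$-th root with angular component fixed by the choice $\ac(x^{p/q})=\ac(x)$, computed as $\sn(|x|)^{p/q}\cdot(x/\sn(|x|))$. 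Here $\sn(|x|)^{p/q}$ means $\sn(\tfrac pq|x|)$, which is defined exactly when $\tfrac pq|x|\in\Gamma$, and this holds on the image set. On each box $\rv^{-1}(\xi)$ these maps are monomial up to $\rv$-constant factors, so $\rv$ of their Jacobian is constant and has the right absolute value.

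For $h^\Gamma=\id$, the map $h^{\Kb}$ is an $A$-definable bijection between subsets of $(\Kb^\times)^{d+m}$, with finite fibres over the first $d$ coordinates. Effectivity of $\cT$ (\cite{stout_integration_2025}, which holds since $\Kb$ is algebraically bounded and $\cL$ adds structure only on $\Gamma$) combined with $\RV$-minimality, exactly as in Proposition \ref{prop_rel_bij_eff}, gives an $\cL(A')$-definable jacobian lifting. Alternatively, and more directly, we lift on $\Kb$-coordinates after normalising by $\sn(|x|)$: this reduces to lifting a bijection between residue-level sets, which effectivity of $\cL_{\DP}$ restricted to units provides, and then multiplying back by $\sn(|x|)$.

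The main obstacle is this last step. A bijection on $\Kb^\times$-coordinates that mixes the $d$ ``volume'' coordinates with finite auxiliary coordinates requires effectivity to lift finite sets from $\Kb$ to $\VF$ (\cite{stout_integration_2025}, Lemma 3.1.2). We must therefore check that algebraic boundedness of $\Kb$ over $\emptyset$ still yields this effectivity once the parameters $A'$ are added, in $\cT$ rather than in $\cT'$. We would first verify that $\cT(A')$ remains effective because $A'\subset\VF$, and then transfer the lift to $\cT'$ via $\RV$-minimality.
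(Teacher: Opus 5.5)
\textbf{There is a genuine gap, in the $\Gamma$-part.}

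You import \cite{cluckers_quantifier_2011}, Corollary~1.10, from Proposition~\ref{prop_rel_bij_Kb}. There it rested on hypothesis~(4), that $\Gamma$ carries only the structure of an ordered abelian group, whereas this lemma allows arbitrary extra structure on $\Gamma$. For instance, if a new symbol names a bijection $g:\Gamma\to\Gamma$ that is not piecewise $\bQ$-affine, then $r\mapsto\ac(r)\,g(|r|)$ is a morphism of $\RV_{\cT'}[1]$ that your recipe cannot lift.

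Independently, your lift of integer linear maps is wrong. The element $x/\sn(|x|)$ has absolute value $1$ and angular component $\ac(x)$. Hence $(x/\sn(|x|))^{k-1}x$ has absolute value $|x|$ and angular component $\ac(x)^k$: it lifts $y\mapsto y^k$ on $\Kb^\times$, not $z\mapsto z^k$ on $\Gamma$. You transposed the formula of Proposition~\ref{prop_rel_bij_Kb} without exchanging the roles of $\ac$ and $|\cdot|$.

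The missing observation is that the section makes any description of $h^\Gamma$ unnecessary. When $h^{\Kb}=\id$, let $\gamma'_i=|h_0(\rv(x),\zeta)_i|$ and set $f_i(x,\zeta)=x_i\,\sn(\gamma'_i)\,\sn(|x_i|)^{-1}$. On each box this is multiplication by constants. So it maps $\fL R$ bijectively onto $\fL S$, with $\rv(\Jac f)$ constant and $|\Jac f|\cdot|\xi|=|h_0(\xi)|$, whatever $h^\Gamma$ is. Your translation and homothety lifts are special cases of this formula.

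The paper goes one step further and never decomposes $h$. It fibres over the pairs $(|r|,|h(r)|)$ and uses $\sn$ to turn these $\Gamma$-values into $\VF$-parameters; the lifting property is stable under adding $\VF$-parameters. It then rescales into units and glues by compactness. Only bijections between subsets of $(\Kb^\times)^d\times\Kb^m$ remain.

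\textbf{Your first route for the $\Kb$-part also fails.} $\cT$ is not effective in general, so Proposition~\ref{prop_rel_bij_eff} does not apply. Take $\bC((t))$ with $\cL_{\DP}$; there $\rv(t)$ is $\emptyset$-definable, as the element with angular component $1$ and value $\max\Gamma_{<1}$. A lift of the $\emptyset$-definable bijection $\{1\}\to\{\rv(t)\}$ would send $1$ to an $\emptyset$-definable point of $t(1+\mathfrak m)$. But the $\cL_{\DP}$-automorphisms $t\mapsto t+ct^2$ ($c\neq 0$) move every such point. This is exactly why a section is needed.

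So the issue is not whether effectivity survives adding $A'$, but how to land in a theory that \emph{is} effective. Assume $A=\dcl_{\cL'}(A)\cap\VF$. Then the structure induced on $\Kb$ is the pure field with parameters $\ac(a)=\rv(a/\sn(|a|))$, $a\in A$. After normalising by $\sn$, the residual bijection is therefore $\cL_{val}(A)$-definable. Stout--Vermeulen's effectivity of $\cL_{val}$ with algebraically bounded residue field then gives the Jacobian lift. Your ``alternative'' route becomes correct once this justification is supplied, and it is then essentially the paper's final step.
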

\begin{proof}
Let $A$ be a set of parameters in $\VF$ and $R,S$ be objects of $\RV_{\cT'(A)}[d], h:R\to S$ an $A$-definable bijection. Assume without loss $A=\dcl_{\cL'}(A)\cap\VF$. Write $R\subset(\RV^\times)^d\times Y_1\times\RV^{m_1}, S\subset(\RV^\times)^d\times Y_2\times\RV^{n_2}$. For $u\in Y_1, v\in Y_2$, let $X_u, Y_v$ be the respective fibres over $u$ and $v$, and $X_{u,v}=X_u\cap f^{-1}(Y_v), Y_{u,v}=f(X_u)\cap Y_v$, as well as $f_{u,v}:X_{u,v}\to Y_{u,v}$ the restriction of $f$. We may assume that $|\cdot|:Y_i\to\Gamma^{l_i}$ are injective, with inverse $\sn$. In that way, $X_{u,v}, Y_{u,v}$ and $h_{u,v}$ are $\cL'(A,|u|,|v|)$-definable, thus $\cL(A,|u|,|v|)$-definable by $\RV$-minimality. In particular, they are also $\cL(A,u,v)$-definable. We will show that $h_{u,v}$ can be lifted in an $\cL'(A,u,v)$-definable way; by compactness, we will then be able to glue. Consequently, up to adding $u,v$ to the language (which preserves effectivity), we can take them off from notations, and reduce to the $Y_i = \{1\}$ case.

By doing a similar reasoning for the projection maps $R\to \Gamma^d \times\Gamma^{m_1}, S\to \Gamma^d\times\Gamma^{m_2}$, and due to the presence of a section $\sn:\Gamma\to\VF$, we reduce to the case in which those projections have constant image, and even to $R\subset(\Kb^\times)^d\times\Kb^{m_1}, S\subset(\Kb^\times)^d\times\Kb^{m_2}$. But then $h$ is already definable in the language $\cL_{val}(A)$, for which the jacobian lifting property holds according to \cite{stout_integration_2025}.
\end{proof}

\begin{cor}\label{cor_int_hens_2}
If the residue field of $\cT$ is algebraically bounded over $\emptyset$, and $\cL=\cL_{\mathrm{DP}}$, then $\cT'$ has an isomorphism of motivic integration, with and without volume forms.
\end{cor}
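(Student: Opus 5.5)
The plan is to apply Theorems \ref{thm_int_sans} and \ref{thm_int_avec} to $\cT'$, so I have to check their hypotheses one by one, almost all of which have just been established.

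First, the base theory. With $\cL=\cL_{\mathrm{DP}}$, $\cT$ is a theory of henselian valued fields of residue characteristic zero in a language that adds structure only on $\RV$. I will invoke \cite{cluckers_hensel_2022} to get that $\cT$ is 1-h-minimal (indeed $\omega$-h-minimal), exactly as in Corollary \ref{cor_int_hens_1}.

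Second, the axioms for the expansion. Corollary \ref{cor_axiomes_Gamma} gives that $\cT'$ has $\RV$-partitions relative to $\cT$, is $\RV$-thin and is refined. Hence the hypotheses of Section \ref{sec_axiomes} needed to construct $\int$ and to compute its kernel are satisfied. In particular, the dimension theory of Theorem \ref{thm_dim} is available for the volume-form version.

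Third, bijectivity. The preceding lemma states that $\cT'$ has the jacobian bijection lifting property when $\Kb$ is algebraically bounded over $\emptyset$ and $\cL$ is $\cL_{\mathrm{DP}}$ with possibly extra structure on $\Gamma$; this applies with no extra structure. The jacobian version implies the plain bijection lifting property, so Theorem \ref{thm_int_sans} yields the isomorphism without volume forms. Theorem \ref{thm_int_avec} then yields the isomorphism for $\mu\VF$, and also for $\mu_\Gamma\VF$, where the jacobian property is exactly what is needed for injectivity.

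The only point I expect to need care is the 1-h-minimality of $\cT$ in the Denef–Pas language. Hensel minimality is stated for $\RV$-expansions, and $\ac$ is an $\RV$-expansion once $\RV^\times\simeq\Kb^\times\times\Gamma$ is definable via $\ac$. So I would cite \cite{cluckers_hensel_2022}, where $\omega$-h-minimality is shown for pure henselian fields of equicharacteristic zero together with arbitrary $\RV$-expansions. Apart from this, the argument is purely an assembly of the earlier results.
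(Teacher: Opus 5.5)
Your proposal is correct and follows the same approach as the paper. The paper gives no explicit proof, but the corollary is assembled exactly like Corollary \ref{cor_int_hens_1}: 1-h-minimality of $\cT$ from \cite{cluckers_hensel_2022}, $\RV$-partitions, $\RV$-thinness and refinedness from Corollary \ref{cor_axiomes_Gamma}, the jacobian bijection lifting property from the preceding lemma, and then Theorems \ref{thm_int_sans} and \ref{thm_int_avec}.
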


It is possible rewrite the categories analogously to Section \ref{ssec_reecriture}; we do not give details.

\section{Formalism of Cluckers-Loeser motivic integration}\label{sec_formalisme_CL}
We want to connect the previous constructions to those of Cluckers-Loeser motivic integration. More precisely, starting from the above motivic integration morphisms \emph{à la} Hrushovski-Kazhdan, we are going to define a formalism of motivic integration \emph{à la} Cluckers-Loeser, which will be done in Proposition \ref{prop_formalisme_integration}. In the particular case in which $\cL'$ is an $\RV$-minimal extension of the Denef-Pas language $\cL_{\mathrm{DP}}$ and $\cT'$ is a theory of henselian valued fields of residue characteristic zero and discrete value group, in next section we will connect this new framework to that of Cluckers-Loeser (Theorem \ref{thm_cluckers_loeser}).

As for our assumptions, we always assume that $\cT'$ has $\RV$-partitions relative to $\cT$, which is 1-h-minimal, and that it is refined. Everywhere, when a statement includes for example at the same time results for the cases with and without volume forms, the results without further hypothesis, but for the results with volume forms we have to assume $\cT'$ to be $\RV$-thin (if only for them to make sens). We could also assume everywhere that $\cT'$ is $\RV$-thin; the other cases are pathological.

Unlike in the rest of the article, here it is quite often relevant to study the case of a trivial extension $\cL=\cL', \cT=\cT'$. The first subsection applies to all the theories of motivic integration we have seen so far, but after that it will be necessary to exclude the case of $\RV$-valued volume forms. The reason for this is the lack of analogue for Serre measure on these objects, which prevents us from defining general jacobians.

\subsection{Relative motivic integration}

First of all, note that, due to control of parameters and compactness theorem, all what we have done so far also holds in relative version -- with some subtleties in the volume forms case. We give the corresponding definitions and statements. They will come in three forms: without volume forms, a "rigid" version with volume forms and finally with volume forms and taking into account the geometry of the base. In any case, we find a motivic integration map in Proposition \ref{prop_HK_rel}.

\begin{dfn}
Let $Z\subset\VF^n\times\RV^m$ be an $\emptyset$-definable set. A set $Y\subset Z\times\VF^l$ \emph{maps into $\RV$ relative to $Z$} if, for every $z\in Z$, the fibre $Y_z$ $z$-definably maps into $\RV$.
\end{dfn}
By compactness, this is equivalent to the existence of an injective $\emptyset$-definable map $Y\to Z\times\RV^k$ (for some $k\in\bN$) compatible to projection to $Z$. Actually, those sets enjoy a far simpler description:

\begin{lem}\label{lem_inj_RV_rel}
Let $Z\subset\VF^n\times\RV^m$ be an $\emptyset$-definable set, and let $Y$ be an $\emptyset$-definable set which maps into $\RV$ relative to $Z$. Then there exists $Y_0$ an $\emptyset$-definable set mapping into $\RV$ (relative to $\{*\}$), a set $Y'\subset Y_0\times Z\times\RV^p$ such that the projection map $Y'\to Y_0\times Z$ has finite fibres and an $\emptyset$-definable bijection $Y\to Y'$ commuting to projections to $Z$.
\end{lem}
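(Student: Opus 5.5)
The plan is to use an $\RV$-partition of $Y$, viewed as a subset of $\VF^n\times\VF^l\times\RV^m$, to split off the part of $Y$ that is definable "absolutely" into $\RV$, and then treat the remainder fibrewise over $Z$ with Lemma \ref{lem_ens_fini_int_RV}.

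First, fix an $\emptyset$-definable $\RV$-partition $\chi:Y\to Y_0$, where $Y_0\subset\VF^k$ maps into $\RV$ over $\emptyset$. By definition, each fibre $\chi^{-1}(y_0)$ is $\cL(y_0)$-definable. The candidate is
\[Y'=\{(\chi(y),z,\ldots)\}\subset Y_0\times Z\times\RV^p,\]
where the dots are $\RV$-coordinates still to be constructed. These coordinates must make the map $y\mapsto(\chi(y),\pi_Z(y),\ldots)$ injective, with finite fibres over $Y_0\times Z$.

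The next step is to show that for fixed $(y_0,z)$ the set $F_{y_0,z}=\{y\in\chi^{-1}(y_0):\pi_Z(y)=z\}$ is finite. It is a subset of $Y_z$, and $Y_z$ maps $z$-definably into $\RV$. So $F_{y_0,z}$ is an $\cL(y_0z)$-definable subset of $\VF^l$ that admits an $\cL'(z)$-definable injection into $\RV^k$.

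To get finiteness I would work in a sufficiently saturated model. If $F_{y_0,z}$ were infinite, then by 1-h-minimality for $\cT(y_0z)$ it would contain a ball, since $\cL$-definable infinite subsets of $\VF$ contain balls; one would apply this after projecting to a suitable coordinate. A ball cannot lie in $\Inj_{z}$, provided $\cT'$ is $\RV$-thin, which the section assumes where relevant. This is the main obstacle: without $\RV$-thinness the finiteness can fail. If thinness is not available, I would instead refine $\chi$ using the relative injection $Y\to Z\times\RV^k$. That is, take $\chi$ to be an $\RV$-partition of the graph of that injection. Then on each fibre of $\chi$ the injection becomes $\cL(y_0)$-definable, and the set $F_{y_0,z}$ injects $\cL(y_0z)$-definably into $\RV^k$. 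In a 1-h-minimal theory, an $\cL$-definable subset of $\VF^l$ that injects definably into $\RV$ is finite, because definable maps from $\RV$ to $\VF$ have finite image. This gives finiteness without thinness, and I would use this version.

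Finally, the family $F=\{(y_0,z,y)\}\to Y_0\times Z$ is $\emptyset$-definable with finite fibres, bounded by compactness. Apply the family version of Lemma \ref{lem_ens_fini_int_RV} to it. This gives an $\emptyset$-definable bijection of $F$, and hence of $Y$ (since $y\mapsto(\chi(y),\pi_Z(y),y)$ is a bijection onto $F$), with a set $Y'\subset Y_0\times Z\times\RV^p$. The bijection is compatible with the projection to $Y_0\times Z$, hence with the projection to $Z$, and $Y'\to Y_0\times Z$ has finite fibres by construction.
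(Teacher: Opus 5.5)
Your proof is correct and follows the paper's overall plan: take an $\RV$-partition $\chi:Y\to Y_0$, show that $y\mapsto(\chi(y),\pi_Z(y))$ has finite fibres, and attach $\RV$-coordinates over $Y_0\times Z$. There are two differences worth noting.

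\emph{The finiteness step.} The paper takes an arbitrary $\RV$-partition $\chi$ and an arbitrary relative injection $\psi:Y\to Z\times\RV^p$. It then asserts that the fibre over $(y_0,z)$ is finite because it is $\cL(y_0,z)$-definable, lies in $\VF$, and maps into $\RV$. For an arbitrary $\chi$, however, $\psi$ is only $\cL'$-definable on that fibre, so this assertion needs an extra input, such as $\RV$-thinness together with the dimension theory of Section~\ref{sec_axiomes}. In the pathological example of Section~\ref{sec_axiomes}, the constant partition of $Y=\VF$ is a legitimate $\RV$-partition with an infinite fibre. You instead take $\chi$ to be an $\RV$-partition of the graph of $\psi$. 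This makes $\psi$ itself $\cL(y_0z)$-definable on each fibre, so finiteness follows directly from the 1-h-minimal fact that definable maps from $\RV$ to $\VF$ have finite image. That makes the step rigorous without any thinness hypothesis, which is better matched to the standing assumptions of that section.

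\emph{The final step.} Once the fibres are finite, you call on the family version of Lemma~\ref{lem_ens_fini_int_RV} to produce new $\RV$-coordinates. That is valid but unnecessary: as in the paper, you can take $Y'$ to be the image of $Y$ under $(\chi,\psi)$. The map $\psi$ is already injective and compatible with the projection to $Z$, and with your choice of $\chi$ its fibres over $Y_0\times Z$ are exactly the finite sets you controlled.
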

\begin{proof}
Let $\chi:Y\to Y_0$ be an $\RV$-partition of $Y$ and $\psi:Y\to Z\times\RV^p$ be an injection of $Y$ into $\RV$. We take $Y'$ to be the image of $Y$ under $(\chi,\psi)$. Clearly this commutes to projections to $Z$ and is $\emptyset$-definable. Let $(y,z)\in Y_0\times Z$ be a point. The fibre $\chi^{-1}(y)$ is $\cL(y)$-definable, so the fibre of $Y$ at $(y,z)$ is $\cL(y,z)$-definable. But it is contained in $\VF$, and it maps to $\RV$; thus it is finite.
\end{proof}

\subsubsection{Categories without volume forms}
In the case without volume forms, this gives motivation for the following definition:

\begin{dfn}\label{def_relatif_sans}
Let $Z\subset\VF^n\times\RV^m$ be an $\emptyset$-definable set. For $d\in\bN$, we define the \emph{categories relative to $Z$ without volume forms} in the following way:

\begin{enumerate}
\item the objects of $\VF_Z[d]$ are the $\emptyset$-definable subsets $X\subset Z\times \VF^d\times Y \times \RV^k$, where $Y$ maps into $\RV$ and the projection map $X\to Z\times \VF^d\times Y$ has finite fibres. If $X,X'$ are two objects of $\VF_Z[d]$, a morphism $f:X\to X'$ is an $\emptyset$-definable map $f:X\to X'$ commuting to the projections to $Z$.
\item the objects of $\RV_Z[d]$ are the $\emptyset$-definable subsets $R\subset Z\times(\RV^\times)^d\times Y \times \RV^k$, where $Y$ maps into $\RV$ and the projection map $R\to Z\times (\RV^\times)^d\times Y$ has finite fibres. If $R,S$ are two objects of $\RV_Z[d]$, a morphism $f:R\to S$ is an $\emptyset$-definable map $f:R\to S$ commuting to the projections to $Z$.
\end{enumerate}

Denote by $\VF_Z$ the colimit of the categories $\VF_Z[d]$ (for the natural inclusions $\VF_Z[d]\subset\VF_Z[d+1]$), as well as $\RV_Z[\leq d]$ the direct sum of the categories $\RV_Z[\delta], \delta\leq d$, and $\RV_Z[*]$ the colimit of the $\RV_Z[\leq d]$. We also define $\Isp^Z$ as the congruence of the semiring $\Kp\RV_Z[*]$ generated by the relation $([1]_1, [\RV_{<1}]_1 + 1)$.
\end{dfn}

When $Z=\{*\}$ is a definable point, we find back the previous definitions. We think to these objects as versions "in a definable family over $Z$" of the above. So the objects over $Z$ are relative versions, while those over the point are absolute versions. More precisely, for every $z\in Z$, there is an "evaluation at $z$" functor given by
\[\begin{array}{rcl}
\RV_Z[*]&\to&\RV_{\cT'(z)}[*]\\
R&\mapsto&R_z\,,
\end{array}\]
with $R_z$ the fibre of $R$ over $z$ (the functor being defined by taking restriction on the morphisms). Note that here we dealt with the example of $\RV_Z[*]$, but the other categories can be dealt with in the same way. Also note that, by taking Grothendieck semigroups, this evaluation morphism induces a semiring morphism.

The following lemma justifies this functional point of view by showing that "we can test conditions pointwise":

\begin{lem}\label{lem_sans_forme_fibre}
Let $Z$ be an $\emptyset$-definable set. For every $d\in\bN$, define the category $\widetilde \VF_Z[d]$ (resp. $\widetilde\RV_Z[d]$) as follows: an $\emptyset$-definable set $X\subset Z\times\VF^l\times\RV^k$ is an object of $\widetilde\VF_Z[d]$ (resp. of $\widetilde\RV_Z[d]$) if and only if, for all $z\in Z$, the fibre $X_z$ is in $\VF_{\cT'(z)}^*[d]$ (resp. in $\RV_{\cT'(z)}^*[d]$). Similarly, an $\emptyset$-definable map $f:X\to X'$ between two objects of $\widetilde\VF_Z[d]$ (resp. of $\widetilde\RV_Z[d]$) is a morphism if and only if it commutes to the projection maps towards $Z$ and, for all $z\in Z$, the induced map $f_z:X_z\to X'_z$ is a morphism in $\VF_{\cT'(z)}[d]$ (resp. in $\RV_{\cT'(z)}[d]$). Then there is a canonical inclusion $\VF_Z[d]\subset\widetilde\VF_Z[d]$ (resp. $\RV_Z[d]\subset\widetilde\RV_Z[d]$) which is an equivalence of categories.

If $X,X'$ are objects of $\VF_Z[*]$ (resp. of $\RV_Z[*]$), then $[X]=[X']$ in $\Kp\VF_Z[*]$ (resp. in $\Kp\RV_Z[*]$) if and only if $[X_z]=[X'_z]$ in $\Kp\VF_{\cT'(z)}$ (resp. in $\Kp\RV_{\cT'(z)}[*]$) for all $z\in Z$.

In a similar way, if $X, X'$ are two objects of $\RV_Z[*]$, then $([X],[X'])\in\Isp^Z$ if and only if, for all $z\in Z$, the fibre $([X_z],[X'_z])$ is in $\Isp^{\cT'(z)}$.
\end{lem}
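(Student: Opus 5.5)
The plan is to reduce all three assertions to the fibrewise (absolute) results already proved, parametrised over $z\in Z$, and then glue the pointwise data by compactness. The gluing works because every notion involved (finite fibres, mapping into $\RV$, being an $\RV$-partition, definable bijections, the generators of $\Isp$) is witnessed by finitely many formulas whose parameters vary definably with $z$.

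\emph{Equivalence of categories.} The inclusion $\VF_Z[d]\subset\widetilde\VF_Z[d]$ is well defined. If $X\subset Z\times\VF^d\times Y\times\RV^k$ with $Y$ mapping into $\RV$, then each fibre $X_z$ lies in $\VF^*_{\cT'(z)}[d]$, since $Y$ still maps into $\RV$ over $z$ and the finiteness of fibres is inherited. Full faithfulness holds because the morphisms on both sides are the $\emptyset$-definable maps over $Z$ that induce morphisms fibrewise. The main work is essential surjectivity. Let $X\subset Z\times\VF^l\times\RV^k$ be an object of $\widetilde\VF_Z[d]$. For each $z$, the fibre $X_z$ is $z$-definably isomorphic to a subset of $\VF^d\times Y^z\times\RV^{k_z}$, where $Y^z$ $z$-definably maps into $\RV$ and the projection has finite fibres. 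By compactness, applied in a saturated model and using that $\widetilde\VF_Z[d]$ membership is type-definable in $z$, finitely many formulas suffice. We then partition $Z$ definably and glue to get an $\emptyset$-definable bijection over $Z$ from $X$ onto $X'\subset Z\times\VF^d\times \widetilde Y\times\RV^{k'}$, where $\widetilde Y$ maps into $\RV$ relative to $Z$. Lemma \ref{lem_inj_RV_rel} now replaces $\widetilde Y$ by $Y_0\times(\text{finite over } Y_0\times Z)\times\RV^p$ with $Y_0$ mapping absolutely into $\RV$. The finite $\RV^p$-part is absorbed into the $\RV^k$-coordinates, so the finiteness of the projection is preserved and the result is an object of $\VF_Z[d]$. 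The $\RV$ case is identical, and the $\RV$-partition condition can be arranged as in Lemma \ref{lem_eq_cat}.

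\emph{Grothendieck classes.} One direction is immediate, since evaluation at $z$ is a functor. Conversely, suppose $[X_z]=[X'_z]$ for every $z$. In a Grothendieck semigroup of a category where disjoint unions exist, equality of classes means an isomorphism, possibly after adding a common object. I will first check that $\Kp$ here is cancellation-free in the sense that $[A]=[B]$ means $A\simeq B$; this holds because the classes are isomorphism classes with $+$ given by disjoint union. So for each $z$ there is a $z$-definable bijection $f_z:X_z\to X'_z$ which is a morphism. Compactness gives finitely many formulas $\varphi_i(z,\cdot)$ such that for each $z$ some $\varphi_i(z,\cdot)$ defines such a bijection. We partition $Z$ according to the first $i$ that works and glue into an $\emptyset$-definable bijection over $Z$. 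By the first part, this is a morphism of $\VF_Z$ or $\RV_Z$.

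\emph{The congruence $\Isp^Z$.} This is the step I expect to be the main obstacle, since the congruence is generated by a chain of equalities, not a single bijection. I will mirror the proof of Lemma \ref{lem_Isp_ponctuel}, replacing the $\RV^N$-parameter $t$ by $z\in Z$. For each $z$, a witness consists of a length $k_z$ and objects $T^{(l)}_{i,j,z}$ with the system of equalities written there, each realised by $z$-definable bijections. Compactness bounds $k$ and the exponents uniformly, and padding with trivial steps equalises lengths across a finite definable partition of $Z$. The families $T^{(l)}_{i,j,z}$ then glue to objects $T^{(l)}_{i,j}$ of $\widetilde\RV_Z$, hence of $\RV_Z$ by the first part. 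The bijections glue as well, and the second part turns the fibrewise equalities into equalities in $\Kp\RV_Z[*]$, so $([X],[X'])\in\Isp^Z$. The reverse implication again follows from functoriality of evaluation at $z$.
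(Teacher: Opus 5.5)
Your proposal is correct and follows the paper's proof. It glues fibrewise witnesses by compactness, uses Lemma \ref{lem_inj_RV_rel} to replace a set mapping into $\RV$ relative to $Z$ by an absolute $Y_0$ plus finitely many extra $\RV$-coordinates, glues fibrewise bijections for the equality of classes, and glues $\Isp$-witnesses, made legitimate by the first two parts.

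The differences are minor. You carry a chain of elementary steps as in Lemma \ref{lem_Isp_ponctuel}, whereas the paper's proof writes a single-step witness $(T_{i,j})$, so your version does not need a one-step normal form for $\Isp^Z$. Also, compactness does not rely on membership being type-definable (it is really a disjunction over formulas); it uses that, for each fixed witnessing formula, the set of $z$ where it works is $\emptyset$-definable, and these sets cover $Z$.
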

\begin{proof}
Let us start by the second point. If $[X]=[X']$, then there is an $\emptyset$-definable bijective map $f:X\to X'$ commuting to the projection maps towards $Z$. For $z\in Z$, the fibre $f_z:X_z\to X'_z$ is a $z$-definable bijection, whence $[X_z]=[X'_z]$. Conversely, assume that for all $z\in Z$ it holds that $[X_z]=[X'_z]$, \emph{i.e.} there is a $z$-definable bijection $f_z:X_z\to X'_z$. By compactness, we glue these bijections together into an $\emptyset$-definable bijection $f:X\to X'$ commuting to the projection maps towards $Z$. Thus $[X]=[X']$.

Now we turn to the first point. The existence of such an inclusion follows from the definitions, as well as its full faithfulness. As for essential surjectivity, we only deal with $\VF_Z[d]$, since $\RV_Z[d]$ is similar. Let $X\subset Z\times\VF^l\times\RV^k$ be such that, for all $z\in Z$, the fibre $X_z$ is in $\VF_{\cT'(z)}^*[d]$. Then $l\geq d$, and moreover for all $z\in Z$ there is a subset $Y_z\subset\VF^{l-d}$ such that $X_z\subset\VF^d\times Y_z$, the set $Y_z$ $z$-definably maps into $\RV$ and the projection map $X_z\to\VF^l$ has finite fibres. By glueing (and permuting the coordinates), we find out that $X\subset Y\times\VF^d\times\RV^k$, with $Y\subset Z\times\VF^{l-d}$ an $\emptyset$-definable set that maps into $\RV$ relative to $Z$, and the projection map $X\to Y\times\VF^d$ has finite fibres. By Lemma \ref{lem_inj_RV_rel}, there is an $\emptyset$-definable bijection $Y\to Y'\subset Z\times Y_0\times\RV^m$ that commutes to the projection maps towards $Z$, with $Y_0$ mapping into $\RV$ and $Y'\to Z\times Y_0$ having finite fibres. In particular, up to applying a definable bijection relative to $Z$, we may replace $X$ by a subset $X'\subset Z\times Y_0\times\VF^d\times\RV^{k+m}$ which is indeed inside $\VF_Z[d]$ (up to permuting the coordinates).

Finally we prove the last point, by closely followinng \cite{stout_integration_2025}, Lemma 5.4.2. Assume that $X,X'\in\Ob\RV_Z[\leq d]$. Saying that $([X], [X'])\in\Isp^Z$ is equivalent to saying that there are $T_{i,j}, i,j\leq d$ in $\Ob\RV_Z[\leq d]$ such that
\[\begin{cases}
[X] = \sum_{i,j\leq d} [T_{i,j}]([\RV^\times_{<1}]_1+1)^i[1]_1^j\,, \\
[X'] = \sum_{i,j^leq d} [T_{i,j}]([\RV^\times_{<1}]_1+1)^j[1]_1^i\,.
\end{cases}\]
By taking the fibre at $z\in Z$, we now see that $([X_z],[X'_z]) \in\Isp^{\cT'(z)}$. Conversely, if $([X_z],[X'_z])\in\Isp^{\cT'(z)}$ for all $z$, we find such relations for some objects $T_{i,j,z}$. By compactness, we are able to glue them together; now using the first point, we find objects $T_{i,j}$ of $\RV_Z[\leq d]$. Since, according to the second points, equalities can be checked pointwise, we still have the desired equalities, which gives the desired conclusion.
\end{proof}

Now we define the support of an object:
\begin{dfn}
The support of an object $X\in\Ob\VF_Z[d]$ (resp. $\Ob\RV_Z[d]$) is the set of $z\in Z$ such that $X_z\neq\emptyset$. We denote it by $\Supp X$.
\end{dfn}
This definition can be extended to $\VF_Z, \RV_Z[\leq d]$ and $\RV_Z[*]$ in a natural way.

\begin{rmk}\label{rmk_pullback}
Let $f:Z'\to Z$ be an $\emptyset$-definable map. Then it naturally induces a pullback functor $f^*\RV_Z[*]\to\RV_{Z'}[*]$. Similar statements holds for $\VF_Z$, etc.
\end{rmk}

\subsubsection{Categories with volume forms}
With volume forms, we can follow exactly the same reasoning as in Lemma \ref{lem_sans_forme_fibre}. Let us give in particular the following definition:

\begin{dfn}\label{def_cat_rig}
Let $Z$ be an $\emptyset$-definable set. For every $d\in\bN$, let $\mu_\Gamma\VF_Z^{\mathrm{rig}}[d]$ be the category whose objects are pairs $(X,\omega)$, with $X\in\Ob\VF_Z[d]$ and $\omega:X\to\Gamma$ an $\emptyset$-definable map. A morphism $(X,\omega)\to (X',\omega')$ is a partial $\emptyset$-definable map $f:X\to X'$ commuting to the projections towards $Z$ and such that, for all $z\in Z$, the map $f_z:(X_z,\omega_z)\to (X'_z,\omega'_z)$ is a morphism in $\mu_\Gamma\VF_{\cT'(z)}[d]$.

Let also $\mu_\Gamma\RV_Z^{\mathrm{rig}}[d]$ be the category whose objects are pairs $(R,\omega)$, with $R\in\Ob\RV_Z[d]$ and $\omega:R\to\Gamma$ an $\emptyset$-definable map. A morphism $(R,\omega)\to (S,\rho)$ is an $\emptyset$-definable map $f:R\to S$ that commutes to the projection maps towards $Z$ and such that, for all $z\in Z$, the map $f_z:(R_z,\omega_z)\to (S_z,\rho_z)$ is a morphism in $\mu_\Gamma\RV_{\cT'(z)}[d]$.

We define as usual $\mu_\Gamma\VF_Z^{\mathrm{rig}}[*], \mu_\Gamma\RV_Z^{\mathrm{rig}}[\leq d]$ and $\mu_\Gamma\RV_Z^{\mathrm{rig}}[*]$.
\end{dfn}

The analogous statement to Lemma \ref{lem_sans_forme_fibre} clearly holds (by definition) for these categories, and from that fact we deduce the existence of pullback functors. Nevertheless, the description of the morphisms in this category is not satisfying for our purposes. In order to find an alternative definition, let us first give a definition of the $\VF$ categories that will better fit our desiderata:

\begin{dfn}\label{def_cat_avec_VF}
Let $Z$ be an $\emptyset$-definable set, and let $\delta\in\bN$ be an integer. For every $d\in\bN$, let $\mu_\Gamma\VF^\delta_Z[d]$ be the category whose objects are pairs $(X,\omega)\in\Ob\mu_\Gamma\VF_Z^{\mathrm{rig}}[d]$ such that $\dim_\VF\Supp X\leq\delta$. A morphism $(X,\omega)\to (X',\omega')$ is an essential bijection $f:X\to X'$ in dimension $d+\delta$ such that;
\begin{enumerate}
\item $f$ commutes to the projection maps towards $Z$;
\item for almost all $x\in X$ (\emph{i.e.} outisde of a set of dimension at most $d+\delta-1$), we have
\[\omega(x) = \omega'(f(x))|\Jac f(x)|\,.\]
\end{enumerate}
Let $\mu_\Gamma\VF_Z^\delta[*]$ be the direct sum of the categories $\mu_\Gamma\VF_Z^\delta[d], d\in\bN$, and let $\mu_\Gamma\VF_Z[*]$ be the direct sum of the categories $\mu_\Gamma\VF_Z^\delta[*], \delta\in\bN$.
\end{dfn}
When $Z=\{*\}$, this indeed gives us back the category $\mu_\Gamma\VF_{\cT'}[*]$ with $\delta=0$, and for $\delta>0$ the categories here defined are trivial (equivalent to the category with one object and one morphism). In particular $\mu_\Gamma\VF_{\cT'}[*]$ is equivalent $\mu_\Gamma\VF_{\{0\}}[*]$. More generally, if $\delta>\dim_\VF Z$, then $\mu_\Gamma\VF_Z^\delta[*]$ is trivial.

For $Z$ any base set, if we see these categories as functions over $Z$, then what we actually did is that we introduced functions defined almost everuwhere, with this word holding a different meaning in each dimension: if we are interested in functions supported in dimension $1$, then we look them up to a set of dimension $0$. The most interesting case is $\delta=\dim_\VF Z$, where this "almost everywhere" has the usual meaning. Indeed, if $\delta<\dim_\VF Z$, then for every $(X,\omega)\in\Ob\mu_\Gamma\VF_Z^\delta[*]$ there is $Z'\subset Z$ an $\emptyset$-definable set of dimension $\delta$ such that $(X,\omega)$ can be interpreted as an object of $\mu_\Gamma\VF_{Z'}^\delta[*]$, which is just $Z'=\Supp X$. However, the categories with $\delta<\dim_\VF Z$ will play an important technical role.

Now we introduce the analogous categories over $\RV$:

\begin{dfn}\label{def_cat_avec_RV}
Let $Z$ be an $\emptyset$-definable set, and let $\delta\in\bN$ be an integer. For every $d\in\bN$, let $\mu_\Gamma\RV_Z[*]$ be the category whose objects are the pairs $(R,\omega)\in\Ob\mu_\Gamma\RV_Z^{\mathrm{rig}}[d]$ such that $\dim_\VF(\Supp R)\leq\delta$. Let $(R,\omega),(S,\rho)$ be two objects, and denote by $\pi:R\to Z$ the structural morphism. A map $(R,\omega)\to (S,\rho)$ is a partial $\emptyset$-definable map $f:\pi^{-1}(Z_0)\subset R\to S$ such that:
\begin{enumerate}
\item $f$ commutes to the projections towards $Z$;
\item $Z_0\subset\Supp R\cap\Supp S$ and $\dim_\VF (\Supp(R\cup S)\setminus Z_0)<\delta$;
\item for all $z\in Z_0$, the map $f_z:(R_z,\omega_z)\to (S_z,\rho_z)$ is a morphism of $\mu_\Gamma\RV_{\cT'(z)}[d]$.
\end{enumerate}
In addition we identify two morphisms $f,g:(R,\omega)\to (S,\rho)$ if there is an $\emptyset$-definable subset $Z_0\subset \Supp R$ such that $\dim_\VF (\Supp(R)\setminus Z_0)<\delta$ and $f_{|\pi^{-1}(Z_0)} = g_{|\pi^{-1}(Z_0)}$.

We also define $\mu_\Gamma\RV_Z^\delta[\leq d]$ as the direct sum of the $\mu_\Gamma\RV_Z^\delta[d'], d'\leq d$, and $\mu_\Gamma\RV_Z^\delta[*]$ as the colimit of the $\mu_\Gamma\RV_Z^\delta[\leq d], d\in\bN$, and finally $\mu_\Gamma\RV_Z[*]$ as the direct sum of the $\mu_\Gamma\RV_Z^\delta[*]$.
\end{dfn}

From now on, the categories defined in Definition \ref{def_cat_rig} will be called the \emph{rigid} versions of the categories defined in \ref{def_cat_avec_VF}, \ref{def_cat_avec_RV}.

An important thing is that $\Kp\mu_\Gamma\VF_Z[*],\Kp\mu_\Gamma\RV_Z[*]$ are bigraded semigroups, but not semirings, unlike $\Kp\mu_\Gamma\VF_Z^{\mathrm{rig}}[*], \Kp\mu_\Gamma\RV_Z^{\mathrm{rig}}[*]$ which are graded semirings. However, if $\delta=\dim_\VF Z$, then indeed $\Kp\mu_\Gamma\VF_Z^\delta[*],\Kp\mu_\Gamma\RV_Z^\delta[*]$ are semirings. More interestingly, there is an action by multiplication of the semiring $\Kp\mu_\Gamma\RV_Z^{\mathrm{rig}}[*]$ on each of the semigroups $\Kp\mu_\Gamma\RV_Z^\delta[*],\delta\in\bN$, turning $\Kp\mu_\Gamma\RV_Z[*]$ into a bigraded semi-$\Kp\mu_\Gamma\RV_Z^{\mathrm{rig}}[*]$-module, with natural action of the semiring on the grading by $d$ and trivial action on the grading by $\delta$. Similarly, $\Kp\mu_\Gamma\VF_Z[*]$ is naturally a bigraded semi-$\Kp\mu_\Gamma\VF_Z^{\mathrm{rig}}[*]$-module.

\begin{dfn}
Let $Z$ be an $\emptyset$-definable set. Let $\Isp^{Z,\mu,\mathrm{rig}}$ be the congruence over $\Kp\mu_\Gamma\RV_Z^{\mathrm{rig}}[*]$ generated by the relation $([1]_1, [\RV_{<1}]_1)$. We denote by $\Isp^{Z,\mu}$ the semigroup congruence on $\Kp\mu_\Gamma\RV_Z[*]$ induced by $\Isp^{Z,\mu,\mathrm{rig}}$ via the semi-module structure.
\end{dfn}

As in the case without volume forms, for $(R,\omega),(S,\rho)$ two objects of $\mu_\Gamma\RV_Z^{\mathrm{rig}}[*]$, we have $([R,\omega],[S,\omega])\in\Isp^{Z,\mu,\mathrm{rig}}$ if and only if for all $z\in Z$ we have $([R_z,\omega_z],[S_z,\rho_z])\in\Isp^{\cT'(z),\mu}$. In $\mu_\Gamma\RV_Z[*]$, there is a similar description: if $(R,\omega),(S,\rho)$ are two objects in $\mu_\Gamma\RV_Z^\delta[*]$, with $Z_0=\Supp R\cup\Supp S$, then $([R,\omega],[S,\rho])\in\Isp^{Z,\mu}$ if and only if there is a subset $Z_1\subset Z_0$ such that $\dim_\VF(Z_0\setminus Z_1)<\delta$ and, for all $z\in Z_1$, we have $([R_z,\omega_z],[S_z,\rho_z])\in\Isp^{\cT'(z),\mu}$.

Let us justify these assertions. The first case is clear, and follows from the fibrewise description of the category $\mu_\Gamma\RV_Z^{\mathrm{rig}}[*]$. For the second case, first of all this is indeed a semigroup congruence, \emph{i.e.} an equivalence relation stable under sum. Moreover, it obviously contains the relation $\Isp^{Z,\mu}$. Conversely, we see that, if $([R,\omega],[S,\rho])$ lies in this relation, then by taking the restriction to $Z_1$ (which is an isomorphism) we obtain a pair $([R_1,\omega_1],[S_1,\rho_1])$ which is in $\Isp^{Z,\mu,\mathrm{rig}}$, hence in $\Isp^{Z,\mu}$.

\subsubsection{Integration morphisms}
The above definitions are designed so that we have lifting maps
\[\fL:\Ob\RV_Z[\leq d]\to\Ob\VF_Z[d]\text{ et }\fL:\Ob\mu_\Gamma\RV_Z^\delta[d]\to\Ob\mu_\Gamma\VF_Z^\delta[d]\]
for every $\delta,d\in\bN$. Note that, since the rigid categories have the same objects as non-rigid categories, these maps are also defined for them.

Now we may state the main theorem of motivic integration:

\begin{prop}\label{prop_HK_rel}
Let $Z\subset\VF^n\times\RV^m$ be a $\emptyset$-definable set. There is a semiring morphism
\[\int:\Kp\VF_Z\to \Kp\RV_Z[*]/\Isp^Z\,,\]
which is Hrushovski-Kazhdan motivic integration without volume forms relative to $Z$, and a morphism of graded semirings
\[\int^{\mathrm{rig}}:\Kp\mu_\Gamma\VF^{\mathrm{rig}}_Z[*]\to\Kp\mu_\Gamma\RV_Z^{\mathrm{rig}}[*]/\Isp^{Z,\mu,\mathrm{rig}}\,,\]
which is Hrushovski-Kazhdan rigid motivic integration with volume forms relative to $Z$.

Finally there is also a morphism of bigraded semigroups
\[\int:\Kp\mu_\Gamma\VF_Z[*]\to\Kp\mu_\Gamma\RV_Z[*]/\Isp^{Z,\mu}\,,\]
which is Hrushovski-Kazhdan motivic integration with volume forms relative to $Z$, and which is equivariant with respect to $\int^{\mathrm{rig}}$.

Those morphisms are characterised by the fact that, for all $[R]$ in the codomain, we have $\int[\fL R]=[R]$.

Moreover, if $\cT'$ has the bijection lifting property (resp. the jacobian bijection lifting property) and $Z\subset\VF^n$, then the first (resp. the last two) of those morphisms is an isomorphism.
\end{prop}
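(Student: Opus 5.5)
The plan is to reduce everything to the absolute theorems (Theorem \ref{thm_int_sans}, Theorem \ref{thm_int_avec} and the lemmas of Section \ref{sec_axiomes}) applied fibrewise in the theories $\cT'(z)$, $z\in Z$, and then glue by compactness. The tool for this is Lemma \ref{lem_sans_forme_fibre} and its volume-form analogue: equality of classes in $\Kp\VF_Z$, $\Kp\RV_Z[*]$, and membership in $\Isp^Z$, can all be tested pointwise. Adding the parameter $z$ preserves the hypotheses: $\RV$-partitions, refinedness and $\RV$-thinness are stated for arbitrary parameter sets. The (jacobian) bijection lifting property is only preserved when $z$ is in $\VF$, which is why the isomorphism clause requires $Z\subset\VF^n$.

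\emph{Construction without volume forms.} For $X\in\Ob\VF_Z[d]$, choose an $\RV$-partition refining $X\to Z\times Y$ and a cell decomposition. Apply the operators $I^j$ coordinatewise, exactly as in the construction of $I=I^1\circ\dots\circ I^d$, but with everything relative to $Z$; equivalently, treat the $Z$-coordinates like the extra $\VF^{k}$ coordinates in $\VFR_{\cT'}[k,l]$, over which morphisms are already relative. Well-definedness is then checked fibrewise. For each $z$, the fibre of the constructed object is a representative of $I[X_z]$ in $\Kp\RV_{\cT'(z)}[\leq d]/I_{\sp,z}$, which is independent of the choices and of the representative by the absolute lemmas over $\cT'(z)$. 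The relative $\Isp^Z$-congruence is detected pointwise by Lemma \ref{lem_sans_forme_fibre}, so the class in $\Kp\RV_Z[*]/\Isp^Z$ is well defined. Additivity and multiplicativity also hold pointwise. The identity $\int[\fL R]=[R]$ is immediate because $I_{\chi}\fL R=R$ for the obvious decomposition. This identity characterises the map, since every object is isomorphic to some $\fL R$ by the relative form of Lemma \ref{lem_fL_surj}, glued by compactness.

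\emph{Rigid version.} The rigid volume-form case is identical, since its morphisms are defined fibrewise, using the volume-form version of cell decomposition and surjectivity together with Theorem \ref{thm_int_avec} in each $\cT'(z)$.

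\emph{Non-rigid version.} For $\mu_\Gamma\VF_Z^\delta[*]$, I would define $\int[X,\omega]$ as the class of $\int^{\mathrm{rig}}[X,\omega]$ viewed in $\Kp\mu_\Gamma\RV_Z^\delta[*]/\Isp^{Z,\mu}$. The key step is showing this respects the coarser morphisms, namely essential bijections in dimension $d+\delta$ that match the jacobians almost everywhere.

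\emph{Main obstacle.} I expect this last step to be the main obstacle. Given such an $f$, I would use the dimension theory (Theorem \ref{thm_dim}, especially item \ref{item_thm_dim_4}, and Lemma \ref{lem_differentiable}) to find a definable $Z_1\subset\Supp X$ with $\dim_\VF(\Supp X\setminus Z_1)<\delta$. On $Z_1$, each fibre map $f_z$ must be an essential bijection in dimension $d$ whose jacobian condition holds almost everywhere in the fibre. The bad set has dimension $<d+\delta$, so the set of $z$ whose fibre bad set has dimension $d$ has dimension $<\delta$.

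A subtlety is that $|\Jac f|$ (the full jacobian) must be compared with the fibre jacobian $|\Jac f_z|$. This works because $f$ commutes with the projection to $Z$, so its jacobian is block-triangular with identity on the base block. Restricted to $Z_1$, $f$ is then a rigid morphism, and the characterisation of $\Isp^{Z,\mu}$ given just before the proposition finishes the argument. Equivariance follows from the fact that the product of a rigid class with a $\delta$-class is computed fibrewise.

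\emph{Isomorphism clause.} Assuming the (jacobian) bijection lifting property and $Z\subset\VF^n$, each $\cT'(z)$ inherits the property. A bijection $R\to S$ relative to $Z$, or modulo $\Isp$, then lifts fibrewise to $\fL R\to\fL S$, and these lifts glue by compactness, giving injectivity. Surjectivity already holds from $\int[\fL R]=[R]$.
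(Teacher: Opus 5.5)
Your proposal is correct and follows the paper's proof. Both reduce everything to the absolute results in each $\cT'(z)$ and glue by compactness, using relative cell decompositions for the construction and for surjectivity, and fibrewise (jacobian) bijection lifting when $Z\subset\VF^n$. Both handle the non-rigid morphism by the observation that $\int^{\mathrm{rig}}$ preserves support, so an equality holding outside a set of $z$ of dimension $<\delta$ survives integration. Your dimension count producing $Z_1$ and your comparison of $|\Jac f|$ with $|\Jac f_z|$ just spell out what the paper compresses into a single sentence for the third morphism.
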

\begin{rmk}
If $\cT'$ enjoys a section $\sn$ of $\rv$, then the jacobian bijection lifting property holds. In addition, by using $\sn$, we can always reduce ourselves to the case $Z\subset\VF^n$. As a consequence, in that situation, there are such isomorphisms for any $Z$.
\end{rmk}
\begin{proof}
As mentioned above, this is merely a consequence of compactness and the absolute case. More precisely, for the first two morphisms:
\begin{itemize}
\item to define the morphism, we proceed as in the absolute case but taking cell decompositions relative to $Z$, as is done for instance in Section \ref{ssec_dec_cell} relative to a set $Y$ mapping into $\RV$. Note that the construction in the absolute cases already makes use of the fact that we can actually define it in the relative case, this is how the categories $\VFR_{\cT'}[k,l]$ work;
\item to show that every object on $\VF$ is isomorphic to a lift, we use once more cell decompositions relative to $Z$;
\item to lift bijections, if $Z\subset\VF^n$ and $\cT'$ has the (jacobian) bijection lifting property, then for every $z\in Z$ the theory $\cT'(z)$ still has the (jacobian) bijection lifting property) by definition (and thanks to $Z\subset\VF^n$); so we lift the induced bijection in each fibre, and glue by compactness.
\end{itemize}
For the third morphism, just note that $\int^{\mathrm{rig}}$ preserves support, hence an equality almost everywhere before integration becomes an equality almost everywhere after integration.
\end{proof}

\begin{rmk}
Throughout this section, we have only worked with the categories without volume forms or with $\Gamma$-valued volume forms. Nevertheless, everything we did in that second case works \emph{mutatis mutandis} with $\RV$-valued volume forms. 
\end{rmk}

\subsection{Constructible motivic functions}\label{ssec_fonctions_constructibles_motiviques}
From now on, we will only study motivic integration without volume forms or with $\Gamma$-valued volume forms.

Now we may define the semigroup of (positive) constructible motivic Functions:

\begin{dfn}
Let $Z\subset\VF^n\times\RV^m$ be an $\emptyset$-definable set. The semigroup of constructible motivic Functions over $Z$ without volume forms (resp. with volume forms) is the Grothendieck semigroup $\Kp\RV_Z[*]/\Isp^Z$ (resp. $\Kp\mu_\Gamma\RV_Z[*]/\Isp^{Z,\mu}$). We denote it by $C_+^\chi(Z)$ (resp. $C_+^{\mu_\Gamma}(Z)$).

We will also denote by $\sC_+^{\mu_\Gamma}(Z)$ the semiring $\Kp\RV^{\mathrm{rig}}_Z[*]/\Isp^{Z,\mu,\mathrm{rig}}$, which we call the semiring of constructible motivic functions with volume forms over $Z$.
\end{dfn}
Although $\sC_+^{\mu_\Gamma}(Z)$ is still a graded semiring and $C_+^{\mu_\Gamma}(Z)$ a bigraded semigroup, we will no longer be interested in the grading by $d$. One of the main reasons for this is thas, in \cite{cluckers_constructible_2008}, Cluckers and Loeser normalise the motivic measure so that $\int[\cO]=1$, which cancels this grading. So from now on, whenever we see $C_+^{\mu_\Gamma}(Z)$ as a graded semigroup, this is in terms of the $\delta$ grading.

Sometimes it will be useful to let $\sC_+^\chi(Z) = C_+^\chi(Z)$, which we then call semiring  of constructible motivic functions without volume forms over $Z$.

There is another construction of $C_+^{\mu_\Gamma}(Z)$. For $\delta\in\bN$, let $\sC_+^{\mu_\Gamma,\leq\delta}$ be the subsemigroup of $\sC_+^{\mu_\Gamma}(Z)$ composed of the functions whose support has dimension at most $\delta$. Then
\[C_+^{\mu_\Gamma,\delta}(Z) = \sC_+^{\mu_\Gamma,\leq\delta}(Z)/\sC_+^{\mu_\Gamma,\leq\delta-1}(Z)\,.\]

There are pullback morphisms between those semirings. Indeed, if $f:Z'\to Z$ is an $\emptyset$-definable map, we immediatly find by definition of the relative categories a functor $f^*:\RV_Z[*]\to\RV_{Z'}[*]$ (and $f^*:\mu_\Gamma\RV_Z^{\mathrm{rig}}[*]\to\mu_\Gamma\RV_{Z'}^{\mathrm{rig}}[*]$), see Remark \ref{rmk_pullback}. By taking semigroups, this functore induces a semiring morphism $f^*:C_+^\chi(Z)\to C_+^\chi(Z')$ (resp. $f^*:\sC_+^{\mu_\Gamma}(Z)\to \sC_+^{\mu_\Gamma}(Z')$).

Moreover, in the case with volume forms, if $f$ has $0$-dimensional fibres, the functor $f^*$ preserves the dimension of the support, hence induces a functor $f^*:\mu_\Gamma\RV_Z[*]\to\mu_\Gamma\RV_{Z'}[*]$. By taking semigroups again, we find a morphism of graded semigroups $f^*:C_+^{\mu_\Gamma,d}(Z)\to C_+^{\mu_\Gamma,d}(Z')$, which is equivariant relative to the morphism between the semirings of functions. All these morphisms clearly enjoy functoriality.

\subsection{Differentiation and jacobian}
Here we show some results around differentiation and construction of the jacobian. These results are not needed to define motivic integration and they use the notion of constructible motivic functions, so we rather present them  here.

We will try and define a canonical volume form \emph{à la} Serre on $X\subset\VF^n\times\RV^m$ definable of dimension $d$, and use it to get a notion of jacobian in Definition \ref{def_jacobien}. To do so, we take our inspiration from \cite{cluckers_constructible_2008}, Section 8.

\begin{dfn}
Let $Z\subset\VF^\delta\times\RV^m$ be an $\emptyset$-definable set of dimension $\delta$. We define the set $|\widetilde\Omega|_+(Z)$ of positive definable volume forms on $Z$ as the rank one free semimodule on $C_+^{\mu_\Gamma,\delta}(Z)$ with generator $|dx_1\wedge\dots\wedge dx_\delta|$ (formal notation).

If $h:Z_1\to Z_2$ is an $\emptyset$-definable map whose fibres with $0$-dimensional fibres, with $Z_i\subset\VF^\delta\times\RV^{m_i},i=1,2$, of dimension $\delta$, let $h^*:|\widetilde\Omega|_+(Z_2)\to|\widetilde\Omega|_+(Z_1)$ be the pullback morphism defined by
\[h^*(\varphi|dx_1\wedge\dots\wedge dx_\delta|) = [|\Jac(h)|](h^*\varphi)|dx_1\wedge\dots\wedge dx_\delta|\,,\]
where $[|\Jac(h)|]$ is by definition the class of $(Z_1, |\Jac(h)|)$ inside $C_+^{\mu_\Gamma,\delta}(Z_1)$.
\end{dfn}

This is well defined because $\Jac(h)$ is defined up to a subset of dimension at most $\delta-1$.

\begin{lem}\label{lem_mesure_fonctorielle}
This construction is functorial: if $Z_i\subset\VF^\delta\times\RV^{m_i}, 1\leq i\leq 3$, are $\emptyset$-definable sets of dimension $\delta$, and $f_1:Z_1\to Z_2, f_2:Z_2\to Z_3$ are $\emptyset$-definable maps with $0$-dimensional fibres, then $(f_2\circ f_1)^* = f_1^*\circ f_2^*$.
\end{lem}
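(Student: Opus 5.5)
The plan is to reduce functoriality to the chain rule for jacobians, holding almost everywhere, together with the multiplicativity and functoriality of the pullback of constructible functions. Let $\varphi|dx_1\wedge\dots\wedge dx_\delta|\in|\widetilde\Omega|_+(Z_3)$. By definition,
\[(f_2\circ f_1)^*(\varphi|dx|) = [|\Jac(f_2\circ f_1)|]\,((f_2\circ f_1)^*\varphi)\,|dx|\,,\]
while
\[f_1^*f_2^*(\varphi|dx|) = [|\Jac f_1|]\,f_1^*\bigl([|\Jac f_2|]\,f_2^*\varphi\bigr)|dx| = [|\Jac f_1|]\,[|\Jac f_2\circ f_1|]\,(f_1^*f_2^*\varphi)|dx|\,.\]
Three ingredients are needed: first, $(f_2\circ f_1)^*=f_1^*\circ f_2^*$ on $C_+^{\mu_\Gamma,\delta}$; second, $f_1^*[|\Jac f_2|]=[|\Jac f_2|\circ f_1]$ and $f_1^*$ is multiplicative, i.e. equivariant for the rigid semiring action; third, $|\Jac(f_2\circ f_1)|=|\Jac f_2\circ f_1|\cdot|\Jac f_1|$ outside a subset of $Z_1$ of dimension $<\delta$.

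The first two ingredients are formal. The functor $f^*$ on relative categories is just the fibre product along $f$, so $(f_2\circ f_1)^*$ and $f_1^*f_2^*$ agree up to canonical isomorphism, which is compatible with $\Isp$. As already noted for pullbacks with $0$-dimensional fibres, $f_1^*$ preserves support dimension $\delta$. Also, the class $[|\Jac f_2|]$ is represented by $(Z_2,|\Jac f_2|)$, whose pullback is literally $(Z_1,|\Jac f_2|\circ f_1)$. Products in $C_+^{\mu_\Gamma,\delta}(Z_1)$ of such classes of the form $(Z_1,\omega)$ correspond to multiplying the $\Gamma$-valued forms, since the fibre product over $Z_1$ of $Z_1$ with itself is $Z_1$.

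The main obstacle is the chain rule almost everywhere, because $\Jac f_2$ is only defined off a $(\delta-1)$-dimensional set $E\subset Z_2$, and we need $f_1^{-1}(E)$ to have dimension $<\delta$. This follows from Theorem \ref{thm_dim}, item \ref{item_thm_dim_4}, since $f_1$ has $0$-dimensional fibres, so $\dim_\VF f_1^{-1}(E)\le\dim_\VF E<\delta$.

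Next, take an $\RV$-partition of the graphs of $f_1$ and $f_2$ simultaneously, and apply Lemma \ref{lem_differentiable}. This gives a subset $U\subset Z_1$ with complement of dimension $<\delta$ on which, locally on open balls with fixed $\RV$-coordinates, $f_1$ is $\cC^1$ into a region where $f_2$ is $\cC^1$. On such balls both maps are ordinary $\cC^1$ maps between open subsets of $\VF^\delta$. The classical chain rule then gives $\Jac(f_2\circ f_1)=(\Jac f_2\circ f_1)\Jac f_1$ pointwise, and taking $|\cdot|$ yields the identity.

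Finally, remove the exceptional sets. Their union has dimension $<\delta$ by item \ref{item_thm_dim_2}, so the two classes agree in $C_+^{\mu_\Gamma,\delta}(Z_1)$. That quotient ignores sets of dimension $\le\delta-1$, so restricting to the good locus is an isomorphism there, which concludes the proof.
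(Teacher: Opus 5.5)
Your proposal is correct and takes the same route as the paper, whose proof is the one-line remark that the result follows from the chain rule and the definitions. You have made the details explicit: the functoriality and multiplicativity of the pullback on $C_+^{\mu_\Gamma,\delta}$, and the chain rule holding outside a set of dimension $<\delta$, using that preimages of small sets under maps with $0$-dimensional fibres remain small.
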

\begin{proof}
This follows from chain rule and the definitions
\end{proof}

We deduce the following definition:

\begin{dfn}
Let $Z$ be an $\emptyset$-definable set of dimension $\delta$. By Lemma \ref{lem_eq_cat}, there is an $\emptyset$-definable bijection $h:Z\to Z'\subset\VF^\delta\times\RV^m$. We define $|\widetilde\Omega|_+(Z)$ as the rank one free $C_+^{\mu_\Gamma,\delta}(Z)$-semimodule with generator $h^*|dx_1\wedge dx_\delta|$, and we denote by $h^*:|\widetilde\Omega|_+(Z')\to|\widetilde\Omega|_+(Z)$ the canonical isomorphism.

By Lemma \ref{lem_mesure_fonctorielle}, for every $\emptyset$-definable map $h:Z'\to Z$ with $0$-dimension fibres and between sets of dimension $\delta$, we may define a morphism $h^*:|\widetilde\Omega|_+(Z)\to|\widetilde\Omega|_+(Z')$ functorially.
\end{dfn}
Note that nothing depend on the choices.

Let us slightly generalise the morphisms $h^*$: let $h:Z'\to Z$ be an $\emptyset$-definable map, with $\dim_\VF Z=\dim_\VF Z'=\delta$, and let $Z_0\subset Z$ be the set of $z\in Z$ such that $\dim_\VF h^{-1}(z)=0$. We denote by $h_0$ the restriction of $h$ to $Z'_0=h^{-1}(Z_0)\to Z$. If $\dim_\VF h^{-1}(Z_0) = \delta$, there is a morphism $h_0^*:|\widetilde\Omega|_+(Z)\to |\widetilde\Omega|_+(Z'_0)$. By taking composition with the extension by $0$, we find a morphism $h^*:|\widetilde\Omega|_+(Z)\to|\widetilde\Omega|_+(Z')$. When $\dim_\VF Z'_0<\delta$, we just let $h^*=0$. This construction is, again, functorial.

Now we can define Serre canonical measure.
\begin{dfn}\label{prop_mesure_Serre}
Let $Z\subset\VF^n\times\RV^m$ be an $\emptyset$-definable set of dimension $\delta$. For $I=\{i_1<\dots<i_\delta\}\subset\llbracket 1,\delta\rrbracket$, let $\pi_I:Z\to\VF^I$ be the projection map, $|\omega_I| = |dx_{i_1}\wedge\dots\wedge dx_{i_\delta}|$ and $|\omega_I|_Z = \pi_I^*|\omega_I|$.

Then there is a unique measure $|\omega_0|_Z$, called the \emph{canonical measure on $Z$}, such that, for all $I$, there is an $\emptyset$-definable map $\alpha_I:Z\to\Gamma\cup\{0\}$ such that $|\omega_I|_Z = [\alpha_I] |\omega_0|_Z$ and furthermore:
\begin{enumerate}
\item $|\alpha_I| \leq 1$;
\item $\sup_I|\alpha_I|=1$.
\end{enumerate}

We also call $|\omega_0|_Z$ the Serre measure of $Z$.
\end{dfn}
\begin{proof}
We start by uniqueness. Assume that $|\omega_0|_Z$ and $|\omega'_0|_Z$ both satisfy the hypotheses, and denote by $\alpha_I,\alpha'_I$ the corresponding maps. Let $Z_I\subset Z$ be the locus given by $\alpha_I=1$. Then $|\omega_0|_{Z_I} = |\omega_I|_{Z_I} = [\alpha'_I]|\omega_0|_{Z_I}$. In particular, for every $I,J$, on $Z_I\cap Z_J$, we have $[\alpha'_I]=[\alpha'_J]$, thus $\alpha'_I=\alpha'_J$ outside of a subset of dimension at most $d-1$. In particular, $\alpha'_I=1$ outside of a subset of dimension at most $\delta-1$, hence $[\alpha'_I]=1$. So $|\omega_0|=|\omega'_0|$ on $Z_I$, then on all $Z$.

Now we show the existence. The statement is local, so we may just assume that $\pi_I:Z\to \VF^I$ has $0$-dimensional fibres. Then $\widetilde\Omega^\RV_+(Z) = C_+^\mu(Z)|\omega_I|_Z$. By definition of pullback, for every $J$ there is an $\emptyset$-definable map $\gamma_J:Z\to\Gamma\cup\{0\}$ such that $|\omega_J|_Z=[\gamma_J]|\omega_I|_Z$. Let $\beta = \sup_J \gamma_J$, then $|\omega_0|_Z=[\beta]|\omega_I|_Z$ and $\alpha_J = \gamma_J\beta^{-1}$.
\end{proof}

Now we may define the jacobian of a map.

\begin{dfn}\label{def_jacobien}
Let $Z,Z'$ be two $\emptyset$-definable sets of dimension $\delta$, and let $f:Z'\to Z$ be an $\emptyset$-definable map with $0$-dimensional fibres. Then there is an $\emptyset$-definable map $|\Jac(f)|:Z'\to \Gamma$, unique up to a subset of dimension $\delta-1$, such that
\[f^*|\omega_0|_Z = [|\Jac(f)|]|\omega_0|_{Z'}\,.\]
\end{dfn}
The existence of such a factor in $C_+^{\mu_\Gamma, \delta}(Z')$ follows from the definition of the Serre measure. It being of the form $[\alpha]$ for a map $\alpha:Z'\to\Gamma$ is a consequence of the change of variable formulas defining pullback morphisms.

Now we have this very general notion of jacobian, we are at last able to give a new description of the category $\mu_\Gamma\VF_{\cT'}[d]$ (and of $\mu_\Gamma\VF_Z[*]$), analogous to $\VF_{\cT'}^\circ[d]$ in the case without volume forms.

\begin{dfn}\label{def_mu_Gamma_VF}
For $d\in\bN$, let $\mu_\Gamma\VF_{\cT'}^\circ[d]$ be the category whose objects are pairs $(X,\omega)$, with $X\in\Ob\VF_{\cT'}^\circ[d]$ and $\omega:X\to \Gamma$. A morphism $(X,\omega)\to (X',\omega')$ is an essential $\emptyset$-definable bijection $f:X\to X'$, such that for almost every $x\in X$ we have
\[\omega(x)=\omega'(f(x))|\Jac(f)(x)|\,.\]

Similarly, for $Z$ an $\emptyset$-definable set and $d,\delta\in\bN$ natural numbers, let $\mu_\Gamma\VF_Z^{\delta,\circ}[d]$ be the category whose objects are pairs $(X,\omega)$ with $X\subset Z\times\VF^n\times\RV^m$ and $\omega:X\to\Gamma$ both $\emptyset$-definable such that:
\begin{enumerate}
\item for all $z\in Z$, we have $\dim_\VF X_z \leq d$;
\item the projection map $X\to Z\times\VF^n$ has finite fibres;
\item $\dim_\VF \Supp X \leq\delta$.
\end{enumerate}
A morphism $(X,\omega)\to (X',\omega')$ in this category is then an essential $\emptyset$-definable bijection in dimension $d+\delta$ $f:X\to X'$ commuting to the projection maps towards $Z$ and such that, for almost every $x\in X$ (\emph{i.e.} outside of a set of dimension at most $d+\delta-1$), we have
\[\omega(x) = \omega'(f(x))|\Jac(f)(x)|\,.\]
\end{dfn}
\begin{rmk}
The category $\mu_\Gamma\VF_Z^\circ[*]$ has no rigid version: the presence of a jacobian defined almost everywhere does not allow for it.
\end{rmk}

\begin{lem}\label{lem_eq_cat_mes}
Let $d\in\bN$. Then the inclusion $\mu_\Gamma\VF_{\cT'}[d]\subset\mu_\Gamma\VF_{\cT'}^\circ[d]$ is an equivalence of categories.

Similarly, for $Z$ an $\emptyset$-definable set, $d,\delta\in\bN$ natural numbers, the inclusion $\mu_\Gamma\VF_Z^\delta[d]\subset\mu_\Gamma\VF_Z^{\delta,\circ}[d]$ is an equivalence of categories.
\end{lem}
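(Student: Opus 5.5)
The plan is the same as for Lemma \ref{lem_eq_cat}, with the extra work of transporting volume forms through the Serre jacobian of Definition \ref{def_jacobien}. The inclusion $\mu_\Gamma\VF_{\cT'}[d]\subset\mu_\Gamma\VF_{\cT'}^\circ[d]$ has two parts to check.

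\textbf{Full faithfulness.} For $X\subset\VF^d\times Y\times\RV^m$ an object of $\VF_{\cT'}[d]$ and $\dim_\VF X=d$, I would show that the jacobian $|\Jac f|$ of Lemma \ref{lem_differentiable} (computed in the $\VF^d$-coordinates) coincides almost everywhere with the Serre jacobian. The point is that the projection $\pi:X\to\VF^d$ has $0$-dimensional fibres. Indeed, its fibres map into $\RV$ because $Y$ does and the $\RV^m$-fibres are finite; since $\cT'$ is $\RV$-thin, Theorem \ref{thm_dim} then gives that these fibres have dimension $0$.

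Hence $|\omega_0|_X=\pi^*|dx_1\wedge\dots\wedge dx_d|$ almost everywhere. To see this, apply Definition \ref{prop_mesure_Serre} with $I$ the first $d$ coordinates. After an $\RV$-partition, any other projection factors through $\pi$ locally via $\cL(y)$-definable maps $\VF^d\to\VF^{I'}$, and these are $\cC^1$ almost everywhere. So $\alpha_I=1$ almost everywhere, and by functoriality (Lemma \ref{lem_mesure_fonctorielle}) the two morphism conditions agree. The relative case is handled the same way, with the dimension bookkeeping $d+\delta$ relative to $Z$.

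\textbf{Essential surjectivity.} Given $(X,\omega)$ in $\mu_\Gamma\VF_{\cT'}^\circ[d]$, Lemma \ref{lem_eq_cat} provides a definable bijection $h:X\to X'$ with $X'\in\Ob\VF_{\cT'}[d]$. We set $\omega'=\omega\circ h^{-1}\cdot|\Jac(h^{-1})|$, using the Serre jacobian, which is defined almost everywhere, on the locus where $\dim_\VF X=d$. If $\dim_\VF X<d$, the object is isomorphic to the empty object, since essential bijections ignore sets of dimension $<d$. Functoriality of pullbacks then makes $h$ a morphism $(X,\omega)\to(X',\omega')$.

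\textbf{Relative version.} We glue fibrewise over $Z$ by compactness, as in Lemma \ref{lem_sans_forme_fibre}, after first using Lemma \ref{lem_inj_RV_rel} to put $X$ in the form $Z\times\VF^d\times Y\times\RV^k$. Essential bijections are then taken in dimension $d+\delta$. The step requiring care is showing that the fibrewise jacobians glue to the Serre jacobian of the total space up to a set of dimension $<d+\delta$. For this I would use Theorem \ref{thm_dim}(\ref{item_thm_dim_4}), the additivity of dimension in fibres, to see that the bad set has small total dimension.

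\textbf{Main obstacle.} I expect the hardest step to be the comparison of the Serre jacobian with the coordinate jacobian. What makes it work is the $\RV$-thinness argument that sets mapping into $\RV$ have dimension $0$, together with Lemma \ref{lem_differentiable} to control the bad loci.
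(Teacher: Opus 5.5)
Your proposal follows the paper's proof. Essential surjectivity comes from Lemma~\ref{lem_eq_cat} plus transporting $\omega$ through the jacobian of the bijection (the paper sets $\omega'=\omega\circ f^{-1}\cdot|\Jac(f^{-1})|$, exactly as you do), and your jacobian comparison spells out the full faithfulness that the paper simply calls clear.

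One justification needs fixing. $\alpha_{\{1,\dots,d\}}=1$ does not follow from the factor maps being $\cC^1$. It follows because every other $I'$ contains a $Y$-coordinate that is constant on each piece of the $\RV$-partition, so $\pi_{I'}(X)$ has dimension $<d$ and $|\omega_{I'}|_X=0$. The same observation is what identifies the total-space Serre jacobian with the fibrewise one in the relative case: only projections that use all $d$ fibre coordinates, together with $\delta$ base coordinates, contribute. The dimension bookkeeping you mention does not by itself give this.
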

\begin{proof}
In the first case, full faithfulness is clear; let us show that this functor is essentially surjective. Let $(X,\omega)$ be an object of $\mu_\Gamma\VF_{\cT'}^\circ[d]$. According to the equivalence of categories of Lemma \ref{lem_eq_cat}, there exists $f:X\to X'$ an $\emptyset$-definable bijection with $X'$ an object of $\VF_{\cT'}[d]$. Then let $\omega'(x) = \omega(f^{-1}(x))|\Jac(f^{-1})(x)|$, which is an $\emptyset$-definable map $X'\to\Gamma$. Clearly $f$ gives in $\mu_\Gamma\VF_{\cT'}^\circ[d]$ an isomorphism $(X,\omega)\to (X',\omega')$.

The proof of essential surjectivity still holds in the relative case, and full faithfulness is clear.
\end{proof}

\subsection{Construction of the integral}
We have now defined the main objects, so we turn to the construction of the integration morphisms -- the construction will be achieved in Proposition \ref{prop_formalisme_integration}. Set $\eta\in\{\chi,\mu_\Gamma\}$.

\subsubsection{Projection along \texorpdfstring{$\RV$}{RV}}
Let $Z\subset\VF^n\times\RV^m$ be an $\emptyset$-definable set, and let $l\in\bN, Z'\subset Z\times\RV^l$ be an $\emptyset$-definable set. Denote by $\pi:Z'\to Z$ the projection map, we will define the morphism $\pi_!$ of integration along $\pi$.

We start by the case without volume forms. Let $\varphi\in C_+^\chi(Z')$. Then there is $X\in\Ob\RV_{Z'}[*]$ such that $\varphi=[X]$. Assume for instance that $X\in\Ob\RV_{Z'}[d]$. In that case, $X\subset Z'\times(\RV^\times)^d\times Y\times\RV^k$ where $Y$ maps into $\RV$, and the projection map $X\to Z'\times(\RV^\times)^d\times Y$ has finite fibres. Since $Z'\subset Z\times\RV^l$, we also have $X\subset Z\times\RV^{d+l}\times Y\times\RV^k$, still with a finite projection towards $Z\times\RV^{d+l}\times Y$. In that way, we are able to see $X$ as an object of $\RV_Z[\leq d+l]$, which we denote by $X^{(Z)}$. Let $\pi_!\varphi = [X^{(Z)}]$.

We can restate this construction in a more categorical way: there is a forgetful functor of the structure relative to $Z'$ that only keeps structure relative to $Z$, denoted by $\pi_!:\RV_{Z'}[*]\to\RV_Z[*]$. This functore commutes with disjoint unions, thus induces a morphism between the underlying Grothendieck semigroups. Moreover, it sends $([1]_1,[\RV_{<1}]_1+1)$ to $[Z']([1]_1,[\RV_{<1}]_1+1)$, thus descends to the quotient giving a morphism between the semigroups of constructible motivic Functions without volume forms.

In the rigid case with volume forms, the same construction works up to one detail: since we are thinking of that construction as a sum over $\RV^l$, seen as a discrete set, we do not wish to interpret the $l$ $\RV$-coordinates we add as having a volume. To do so, we modify $\omega$ as follows: if $(X,\omega)$ is the original object, we replace $\omega$ bt $\omega'(x) = |\mathrm{pr}_\RV(x)|^{-1}\omega(x)$, where $\mathrm{pr}_\RV:X\to Z'\to\RV^l$ is the projection map.

The functor $\pi_!$ defined above commutes with support: clearly
\[\forall (X,\omega)\in\Ob\mu_\Gamma\RV_Z^{\mathrm{rig}}[*], \Supp \pi_!(X,\omega) = \pi(\Supp (X_\omega))\,.\]
Since $\pi$ preserves dimension, we deduce a functor $\pi_!:\mu_\Gamma\RV_{Z'}^\delta[*]\to\mu_\Gamma\RV^\delta_Z[*]$ for all $\delta\in\bN$. As in the case without volume forms, those functors induce morphisms of Grothendieck semigroups, and descend to the quotient by $\Isp^\mu$.

We have proved:

\begin{lem}\label{lem_int_RV_proj}
The above construction gives a well-defined semigroup morphism $\pi_!:C_+^\eta(Z')\to C_+^\eta(Z)$, graded in the case with volume forms $\eta=\mu_\Gamma$, as well as a semigroup morphism $\pi_!:\sC_+^\eta(Z')\to\sC_+^\eta(Z)$. Additionally, for all $\varphi\in\sC_+^\eta(Z')$, we have $\Supp(\pi_!\varphi) = \pi(\Supp\varphi)$.
\end{lem}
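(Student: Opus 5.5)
The plan is to check, one piece at a time, that the forgetful functor $\pi_!$ described above respects every structure involved. I begin with the case without volume forms. If $X\in\Ob\RV_{Z'}[d]$, I will verify that $X^{(Z)}$ is an object of $\RV_Z[\leq d+l]$. Compose the inclusion $X\subset Z'\times(\RV^\times)^d\times Y\times\RV^k$ with $Z'\subset Z\times\RV^l$. The $l$ new $\RV$-coordinates may vanish, so I split $X$ according to which of them are zero. This gives a finite $\emptyset$-definable partition, and each piece lies in some $\RV_Z[d+l']$ with $l'\leq l$, in line with Remark 2 after the definition of $\RV_{\cT'}[d]$ on compatibility with zeroes. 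Finiteness of the projection to $Z\times(\RV^\times)^{d+l'}\times Y$ is inherited directly. Any morphism in $\RV_{Z'}[*]$ commutes with the projection to $Z'$, hence with the projection to $Z$, and it preserves the zero pattern of the $\RV^l$ coordinates because it is the identity on them. So $\pi_!$ is a functor, and it commutes with disjoint unions. It therefore induces a semigroup morphism $\Kp\RV_{Z'}[*]\to\Kp\RV_Z[*]$.

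Next I show that $\Isp^{Z'}$ maps into $\Isp^Z$. By Lemma~\ref{lem_sans_forme_fibre}, a pair in $\Isp^{Z'}$ is witnessed by global objects $T_{i,j}$ together with the equalities $[X]=\sum[T_{i,j}]([\RV^\times_{<1}]_1+1)^i[1]_1^j$ and its mirror. Applying $\pi_!$ term by term, the key observation is that $\pi_!(T\cdot U)=\pi_!(T)\cdot U$ whenever $U$ is pulled back from the point, since the product is fibred over $Z'$. The images are then again of the defining shape, so they lie in $\Isp^Z$. Equivalently, I can check this fibrewise over $z\in Z$ using the last point of Lemma~\ref{lem_sans_forme_fibre}, combined with Lemma~\ref{lem_Isp_ponctuel} to sum over the $\RV^l$-fibre. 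The statement $\Supp\pi_!\varphi=\pi(\Supp\varphi)$ is immediate from the definitions.

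For the rigid volume-form case I replace $\omega$ by $\omega'=|\mathrm{pr}_\RV|^{-1}\omega$. I must check that morphisms in $\mu_\Gamma\RV^{\mathrm{rig}}_{Z'}[*]$ remain morphisms. A morphism $h$ satisfies $\omega(\xi)|\xi|=\rho(h(\xi))|h(\xi)|$ with $|\xi|$ computed on the $d$ coordinates. Since $h$ fixes the $\RV^l$ coordinates $t$, the new condition $\omega'(\xi)|\xi||t|=\rho'(h\xi)|h\xi||t|$ is the same equation, with $|t|$ cancelling. For the non-rigid categories $\mu_\Gamma\RV^\delta$, I use that $\pi$ has finite-to-$\RV$ fibres, so it preserves $\dim_\VF$ by Theorem~\ref{thm_dim}(\ref{item_thm_dim_4}), the fibres being of dimension $0$. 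Hence exceptional sets of dimension $<\delta$ in $Z'$ map to sets of dimension $<\delta$ in $Z$, and supports of dimension $\leq\delta$ are preserved. The congruences are handled as before, through the module structure.

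The main obstacle I expect is the zero-coordinate bookkeeping and the $\Isp$ compatibility in the volume case. There, $[1]_1$ versus $[\RV_{<1}]_1$ must carry the correct volume after the reweighting by $|t|^{-1}$. I would verify this fibrewise: $\pi_!$ is $\sC_+^{\mu_\Gamma}(Z)$-linear on pulled-back factors, and the generator of the congruence is pulled back from the point.
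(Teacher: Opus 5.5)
Your proposal is correct and follows essentially the paper's argument. In both, $\pi_!$ is the forgetful functor, reweighted by $\omega'=|\mathrm{pr}_\RV|^{-1}\omega$ in the volume case; it commutes with disjoint unions and is linear in factors pulled back from $Z$, so the generator of $\Isp$ is sent to $[Z']$ times the generator, and since $\pi$ preserves $\dim_\VF$, supports and exceptional sets of dimension $<\delta$ behave. You spell out a few points the paper leaves implicit: the splitting by the zero pattern of the $\RV^l$ coordinates, where the reweighting should only involve the nonzero ones, and the check that morphisms survive the reweighting.
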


\subsubsection{Projection along \texorpdfstring{$\VF$}{VF}}
Let $Z\subset\VF^n\times\RV^m$ be an $\emptyset$-definable set, and let $l\in\bN,Z'\subset Z\times\VF^l$ be an $\emptyset$-definable subset. Denote by $\pi:Z'\to Z$ the projection map, and define once more the morphism $\pi_!$ of  integration along $\pi$.

We deal separately with the cases with and without volume forms. The case without volume forms is simpler, whereas is the case with volume forms we will have to be careful about grading. Notably, $\pi_!$ will no longer be a morphism of graded semigroups.

Let $\varphi\in C_+^\chi(Z')$. Then there exists $X\in\Ob\RV_{Z'}[\leq d]$ (for some $d\in\bN$) such that $[X]=\varphi$. The set $X$ is an $\emptyset$-definable subset of $Z'\times\RV^d\times Y\times\RV^m\subset Z\times\VF^l\times\RV^d\times Y\times\RV^m$, with $Y$ mapping into $\RV$, such that the projection map $X\to Z\times\VF^l\times\RV^d\times Y$ has finite fibres. Let us change our point of view and see $X$ as an object $\widetilde X$ of $\VF_{Z\times\RV^d}[l]$. We may take its motivic integral $\int [\widetilde X]\in \RV_{Z\times\RV^d}[\leq l]/\Isp^{Z\times\RV^d}$. Denote by $\pi_\RV^d:Z\times\RV^d\to Z$ the projection map; the morphism $\pi_{\RV!}^d$ defined in the above section now allows us to define $\pi_!\varphi \coloneqq \pi_{\RV}^d\int[\widetilde X]$.

In other words, when $\varphi = 1_{Z'}$, we define the integral by means of Hrushovski-Kazhdan motivic integration relative to $Z$. In general, we reduce to this situation using the integration morphisms along an $\RV$-projection and the fact that we want integration to be functorial.

\begin{lem}\label{lem_int_VF_proj}
The morphism $\pi_!$ is well-defined. Moreover, if there is a commutative diagram
\[\begin{tikzcd}
&Z_0 \arrow[ld, "\pi_1"] \arrow[rd, "\pi'_1"] & \\ Z_1 \arrow[rd, "\pi_2"] & & Z_2 \arrow[ld, "\pi'_2"] \\ & Z &
\end{tikzcd}\]
where every arrow $\pi_1,\pi'_1,\pi_2,\pi'_2$ is a projection either along $\RV$-coordinates or $\VF$-coordinates, then for all $\varphi\in C_+^\emptyset(Z_0)$ we have an equality
\[\pi_{2!}\pi_{1!}\varphi = \pi'_{2!}\pi'_{1!}\varphi\,.\]
\end{lem}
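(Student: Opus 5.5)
The plan is to prove the two assertions separately, reducing both, via compactness and the pointwise criteria of Lemma \ref{lem_sans_forme_fibre}, to statements about the relative Hrushovski-Kazhdan integral of Proposition \ref{prop_HK_rel}. I treat the case $\eta=\chi$, i.e. $C_+^\chi$, which is how I read $C_+^\emptyset$ in the statement; the volume-form case would run the same way with the $\omega$-corrections of the $\RV$-projection.

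\emph{Well-definedness.} Let $X\in\Ob\RV_{Z'}[\leq d]$ represent $\varphi$. There are three choices to check.
\begin{enumerate}
\item \emph{Choice of the integer $d$.} Enlarging $d$ only adds coordinates that are zero. The resulting $\RV$-coordinates are killed by the forgetful functor $\pi^d_{\RV!}$.
\item \emph{Choice of $X$ in its isomorphism class.} Let $g:X\to X'$ be an isomorphism in $\RV_{Z'}[*]$. It commutes with the projection to $Z'\subset Z\times\VF^l$, so it is an isomorphism between $\widetilde X$ and $\widetilde X'$ relative to $Z\times\VF^l$. It may still mix the $\RV^d$-coordinates. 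To handle this, I would view both sets as objects over the larger base $Z\times\RV^d\times\RV^{d}$ using the graph of $g$. I would then apply Lemma \ref{lem_int_VFR_RV} (summation over $\RV$-fibres), fibrewise over $z\in Z$, together with Lemma \ref{lem_sans_forme_fibre}. The conclusion should be that $\pi^d_{\RV!}\int[\widetilde X]=\pi^d_{\RV!}\int[\widetilde X']$.
\item \emph{Passage to the quotient by $\Isp^{Z'}$.} By multiplicativity, it suffices to treat a generator $T\cdot[1]_1$ versus $T\cdot([\RV_{<1}]_1+1)$ with $T\in\Ob\RV_{Z'}[*]$. Here the extra factor lives only on $\RV$-coordinates, which are constant along $\VF^l$. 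Hence the relative integral factors as $\int[\widetilde T]$ times that factor; this is the same argument as in the proof of Lemma \ref{lem_int_rv}, namely cell decompositions adapted to $T$ and constancy on cells. After applying $\pi^d_{\RV!}$ the two sides agree modulo $\Isp^Z$.
\end{enumerate}

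\emph{Commutation of the diagram.} I would split according to the types of the four projections.
\begin{enumerate}
\item If all four are $\RV$-projections, both composites are the same forgetful functor, so there is nothing to prove.
\item Now take a mixed square, for instance $\pi_1$ along $\VF$ and $\pi_2$ along $\RV$. The claim is that Hrushovski-Kazhdan integration relative to $Z\times\RV^{l}$, followed by forgetting $\RV^l$, equals integration relative to $Z$. The plan is to check this fibrewise in $z$ and then glue by compactness. In each fibre it is exactly Lemma \ref{lem_int_VFR_RV}: the integral over a base that carries extra $\RV$-coordinates is the sum over those coordinates.
\item The remaining case has all four projections along $\VF$-coordinates. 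This is a Fubini statement: integrating the $\VF^{l_1}$-coordinates and then the $\VF^{l_2}$-coordinates equals the full integral relative to $Z$ over $\VF^{l_1+l_2}$. The plan is to use that the full integral is defined as the iterate $I^1\circ\dots\circ I^{l}$ over the categories $\VFR$, and that the outcome is independent of the order of the coordinates. The absolute version of this independence is what underlies \cite{stout_integration_2025}, Lemma 5.5.15, transferred to our setting through $\RV$-partitions as in the well-definedness lemma for $I$. I would then relativise to $Z$ by compactness, and insert the intermediate $\RV$-pushforwards $\pi^d_{\RV!}$ using the mixed case.
\end{enumerate}

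I expect the all-$\VF$ (Fubini) case to be the main obstacle. The difficulty is that $\pi_{1!}$ produces an intermediate class over $Z_1\times\RV^{d}$ rather than over $Z_1$. I would first try to show that the integration morphism relative to $Z_1$ is compatible with pulling back along $Z_1\times\RV^d\to Z_1$. That reduces the composite to an iterated $I^j$ on a single object of $\VFR_{Z}[l_1+l_2,0]$, where the independence of the order of integration can be applied.
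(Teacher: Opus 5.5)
Your proposal is correct in outline and would go through, but at each step it takes a longer road than the paper.

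\textbf{Well-definedness.} The paper does not split into three choices. It regards $X_1,X_2$ directly as objects of $\VFR_Z[l,d]$. Morphisms there are exactly the definable bijections over $Z\times\VF^l$ that may mix the $\RV^d$-coordinates, so an isomorphism in $\RV_{Z'}[\leq d]$ is already a morphism there. Moreover, $\Isp^{Z'}$ and $\Isp^Z$ on $\Kp\VFR_Z[l,d]$ are both tested pointwise (Lemma \ref{lem_sans_forme_fibre}), so equality modulo $\Isp^{Z'}$ is equality in $\Kp\VFR_Z[l,d]/\Isp^Z$. The relative versions of Lemmas \ref{lem_int_VFR} and \ref{lem_int_rv} then settle your choices (2) and (3) at once.

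Your graph trick over $Z\times\RV^{2d}$ essentially reproves Lemma \ref{lem_int_VFR}, and it leaves one point implicit. When you forget the enlarged base, one of the two $\RV^d$-blocks must go into the finite-fibre slot rather than become volume coordinates, otherwise you land in $\RV_Z[\leq 2d+l]$. This forces a cell decomposition that prepares $r'\circ g$. Your generator check for the quotient is fine, and arguably more transparent than citing Lemma \ref{lem_int_rv}.

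\textbf{The square.} Your mixed case is in fact tautological. By definition, $\pi_!$ along $\VF$ integrates relative to $Z$ times \emph{all} $\RV$-coordinates present and then forgets them. So both composites are literally $\pi^{b+d}_{\RV!}\int_{Z\times\RV^{b+d}}[\widetilde X]$, and no appeal to Lemma \ref{lem_int_VFR_RV} is needed.

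For the all-$\VF$ case, which you flag as the main obstacle, the paper uses the lift description instead of order-independence of the $I^j$. Let $Z_0\subset Z\times\VF^a\times\VF^b$, with $\pi_1$ forgetting $\VF^b$.
\begin{enumerate}
\item By the relative Lemma \ref{lem_fL_surj}, choose $R$ with $\widetilde X\cong\fL R$ relative to $Z_1\times\RV^d$ (lifting the $b$ coordinates).
\item Then choose $S$ with $\widetilde R\cong\fL S$ relative to $Z\times\RV^{d+b}$ (lifting the $a$ coordinates).
\item The second isomorphism is relative to the coordinates $\rv(x_b)$, so it lifts along $x_b$ by the identity. This gives $\widetilde X\cong\fL S$, lifting all $a+b$ coordinates.
\end{enumerate}
Since $\int$ is well defined and $\int[\fL S]=[S]$, the composite and the direct projection along all of $\VF^{a+b}$ both equal $\pi^{d+b}_{\RV!}[S]$. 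Hence any two groupings agree.

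Your route also works, but two points need care. First, the paper does not state order-independence for $\VFR_Z[k,d]$, where coordinate permutations are not morphisms. You would obtain it fibrewise over $\RV^d$ from the invariance of $I$ under isomorphisms of $\VF[k]$, which include coordinate permutations. Second, your object should live in $\VFR_Z[l_1+l_2,d]$, not $\VFR_Z[l_1+l_2,0]$.
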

\begin{proof}
Let us first check that $\pi_!$ is well-defined. We made one choice, that of a representative $X$ of the isomorphism class $\varphi$.

We show that the construction is independent of that choice; to do so, let $X_i,i=1,2,$ have the same class in $\Kp\RV_{Z'}[*]/\Isp^{Z'}$. Assume that $X_1,X_2$ are objects of $\RV_{Z'}[\leq d]$. We may also see them as objects of $\VFR_Z[l,d]$. By assumption, the equality $[X_1]=[X_2]$ holds in $\Kp\VFR_Z[l,d]/\Isp^Z$, so by Lemma \ref{lem_int_VFR} (or more precisely its relative variant) it still holds that $\int[X_1]=\int[X_2]$ in $\Kp\RV_Z[d+l]/\Isp^Z$, as we wanted. Note that, by using the categories $\VFR$, we could spare the use of the integration morphism along $\RV$-coordinates; nonetheless, the construction we just did is clearly the same as the one described above.

Assume we have such a commutative diagram. There are three cases: either $\pi_1,\pi'_2$ are $\VF$-projections and $\pi_2,\pi'_1$ are $\RV$-projections -- which implies that each of the two pairs projects along the same set of coordinates -- or all the projections are $\VF$-projections, or all the projections are $\RV$-projections. This last case is straightforward by composition of forgetful functors. The second case is just a second case of Fubini theorem for Hrushovski-Kazhdan motivic integration (or at least its relative version), which we did not explicitly mentioned but immediately follows from the description of the integral in terms of liftings $\fL$. As for the first case, the assertion holds by construction, as we explained just before the lemma.
\end{proof}

\begin{rmk}
We did not use anything deep about the $\VFR$ categories. We just used them as tools to show that integration commutes with $\RV$, in a way compatible with quotient by $\Isp$ (\emph{cf.} \cite{stout_integration_2025}, Lemmas 5.5.12 \& 5.5.13).
\end{rmk}

Now let $\varphi\in C_+^{\mu_\Gamma,\delta}(Z')$ be a Function. Then take $(X,\omega)\in\Ob\mu_\Gamma\RV^\delta_{Z'}[*]$ such that $\varphi$ is the class of $(X,\omega)$ in $\Kp\mu_\Gamma\RV^\delta_{Z'}[*]/\Isp^{\mu,Z'}$. Assume for instance that $(X,\omega)$ is in $\mu_\Gamma\RV^\delta_{Z'}[d]$. For $z\in Z$, let $\varepsilon = \dim_{\VF} X_z$ be the dimension of the fibre of $X$ over $z$. Let us partition $Z$ into $\bigsqcup_{\varepsilon=0}^l Z_{\varepsilon}$.

Over $Z_{\varepsilon}$, we may see $(X,\omega)$ as an object $(X_{\varepsilon}, \omega)$ of $\mu_\Gamma\VF^{\delta-\varepsilon}_{Z\times(\RV^\times)^d}[\varepsilon]$ or of $\mu_\Gamma\VF^{\delta-\varepsilon}_{Z\times(\RV^\times)^d}[\varepsilon]$, by expansion by $0$. Indeed, $\delta\geq \dim_\VF\Supp_{Z'} X \geq \varepsilon+\dim_\VF\Supp_Z X_{\varepsilon}$, where $\Supp_Z$ is the support relative to $Z$, and consequently $\Supp_Z X_\varepsilon \leq \delta-\varepsilon$. Actually, we apply to $\omega$ the inverse transformation to that of Lemma \ref{lem_int_RV_proj}: if we denote by $(z',r,y,\rho)$ the coordinates on $X\subset Z'\times(\RV^\times)^d\times Y\times\RV^m$, we let $\omega' = |r|\omega$. Let $\pi_!\varphi = \sum_{\varepsilon=0}^l \pi_{\RV!}^d\int[X_{\varepsilon},\omega']$.

We expand by linearity the morphism $\pi_!$ to $C_+^{\mu_\Gamma}(Z')$.

\begin{lem}\label{lem_int_proj_VF}
The morphism $\pi_!$ is well defined. Moreover, if we have a commutative diagram
\[\begin{tikzcd}
&Z_0 \arrow[ld, "\pi_1"] \arrow[rd, "\pi'_1"] & \\ Z_1 \arrow[rd, "\pi_2"] & & Z_2 \arrow[ld, "\pi'_2"] \\ & Z &
\end{tikzcd}\] where each of the arrows $\pi_1,\pi'_1,\pi_2,\pi'_2$ is a projection along $\RV$-coordinates or $\VF$-coordinates, then for all $\varphi\in C_+^{\mu_\Gamma}(Z_0)$ it holds that
\[\pi_{2!}\pi_{1!}\varphi = \pi'_{2!}\pi'_{1!}\varphi\,.\]
\end{lem}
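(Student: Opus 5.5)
The plan follows the proof of Lemma \ref{lem_int_VF_proj}, keeping track of supports and of the twist $\omega'=|r|\omega$. Well-definedness comes first. The construction involves three choices: a representative $(X,\omega)$ of $\varphi$ modulo $\Isp^{\mu,Z'}$, the grading $d$, and the partition $Z=\bigsqcup_\varepsilon Z_\varepsilon$. The last is canonical, since it is determined by $X$; by Theorem \ref{thm_dim} item \ref{item_thm_dim_3} each $Z_\varepsilon$ is $\emptyset$-definable.

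Take $(X_1,\omega_1)$ and $(X_2,\omega_2)$ with the same class. By the fibrewise description of $\Isp^{Z',\mu}$ given after its definition, there is $Z'_1\subset Z'$ with $\dim_\VF(\Supp\setminus Z'_1)<\delta$ such that for all $z'\in Z'_1$ the fibres are congruent modulo $\Isp^{\cT'(z'),\mu}$. First discard the part over $Z'\setminus Z'_1$. Its image in each $Z_\varepsilon$, after expansion, has support of dimension $<\delta-\varepsilon$, by additivity of dimension (Theorem \ref{thm_dim} item \ref{item_thm_dim_4}). It is therefore zero in $\mu_\Gamma\VF^{\delta-\varepsilon}_{Z\times(\RV^\times)^d}$. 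Here one has to check that $\varepsilon$ is computed for the union $X_1\cup X_2$, and that the partitions for $X_1$ and $X_2$ agree almost everywhere. I would refine to a common partition by the dimension of the fibres of both sets.

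Over $Z'_1$ we are reduced to the rigid situation. There one argues as in Lemma \ref{lem_int_VF_proj}: view the objects in the relative $\VFR$ categories with volume forms, and apply the relative version of Lemma \ref{lem_int_rv}, which passes congruence modulo $\Isp$ through $I^j$. One must check that multiplying $\omega$ by $|r|$ is compatible with the generator $([1]_1,[\RV_{<1}]_1)$ of $\Isp^\mu$. It is, because the twist only involves the $\RV^d$ coordinates, which are kept as parameters. Finally apply $\pi^d_{\RV!}$, which is well defined by Lemma \ref{lem_int_RV_proj}. Additivity then gives linearity.

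For the commutative diagram, the same three cases occur as in Lemma \ref{lem_int_VF_proj}. In the all-$\RV$ case, forgetful functors compose, and the factors $|\mathrm{pr}_\RV|^{-1}$ multiply consistently. In the mixed case, the equality holds by construction, because the $|r|$ and $|r|^{-1}$ twists cancel. In the all-$\VF$ case, the statement is Fubini for relative Hrushovski-Kazhdan integration with volume forms (Proposition \ref{prop_HK_rel}): write $\varphi$ as $\int[\fL R]$ and use the lifting characterisation of $\int$.

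\textbf{Main obstacle.} The all-$\VF$ case is where I expect the difficulty, because the grading $\delta$ is shifted by the fibre dimensions $\varepsilon$ at each stage. One must show that decomposing first along $\pi_1$ and then along $\pi_2$ gives the same stratification, up to lower-dimensional sets, as decomposing along the other route. I would first try to refine both routes to the common stratification of $Z_0$ by the pair (dimension of fibre over $Z_1$, dimension of fibre over $Z$). By Theorem \ref{thm_dim} item \ref{item_thm_dim_4}, the total fibre dimension over $Z$ is the same along both routes on each stratum. On each stratum the integrals are then genuine Hrushovski-Kazhdan integrals in fixed dimension, and Fubini applies.
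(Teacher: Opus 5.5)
Your proposal follows the paper's proof. The paper likewise handles the rigid case by the argument without volume forms. It removes the lower-dimensional discrepancy by showing that $\pi_!^{(\delta)}$ vanishes on $\sC_+^{\mu_\Gamma,\le\delta-1}(Z')$, using $\dim_\VF Z_\varepsilon+\varepsilon\le\delta-1$; this is your ``discard the part over $Z'\setminus Z'_1$'' step. It treats the diagram by the same three cases, with the twists $\omega\mapsto|r|^{\pm1}\omega$ cancelling. Your common stratification in the all-$\VF$ case spells out what the paper only asserts (``exactly as in the case without volume forms'').

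Two justifications need adjusting, though neither affects the conclusion.

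First, the discarded piece need not have support of dimension $<\delta-\varepsilon$ inside $Z_\varepsilon$, since it may have lower-dimensional fibres over a large part of $Z_\varepsilon$. What it does have is total dimension $<\delta$, which makes it negligible in $\mu_\Gamma\VF^{\delta-\varepsilon}_{Z\times(\RV^\times)^d}[\varepsilon]$, and that is all you need.

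Second, compatibility of the $|r|$ twist with $\Isp^\mu$ does not come from the $\RV^d$ coordinates being parameters. The generator $([1]_1,[\RV_{<1}]_1)$ acts on exactly those coordinates. Compatibility comes instead from the $\VFR$ argument you already invoke, after which $\pi^d_{\RV!}$ undoes the twist.
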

\begin{proof}
Let us start by proving good definition. We made one choice, that of $(X,\omega)$. So let $(X',\omega')$ be such that $[X,\omega]=[X',\omega']$ in $\Kp\mu_\Gamma\RV^\delta_{Z'}[*]/\Isp^{\mu,Z'}$. We may first assume that $[X,\omega]=[X',\omega']$ in $\Kp\mu_\Gamma\RV^{\mathrm{rig}}_{Z'}[*]/\Isp^{\mu,Z',\mathrm{rig}}$. In this situation, the proof is dealt with analogously to the case without volume forms, and we get a morphism $\pi_!^{(\delta)}:\sC_+^{\mu_\Gamma,\leq\delta}(Z')\to C_+(Z)$.

Now it is enough to show that, for all $\varphi\in\sC_+^{\mu_\Gamma,\leq\delta-1}(Z')$, we have $\pi_!^{(\delta)}\varphi=0$. Let $(X,\omega)\in\Ob\mu_\Gamma\RV_{Z'}^{\mathrm{rig}}[*]$ be such that $[X,\omega] = \varphi$ in $\sC_+^{\mu_\Gamma}(Z')$. We define as in the construction of $\pi_!^{(\delta)}$ a partition $Z = \bigsqcup_{\varepsilon=0}^l Z_{\varepsilon}$. For $\varepsilon=0,\dots, l$, we have $\dim_\VF Z_{\varepsilon} + \varepsilon \leq \delta-1$, so $\dim_\VF Z_{\varepsilon} \leq \delta-\varepsilon-1$. But the component of $\pi_!^{(\delta)}\varphi$ inside $C_+^{\delta-\varepsilon}(Z)$ has support inside $Z_{\varepsilon}$; thus it cancels, and $\pi_!^{(\delta)}\varphi=0$.

We show commutativity of the diagram exactly as in the case without volume forms. Passing from $\omega$ to $\omega'$ exactly compensates the analogous construction in the case of $\RV$-projections.
\end{proof}

\begin{rmk}
Actually, the object $(X_\varepsilon,\omega)$ should rather be in $\mu_\Gamma\VF_{Z\times\RV^d}^{\delta-\varepsilon, \circ}[\varepsilon]$. Thanks to the equivalence of Lemma \ref{lem_eq_cat_mes}, we may ignore this difference. However, it explains that it is impossible to obtain a rigid version of this morphism.
\end{rmk}

\subsubsection{Projections along any coordinates}\label{sssec_proj}
Let $Z$ be an $\emptyset$-definable set, $Z'\subset Z\times\VF^n\times\RV^m$ be an $\emptyset$-definable subset, and $\pi:Z'\to Z$ be the projection map. Using the two above constructions, we may define $\pi_!: C_+^*(Z')\to C_+^*(Z)$ by composing the projection maps along one coordinate at a time; by Lemmas \ref{lem_int_VF_proj} and \ref{lem_int_proj_VF}, the morphism thus defined is independent of the order of the projections. Moreover, if we have a commutative diagram \[\begin{tikzcd}
&Z_0 \arrow[ld, "\pi_1"] \arrow[rd, "\pi'_1"] & \\ Z_1 \arrow[rd, "\pi_2"] & & Z_2 \arrow[ld, "\pi'_2"] \\ & Z &
\end{tikzcd}\] where each of the arrows $\pi_1,\pi'_1,\pi_2,\pi'_2$ is a projection along some coordinates, then $\pi_{2!}\pi_{1!}=\pi'_{2!}\pi'_{1!}$.

\subsubsection{Bijective projections}
Let $Z$ be an $\emptyset$-definable set, $Z'\subset Z\times\VF^n\times\RV^m$ be an $\emptyset$-definable subset, and assume that the projection map $\pi:Z'\to Z$ is bijective. There is an equivalence of categories $\pi^*:\RV_{Z}[*]\to\RV_{Z'}[*]$ given by pullback. From this we deduce an isomorphism $\pi^*:C_+^\emptyset(Z)\to C_+^\emptyset(Z')$.

\begin{lem}\label{lem_proj_bij_sans}
In the case without volume forms, $\pi_!$ is an isomorphism of semirings, inverse to $\pi^*$.
\end{lem}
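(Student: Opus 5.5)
Since $\pi:Z'\to Z$ is bijective, the plan is to show $\pi_!\circ\pi^*=\mathrm{id}$ on $C_+^\chi(Z)$. Because $\pi^*$ is already an isomorphism, this gives $\pi_!=(\pi^*)^{-1}$. Then $\pi_!$ is automatically a semiring isomorphism, since $\pi^*$ is one; $\pi^*$ is induced by a pullback functor compatible with products and with the generator of $\Isp$.

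\emph{Reduction to one coordinate.} By Section \ref{sssec_proj}, $\pi_!$ is the composite of projections along single coordinates. A bijective projection $Z\times\VF^n\times\RV^m\supset Z'\to Z$ factors as a chain $Z'=Z_{n+m}\to\cdots\to Z_0=Z$. Each intermediate set $Z_i$ is the image of $Z'$ in the partial product, so each step is again a bijective projection along one coordinate. Likewise $\pi^*$ is the composite of the corresponding pullbacks. It therefore suffices to treat two cases: a single $\RV$-coordinate and a single $\VF$-coordinate.

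\emph{$\RV$-coordinate.} Take $\varphi=[R]$ with $R\in\Ob\RV_Z[d]$. Then $\pi^*R=R\times_Z Z'$, which sits in $Z'\times(\RV^\times)^d\times Y\times\RV^k$. The forgetful functor $\pi_!$ regards this set as lying over $Z$, with the extra $\RV$-coordinate $t$ grouped among the auxiliary $\RV^k$-coordinates. Here one must note that, since $t$ is a function of $z$, the object lies in $\RV_Z[d]$ and not merely in $\RV_Z[\le d+1]$. This is the "compatibility with zeroes" reading from the Remark after the definition of $\RV_{\cT'}[d]$: the coordinate $t$ does not count toward the dimension, because the projection forgetting it is still finite-to-one. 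The map $(z,t,\xi)\mapsto(z,\xi)$ is then a $Z$-definable bijection $\pi_!\pi^*R\to R$, so $[\pi_!\pi^*R]=[R]$.

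\emph{$\VF$-coordinate.} Now $Z'\subset Z\times\VF$, with $x=g(z)$ for a definable $g$. For $\varphi=[R]$ with $R\in\Ob\RV_Z[\le d]$, the object $\pi^*R$, viewed as $\widetilde X\in\VF_{Z\times\RV^d}[1]$, has fibres over each $(z,r)$ that are either empty or the single point $\{g(z)\}$. A cell decomposition relative to $Z\times\RV^d$ can be taken with these points as degenerate cells, centred at $g(z)$ with $s=0$. Then $I_{\psi,\chi}$ yields exactly the set $R$, with a new coordinate $0$ in $\RV$ of dimension-index $0$. Thus $\int[\widetilde X]=[R]$ in $\Kp\RV_{Z\times\RV^d}[*]/\Isp$, using the relative form of Proposition \ref{prop_HK_rel} and $\int[\fL R']=[R']$ for $R'$ in degree $0$. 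Pushing forward by $\pi^d_{\RV!}$ returns $[R]$ over $Z$.

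\emph{Main obstacle.} The $\VF$ step carries the real content. One must check that, in the definition of $\pi_!$, the dimension $0$ fibre produces a class of grading $0$ rather than $1$. The plan is to use the fact that the lift $\fL$ of a degree-$0$ object is just the object itself, so that $\int$ sends the point to $[1]_0$. With this in hand, $\pi_!\pi^*=\mathrm{id}$, and the lemma follows.
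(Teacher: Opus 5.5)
Your reduction to bijective projections along a single coordinate follows the paper, whose proof just says both one-coordinate cases are ``clear by construction''. So does your $\VF$ step: the point cell gives a degree-$0$ class over $Z\times\RV^d$, and $\pi^d_{\RV!}$ puts the $d$ coordinates back into the graded block, so you recover $[R]$. The gap is in the $\RV$ step, which misreads how $\pi_!$ along $\RV$ is defined.

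The paper regards $X$ as a subset of $Z\times\RV^{d+l}\times Y\times\RV^k$, finite over $Z\times\RV^{d+l}\times Y$, i.e. as an object of $\RV_Z[\le d+l]$. The new coordinates therefore go into the \emph{graded} block, not among the auxiliary ones. The compatibility-with-zeroes remark you invoke only says that a graded coordinate equal to $0$ contributes degree $0$. A nonzero $t(z)$ raises the degree by one, whether or not it is a function of $z$. Presenting the same underlying set in $\RV_Z[d]$ with $t$ auxiliary gives a different object of $\RV_Z[*]$, with a different class.

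With the actual definition, for $Z'$ the graph of $t:Z\to\RV$ one gets $\pi_!\pi^*\varphi=\varphi\cdot[Z']$. On $\{t\neq 0\}$ this is $\varphi\cdot[1]_1$. But $[1]_1\neq[1]_0$ modulo $\Isp$: for instance whenever Theorem \ref{thm_int_sans} applies, since their lifts $1+\mathfrak m$ and a point are not in definable bijection. So $[\pi_!\pi^*R]=[R]$ does not follow, and it is false for the construction as written.

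Nor can you simply adopt your rule that an $\RV$-coordinate which is a function of the base goes to the auxiliary block, because it contradicts your own $\VF$ step. Take $R=\{(z,u(z))\}\in\Ob\RV_Z[1]$ with $u:Z\to\RV^\times$ definable. Then $\int[\widetilde X]$ is the degree-$0$ class over $Z\times\RV$ supported on the graph of $u$. Your rule would make $\pi^1_{\RV!}$ return $[Z]_0=1$ rather than $[R]_1$.

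So the real content of the lemma is in the $\RV$ case, not the $\VF$ case you flagged as the main obstacle. With the paper's definitions, a bijective $\RV$-projection satisfies $\pi_!=[Z']\cdot(\pi^*)^{-1}$. Getting $\pi_!=(\pi^*)^{-1}$ would require redefining $\pi_!$ along $\RV$ so that it stays consistent with its use inside the $\VF$-projection construction and with Lemma \ref{lem_int_VF_proj}. Neither your proposal nor the paper's one-line argument supplies this.
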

\begin{proof}
It is enough to deal separately with the case of a projection along $\VF$ and that of a projection along $\RV$; both are clear by construction.
\end{proof}

Now we deal with the case with volume forms. We also have a pullback morphism $\pi^*:C_+^{\mu_\Gamma}(Z)\to C_+^{\mu_\Gamma}(Z')$ which is an isomorphism of graded semigroups, and a semiring isomorphism at grading $\delta=\dim_\VF Z$. However here $\pi^*$ will not be inverse to $\pi_!$, due to the change of variables formula.

We prove the following lemma, thanks to which we will be able to define all the integration morphisms:

\begin{lem}\label{lem_proj_bij_vol}
Assume that the map $\pi:Z'\to Z$ is a bijective projection along some coordinates. Then $\pi_!$ is an isomorphism of graded semigroups. Moreover, its inverse is defined as follows: let $\varphi\in C_+^{\mu_\Gamma,\delta}(Z)$ be a Function and let $X=\Supp \varphi, X' = \pi^{-1}(X)\subset Z'$. Then 
\[(\pi_!)^{-1}\varphi = [|\Jac(\pi_{|X'})|^{-1}]\pi^*\varphi\,.\]
\end{lem}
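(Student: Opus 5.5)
We reduce to single-coordinate projections and then treat the $\RV$ and $\VF$ cases separately, checking by hand that the proposed map is a two-sided inverse. By Subsection \ref{sssec_proj}, $\pi_!$ does not depend on the order in which the coordinates are projected away. We factor $\pi$ as a composite of projections along one coordinate at a time; each intermediate projection is again bijective, since $\pi$ is. Both sides of the formula behave well under composition. For $\pi_!$ this is by definition. For the right-hand side it follows from the chain rule, i.e.\ the functoriality of Lemma \ref{lem_mesure_fonctorielle}, and from functoriality of pullbacks, using that $|\Jac|$ is multiplicative along composites of maps with $0$-dimensional fibres. So it suffices to treat a single $\RV$-coordinate or a single $\VF$-coordinate.

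\emph{Case of an $\RV$-coordinate.} Let $Z'\subset Z\times\RV$ with $\pi$ bijective, and let $r:Z'\to\RV$ be the extra coordinate. By construction $\pi_!$ is the forgetful functor with $\omega$ replaced by $|r|^{-1}\omega$. Since $\pi$ is a bijection on a fibre that is a point, $\pi^*$ is an equivalence inverse to the forgetful functor on the underlying objects. The two operations therefore differ only by the factor $[|r|^{-1}]$, so $(\pi_!)^{-1}\varphi=[|r|]\pi^*\varphi$. We then identify $|r|$ with $|\Jac(\pi_{|X'})|^{-1}$ via Definition \ref{def_jacobien}. Because $\pi$ is bijective, the Serre measures of $X$ and $X'$ are compared through the $\VF$-coordinates only, and we must check that the normalisation conventions agree with $|r|$. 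This is the first place to recheck. If they do not agree, the discrepancy should instead be absorbed by the factor $|r|^{-1}$ that $\pi_!$ inserts, and the required identity amounts to consistency of the two conventions.

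\emph{Case of a $\VF$-coordinate.} Let $Z'\subset Z\times\VF$ with $\pi$ bijective. Then each fibre $Z'_z$ is a point, $Z'$ is the graph of a definable map $g:Z\to\VF$, and $\dim_\VF Z'=\dim_\VF Z$. Take $(R,\omega)$ representing $\varphi$ on $X$. By definition, $\pi_!$ is obtained by seeing the pullback $\pi^*R$ as an object of $\mu_\Gamma\VF^{\delta}_{Z\times(\RV^\times)^d}[0]$ (here $\varepsilon=0$), integrating it, and then applying $\pi_{\RV!}^d$.

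In fibre dimension $0$ the Hrushovski--Kazhdan integral is tautological: by Lemma \ref{lem_fL_surj} with a trivial cell decomposition, the integral of a $0$-dimensional fibre is the object itself. Relatively over $Z$, the only change is therefore the volume form. Here we use the essential bijection $\pi:X'\to X$ in dimension $\delta$ together with the non-rigid morphisms of Definition \ref{def_cat_avec_VF}, with their condition $\omega=\omega'\,|\Jac|$ almost everywhere, and Lemma \ref{lem_eq_cat_mes}, which identifies the $\circ$-categories with the old ones. Applying $\pi_!$ to $[|\Jac(\pi_{|X'})|^{-1}]\pi^*\varphi$ then returns $\varphi$.

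For the converse composition we use that $\pi^*$ is a bijection of graded semigroups. It follows that $\pi_!$ is bijective, with inverse given by the formula in the statement.

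\emph{Main obstacle.} The step I expect to be hardest is the $\VF$-coordinate case. There one must carefully justify that "integrating over a point-fibre of $Z\times\VF\to Z$" multiplies the volume form exactly by the Jacobian of $\pi_{|X'}$ in the sense of Serre measure. Only the pointwise, rigid condition is available in the fibres, whereas $|\Jac|$ is defined only almost everywhere and globally through Definition \ref{prop_mesure_Serre}.

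To handle this, I would first restrict to the dense open set from Lemma \ref{lem_differentiable}, where $g$ is $\cC^1$. I would then cover $X$ by the loci $Z_I$ on which a coordinate projection $\pi_I$ has $0$-dimensional fibres. On each $Z_I$ the Serre measures are the pullbacks $|\omega_I|$, so $|\Jac(\pi_{|X'})|$ can be computed explicitly as the ratio of the normalising factors $\beta$ on $X'$ and on $X$.
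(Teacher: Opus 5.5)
Your reduction to single-coordinate projections (order-independence of $\pi_!$ plus the chain rule) is the same as the paper's. Your $\RV$-coordinate case, however, contains a genuine error, which you notice but do not resolve.

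For an $\RV$-projection the Jacobian is identically $1$. The canonical measure of Definition~\ref{prop_mesure_Serre} is built only from the projections $\pi_I$ onto sets of $\VF$-coordinates, and these are the same for $X'$ and $X$. Hence $\pi^*|\omega_0|_X=|\omega_0|_{X'}$ and $|\Jac(\pi_{|X'})|=1$. The statement therefore demands $(\pi_!)^{-1}=\pi^*$ in this case. Your formula $(\pi_!)^{-1}\varphi=[|r|]\,\pi^*\varphi$ contradicts this as soon as $|r|\neq 1$, and no normalisation convention can identify $|r|$ with $|\Jac(\pi_{|X'})|^{-1}$.

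The faulty step is treating the forgetful functor as inverse to $\pi^*$ ``on underlying objects'', with the factor $|r|^{-1}$ as the only discrepancy. Once $X$ is viewed over $Z$, the coordinate $r$ becomes one of the volume coordinates of $\mu_\Gamma\RV_Z[\leq d+l]$. Morphisms there preserve $\omega(\xi)\prod_i|\xi_i|$, with $r$ among the $\xi_i$. So $r$ itself contributes a factor $|r|$, and the $|r|^{-1}$ inserted by $\pi_!$ is there precisely to cancel it; this is the paper's ``we do not wish to interpret the $l$ $\RV$-coordinates we add as having a volume''.

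Concretely, where $r(z)\in(\RV^\times)^l$, the map $(z,r(z),\xi)\mapsto(z,1,\dots,1,\xi)$ is a morphism over $Z$ carrying the form $|r|^{-1}\rho$ to the form $\rho$. So no power of $|r|$ survives in $\pi_!\pi^*\varphi$. Your fallback sentence gestures at this, but it contradicts the formula you had just derived. This cancellation, together with $|\Jac(\pi_{|X'})|=1$, is the whole of the paper's $\RV$ case.

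In the $\VF$ case you follow the paper's route: unwind $\pi_!$ through $\mu_\Gamma\VF^{\delta,\circ}_{Z\times(\RV^\times)^d}[0]$ and Lemma~\ref{lem_eq_cat_mes}. The $Z_I$-decomposition in your last paragraph is unnecessary, because the Serre-measure Jacobian of Definition~\ref{def_jacobien} is already built into the morphisms of Definition~\ref{def_mu_Gamma_VF}.

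However, you assert rather than compute that $\pi_!$ sends $[|\Jac(\pi_{|X'})|^{-1}]\pi^*\varphi$ to $\varphi$, and that computation is the entire content of this case. Take $\pi$ itself as the isomorphism in the fibre-dimension-$0$ category. The morphism condition $\omega(x)=\omega'(\pi(x))\,|\Jac\pi(x)|$ then gives, for any definable $\beta:X'\to\Gamma$,
\[\pi_!\bigl([\beta]\,\pi^*\varphi\bigr)=\bigl[(\beta/|\Jac(\pi_{|X'})|)\circ\pi^{-1}\bigr]\,\varphi\,.\]
For example, take the graph of $x\mapsto x/t$ over $\VF$ with $|t|<1$. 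Then $|\Jac\pi|=|t|$, and $\pi_!$ of the constant function $1$ equals $[|t|^{-1}]$. Write this out, and check which power of $|\Jac(\pi_{|X'})|$ it forces in front of $\pi^*\varphi$ under these conventions, rather than taking the exponent in the statement on trust.
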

\begin{proof}
By chain rule, it is enough to deal with the case where $\pi$ is a projection along one coordinate.

Assume $\pi$ to be an $\RV$-projection. Then $(\pi_!)^{-1}=\pi^*$ by construction; let us make sure that $|\Jac(\pi_{|X'})|=1$. The projections of $X$ and $X'$ to the $\VF^I$ are the same for every $I$; as a consequence, it follows from the definition of the canonical measure that $\pi^*|\omega_0|_X=|\omega_0|_{X'}$,, hence $|\Jac(\pi_{|X'})|=1$.

Now assume that $\pi$ is a projection along $\VF$, and write $\varphi = [R,\omega]$ with $(R,\omega)\in\Ob \mu_\Gamma\RV_Z[d]$. The result follows at once from the definition of the category $\mu_\Gamma\VF_{X\times\RV^d}^\circ[0]$ and of the morphism $\pi_!$.
\end{proof}
\begin{rmk}
The sets $X,X'$ are only defined up to a subset of dimension at most $\delta-1$, which is not a problem.
\end{rmk}

\subsubsection{Integration along any morphism}
\begin{dfn}
Let $f:Z'\to Z$ be an $\emptyset$-definable map and $\eta\in\{\chi,\mu_\Gamma\}$. Let $\Gamma_f$ be its graph, and let $p:\Gamma_f\to Z', \pi:\Gamma_f\to Z$ be the projections. We set
\[f_! = \pi_!\circ p^{-1}: C_+^\chi(Z')\to C_+^\chi(Z)\,.\]
\end{dfn}
This is well defined by Lemmas \ref{lem_proj_bij_sans} and \ref{lem_proj_bij_vol}.

\begin{prop}\label{prop_formalisme_integration}
Let $\begin{tikzcd} Z_0\arrow[r, "f"] & Z_1 \arrow[r,"g"] & Z_2\end{tikzcd}$ be $\emptyset$-definable maps and $\eta\in\{\chi,\mu_\Gamma\}$. Then $g_!\circ f_! = (g\circ f)_!$.

Moreover, if $f$ is a projection along some coordinates, then this definition is coherent with the above.
\end{prop}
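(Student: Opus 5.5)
We reduce functoriality to the two facts already established: commutation of projections in commutative squares (Section~\ref{sssec_proj}, from Lemmas~\ref{lem_int_VF_proj} and~\ref{lem_int_proj_VF}), and the description of inverses of bijective projections (Lemmas~\ref{lem_proj_bij_sans} and~\ref{lem_proj_bij_vol}).

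We begin with the coherence statement. Suppose $f:Z'\to Z$ is itself a projection along some coordinates. Then $\Gamma_f$ is a subset of $Z'\times Z$, and we have $p:\Gamma_f\to Z'$ bijective and $\pi:\Gamma_f\to Z$. Both $p$ and $\pi$ are projections along coordinates. Moreover $f\circ p=\pi$ is a composite of projections, so the commutative-square statement of Section~\ref{sssec_proj} gives $\pi_!=f_!\circ p_!$ for the previously defined $f_!$. This is the triangle case of the square, obtained by taking one side to be the identity, which is the projection along no coordinates. Composing with $(p_!)^{-1}$ on the right yields coherence.

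For composition, let $\Gamma=\{(z_0,z_1,z_2):z_1=f(z_0),\,z_2=g(z_1)\}\subset Z_0\times Z_1\times Z_2$. This set projects bijectively onto $\Gamma_f$, and also onto $\Gamma_{g\circ f}$ by forgetting $z_1$. It projects onto $\Gamma_g$ via $(z_0,z_1,z_2)\mapsto(z_1,z_2)$ and onto $Z_0$ bijectively. We now express each side of $g_!\circ f_!=(g\circ f)_!$ through $\Gamma$.

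First, $(g\circ f)_!=\pi_{Z_2!}\circ p_{Z_0}^{-1}$, computed through $\Gamma_{g\circ f}$. By the coherence just proved and the square lemma, this equals $q_{2!}\circ q_{0!}^{-1}$, where $q_0:\Gamma\to Z_0$ and $q_2:\Gamma\to Z_2$ are the projections. The reason is that the bijective projection $\Gamma\to\Gamma_{g\circ f}$ composes correctly: its $!$ is invertible, and $\pi_!$ of composites of projections are composites.

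Similarly, $f_!=q_{1!}\circ q_{0!}^{-1}$ with $q_1:\Gamma\to Z_1$. For $g_!$, we use the projection $r:\Gamma\to\Gamma_g$, which forgets $z_0$ and is not bijective. Write $g_!=\pi^{g}_{!}\circ(p^g_!)^{-1}$ with $p^g:\Gamma_g\to Z_1$. It then suffices to show
\[(p^g_!)^{-1}\circ q_{1!}=r_!\,.\]
Since $q_1=p^g\circ r$ is a composite of projections, the square lemma gives $q_{1!}=p^g_!\circ r_!$, and this is exactly what we need. Then
\[g_!f_!=\pi^g_!\,r_!\,q_{0!}^{-1}=q_{2!}\,q_{0!}^{-1}=(g\circ f)_!,\]
using $\pi^g\circ r=q_2$.

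The main obstacle is in the volume-form case $\eta=\mu_\Gamma$, where the bookkeeping must be checked with care. There, $p_!^{-1}$ involves the Jacobian factor $[|\Jac(\pi_{|X'})|^{-1}]$ of Lemma~\ref{lem_proj_bij_vol}. One must also check that the $\delta$-gradings and the ``almost everywhere'' identifications are respected when composing a non-bijective projection $r$ with inverses of bijective ones. I would avoid computing any Jacobians and rely only on the identity $p_!\circ p_!^{-1}=\mathrm{id}$ together with the square lemma, which was proved for $\mu_\Gamma$ in Lemma~\ref{lem_int_proj_VF}. The remaining point to verify is that every map appearing above is genuinely a projection along coordinates between definable sets in the required ambient form, possibly after reordering coordinates. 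Reordering is harmless, since a coordinate permutation is a bijective projection from a graph, and Lemma~\ref{lem_proj_bij_vol} shows its Jacobian is trivial up to sign in absolute value.
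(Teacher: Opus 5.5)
Your proposal is correct and follows the paper's proof. The paper also obtains $g_!\circ f_!=(g\circ f)_!$ directly from the commuting-projection statement of Section~\ref{sssec_proj} together with invertibility of $!$ along bijective projections; your triple graph $\Gamma$ spells out bookkeeping the paper calls straightforward, and your coherence step differs only slightly: the paper identifies $(p_!)^{-1}$ with $p^*$ (via Lemma~\ref{lem_proj_bij_sans}, and with volume forms via the triviality of $|\Jac(p)|$ from the canonical measure plus Lemma~\ref{lem_proj_bij_vol}), whereas your triangle $\pi=f\circ p$ avoids that Jacobian computation and treats both values of $\eta$ uniformly.
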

\begin{proof}
The first fact is a straightforward consequence of the result of Section \ref{sssec_proj}. For the second one, in the case without volume forms it follows from Lemma \ref{lem_proj_bij_sans}. With volume forms, it is enough to see that, if $f$ is a coordinate projection and $\Gamma_f$ its graph, $p:\Gamma_f\to Z_0$ the projection map, then by definition of the canonical measure $|\omega_0|_{Z_0} = p^*|\omega_0|_{\Gamma_f}$. Now apply Lemma \ref{lem_proj_bij_vol}.
\end{proof}

\begin{cor}\label{cor_projection}
Let $f:Z'\to Z$ be an $\emptyset$-definable map, and let $\varphi\in\sC_+^\eta(Z),\psi\in C_+^\eta(Z)$. Then we have the projection formula
\[f_!(f^*\varphi\cdot\psi) = \varphi f_!\psi\,.\]
\end{cor}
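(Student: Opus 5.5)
The plan is to reduce the projection formula to the two elementary kinds of maps from which $f_!$ is built. By definition $f_!=\pi_!\circ p_!^{-1}$, where $p:\Gamma_f\to Z'$ and $\pi:\Gamma_f\to Z$ are the projections. Pullback is functorial, so $f^*\varphi$ pulls back along $p$ to $\pi^*\varphi$ on $\Gamma_f$. It therefore suffices to prove the formula in two cases: for the bijective projection $p$ (in the inverse form $p_!^{-1}(p^*\alpha\cdot\beta)=\alpha\cdot p_!^{-1}\beta$), and for coordinate projections $\pi$. For coordinate projections, Section \ref{sssec_proj} further reduces to projections along a single coordinate, either $\RV$ or $\VF$, because both sides are compatible with composition. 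The composition step is just the chain $\pi_!(\pi^*\varphi\cdot p_!^{-1}\psi)=\varphi\,\pi_!p_!^{-1}\psi$.

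The bijective-projection case is handled by Lemmas \ref{lem_proj_bij_sans} and \ref{lem_proj_bij_vol}. Without volume forms, $p_!^{-1}=p^*$ is a semiring morphism, so the identity is immediate. With volume forms, $p_!^{-1}\beta=[|\Jac(p)|^{-1}]p^*\beta$. Since $p^*$ is multiplicative with respect to the semiring action of $\sC_+^\eta$, the Jacobian factor simply commutes past $p^*\alpha$.

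For an $\RV$-coordinate projection $\pi:Z'\subset Z\times\RV^l\to Z$, represent $\varphi=[R]$ by an object $R$ over $Z$ and $\psi=[X]$ by an object $X$ over $Z'$. Then $\pi^*\varphi\cdot\psi$ is represented by the fibre product $R\times_Z X$, viewed over $Z'$. Applying the forgetful functor $\pi_!$ gives $R\times_Z X^{(Z)}$, which is exactly the representative of $\varphi\cdot\pi_!\psi$. In the volume-form case the correction $|\mathrm{pr}_\RV|^{-1}$ applies only to the $X$-factor, so the forms agree.

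For a $\VF$-coordinate projection, write $\psi=[X]$ and view $X$ as an object of $\VF_{Z\times\RV^d}$. Then $\pi^*\varphi\cdot\psi=[R\times_Z X]$. The key claim is that the relative Hrushovski--Kazhdan integral of Proposition \ref{prop_HK_rel} is linear over $\RV$-objects pulled back from the base, i.e.
\[\int[R\times_Z \widetilde X]=[R]\cdot\int[\widetilde X]\,.\]
To prove it, I would choose a cell decomposition adapted to $\widetilde X$ relative to $Z\times\RV^d$ and pull it back to $R\times_Z\widetilde X$. This stays an adapted cell decomposition, now relative to $R$, because the $R$-coordinates are $\RV$-coordinates with finite fibres over the base. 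Consequently $I_{\psi,\chi}(R\times_Z\widetilde X)=R\times_Z I_{\psi,\chi}\widetilde X$ at each integration step $I^j$. The same holds through the congruence $\Isp$, since it is generated by a relation multiplied by arbitrary objects (the fibrewise description in Lemma \ref{lem_sans_forme_fibre} confirms this pointwise). This compatibility with the $\Isp$ quotient is the step I expect to be the main obstacle.

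With volume forms there is an extra difficulty from the $\delta$-grading and the partition $Z=\bigsqcup Z_\varepsilon$ by fibre dimension. Multiplying by $\pi^*\varphi$, with $\varphi\in\sC^\eta_+(Z)$ rigid, preserves the fibre dimensions and only shrinks supports. Hence the decomposition is compatible term by term, and the essential-bijection/almost-everywhere identifications pass through the product. I would verify this using the equivariance of $\int$ with respect to $\int^{\mathrm{rig}}$ stated in Proposition \ref{prop_HK_rel}.
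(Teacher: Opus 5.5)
Your proposal is correct and is essentially the paper's argument. The paper's proof is just ``clear from construction'', and you have unpacked exactly that construction: you factor through the graph, handle the bijective projection via Lemmas \ref{lem_proj_bij_sans} and \ref{lem_proj_bij_vol}, and treat single $\RV$- and $\VF$-coordinate projections, the latter via multiplicativity/equivariance of the relative integral in Proposition \ref{prop_HK_rel}. That same multiplicativity settles your worry about $\Isp$, since $\Isp$ is a semiring congruence. Two small slips: the inverse form for $p$ should read $p_!^{-1}(\alpha\cdot\beta)=p^*\alpha\cdot p_!^{-1}\beta$, as your own verification in fact shows, and $\psi$ should live in $C_+^\eta(Z')$ (the statement has a typo there).
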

\begin{proof}
Clear from construction.
\end{proof}

\begin{rmk}[Adding parameters]
Let $A\subset M\models\cT'$ be a set of parameters. We may do all the previous construction for $A$-definable objects instead of $\emptyset$-definable ones. By invariance of Hrushovski-Kazhdan motivic integration under adding parameters, the two constructions are compatible.

More precisely, denote by $\RV_Z^A[*],\mu_\Gamma\RV_Z^A[*], \dots$ the categories analogous to the above, but with parameters in $A$.We have functors $\RV_Z[*]\to\RV_Z^A[*], \dots$. Those functors induce morphisms between the Grothendieck semigroups, and these descend to the quotient down to constructible Functions, whence morphisms
\[C_+^\eta(Z)\to C_{+,A}^\eta(Z)\]
for every $\emptyset$-definable set $Z$, where $C_{+,A}^\eta(Z)$ is the semigroup of constructible motivic Functions over $Z$ with parameters in $A$ (and $\eta\in\{\chi,\mu_\Gamma\}$).

Then the above morphisms yield commutative diagrams
\[\begin{tikzcd}
C_+^\eta(Z') \arrow[r] \arrow[d, "f_!"] & C_{+,A}^\eta(Z') \arrow[d, "f_!"] \\
C_+^\eta(Z) \arrow[r] & C_{+,A}^\eta(Z')
\end{tikzcd}\,.\]
\end{rmk}

\subsection{Bounded objects}
In \cite{cluckers_constructible_2008}, Cluckers and Loeser consider a much smaller class of constructible motivic Functions. One of the main interests of this class is that its groupification is nonzero. We will take this restriction in our case as well, which will enable us to find back Cluckers-Loeser motivic integration.

We start by introducing a notion of bounded objects on $\RV$:
\begin{dfn}\label{dfn_borné}
Let $Z$ be an $\emptyset$-definable set and $d,\delta\in\bN$ natural numbers. An object $(R,\omega)$ of $\mu_\Gamma\RV^\delta_Z[d]$ is (relatively) \emph{bounded} if there is $Z_0\subset\Supp R$ such that $\dim_\VF \Supp(R)\setminus Z_0 < \delta$ and that, for all $z\in Z_0$:
\begin{enumerate}
\item the image of $|\omega_z|$ in $\Gamma$ is bounded from above, \emph{i.e.} there exists $\gamma_z\in\Gamma$ such that for all $r\in R_z$, we have $|\omega(r)|\cdot|r|<\gamma_z$;
\item for all $\gamma\in\Gamma$, the set $R_{z,\gamma}\coloneqq\{r\in R_z: |r|\cdot |\omega(r)|=\gamma\}$ is bounded in $\RV$, \emph{i.e.} if $R_z\subset\RV^n$, there exists $\delta_{z,\gamma}\in\Gamma$ such that, for all $r\in R_{z,\gamma}$, we have $|r|\in [\delta_{z,\gamma}^{-1},\delta_{z,\gamma}]^n$.
\end{enumerate}
We denote by $\mu_\Gamma\RV^{\delta,\bdd}_Z[d]$ the full subcategory of bounded objects, as well as $\mu_\Gamma\RV^{\delta,\bdd}_Z[*], \mu_\Gamma\RV^\bdd_Z[*]$ the associated direct sums.

We denote again by $\Isp^{Z,\mu}$ the restriction of this congruence to the Grothendieck semigroup of the category of (relatively) bounded objects.
\end{dfn}

Analogously, we can also define the rigid versions $\mu_\Gamma\RV_Z^{\bdd, \mathrm{rig}}[d], \mu_\Gamma\RV_Z^{\bdd,\mathrm{rig}}[*]$ of these categories. Now we introduce the semigroups of bounded constructible motivic Functions:

\begin{dfn}
Let $Z$ be an $\emptyset$-definable set. The semigroup of bounded constructible motivic Functions over $Z$ (with volume forms) is $C_+^\bdd(Z) = \Kp\mu_\Gamma\RV_Z^\bdd[*]/\Isp^{Z,\mu}$.

The semiring of bounded constructible motivic functions over $Z$ (with volume forms) is $\sC_+^\bdd(Z) = \Kp\mu_\Gamma\RV_Z^{\bdd,\mathrm{rig}}[*]/\Isp^{Z,\mu,\mathrm{rig}}$.
\end{dfn}
The semigroup $C_+^\bdd(Z)$ is naturally a (bi)graded subsemigroup of $C_+^{\mu_\Gamma}(Z)$, with the additional property:
\[\forall\varphi,\psi\in C_+^{\mu_\Gamma}(Z), \varphi+\psi\in C_+^\bdd(Z)\implies \varphi,\psi\in C_+^\bdd(Z)\,.\]
Similarly, $\sC_+^\bdd(Z)$ is a (graded) subsemiring of $\sC_+^{\mu_\Gamma}(Z)$, with the same property.

\begin{lem}
Let $f:Z'\to Z$ be an $\emptyset$-definable map, and let $\varphi\in C_+^{\mu_\Gamma}(Z')$ be such that $f_!\varphi\in C_+^{\mu_\Gamma}(Z)$. Then $\varphi\in C_+^\bdd(Z')$.
\end{lem}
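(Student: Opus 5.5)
The plan is to argue by contraposition, following the factorisation $f_!=\pi_!\circ p^{-1}$ from the definition of integration along any morphism. I will show that if $\varphi\notin C_+^\bdd(Z')$, then $f_!\varphi$ fails to be bounded, and then compare this with the hypothesis on $f_!\varphi$.

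\emph{Warning on the wording.} Every $f_!$ constructed above takes values in $C_+^{\mu_\Gamma}(Z)$. Read literally, the hypothesis $f_!\varphi\in C_+^{\mu_\Gamma}(Z)$ therefore always holds, and the statement would assert that every Function is bounded. That is false: for $f=\id$ on a point and $\varphi=[\RV^\times,1]$, the set $\{|r|:r\in\RV^\times\}$ is unbounded above. So the plan below can only conclude if the hypothesis carries boundedness information on $f_!\varphi$; otherwise the final step fails. I flag this as the decisive gap.

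Step 1 (reduction to coordinate projections). By Lemma \ref{lem_proj_bij_vol}, $p^{-1}=(p_!)^{-1}$ is given by $[|\Jac|^{-1}]\,p^*$. Pulling back along the bijection $p$ and multiplying by a $\Gamma$-valued Jacobian, which is nonzero almost everywhere, should preserve the two boundedness conditions of Definition \ref{dfn_borné}. I expect this to need a local bound on the Jacobian over a generic subset, which should be available via Lemma \ref{lem_differentiable}. This reduces the problem to $f=\pi$ a coordinate projection. Using Section \ref{sssec_proj}, I further reduce to a single $\RV$-coordinate or a single $\VF$-coordinate.

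Step 2 ($\RV$-projection). Here $\pi_!$ is the forgetful functor with $\omega'=|\mathrm{pr}_\RV|^{-1}\omega$. The quantity $|r|\cdot|\omega(r)|$ is unchanged by this, and the level sets $R_{z,\gamma}$ of $\pi_!(X,\omega)$ are unions of the level sets over the fibre $z'\in\RV$. So if $(X,\omega)$ violates condition (1) or (2) on a set of $z'$ whose image in $Z$ is of full dimension $\delta$, then so does $\pi_!(X,\omega)$.

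Step 3 ($\VF$-projection). For a single $\VF$-projection, I choose a cell decomposition adapted to $(X,\omega)$ as in Lemma \ref{lem_fL_surj} and compute $\pi_!$ fibrewise through $I_{\psi,\chi}$. A box $\rv^{-1}(\xi)$ contributes a coordinate $\xi$ with weight $|\xi|\,\omega$. Unboundedness of $\varphi$ over the base then transfers to unboundedness of the integrated object, provided no cancellation occurs in $\Kp$. Positivity (we are in semigroups), together with the decomposition property $\varphi+\psi\in C_+^\bdd\Rightarrow\varphi,\psi\in C_+^\bdd$, rules out cancellation.

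Step 4 (conclusion). Combining Steps 1 to 3, I expect the contrapositive to give $f_!\varphi\notin C_+^\bdd(Z)$. I expect this to be the main obstacle, and it is where the hypothesis must be used to conclude. Under the literal hypothesis no contradiction arises, so to finish I would need the hypothesis to be the integrability condition $f_!\varphi\in C_+^\bdd(Z)$.
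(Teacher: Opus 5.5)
Your reading of the statement is right. The hypothesis must be $f_!\varphi\in C_+^\bdd(Z)$: the paper's proof only shows that an unbounded $\varphi$ has unbounded $f_!\varphi$, and the definition $\I_S C_+(Z)=h_!^{-1}(C_+^\bdd(S))\subset C_+^\bdd(Z)$ that follows uses exactly that version. So your Step 4 ``gap'' is a typo in the statement, not a defect of your argument.

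Your architecture is the paper's: contrapositive, reduction through the bijection $p$ via Lemma \ref{lem_proj_bij_vol}, reduction to one $\RV$- or one $\VF$-coordinate, and the $\RV$ case by definition. Step 2 is fine. In Step 1 you need no local bound on the Jacobian. $|\Jac|^{-1}$ is a function of the base point, so over each base point it multiplies all weights of the fibre by one constant, and the boundedness conditions allow bounds depending on the base point.

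The real gap is Step 3. The obstacle there is not cancellation, which positivity and the summand property settle at once; it is localisation.

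\begin{itemize}
\item In this framework a cell decomposition adapted to $(X,\omega)$ is an $\RV$-partition $\psi:X\to Y'$ together with cell decompositions $\chi_{y'}$ chosen piece by piece.
\item So the fibre of $X$ over a point $(z,x)$ of $Z'$ is spread over infinitely many pieces $y'$. Each piece has its own box around $x$, whose radius depends on $y'$.
\item Hence no single box carries the whole unbounded fibre with one factor $|\xi|$, and ``unboundedness transfers'' is not justified box by box. For instance, points of one unbounded level set that come from different pieces get rescaled by different radii and land on different levels.
\end{itemize}

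The missing idea is the paper's:
\begin{itemize}
\item For each $(y,z)$, take a finite $\cL(y,z)$-definable set $C_{y,z}$ preparing $X_{y,z}$ and $\omega$, and glue these by compactness.
\item The union of the $C_{y,z}$ over $y$ maps into $\RV$, so its image in $\VF$ is $0$-dimensional.
\item By Theorem \ref{thm_dim}, which rests on $\RV$-thinness, the $1$-dimensional unbounded locus in $Z'_z$ minus this image still contains an open ball $B_z$.
\item On $B_z$ all pieces are constant simultaneously, so $X_{|B_z}=B_z\times X'_z$ with $\omega$ independent of the $B_z$-coordinate.
\item Then $f_{z!}[X_{|B_z},\omega]=[X'_z][r(B_z)]_1$ is unbounded, and the summand property concludes.
\end{itemize}

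You also omit the bookkeeping the paper does first.
\begin{itemize}
\item Restrict to $Z'=\Supp X$ with equidimensional fibres over $Z$; an unbounded $\varphi$ has an unbounded such summand.
\item If the fibre dimension is $0$, factor through a bijection followed by an $\RV$-projection.
\item If the fibre dimension is $1$, the output sits in grading $\delta-1$. You then need the unbounded locus to project onto a set of dimension $\delta-1$, which equidimensionality provides.
\end{itemize}
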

\begin{proof}
If $f$ is bijective, it follows from Lemma \ref{lem_proj_bij_vol} that $f_!$ induces an isomorphism between the semigroups of bounded Functions. Using the construction of $f_!$, we are reduced to the case where $f$ is a projection. By induction, it is enough to deal with the case where $f$ is a projection along $\RV$ or along one $\VF$ coordinate.

If it is a projection along $\RV$, the result is clear by definition. So let us assume that $f$ is a projection along one $\VF$ coordinate, and let $\varphi\in C_+^{\mu_\Gamma}(Z')$ be unbounded. We want to show that $f_!\varphi$ is unbounded as well. To do so, assume that there is $\delta\in\bN$ such that $\varphi\in C_+^{\mu_\Gamma,\delta}(Z')$, and let $(X,\omega)\in\Ob\mu_\Gamma\RV_{Z'}^\delta[*]$ be a representative of $\varphi$. Taking a restriction if necessary, we may assume that $Z'=\Supp X$ and that the projection map $Z'\to Z$ has equidimensional fibres, with common dimension $\varepsilon\in\{0,1\}$. If $\varepsilon=0$, then there is a factorisation $Z'\to Z''\to Z$ of $f$ such that $Z''\to Z$ is an $\RV$-projection and that $Z'\to Z''$ is bijective; we have already treated this case. Assume then that $\varepsilon=1$. We write $X\subset Z'\times \RV^d\times Y \times \RV^m$, with $Y$ mapping into $\RV$, the projection $X\to Z'\times\RV^d\times Y$ having finite fibres and the projection map from the graph of $\omega$ towards $Y$ being an $\RV$-partition. For $y\in Y, z\in Z$, we take $C_{y,z}\subset\VF$ a finite $\cL(y,z)$-definable set preparing $\{(x,\omega(x)):x\in X_{y,z}\}$, where $X_{y,z}$ is the fibre of the projection $X\to Y\times Z$ at $(y,z)$. By compactness, we may assume that $C\subset\VF\times Y\times Z$ is $\emptyset$-definable. Then for all $z\in Z$ the projection $C_z\subset\VF\times Y\to \VF$ has an image $\pi(C_z)$ of dimension $0$; in particular, there exists $B_z\subset Z'_z\setminus \pi(C_z)$ a nonempty open ball. Over $B_z$, we have $X_{|B_z} = B_z\times X'_z$ and $\omega_{|X_{|B_z}}$ only depends on the variable along $X'_z$. Let us add some parameters to make $z$ and $B_z$ definable, and write $f_z:Z'_z\to \{z\}$ for the projection map. Then $f_{z!}[X_{|B_z},\omega] = [X'_z]\int[B_z] = [X'_z][r(B_z)]_1,$ where $r(B_z)\in\Gamma$ is the radius of the ball $B_z$ (seen as an open ball). Since $(X'_z,\omega)$ is unbounded by hypothesis (for some good choice of $z$), we deduce that $f_{z!}[X_{|B_z},\omega]$ neither. Hence, if we write $\varphi_z = [X_z,\omega]$, then $f_{z!}\varphi_z$ is unbounded, because $\varphi_z = [X_{|B_z},\omega]+\psi_z$ for some $\psi_z$. But $f_{z!}\varphi_z = (f_!\varphi)_z$, so $f_!\varphi$ is unbounded.
\end{proof}

This legitimates the following definition:

\begin{dfn}
Let $h:Z\to S$ be an $\emptyset$-definable map. The graded semigroup of constructible motivic Functions over $Z$ integrable relative to $S$ is
\[\I_S C_+(Z) = h_!^{-1}(C_+^\bdd(S))\subset C_+^\bdd(Z)\,.\]
\end{dfn}
We verify that this is indeed a graded semigroup. Let $\varphi = \sum_{\delta=0}^n \varphi_\delta \in \I_S C_+(Z)$ with $\varphi_\delta\in C_+^{\mu_\Gamma,\delta}(Z)$. Set some $\delta$, and write $\varphi = \varphi_\delta+\psi$. Then we have $h_!\varphi = h_!\varphi_\delta+h_!\psi$ which is bounded by assumption. So $h_!\varphi_\delta$ is indeed bounded.

\section{Connection with Cluckers-Loeser motivic integration}\label{sec_integration_CL}
We investigate more specifically the case of a discretely valued field, starting by an analogy with $\bQ_p$. Cluckers and Haskell \cite{cluckers_grothendieck_2001} showed that the Grothendieck group of definable subsets of $\bQ_p$ is trivial, and Hrushovski and Kazhdan \cite{ginzburg_integration_2006} interpreted this result in terms of their integration in their article. Later, Cluckers and Halupczok \cite{cluckers_p-adic_2021} showed that $p$-adic integration is universal, meaning that it induces an isomorphism between the Grothendieck group of definable subsets of $\bQ_p$ with volume forms and a subgroup of $\bQ$. They additionally interpreted this result as meaning that $p$-adic integration is an avatar of Hrushovski-Kazhdan motivic integration over $\bQ_p$. The language considered by Cluckers and Haskell, then Cluckers and Halupczok, is that of Denef-Pas, with an angular component. It is in particular the same as the one used by Cluckers and Loeser \cite{cluckers_constructible_2008} for their motivic integration. However, in the case of $\bQ_p$, finiteness of the residue field allows for the construction of a definable section $\Kb\to\VF$ -- which is not a field morphism, but this does not matter.

Following the same intuition, we will show that the Grothendieck group of definable subsets of a discretely valued field is zero, and that Cluckers-Loeser motivic integration is universal in the correct language. However, this "good language" is not that of Denef-Pas: in the same way that Cluckers and Halupczok used the section of the residue field that was naturally present, we will have to add a section of $\RV$. Note that, by virtue of $\RV$-minimality, this does change the categories over $\VF$ but not over $\RV$. We may see this as a way of precisely quantifying the defect of injectivity of Cluckers-Loeser motivic integration, see also upcoming work by Stout and Vermeulen.

Here $\cL$ is an expansion of $\cL_{\mathrm{DP}}$ the language of Denef-Pas by a possible enrichment of the structure on the residue field $\Kb$. We assume that $\cT$ is a theory of henselian valued fields with zero residue characteristic with $\ac$ an angular component on $\RV$. Finally, we assume that $\cT'$ is an expansion of $\cT$ obtained by adding a section $\sn:\RV^\times\to\VF^\times$, as described in Section \ref{sec_henselien}, and possibly adding parameters.

\subsection{Decomposition of \texorpdfstring{$\RV$}{RV}}
Due to the presence of an angular component, the $\RV$-categories decompose into categories along $\Kb$ and $\Gamma$. We start by writing these decompositions explicitly, first in the absolute case (Lemma \ref{lem_RV_dec_orth}) and then in the relative case (Lemma \ref{lem_RV_dec_orth_rel}), without forgetting to analyse what happens for bounded objects in Lemma \ref{lem_RV_dec_orth_bdd}.

\subsubsection{Absolute case}

\begin{dfn}
Let $\Def_{\Kb}[d]$ be the category whose objects are the $\emptyset$-definable subsets of $(\Kb^\times)^d\times\Kb^m$, and whose morphisms are the $\emptyset$-definable maps.

Let $\Def_\Gamma$ be the category whose objects are $\emptyset$-definable subsets of $\Gamma^m, m\in\bN$, and whose morphisms are $\emptyset$-definable maps.

Let $\mu\Def_\Gamma[d]$ be the category whose objects are pairs $(R,\omega)$, with $R\subset\Gamma^d\times\Gamma^m, m\in\bN$, and $\omega:R\to\Gamma$ being $\emptyset$-definable. The morphisms are $\emptyset$-definable bijections $h:(R,\omega)\to (S,\rho)$ such that, for all $r\in R$, we have $|r|\cdot \omega(r) = |h(r)|\cdot \rho(h(r))$, where $|r|=\prod_{i=1}^d r_i$.
\end{dfn}
\begin{rmk}
We assumed $\emptyset$-definability everywhere in the above. Note that, since all the sets of interest are in $\RV$ and $\cT'$ is $\RV$-minimal, assuming $\dcl_{\cL'}(\emptyset)\cap\VF = \dcl_\cL(\emptyset)\cap\VF$, these definitions are equivalent to the analogous definitions assuming $\cL(\emptyset)$-definability.
\end{rmk}

Note that there are inclusions $\mu\Def_\Gamma[d]\subset\mu\Def_\Gamma[d+1]$, given by $(R,\omega)\mapsto (\{1\}\times R,\omega)$. Those inclusions are equivalences of categories: full faithfulness is clear, and note that we might always add a $1$ at the beginning (shifting the rest to the right), up to changing $\omega$.

In particular, we can denote by $\Kp\mu\Def_\Gamma$ the associated Grothendieck semigroup, which is a semiring (due to case $d=0$).

In the same fashion, the inclusions $\Def_{\Kb}[d]\subset\Def_{\Kb}[d+1]$ are equivalences of categories.

\begin{lem}\label{lem_RV_dec_orth}
There are decompositions
\[\Kp\RV_{\cT'}[d] = \Kp\Def_{\Kb}[d]\otimes_\bN\Kp\Def_\Gamma\]
and
\[\Kp\mu_\Gamma\RV_{\cT'}[d] = \Kp\Def_{\Kb}[d]\otimes_\bN\Kp\mu\Def_\Gamma\,,\]
for all $d\in\bN\cup\{*\}$.
\end{lem}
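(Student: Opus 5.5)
The plan is to construct the maps explicitly and then show they are inverse bijections at the level of isomorphism classes. We use the angular component to identify $\RV^\times$ with $\Kb^\times\times\Gamma$ via $x\mapsto(\ac(x),|x|)$, and $\RV$ with $\Kb\times\Gamma\cup\{0\}$.

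\textbf{Step 1: reduce to pure $\RV$-sets.} An object of $\RV_{\cT'}[d]$ is $R\subset(\RV^\times)^d\times Y\times\RV^m$ with $Y\subset\VF^l$ mapping into $\RV$. Fix an $\emptyset$-definable injection $Y\to\RV^k$. Since $\sn$ is available, its image $\sn(\RV^\times)^k$ is definably in bijection with $Y$. So up to isomorphism $R$ is a subset of $(\RV^\times)^d\times\RV^{k+m}$ whose projection to $(\RV^\times)^d\times Y$ has finite fibres. By $\RV$-minimality (Corollary~\ref{cor_section}), $R$ is then $\cL(\emptyset)$-definable, and so are all morphisms between such sets. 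Translating coordinates through $\ac$ and $|\cdot|$, $R$ becomes an $\cL_{\DP}$-definable subset of $(\Kb^\times)^d\times\Kb^{k+m}\times\Gamma^{d+k+m}$. The zero coordinates are handled by definably partitioning according to which coordinates vanish.

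\textbf{Step 2: orthogonality.} Here we use orthogonality of $\Kb$ and $\Gamma$ in the Denef--Pas language, as in item~\ref{item_prop_rel_bij_Kb_3} of Proposition~\ref{prop_rel_bij_Kb}, which holds for henselian fields of residue characteristic zero with $\ac$ by Pas's quantifier elimination. This makes every definable subset of $\Kb^a\times\Gamma^b$ a finite disjoint union of products $P\times Q$. It also makes every definable bijection between two such sets piecewise a product $h^{\Kb}\times h^\Gamma$, as already used in the proof of Proposition~\ref{prop_rel_bij_Kb}. This yields a surjective semigroup morphism
\[\Kp\Def_{\Kb}[d]\otimes_\bN\Kp\Def_\Gamma\to\Kp\RV_{\cT'}[d],\qquad [P]\otimes[Q]\mapsto[P\times Q].\]
Here $P$ carries the $d$ nonzero residue coordinates. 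The $\Gamma$ part contains the $d$ valuative coordinates, but these are free since $d$ only constrains the residue side. The finiteness condition on fibres is automatic on the $\Kb$ side, because the $\Gamma$-coordinates lie in the tensored factor, which has no finiteness constraint. The map is well defined on the tensor product because it is bi-additive and respects isomorphisms in each factor.

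\textbf{Step 3: injectivity (main obstacle).} We must show that $\sum_i[P_i\times Q_i]=\sum_j[P'_j\times Q'_j]$ forces equality in the tensor product. First, given a bijection $f$ between the two unions, orthogonality lets us refine the domain into finitely many product pieces $P''_k\times Q''_k$ on which $f=h_k^{\Kb}\times h_k^\Gamma$. Each $P_i\times Q_i$ is the union of the pieces contained in it. Next, we make each $P_i$ (resp.\ $Q_i$) a disjoint union of the relevant $P''_k$ (resp.\ $Q''_k$). For this we take a common refinement: the Boolean algebra generated by the $\Kb$-projections of the pieces, and similarly for $\Gamma$. Since a disjoint union of rectangles $\bigsqcup P''_k\times Q''_k$ can be rewritten in terms of the atoms $A_\alpha\times B_\beta$ of the two Boolean algebras, we get $[P_i]\otimes[Q_i]=\sum_{(\alpha,\beta)}[A_\alpha]\otimes[B_\beta]$ in the tensor product. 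The same decomposition applies on the target side through $f$. The care needed is in choosing the atoms so that both sides decompose compatibly; I would refine the atoms on the source side and transport them by the $h_k$.

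\textbf{Volume forms and $d=*$.} With volume forms, $\omega:R\to\Gamma$ depends, after orthogonality, only on the $\Gamma$-part piecewise. Hence $\omega$ and the morphism condition $\omega(\xi)|\xi|=\rho(h(\xi))|h(\xi)|$ pass entirely to the $\Gamma$ factor, giving $\mu\Def_\Gamma$. For $d=*$ we take direct sums and check that multiplication matches, which is immediate since products of products are products.
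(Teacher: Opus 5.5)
Your proof is correct and follows the paper's route: the map $[P]\otimes[Q]\mapsto[P\times Q]$, surjectivity from orthogonality of $\Kb$ and $\Gamma$, and injectivity from orthogonality applied to the graph of a bijection, which you spell out in much more detail than the paper does. One step in Step 3 needs tightening: the transported rectangles $f(A_\alpha\times B_\beta)$ need not refine the given $P'_j\times Q'_j$, so either first refine the pieces so that each maps into a single $P'_j\times Q'_j$, or run your atom argument once more on the target to compare the two rectangle partitions of the same set.
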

\begin{proof}
Let us specify the morphism from right to left: when there is no volume form, it is $[X]\otimes[Y]\mapsto [X\times Y]$; when there are volume forms, it is the same morphism at the level of sets, and for the volume form we compose the one already present over $Y$ with the second projection.

Surjectivity then follows from orthogonality between $\Kb$ and $\Gamma$. On the other hand, injectivity also comes from orthogonality between $\Kb$ and $\Gamma$, applied to the graph of a bijection between two objects of $\RV_{\cT'}[d]$.
\end{proof}

We obtain in particular the following fact, to be compared with the results of Cluckers-Haskell \cite{cluckers_grothendieck_2001}:

\begin{prop}
Assume that the group $\Gamma$ is discrete. For all $d\in\bN$, the following Grothendieck groups cancel:
\[\mathrm K\VF_{\cT'}[d] = \mathrm K\RV_{\cT'}[d] = \mathrm K\Def_\Gamma=0\,.\]
\end{prop}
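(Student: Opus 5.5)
The plan is to prove $\mathrm K\Def_\Gamma=0$ by an Eilenberg swindle on a definable copy of $\bN$ inside $\Gamma$. Then $\mathrm K\RV_{\cT'}[d]=0$ follows from the tensor decomposition of Lemma \ref{lem_RV_dec_orth}, and $\mathrm K\VF_{\cT'}[d]=0$ from Theorem \ref{thm_int_sans} via Corollary \ref{cor_int_hens_1}.

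Since $\Gamma$ is discrete, it has a smallest positive element (written multiplicatively, $\gamma_0<1$ the largest element below $1$), which is $\emptyset$-definable. Consider the $\emptyset$-definable set $N=\{\gamma\in\Gamma:\gamma\leq 1\}$, the nonnegative part in additive notation. Multiplication by $\gamma_0$ is an $\emptyset$-definable bijection $N\to N\setminus\{1\}$. In $\Kp\Def_\Gamma$ this gives $[N]=[N]+[1]$, where $[1]$ is the class of a point. Passing to the Grothendieck group yields $[1]=0$. Since $\mathrm K\Def_\Gamma$ is a ring with unit $[1]$ (products being given by cartesian products), it is the zero ring. Concretely, any object $X$ satisfies $[X]=[X][1]=0$, because $X\times N\simeq X\times N\sqcup X$ by the same shift in the last coordinate. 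The only point I would check carefully is that this works with $\emptyset$-definable morphisms: $\gamma_0$ is $\emptyset$-definable as the maximum of $\{\gamma<1\}$, which exists by discreteness.

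For $\RV$, Lemma \ref{lem_RV_dec_orth} gives $\Kp\RV_{\cT'}[d]\simeq\Kp\Def_{\Kb}[d]\otimes_\bN\Kp\Def_\Gamma$. Groupification commutes with this tensor product: the groupification of $A\otimes_\bN B$ is $\mathrm K(A)\otimes_\bZ\mathrm K(B)$, since groupification is left adjoint and tensors of commutative monoids are generated by elementary tensors. Hence $\mathrm K\RV_{\cT'}[d]=\mathrm K\Def_{\Kb}[d]\otimes_\bZ 0=0$. To avoid this adjunction argument, one can instead argue directly: every object is isomorphic to a finite disjoint union of products $X\times Y$ with $X$ over $\Kb$ and $Y$ over $\Gamma$, and $[X\times Y]+[X\times N]=[X\times(Y\sqcup N)]$. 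Applying the shift relation in the $\Gamma$ factor, as above, gives $[X\times Y]=0$ directly. Here I should make sure that the point $\{1\}\subset\Gamma$ in the extra coordinates, with $\RV$-coordinates lying in $\Gamma$ via $\ac=1$, stays within $\RV_{\cT'}[d]$. This holds because the extra $\Gamma$-coordinates are among the $\RV^m$-coordinates, which carry no finiteness condition in the coarse case $R=\RV^\times$.

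For $\VF$, Theorem \ref{thm_int_sans} applies by Corollary \ref{cor_int_hens_1}, case \ref{cas_RV}. It gives a semigroup isomorphism $\Kp\VF_{\cT'}[d]\simeq\Kp\RV_{\cT'}[\leq d]/\Isp$. Groupifying, the right-hand side is a quotient of $\bigoplus_{d'\leq d}\mathrm K\RV_{\cT'}[d']=0$, so $\mathrm K\VF_{\cT'}[d]=0$. The main obstacle I expect is bookkeeping rather than mathematics. Specifically, one has to check that groupification is compatible with the semigroup quotient by the congruence $\Isp$, and that the swindle bijection is a morphism in the relevant category (it is relative to the $(\RV^\times)^d$ coordinates, so for $\RV$ it is fine). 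For safety, I would simply apply the swindle directly in $\VF_{\cT'}[d]$ as well: for any object $X$, the set $X\times N$, with $N$ placed in $\RV$-coordinates, is isomorphic to $X\times N\sqcup X$. This gives $[X]=0$ independently of the integration theorem.
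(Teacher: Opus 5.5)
Your proof is correct and is essentially the paper's: multiplication by $\gamma_0=\max\Gamma_{<1}$ gives $\Gamma_{\leq 1}\simeq\Gamma_{<1}=\Gamma_{\leq 1}\setminus\{1\}$, so $[1]=0$ and $\mathrm K\Def_\Gamma=0$, and then Lemma~\ref{lem_RV_dec_orth} and Theorem~\ref{thm_int_sans} carry this over to $\RV$ and $\VF$. Your shortcut of running the swindle directly on $X\times\Gamma_{\leq 1}$ inside $\RV_{\cT'}[d]$ or $\VF_{\cT'}[d]$ also works and bypasses both the decomposition lemma and the integration theorem, with one correction: in the official definitions the $\RV^m$-coordinates do carry a finiteness condition, so the extra $\Gamma$-coordinates must be placed in the $Y$-factor via $\sn$ (equivalently, work in the rewritten coarse categories of Section~\ref{ssec_reecriture}).
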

\begin{proof}
Let $\gamma_0 = \max \Gamma_{<1}$. There is a bijection between $\Gamma_{\leq 1}$ and $\Gamma_{< 1}$, given by multiplication by $\gamma_0$. Since $\Gamma_{\leq 1} = \Gamma_{<1}\cup\{1\}$, we find that $[1]=0$ in $\mathrm K\Def_\Gamma$, thus $\mathrm K\Def_\Gamma=0$. The previous lemma implies then that $\mathrm K\RV_{\cT'}[d]=0$, and Theorem \ref{thm_int_sans} yields $\mathrm K\VF_{\cT'}[d]=0$.
\end{proof}

\subsubsection{Relative case}
Let $Z$ be an $\emptyset$-definable set. We have the following relative variants of the categories on $\Kb$:

\begin{dfn}[Categories on $\Kb$]
Let $d,\delta\in\bN$.

Let $\Def_{\Kb,Z}^{\mathrm{rig}}[d]$ be the category whose objects are $\emptyset$-definable subsets of $Z\times(\Kb^\times)^d\times\Kb^m$, and whose morphisms are $\emptyset$-definable bijections commuting to the projections towards $Z$.

Let $\Def_{\Kb,Z}^\delta[d]$ be the category whose objects are the $R\in\Ob\Def_{\Kb,Z}^{\mathrm{rig}}[d]$ such that $\dim_\VF \Supp R \leq\delta$. A morphism $R\to S$ is a partial $\emptyset$-definable map $f:\pi^{-1}(Z_0)\subset R\to S$, where $\pi:R\to Z$ is the structure morphism, such that:
\begin{enumerate}
\item $f$ commutes to the projections to $Z$;
\item $Z_0\subset\Supp R\cap\Supp S$ and $\dim_\VF (\Supp(R\cup S)\setminus Z_0)<\delta$;
\item for all $z\in Z_0$, $f_z:R_z\to S_z$ is a bijection.
\end{enumerate}
We identify two morphisms $f,g:R\to S$ if there exists $Z_0\subset\Supp R$ an $\emptyset$-definable subset such that $\dim_\VF(\Supp(R)\setminus Z_0) < \delta$ and that $f_{|\pi^{-1}(Z_0)} = g_{|\pi^{-1}(Z_0)}$.

Then we define as usual the categories $\Def_{\Kb,Z}^{\mathrm{rig}}[*],\Def_{\Kb,Z}^\delta[*], \Def_{\Kb,Z}[*]$.
\end{dfn}

Similarly, we have relative variants of the categories on $\Gamma$. Since we already saw that, when $\Gamma$ is discrete, the group without volume forms is trivial, we now focus on the case with volume forms.

\begin{dfn}[Categories on $\Gamma$]
Let $d,\delta\in\bN$.

Let $\mu\Def_{\Gamma,Z}^{\mathrm{rig}}[d]$ be the category whose objects are pairs $(R,\omega)$, where $R\subset Z\times\Gamma^d\times\Gamma^m$ and $\omega:R\to\Gamma$ are $\emptyset$-definable. Morphisms are the $\emptyset$-definable bijections $f:(R,\omega)\to (S,\rho)$ commuting to projections towards $Z$ and such that, for all $r\in R$,
\[|r|\omega(r) = |f(r)|\rho(f(r))\,,\]
where, if $r=(z,\gamma,\gamma')\in Z\times\Gamma^d\times\Gamma^m$, we have $|r|=\prod_{i=1}^d\gamma_i$.

Let $\mu\Def_{\Gamma,Z}^\delta[d]$ be the category whose objects are the $(R,\omega)\in\Ob\mu\Def_{\Gamma,Z}^{\mathrm{rig}}[d]$ such that $\dim_\VF\Supp(R)\leq\delta$. A morphism $(R,\omega)\to (S,\rho)$ is a partial $\emptyset$-definable map $f:\pi^{-1}(Z_0)\subset R\to S$, where $\pi:R\to Z$ is the structure morphism, such that:
\begin{enumerate}
\item $f$ commutes to the projections to $Z$;
\item $Z_0\subset \Supp R\cap \Supp S$ et $\dim_\VF(\Supp(R\cup S)\setminus Z_0)<\delta$;
\item for all $z\in Z_0$, $f_z:(R_z,\omega_z)\to (S_z,\rho_z)$ is a morphism of $\mu\Def_{\Gamma,\cT'(z)}[d]$.
\end{enumerate}
\end{dfn}

\begin{rmk}\label{rmk_cat_rel}
If we rewrite the definition of relative categories on $\RV$ to replace the factor "$Y$ mapping into $\RV$" by $\RV$, as we did in Section, then these categories are actually by definition the full subcategories of the corresponding categories in $\RV$ whose objects are either entirely in $\Kb$ or entirely in $\Gamma$, with a trivial measure in the first case.
\end{rmk}

Again, there are inclusions $\mu\Def_ {\Gamma,Z}^\delta[d]\subset\mu\Def_{\Gamma,Z}^\delta[d+1]$ and $\mu\Def_{\Gamma,Z}^{\mathrm{rig}}[d]\subset\mu\Def_{\Gamma,Z}^{\mathrm{rig}}[d+1]$, given by $(R,\omega)\mapsto (\{1\}\times R,\omega)$. These inclusions are once more equivalences of categories.

In particular, we can write $\Kp\mu\Def_{\Gamma,Z}^\delta,\Kp\mu\Def_{\Gamma,Z}^{\mathrm{rig}}$ for the corresponding Grothendieck semigroups. In the rigid case or when $\delta=\dim_\VF Z$, they are semirings.

We define one last object.
\begin{dfn}
Let $\cP^{00}_+(Z)$ be the semiring of maps $Z\to\bN$ generated by the maps $\mathbf 1_{Z'}, Z'\subset Z$ $\emptyset$-definable.
\end{dfn}
In other words, $\cP^{00}_+(Z)$ is the semi-ring of $\emptyset$-definable maps over $Z$ with (bounded) integer values. This semiring acts by product on all Grothendieck semigroups relative to $Z$, rigid or not. Then we have:

\begin{lem}\label{lem_RV_dec_orth_rel}
There are decompositions
\[\Kp\mu_\Gamma\RV_Z^\delta[d] = \Kp\Def_{\Kb,Z}^\delta[d]\otimes_{\cP^{00}_+(Z)}\Kp\mu\Def_{\Gamma,Z}^\delta\]
and
\[\Kp\mu_\Gamma\RV_Z^{\mathrm{rig}}[d] = \Kp\Def_{\Kb,Z}^{\mathrm{rig}}[d]\otimes_{\cP^{00}_+(Z)}\Kp\mu\Def_{\Gamma,Z}^{\mathrm{rig}}\,,\]
for every $d\in\bN\cup\{*\},\delta\in\bN$.
\end{lem}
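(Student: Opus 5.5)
The plan is to mimic the proof of Lemma \ref{lem_RV_dec_orth}, working fibrewise over $Z$ and gluing by compactness, as in Lemma \ref{lem_sans_forme_fibre}. First I will define the comparison map from right to left. Given $R\in\Ob\Def_{\Kb,Z}^{\mathrm{rig}}[d]$ and $(S,\omega)\in\Ob\mu\Def_{\Gamma,Z}^{\mathrm{rig}}$, I send $[R]\otimes[S,\omega]$ to the fibre product $R\times_Z S$. Using the angular component, I view this fibre product as the subset of $Z\times(\RV^\times)^d\times\RV^{m}$ consisting of the elements $\xi\eta$ with $\xi\in R_z$ and $\eta\in S_z$. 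The volume form is $\omega$ pulled back along the second projection. Remark \ref{rmk_cat_rel} shows that this is an object of $\mu_\Gamma\RV_Z^{\mathrm{rig}}[d]$. The map is additive in each variable and $\cP^{00}_+(Z)$-balanced, because $\mathbf 1_{Z'}$ acts on either factor by restriction to $Z'$. It therefore descends to the tensor product over $\cP^{00}_+(Z)$. For the non-rigid version, the support of $R\times_Z S$ is $\Supp R\cap\Supp S$. Morphisms defined away from a set of dimension $<\delta$ are sent to morphisms of the same kind, so the map also passes to $\Kp\mu_\Gamma\RV_Z^\delta[d]$.

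Surjectivity comes next. Take $(X,\omega)$ in $\mu_\Gamma\RV_Z^{\mathrm{rig}}[d]$. Using $\ac$ I identify $X\subset Z\times(\Kb^\times\Gamma)^{\dots}$. Applying the orthogonality hypothesis (item 3 of Proposition \ref{prop_rel_bij_Kb}) to each fibre $X_z$, together with the graph of $\omega_z$, decomposes $X_z$ as a finite disjoint union of products $R_{i,z}\times S_{i,z}$ on which $\omega$ depends only on the $\Gamma$-factor. Compactness bounds the number of pieces uniformly and makes the pieces definable in $z$. After a finite $\emptyset$-definable partition $Z=\bigsqcup Z_j$, the decomposition is uniform, and I obtain $[X,\omega]=\sum_i[R_i]\otimes[S_i,\omega_i]$. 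The bookkeeping of finite-to-one projections and of the $(\RV^\times)^d$ coordinates versus the extra $\RV^m$ coordinates is handled exactly as in the absolute case.

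Injectivity is the main obstacle, since it is a statement about a semimodule tensor product. I will reduce it to the absolute Lemma \ref{lem_RV_dec_orth} applied over each $\cT'(z)$. Suppose $\sum_i[R_i\times_Z S_i]=\sum_j[R'_j\times_Z S'_j]$, witnessed by a $Z$-bijection $h$. Orthogonality applied to the graph of $h$, uniformly in $z$, splits $h$ on finitely many pieces as $h^{\Kb}\times h^\Gamma$. These pieces refine the $R_i,S_i$ into products. Each refinement identity holds in the tensor product by additivity, and each piece of $h$ gives an elementary identity $[A]\otimes[B]=[A']\otimes[B']$ coming from bijections $A\cong A'$ and $B\cong B'$. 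Where these pieces depend on $z$, I use the indicators $\mathbf 1_{Z_j}$ of a finite definable partition of $Z$ to move them across the tensor sign; this is precisely why the tensor product is taken over $\cP^{00}_+(Z)$ rather than over $\bN$. For the non-rigid categories, I first restrict to the definable set $Z_0$ on which the given morphism is defined. The discrepancy lives over a set of dimension $<\delta$, so it vanishes in both sides simultaneously, and the rigid argument then applies.
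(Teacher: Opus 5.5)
Your proposal is correct and takes the paper's route. The paper's proof is just ``same as Lemma \ref{lem_RV_dec_orth}'': the product map with the volume form pulled back from the $\Gamma$-factor, with surjectivity and injectivity both coming from $\Kb$--$\Gamma$ orthogonality, the latter applied to the graph of a bijection. Your fibrewise decomposition glued by compactness, with the indicators $\mathbf 1_{Z_j}$ used to balance over $\cP^{00}_+(Z)$ and the restriction to $Z_0$ in the non-rigid case, is a faithful spelled-out version of that argument.
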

\begin{proof}
Same as Lemma \ref{lem_RV_dec_orth}.
\end{proof}

\subsubsection{Bounded case}
Again, let $Z$ be an $\emptyset$-definable set. Using Remark \ref{rmk_cat_rel}, one defines immediately the subcategories of bounded objects of $\Def_{\Kb}[d],\Def_{\Kb,Z}^{\mathrm{rig}}[d],\Def_{\Kb,Z}^\delta[d],\mu\Def_{\Gamma}[d], \mu\Def_{\Gamma,Z}^{\mathrm{rig}}[d], \mu\Def_{\Gamma,Z}^\delta[d]$ (for $d\in\bN\cup\{*\}, \delta\in\bN$) as the intersections of two full subcategories. In the case of categories on $\Kb$, we see that by definition all objects are bounded, because the condition is on $\Gamma$. For the categories on $\Gamma$, write $\mu\Def_{\Gamma}^\bdd[d], \mu\Def_{\Gamma,Z}^{\bdd,\mathrm{rig}}[d],\mu\Def_{\Gamma,Z}^{\bdd,\delta}[d]$ for the respective bounded variants of $\mu\Def_{\Gamma}[d],\mu\Def_{\Gamma,Z}^{\mathrm{rig}}[d], \mu\Def_{\Gamma,Z}^\delta[d]$. We still have equivalences of categories $\mu\Def_{\Gamma}^\bdd[d]\subset\mu\Def_{\Gamma}^\bdd[d+1]$ (and similarly in the relative case), hence we denote by $\Kp\mu\Def_\Gamma^\bdd$ (resp...) the common Grothendieck semigroup of these categories.

\begin{lem}\label{lem_RV_dec_orth_bdd}
We have decompositions of the Grothendieck semigroups of the categories of bounded objects
\[\Kp\mu_\Gamma\RV_Z^{\bdd,\delta}[d] = \Kp\Def_{\Kb,Z}^{\bdd,\delta}[d]\otimes_{\cP^{00}_+(Z)}\Kp\mu\Def_{\Gamma,Z}^{\bdd,\delta}\]
and
\[\Kp\mu_\Gamma\RV_Z^{\bdd,\mathrm{rig}}[d] = \Kp\Def_{\Kb,Z}^{\mathrm{rig}}[d]\otimes_{\cP^{00}_+(Z)}\Kp\mu\Def_{\Gamma,Z}^{\bdd,\mathrm{rig}}\,,\]
for all $d\in\bN\cup\{*\},\delta\in\bN$.
\end{lem}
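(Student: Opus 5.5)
The plan is to restrict the isomorphisms of Lemma \ref{lem_RV_dec_orth_rel} to the bounded subcategories. The main point is that boundedness of a product $R^{\Kb}\times R^\Gamma$ depends only on its $\Gamma$-factor. Since the categories of bounded objects are full subcategories, their Grothendieck semigroups inject into those of the ambient categories. The map from right to left is the same as before: $[X]\otimes[Y]\mapsto[X\times_Z Y]$, with the volume form pulled back from the $\Gamma$-factor. It is clearly $\cP^{00}_+(Z)$-balanced, and it lands in bounded objects. Indeed, if $r=(a,\gamma)$ with $a\in\Kb$-coordinates (valuation $1$), then $|r|\cdot|\omega(r)|=|\gamma|\cdot|\omega(\gamma)|$, and $|r|$ only involves $\gamma$. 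So both conditions of Definition \ref{dfn_borné} for $(X\times_Z Y,\omega)$ over $z$ reduce to the same conditions for $(Y_z,\omega_z)$, whenever $X_z\neq\emptyset$. In the non-rigid case we work up to a subset of the support of dimension less than $\delta$, exactly as in the definition.

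Next we show this map is well defined and injective. Injectivity would follow from injectivity in Lemma \ref{lem_RV_dec_orth_rel}, but only if the natural map $\Kp\mu\Def^{\bdd}_{\Gamma,Z}\otimes\Kp\Def_{\Kb,Z}\to\Kp\mu\Def_{\Gamma,Z}\otimes\Kp\Def_{\Kb,Z}$ is injective, and tensor products of semimodules do not preserve injections in general. I would therefore argue directly, re-running the proof of Lemma \ref{lem_RV_dec_orth}. Suppose $f$ is a bijection between two objects $\bigsqcup_i X_i\times Y_i$ and $\bigsqcup_j X'_j\times Y'_j$, with the $Y_i, Y'_j$ bounded. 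Orthogonality of $\Kb$ and $\Gamma$, applied to the graph of $f$ (relative to $Z$, by compactness, as in Lemma \ref{lem_RV_dec_orth_rel}), decomposes $f$ into finitely many product pieces $h^{\Kb}_k\times h^\Gamma_k$. Each $\Gamma$-piece $h^\Gamma_k$ is a bijection between definable subsets of some $Y_i$ and some $Y'_j$. Definable subsets of bounded objects are bounded, because both conditions in Definition \ref{dfn_borné} pass to subsets. Hence every intermediate term in the chain of relations witnessing equality in the tensor product lies in the bounded subcategory, and the equality holds in $\Kp\Def_{\Kb,Z}\otimes\Kp\mu\Def^{\bdd}_{\Gamma,Z}$.

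Finally, surjectivity. Let $(R,\omega)$ be a bounded object. By Lemma \ref{lem_RV_dec_orth_rel} it is isomorphic to a finite disjoint union $\bigsqcup_i X_i\times_Z Y_i$, with nonempty fibres $X_{i,z}$ on the relevant supports after a finite definable partition of $Z$. Each piece embeds as a subset of $R$, so it is bounded. By the reduction in the first paragraph, a piece $X_i\times_Z Y_i$ is bounded if and only if $Y_i$ is bounded over the support of $X_i$. After replacing $Y_i$ by its restriction to $\Supp X_i$, which does not change the product, each $Y_i$ is bounded. This gives surjectivity.

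I expect the main obstacle to be the injectivity step. The naive deduction from Lemma \ref{lem_RV_dec_orth_rel} fails because of the tensor-product issue, so one has to go back into the orthogonality argument and check that all intermediate objects stay bounded. In the non-rigid case this also requires tracking the exceptional subsets of small dimension.
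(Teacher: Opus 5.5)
Your proposal is correct and follows the paper's route: the paper's entire proof is the remark that the decomposition is induced by that of Lemma~\ref{lem_RV_dec_orth_rel}, and you restrict that orthogonality decomposition to bounded objects in the same way. Your extra care over injectivity supplies a detail that one-line proof leaves implicit. Restriction alone need not preserve injectivity of a tensor product, so you re-run the orthogonality argument and check that every intermediate $\Gamma$-piece is a definable subset of a bounded object, hence bounded.
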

\begin{proof}
This decomposition is induced by that of Lemma \ref{lem_RV_dec_orth_rel}.
\end{proof}

\begin{rmk}
In fact, in the first tensor product, there is no need for both factors to be taken "up to a set of small dimension", it is enough for one of them to carry the indeterminacy. Similarly, asking for one of the two factors to satisfy the condition that the support has dimension at most $\delta$ suffices. So actually, for all $\delta\in\bN,d\in\bN\cup\{*\}$, we have:
\[\Kp\mu_\Gamma\RV_Z^{\bdd,\delta}[d] = \Kp\Def_{\Kb,Z}^{\bdd,\delta}[d]\otimes_{\cP^{00,\delta}_+(Z)}\Kp\mu\Def_{\Gamma,Z}^{\bdd,\mathrm{rig}}\,.\]
The advantage of this new description is that, since the factor at the right does not depend anymore on $d$ or $\delta$, we can just take direct sum and find:
\[\Kp\mu_\Gamma\RV_Z^\bdd[*] = \Kp\Def_{\Kb,Z}^\bdd[*]\otimes_{\cP^{00}_+(Z)}\Kp\mu\Def_{\Gamma,Z}^{\bdd,\mathrm{rig}}\,.\]
\end{rmk}

\subsection{Presburger sets}
The point here is to give a description of $\Kp\mu\Def_\Gamma^\bdd$ (and its relative rigid variant) closer to what Cluckers and Loeser do, see Lemma \ref{lem_def_mu}. One may see this as a kind of integration on discrete ordered abelian groups: given a definable set with a volume form, we want to associate to it a universal additive invariant with a simpler description than the Grothendieck semigroup.

It is much more convenient in what follows to denote the value group additively. So, while $\Gamma$ will still be the value group embedded in $\RV$ (thus denoted multiplicatively), we will also write $\tGamma$ for the same group seen additively. By the usual conventions, we also reverse the order between $\Gamma$ and $\tGamma$. If $\gamma\in\Gamma$, we denote by $\tgamma$ the corresponding element of $\tGamma$. We also denote by $\tun=\min \tGamma_{>0}$, as well as $\gamma_0\in\Gamma$ the corresponding element.

We change the inclusion we defined above between the categories $\mu\Def_\Gamma[d]$, and rather take $(R,\omega)\mapsto (\{1\}\times R,\omega\cdot \gamma_0^{-1})$ (similarly in the relative case). Clearly this is still an equivalence of categories, and Lemma \ref{lem_RV_dec_orth} still holds.

\begin{dfn}
The \emph{reduced symmetric algebra} on $\tGamma$ is the $\bQ$-algebra $\widehat{\cS}\,\tGamma$ freely generated by the $\tgamma\in\tGamma$, with relations the obvious compatibility with addition and $\tun = 1$.

A map $f:Z\to \widehat{\cS}\,\tGamma$, where $Z$ is an definable set, is \emph{definable} if there exists a partition $Z = \bigsqcup_{i=1}^n Z_i$ of $Z$ into definable subsets, as well as polynomials $f_i\in\widehat{\cS}\,\tGamma[y], i=1,\dots, n$, such that for all $z\in Z_i$ we have $f(z)=f_i(z)$.
\end{dfn}

By \cite{cluckers_definable_2018} Proposition 5.2.1 \& Theorem 5.2.2, we have the following fundamental result:

\begin{thm}
Let $Z$ be a definable set and $(X_z)_{z\in Z}$ a definable family of bounded subsets $\tGamma$ (in the sense that, for all $z\in Z$, there is $\tgamma_z\in Z$ such that $X_z\subset [-\tgamma,\tgamma]^n$, if $X\subset Z\times\tGamma^n$). Then the map $z\in Z\mapsto \# X_z$, which to $z$ associates the hypercardinal of the bounded set $X_z$, is definable.

If $(X'_z)_{z\in Z}$ is another definable family of bounded subsets of $\tGamma$, then there is a definable bijection relative to $Z$ between $(X_z)_z$ and $(X'_z)_z$ if and only if, for all $z\in Z$, we have $\# X_z = \#X'_z$. Moreover, if this is the case and both families are $\emptyset$-definable, then the bijection itself can be taken to be $\emptyset$-definable.
\end{thm}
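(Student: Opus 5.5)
The plan is to reduce everything to Presburger arithmetic on $\tGamma$ and then use the rectilinearization and counting results for $\bZ$-groups, uniformly in the parameter $z$.

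\textbf{Reduction to the value group.} By Proposition \ref{prop_rel_bij_Kb}-type orthogonality and the assumption that the induced structure on $\Gamma$ is that of a pure ordered abelian group, a definable family $(X_z)_{z\in Z}$ of subsets of $\tGamma^n$ is, fibre by fibre, definable in the Presburger language with parameters in $\dcl(z)\cap\tGamma$. Since $\Gamma$ is discrete and the residue characteristic is zero, $\tGamma$ is a $\bZ$-group. By compactness we can then work uniformly: there is a Presburger formula $\phi(x,w)$ and a definable map $z\mapsto w(z)\in\tGamma^k$ with $X_z=\phi(\tGamma,w(z))$. It therefore suffices to prove both assertions for Presburger families parametrised by $w\in\tGamma^k$, and then pull back along $w$.

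\textbf{Definability of the counting function.} I would apply Presburger cell decomposition with parameters (Cluckers' cell decomposition for $\bZ$-groups) to the family $\phi(x,w)$. This partitions the parameter space into finitely many definable pieces and each fibre into cells whose boundaries are linear functions of $(w,x_{<i})$ with rational coefficients, together with congruence conditions. Boundedness of the fibres forces all cells to be bounded in each coordinate. The hypercardinal of a bounded cell is then computed by iterated summation of congruence-restricted intervals. Each summation step sends a polynomial in $\widehat{\cS}\,\tGamma[w]$ to another polynomial in $\widehat{\cS}\,\tGamma[w]$, by Faulhaber-type formulas, which are valid in any $\bZ$-group by transfer from $\bZ$, with the residue classes modulo $N$ handled by a further finite partition of parameter space. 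This gives the piecewise-polynomial description of $z\mapsto\#X_z$, i.e.\ definability in the sense of the definition above.

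\textbf{Bijections from equal counts.} One direction is immediate: a definable bijection relative to $Z$ preserves hypercardinals fibrewise, since $\#$ is invariant under definable bijections by transfer from $\bZ$. For the converse, I would use Cluckers' rectilinearization: after a finite definable partition, each family is in definable bijection, relative to the parameters, with a disjoint union of families of the form $\tGamma_{\geq0}^{j}$-boxes $\prod_i[0,\ell_i(w))$, with $\ell_i$ linear. Sorting these boxes into a canonical normal form indexed by the monomials of the counting polynomial, I would show that two families with the same counting polynomial on a common piece of the partition have normal forms in definable bijection. This uses that distinct monomials in $\widehat{\cS}\,\tGamma[w]$ are linearly independent on any infinite definable piece, and that boxes with equal "volume polynomial" can be matched by explicit linear bijections such as lexicographic enumeration.

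\textbf{Uniformity, gluing, and the main obstacle.} Gluing over the finitely many pieces gives a bijection over the same parameters. For the $\emptyset$-definability claim, I would note that every step above (cell decomposition, rectilinearization, normal form) is canonical given $\emptyset$-definable input, because Presburger arithmetic with the constant $\tun$ has definable Skolem functions; no new parameters are introduced. The step I expect to be the main obstacle is this last matching step. There one must turn the equality $\#X_z=\#X'_z$, which holds only pointwise, into a single uniform normal form. I would first refine the two partitions of $Z$ to a common one on which both counts are given by single polynomials, and then argue coefficient by coefficient, modifying boxes by subtracting or adding lower-degree boxes definably.
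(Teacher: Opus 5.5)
\textbf{The converse direction (equal hypercardinals gives a uniform definable bijection) is announced but not proved, and the tools you name for it do not work.}

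The paper does not prove this statement; it imports it from Cluckers--Halupczok \cite{cluckers_definable_2018} (Proposition 5.2.1 and Theorem 5.2.2). Your first step is sound: you reduce, via $\RV$-minimality, the orthogonality of $\Kb$ and $\Gamma$, and compactness, to $\emptyset$-definable Presburger families $\phi(x,w(z))$. That reduction is the bridge the citation tacitly needs. One correction: $\tGamma\equiv\Z$ is the standing hypothesis of the section, not a consequence of discreteness ($\Z\times\Z$ with the lexicographic order is discrete but is not a $\Z$-group).

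Everything after the reduction is a re-proof of the cited theorem, and its decisive step fails as written:
\begin{itemize}
\item \emph{Lexicographic enumeration is not available.} The map $(x,y)\mapsto x\ell_2+y$ multiplies a variable by a nonstandard side length, so it is not Presburger-definable. Worse, it would identify sets with different invariants: in an ultrapower of $\Z$, $[0,b)\times[0,c)$ and $[0,bc)$ have hypercardinals $b\cdot c$ (degree $2$) and $bc$ (degree $1$) in $\widehat{\cS}\,\tGamma$. The only moves you have are coordinate permutations, congruence splittings and linear cuts.
\item \emph{``One box per monomial'' is not a normal form.} With only those moves, monomial coefficients of the counting polynomial can be negative or non-integral, e.g.\ $\#\{0\le x<y<u\}=\tfrac12u^2-\tfrac12u$.
\item \emph{``Subtracting lower-degree boxes'' is circular.} It presupposes a cancellation law for definable sets up to bijection, which is part of what has to be proved.
\item \emph{Linear independence fails as stated.} Monomials are not linearly independent on every infinite definable piece; take a piece contained in $\{w_1=w_2\}$.
\end{itemize}

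What is missing is a normal form with nonnegative integer multiplicities. One way to get it:
\begin{itemize}
\item Re-parametrise each piece of the parameter region by free coordinates $v\in\tGamma_{\ge0}^k$, i.e.\ decompose it, after congruence splitting, into a disjoint union of sets $c+\bN g_1+\dots+\bN g_k$ with linearly independent periods.
\item Refine so that both families are rectilinearised into boxes whose sides are affine in $w$. On such a piece every side length, being nonnegative and integer-valued, becomes $\sum_m c_mv_m+c_0$ with $c_m\in\bN$.
\item Cut the boxes linearly along these sums, so each family becomes a disjoint union of products $\prod_m[0,v_m)^{\alpha_m}$ with fixed bounded sets.
\item Read off the multiplicities from the counting polynomial. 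This works because equality holds for all $v$, in particular $v\in\bN^k$, and $\widehat{\cS}\,\tGamma$ is a domain of characteristic $0$.
\end{itemize}

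The same ``points of a full-dimensional piece'' argument must replace your appeals to ``transfer from $\Z$''. Hypercardinals and polynomial identities in $\widehat{\cS}\,\tGamma$ are not first-order in Presburger arithmetic. So neither the well-definedness of $\#$ nor the easy direction (invariance of $\#$ under definable bijections) follows by literal transfer.
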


In particular:
\begin{cor}
Let $Z$ be an $\emptyset$-definable set. To any object $(R,\omega)\in\Kp\mu\Def_{\Gamma,Z}^{\bdd,\mathrm{rig}}$, we associate the definable function
\[(\tgamma,z)\in\tGamma\times Z\mapsto \#R_{\tgamma,z}\in\widehat{\cS}\,\tGamma\,,\]
where $R_{\tgamma,z} = \{r\in R_z:|r|\omega(z,r) = \gamma\}$.

This mapping is injective at the level of the Grothendieck semigroup.
\end{cor}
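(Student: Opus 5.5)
We need to show the map $[R,\omega]\mapsto \big((\tgamma,z)\mapsto \#R_{\tgamma,z}\big)$ is well defined on $\Kp\mu\Def_{\Gamma,Z}^{\bdd,\mathrm{rig}}$ and injective there. The plan is to apply the preceding theorem to the family $(R_{\tgamma,z})_{(\tgamma,z)\in\tGamma\times Z}$. Each step consists of checking that this family satisfies the theorem's hypotheses, and that isomorphisms in $\mu\Def_{\Gamma,Z}^{\bdd,\mathrm{rig}}$ correspond exactly to definable bijections relative to $\tGamma\times Z$ between these families.

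\emph{Definability of the map.} Take $(R,\omega)$ a bounded object, with $R\subset Z\times\Gamma^d\times\Gamma^m$. Consider the map $(z,r)\mapsto |r|\omega(z,r)$, read additively in $\tGamma$. It is $\emptyset$-definable, so $R_{\tgamma,z}$ is the fibre of an $\emptyset$-definable subset of $\tGamma\times Z\times\tGamma^{d+m}$. Boundedness (Definition \ref{dfn_borné}, condition (2)) says that each $R_{\tgamma,z}$ is bounded. In the rigid bounded setting this holds for every $z$, or after discarding a small-dimensional set, which does not matter at the rigid level since boundedness is imposed pointwise. The theorem then shows that $(\tgamma,z)\mapsto\#R_{\tgamma,z}$ is a definable function to $\widehat{\cS}\,\tGamma$. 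The map is additive on disjoint unions, since hypercardinals add on disjoint bounded sets.

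\emph{Invariance under isomorphism.} If $f:(R,\omega)\to(S,\rho)$ is a morphism, then $f$ commutes with the projection to $Z$ and satisfies $|r|\omega(r)=|f(r)|\rho(f(r))$. Hence $f$ restricts to bijections $R_{\tgamma,z}\to S_{\tgamma,z}$, and these hypercardinals agree. We also have to handle the change of the equivalence $\mu\Def_{\Gamma,Z}[d]\subset\mu\Def_{\Gamma,Z}[d+1]$, given by $(R,\omega)\mapsto(\{1\}\times R,\omega\gamma_0^{-1})$. This shifts $|r|$ by $\gamma_0$ and $\omega$ by $\gamma_0^{-1}$, so the product $|r|\omega$ is unchanged and the invariant is unchanged. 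The map therefore descends to the Grothendieck semigroup.

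\emph{Injectivity.} This is the main step. Suppose $[R,\omega]$ and $[S,\rho]$ have the same invariant, and after padding assume both lie in the same degree $d$. Apply the second part of the theorem with base $\tGamma\times Z$. Since $\#R_{\tgamma,z}=\#S_{\tgamma,z}$ for all $(\tgamma,z)$, there is an $\emptyset$-definable bijection $h$ between the families, relative to $\tGamma\times Z$. Forgetting the $\tGamma$ coordinate, $h$ becomes a bijection $R\to S$ over $Z$ that preserves $|r|\omega(r)$, which is exactly a morphism of $\mu\Def_{\Gamma,Z}^{\mathrm{rig}}[d]$. So $[R,\omega]=[S,\rho]$.

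Two subtleties remain. First, we only know equality of classes in the Grothendieck semigroup, which a priori allows cancellation-type relations. This is harmless, because the invariant is additive and any equality of classes is already witnessed by a single bijection. Second, $\emptyset$-definability of $h$ relies on the ``moreover'' clause of the theorem, and on $\RV$-minimality, which ensures that $\cL'$- and $\cL$-definable subsets of $\Gamma$-powers coincide. This last point, making sure the theorem as cited applies uniformly with the $\omega$-weighted parametrisation and $\emptyset$-definably, is the step I expect to be most delicate.
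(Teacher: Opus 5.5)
Your overall route is the paper's: the corollary is stated as an immediate consequence of the cited hypercardinality theorem, applied over the base $\tGamma\times Z$. A bijection of the families $(R_{\tgamma,z})$, $(S_{\tgamma,z})$ relative to $\tGamma\times Z$ is exactly a morphism of $\mu\Def_{\Gamma,Z}^{\mathrm{rig}}[d]$. Within a fixed degree $d$ your argument is fine. The step that fails is the compatibility with the change of degree.

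You say the inclusion $(R,\omega)\mapsto(\{1\}\times R,\omega\gamma_0^{-1})$ ``shifts $|r|$ by $\gamma_0$''. It does not. The added coordinate is $1\in\Gamma$, so $|(1,r)|=|r|$, and the product becomes $|r|\,\omega(r)\,\gamma_0^{-1}$. The fibres are therefore re-indexed: the new fibre at $\gamma$ is in bijection with $R_{\gamma\gamma_0,z}$. Two consequences follow.
\begin{itemize}
\item The literal formula, evaluated in each object's own degree, is not constant on classes of the common Grothendieck semigroup.
\item Your ``pad to a common degree'' injectivity step breaks for objects of different degrees.
\end{itemize}
Concretely, take $Z$ a point.
\begin{itemize}
\item $(\{*\},\gamma_0)$ in degree $0$ and $(\{1\},1)$ in degree $1$ are the same class, since the second is the image of the first. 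Yet the literal formula gives them different functions, so well-definedness fails.
\item $(\{*\},1)$ in degree $0$ and $(\{1\},1)$ in degree $1$ get the same function. They are different classes, because a degree-$0$ morphism must preserve $\omega$ and $\gamma_0\neq 1$. So injectivity fails.
\end{itemize}

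The printed corollary leaves this normalization implicit, and your proposal asserts a false identity instead of supplying it. The repair is easy. Evaluate the invariant on degree-$0$ representatives, which exist because the inclusions are equivalences and on which $|r|=1$. Equivalently, for an object of degree $d$, count $\{r\in R_z:|r|\,\omega(z,r)=\gamma\gamma_0^{-d}\}$. This normalized count is invariant under the modified inclusions, since $|r|\omega\gamma_0^{-1}=\gamma\gamma_0^{-(d+1)}$ if and only if $|r|\omega=\gamma\gamma_0^{-d}$. It is also the normalization behind $\mu([1]_d)=\bbL^{-d}$ in Lemma~\ref{lem_def_mu}. With it, your argument goes through unchanged after padding both objects to a common degree.
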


Actually, the image of this morphism enjoys a very nice description, which we now give. This is very similar to \cite{cluckers_constructible_2008}, 4.2, from where we borrowed notations.

\begin{dfn}
Let $Z$ be an $\emptyset$-definable set. Let $\cP_+(Z)$ be the subsemiring of maps $Z\to\widehat{\cS}\,\tGamma [(\bbL^{\tgamma})_{\tgamma\in\tGamma},(\frac{1}{1-\bbL^{-i}})_{i\in\bN^*}]$ generated by:
\begin{enumerate}
\item the constant maps equal to $\frac{1}{1-\bbL^{-i}}, i\in\bN^*$;
\item the maps $\bbL^{\widetilde\alpha}$ for every $\emptyset$-definable map $\widetilde\alpha:Z\to\tGamma$;
\item the maps $\widetilde\alpha:z\mapsto \widetilde\alpha(z)$ for every $\emptyset$-definable map $\widetilde\alpha:Z\to\tGamma_{\geq 0}$.
\end{enumerate}

Let $\cP^0_+(Z)$ be the subsemiring of maps $Z\to\widehat{\cS}\,\tGamma[\bbL]$ generated by:
\begin{enumerate}
\item the constant map equal to $\bbL-1$;
\item the maps $\mathbf 1_{Z'}$ for every $\emptyset$-definable subset $Z'\subset Z$.
\end{enumerate}
\end{dfn}

\begin{rmk}
We may see $\cP_+(Z)$ (and $\cP^0_+(Z)$) as a subsemiring of definable functions $Z\times\tGamma\to\widehat{\cS}\,\tGamma$, via the developments in (nonstandard) Laurent series in $\bbL^{-1}$, the image of $(z,\tgamma)$ being the coefficient of $\bbL^{-\tgamma}$.

Indeed, for the last two generators, this is clear. For $\frac{1}{1-\bbL^{-i}}$, the standard development in power series is $\sum_{n\equiv 0[i], n\geq 0}\bbL^{-n}$. But this clearly has a nonstandard analogue, as we wanted.
\end{rmk}

\begin{lem}\label{lem_def_mu}
For every $[R,\omega]\in\Kp\mu\Def_{\Gamma,Z}^{\bdd,\mathrm{rig}}$, the morphism
\[z\in Z\mapsto \sum_{\tgamma\in\Gamma}\#R_{z,\tgamma}\bbL^{-\tgamma}\in\widehat{\cS}\,\tGamma\llbracket\bbL^{-1}\rrbracket[\bbL]\]
induces a semiring isomorphism
\[\mu:\Kp\mu\Def_{\Gamma,Z}^{\bdd,\mathrm{rig}}\to\cP_+(Z)\,.\]
\end{lem}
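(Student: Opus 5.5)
We will prove the statement in four steps: the map is well defined and additive, it lands in $\cP_+(Z)$, it is surjective onto $\cP_+(Z)$, and it is injective.

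\emph{Well-definedness, additivity and multiplicativity.} For a bounded object $(R,\omega)$ and $z\in Z$, boundedness says two things. The set of values $\gamma$ with $R_{z,\gamma}\neq\emptyset$ is bounded from one side, so the formal sum is a Laurent series in $\bbL^{-1}$ with finitely many positive powers. Each $R_{z,\gamma}$ is bounded in $\Gamma$, so its hypercardinal $\#R_{z,\gamma}$ makes sense by the fundamental result quoted above. A morphism in $\mu\Def^{\bdd,\mathrm{rig}}_{\Gamma,Z}$ preserves $|r|\omega(r)$ and commutes with the projection to $Z$, hence restricts to bijections $R_{z,\gamma}\to S_{z,\gamma}$. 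Therefore $\mu$ depends only on the class of $(R,\omega)$. Additivity under disjoint union is immediate. For products, the fibre of $R\times S$ at $\gamma$ is $\bigsqcup_{\gamma_1\gamma_2=\gamma}R_{z,\gamma_1}\times S_{z,\gamma_2}$, so $\mu$ is multiplicative provided hypercardinals are additive and multiplicative, i.e.\ provided we have a discrete Fubini for definable bounded families. We take this from \cite{cluckers_definable_2018}. Finally, $\mu$ respects the normalisation of the new inclusions $(R,\omega)\mapsto(\{1\}\times R,\omega\gamma_0^{-1})$, because the factor $\gamma_0^{-1}$ was chosen so that the weight is unchanged.

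\emph{Image contained in $\cP_+(Z)$.} We reduce by a definable cell decomposition in Presburger arithmetic (relative to $Z$, using the quantifier elimination of Cluckers–Halupczok \cite{cluckers_quantifier_2011} together with the discreteness of $\tGamma$) to cells on which $\tgamma:=\widetilde{|r|\omega}$ is a linear function. Summing over one coordinate at a time, each step computes either a finite geometric sum $\sum_{k=a}^{b}\bbL^{-k}$ with definable endpoints, or an infinite sum bounded on one side. The latter produces $\bbL^{\widetilde\alpha}$ times rational functions in $\bbL^{-1}$ with denominators $1-\bbL^{-i}$, the congruence conditions accounting for the $i$. It can also produce factors polynomial in definable $\widetilde\alpha$, coming from counting points. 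Finite geometric sums need care, since $\bbL^{-a}-\bbL^{-b-1}$ has a minus sign. We avoid subtraction by first using the boundedness condition to rewrite each cell so that every sum is either one-sided infinite or counts a fibre of constant weight. The constant-weight case yields $\#R_{z,\gamma}\bbL^{-\gamma}$, where $\#$ is a polynomial in the $\widetilde\alpha$'s. This is exactly the computation of \cite{cluckers_constructible_2008}, Section 4.5, and we will follow it.

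\emph{Surjectivity and injectivity.} For surjectivity, each generator is realised by an explicit bounded object. The constant $\frac1{1-\bbL^{-i}}$ is given by $\{\tgamma\ge 0,\ \tgamma\equiv0\ [i]\}$ with $\omega$ adjusted so the weight is $\tgamma$. The function $\bbL^{\widetilde\alpha}$ is given by the point $Z$ with weight $-\widetilde\alpha$. The function $\widetilde\alpha$ is given by the set $\{0\le\tgamma<\widetilde\alpha(z)\}$ with constant weight $0$. Since $\mu$ is a semiring morphism, this covers all of $\cP_+(Z)$. For injectivity, suppose $\mu(R,\omega)=\mu(S,\rho)$. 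Comparing coefficients gives $\#R_{z,\gamma}=\#S_{z,\gamma}$ for all $(z,\gamma)$. By the quoted theorem applied to the families indexed by $Z\times\Gamma$, there is an $\emptyset$-definable bijection relative to $Z\times\Gamma$, and this is a morphism in the rigid category. This step is essentially the preceding corollary.

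We expect the main obstacle to be the image computation: showing that the Laurent series land in the semiring $\cP_+(Z)$ generated positively, without subtraction, uniformly in $z$ and with nonstandard $\tGamma$.
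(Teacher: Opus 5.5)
Your route is the paper's: injectivity from the hypercardinal corollary, and containment in $\cP_+(Z)$ via Presburger cell decomposition as in Cluckers--Loeser. You also spell out surjectivity by realising the three kinds of generators, which the paper leaves implicit; that part is fine. The genuine gap is exactly where you anticipated trouble. You claim that boundedness lets you rewrite every cell so that each sum is either one-sided infinite or of constant weight. This is false, and no argument can repair it: in the relative case the image of $\mu$ is not contained in $\cP_+(Z)$ as defined, i.e.\ in the semiring \emph{generated} by the listed functions, without subtraction.

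Here is a counterexample. Take $Z=\{z\in\VF:0<|z|\le 1\}$ and $n(z)=\widetilde{|z|}\in\tGamma_{\geq 0}$. Consider the degree-$0$ object $R=\{(z,\gamma)\in Z\times\Gamma:0\le\tgamma<n(z)\}$ with $\omega(z,\gamma)=\gamma$. It is bounded: all weights satisfy $\tgamma\geq 0$, and each weight fibre $R_{z,\gamma}$ is a singleton or empty. Its image is
\[\mu(R,\omega)(z)=\sum_{k=0}^{n(z)-1}\bbL^{-k}=\frac{1-\bbL^{-n(z)}}{1-\bbL^{-1}}\,.\]
Work in a model with value group $\Z$, so that all coefficients are rational.
\begin{itemize}
\item At every point, each generator of $\cP_+(Z)$ has a Laurent expansion in $\bbL^{-1}$ with nonnegative coefficients, so sums and products never cancel.
\item A product involving some $\frac{1}{1-\bbL^{-i}}$ is either $0$ or has infinite support, and every other product is a single monomial.
\item Hence a fixed expression with $T$ summands has at most $T$ nonzero coefficients at any point where its value is a Laurent polynomial.
\item But $\mu(R,\omega)(z)$ has $n(z)$ nonzero coefficients, and $n(z)$ is unbounded on $Z$.
\end{itemize}
The paper's two-line proof, which simply invokes Presburger cell decomposition, passes over the same point.

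What your outline does prove requires allowing differences in the containment step. The Cluckers--Loeser summation over cells produces terms like $\sum_{k=a}^{b}\bbL^{-k}=\frac{\bbL^{-a}}{1-\bbL^{-1}}-\frac{\bbL^{-b-1}}{1-\bbL^{-1}}$, so it lands in the ring $\cP(Z)$ of differences of elements of $\cP_+(Z)$. You then get that $\mu$ is an injective semiring morphism with $\cP_+(Z)\subseteq\mathrm{im}\,\mu\subseteq\cP(Z)$. Since $\cP(Z)$ is the groupification of $\cP_+(Z)$, this gives $\mathrm K\mu\Def^{\bdd,\mathrm{rig}}_{\Gamma,Z}\cong\cP(Z)$. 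That is what the later groupified description of $\sC^\vol(Z)$ actually relies on.

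A semiring isomorphism needs a different target. One option is to adjoin the finite geometric sums $\sum_{0\le k<\widetilde\alpha}\bbL^{-k}$ to the generators. Another is a cone of coefficientwise nonnegative elements of $\cP(Z)$. Cluckers--Loeser's positivity cone does not work: it contains the constant $\bbL-1$, which is not in the image of $\mu$ because its coefficient of $\bbL^{0}$ is $-1$. For any new target, both containment and surjectivity must be re-proved; hitting the original generators no longer suffices.

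A smaller slip: the inclusion $(R,\omega)\mapsto(\{1\}\times R,\omega\gamma_0^{-1})$ does not leave the weight $|r|\omega(r)$ unchanged; it multiplies it by $\gamma_0^{-1}$. So the formula must be read on degree-$0$ representatives, or equivalently carry an extra factor $\bbL^{-d}$ in degree $d$. This is what gives $\mu([1]_d)=\bbL^{-d}$.
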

\begin{proof}
We have already seen that the first morphism is injective. The fact that $\mu$ has its image inside $\cP_+(Z)$ is a consequence of the cell decomposition theorem for Presburger sets, proved by Cluckers \cite{cluckers_presburger_2003}. See also the construction of Cluckers-Loeser motivic integration \cite{cluckers_constructible_2008}.
\end{proof}
\begin{rmk}
The reason why we changed the previous inclusion is that the object $R=\{1\}$ endowed with $\omega=1$, in dimension $d$, is supposed to correspond to the ball $(1+\mathfrak m)^d$, with volume $q^{-d}$, where $q$ is the cardinal of the residue field. By making these choices, we do find $\mu([1]_d) = \bbL^{-d}$.
\end{rmk}

Actually, this isomorphism is itself a kind of integration. More precisely, let $(R,\omega)\in\Ob\mu\Def_{\Gamma,Z}^{\bdd,\mathrm{rig}}[0]$ be an object, say $R\subset Z\times\Gamma^m$. Let $\pi:Z\times\Gamma^m\to Z$ be the projection map, and let $f = \mathbf 1_Z \bbL^{-\omega}$ be the constructible motivic function (in the sense of \cite{cluckers_constructible_2008}) associated to $(R,\omega)$. Then $\mu([R,\omega]) = \pi_! f$, the morphism $\pi_!$ being taken in the sense of Cluckers-Loeser.

So we may see the isomorphism $\mu$ as an isomorphism analogous to Hrushovski-Kazhdan motivic integration, \emph{i.e.} an isomorphism between a more geometric side ($\emptyset$-definable subsets of $\Gamma$) and a side having a simpler algebraic description (the semiring $\cP_+(Z)$).

\subsection{Cluckers-Loeser constructible motivic Functions}
Now we go back to $\RV$ and constructible motivic functions (or Functions).Let $Z$ be an $\emptyset$-definable set. We will make use of the previous description to connect the objects we have used so far to those of Cluckers-Loeser -- or a least, to their closest analogue in $\cT'$. We will connect them to Cluckers-Loeser objects as well (for $\cT$) when $Z$ is $\cL(\emptyset)$-definable.

\begin{lem}\label{lem_RV_dec_1}
There is a semiring isomorphism
\[\sC_+^\bdd(Z) = (\Kp\Def_{\Kb,Z}[*]\otimes_{\cP_+^{00}(Z)}\cP_+(Z))/([1]_1\otimes 1, (\bbL-[1]_1)\otimes \frac{\bbL^{-1}}{1-\bbL^{-1}})\,.\]
\end{lem}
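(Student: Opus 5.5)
The plan is to combine the orthogonal decomposition of bounded rigid objects with the Presburger isomorphism $\mu$, and then to track what the congruence $\Isp^{Z,\mu,\mathrm{rig}}$ becomes under these identifications. By definition, $\sC_+^\bdd(Z)=\Kp\mu_\Gamma\RV_Z^{\bdd,\mathrm{rig}}[*]/\Isp^{Z,\mu,\mathrm{rig}}$. Taking the direct sum over $d$ of the second decomposition of Lemma \ref{lem_RV_dec_orth_bdd} gives
\[\Kp\mu_\Gamma\RV_Z^{\bdd,\mathrm{rig}}[*]\cong\Kp\Def_{\Kb,Z}^{\mathrm{rig}}[*]\otimes_{\cP^{00}_+(Z)}\Kp\mu\Def_{\Gamma,Z}^{\bdd,\mathrm{rig}},\]
where the rigid $\Kb$-categories coincide with the unadorned $\Def_{\Kb,Z}[*]$ appearing in the statement. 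Lemma \ref{lem_def_mu} then replaces the right-hand factor by $\cP_+(Z)$. One point to check is that the action of $\cP^{00}_+(Z)$ on $\Kp\mu\Def_{\Gamma,Z}^{\bdd,\mathrm{rig}}$, namely multiplication by indicators $\mathbf 1_{Z'}$, is carried by $\mu$ to the obvious action on $\cP_+(Z)$. This holds because $\mu$ is computed fibrewise over $Z$. We thus obtain an isomorphism of $\Kp\mu_\Gamma\RV_Z^{\bdd,\mathrm{rig}}[*]$ with $\Kp\Def_{\Kb,Z}[*]\otimes_{\cP^{00}_+(Z)}\cP_+(Z)$, and it remains to compute the image of the congruence.

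Next we identify the generator of $\Isp^{Z,\mu,\mathrm{rig}}$, the pair $([1]_1,[\RV_{<1}]_1)$, in these terms. The class $[1]_1$ is the point $\{1\}\subset\RV^\times$ in dimension $1$. Since $\ac(1)=1$ and $|1|=1$, it decomposes as $[\{1\}]_1\otimes[\{1\},\omega]$. Because of the modified inclusion $\mu\Def_\Gamma[d]\subset\mu\Def_\Gamma[d+1]$ (with the factor $\gamma_0^{-1}$), $\mu$ sends the $\Gamma$-part to $\bbL^{-1}$. The $\Kb$-part in dimension $1$ is $[1]_1$, which via the inclusions $\Def_{\Kb}[0]\subset\Def_{\Kb}[1]$ is the class denoted $[1]_1$ in the statement. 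Next, $\RV_{<1}$ is the disjoint union of $\{0\}$ and the points of $\RV^\times$ with $|r|<1$. Using $\ac$, the nonzero part splits as $\Kb^\times\times\Gamma_{<1}$. The class of $\Kb^\times$ in dimension $1$ is $\bbL-[1]_1$, after writing $[\Kb]=\bbL$ and removing the zero, with the dimension shift handled through the inclusions. The class of $\Gamma_{<1}$ with the normalised form has $\mu$-image $\sum_{n\ge1}\bbL^{-n}=\frac{\bbL^{-1}}{1-\bbL^{-1}}$. The zero point carries a dimension-$0$ contribution; I expect it to be absorbed because the relation in the statement is written with the $\Kb$-factor $[1]_1\otimes1$, and I would verify this bookkeeping carefully. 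Applying the semiring congruence generated by this one relation to both sides then gives exactly the quotient in the statement. Since all identifications are fibrewise over $Z$, Lemma \ref{lem_sans_forme_fibre} and its volume analogue guarantee that the generated congruences correspond.

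The main obstacle is this computation of the generator. The normalisations must match: the rescaled $\Gamma$-inclusion, the grading shift of $[\Kb^\times]$ between $\Def_{\Kb}[0]$ and $\Def_{\Kb}[1]$, and the role of the zero of $\RV$, which has to be tracked so that $[\RV_{<1}]_1$ decomposes precisely as $(\bbL-[1]_1)\otimes\frac{\bbL^{-1}}{1-\bbL^{-1}}$ plus the correct correction. I would settle this first in the absolute case $Z=\{*\}$ by an explicit count of hypercardinalities. Then I would check that the congruence generated in the tensor product coincides with the image of $\Isp^{Z,\mu,\mathrm{rig}}$, using that tensoring is compatible with congruences generated by elements of the form $x\otimes 1$.
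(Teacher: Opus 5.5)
Your overall route is the paper's: the orthogonal decomposition of Lemma \ref{lem_RV_dec_orth_bdd}, then the Presburger isomorphism $\mu$ of Lemma \ref{lem_def_mu} on the $\Gamma$-factor, then transporting the generator of $\Isp^{Z,\mu,\mathrm{rig}}$. That last computation is essentially the whole content of the proof, and as you carry it out it produces the wrong relation.

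\textbf{First problem: there is no zero point.} With volume forms the generator is $([1]_1,[\RV^\times_{<1}]_1)$. The notation $[\RV_{<1}]_1$ in the relative definition is shorthand for this, as in the absolute definition of $\Isp^{\cT',\mu}$. In any case, an object of $\mu_\Gamma\RV^{\mathrm{rig}}_Z[1]$ lives over $(\RV^\times)^1$. The $[1]_0$ summand belongs only to the congruence without volume forms, and ``absorbing'' it is not an argument.

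\textbf{Second problem: your normalisation is inconsistent.} You rightly give the $\Gamma$-part of $[1]_1$ the value $\bbL^{-1}$, because of the rescaled inclusion $\mu\Def_\Gamma[0]\subset\mu\Def_\Gamma[1]$. But $(\Gamma_{<1},1)$ is also a dimension-$1$ object and receives the same extra factor. Its image is therefore $\bbL^{-1}\sum_{n\geq 1}\bbL^{-n}=\frac{\bbL^{-2}}{1-\bbL^{-1}}$, not $\frac{\bbL^{-1}}{1-\bbL^{-1}}$.

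With your values the generator would be $([1]_1\otimes\bbL^{-1},(\bbL-[1]_1)\otimes\frac{\bbL^{-1}}{1-\bbL^{-1}})$. This is equivalent to $([1]_1\otimes 1,(\bbL-[1]_1)\otimes\frac{1}{1-\bbL^{-1}})$, which differs from the statement by a factor $\bbL$ in the second component. So your closing claim that it ``gives exactly the quotient in the statement'' is false.

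The correct image is $([1]_1\otimes\bbL^{-1},(\bbL-[1]_1)\otimes\frac{\bbL^{-2}}{1-\bbL^{-1}})$. You are also missing the final step the paper uses: multiply by $1\otimes\bbL$. This is a unit, since $\bbL^{\pm 1}=\bbL^{\pm\tun}\in\cP_+(Z)$ (because $\tun$ is $\emptyset$-definable). A semiring congruence generated by $(a,b)$ coincides with the one generated by $(ua,ub)$ for any unit $u$. This yields exactly $([1]_1\otimes 1,(\bbL-[1]_1)\otimes\frac{\bbL^{-1}}{1-\bbL^{-1}})$.
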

\begin{proof}
According to Lemma \ref{lem_RV_dec_orth_rel}, the only thing left to understand is the quotient by the congruence $\mu\Isp$. The congruence $\Isp^{Z,\mu,\mathrm{rig}}$ is generated by $([1]_1,[\RV_{<1}])$. The element $[1]_1$ on the left hand side corresponds to $[1]_1\otimes \bbL^{-1}$ on the right, while $[\RV_{<1}]_1$ on the left corresponds to $[\bG_m]_1\otimes\frac{\bbL^{-2}}{1-\bbL^{-1}}$ on the right. In other words, $\Isp^{Z,\mu,\mathrm{rig}}$ is generated by $([1]_1\otimes \bbL^{-1},(\bbL-[1]_1)\otimes\frac{\bbL^{-2}}{1-\bbL^{-1}})$. Let us multiply by the invertible element $1\otimes\bbL$; we rewrite the generator as $([1]_1\otimes 1, (\bbL-[1]_1)\otimes \frac{1}{\bbL^{-1}}{1-\bbL^{-1}}$, as we wanted.
\end{proof}

As one can see, this description is little satisfying. In order to get a more elegant one, we will do two different things:
\begin{itemize}
\item go to volume semigroups, \emph{i.e.} normalise the motivic measure so that $\int[\cO]_1=1$;
\item take the groupification for all objects and morphisms.
\end{itemize}

Note that, by construction, $\int[\cO]_1 = [\RV_{\leq 1}]_1 = [1]_1\otimes 1$. So for the equality $\int[\cO]_1=1$ to hold, the relation we have to ask for is $[1]_1=1$ in $\Kp\Def_{\Kb,Z}[*]$.

\begin{dfn}\label{def_ideal_volume}
Denote by $\tun$ the pair $(X,\widetilde\omega)=(\{*\},\tun)$ in $\Ob\RV_Z^{\mathrm{rig}}[0]$. Let $\Isp^{Z,\vol,\mathrm{rig}}$ be the congruence on $\Kp\mu_\Gamma\RV_{Z}^{\bdd,\mathrm{rig}}[*]$ generated by $([1]_1,[\RV_{<1}]_1)$ et $([1]_1,\tun)$. Also let $\sC_+^{\vol}(Z)$ be the quotient of $\Kp\mu_\Gamma\RV_Z^{\bdd,\mathrm{rig}}[*]$ by $\Isp^{Z,\vol,\mathrm{rig}}$.

Let $\Def_{\Kb}$ be the colimit category of the natural inclusions $\Def_{\Kb}[n]\subset \Def_{\Kb}[n+1]$.
\end{dfn}

The decomposition of $\RV$ now takes a nicer form:

\begin{lem}\label{lem_RV_dec_2}
Let $J$ be the congruence on $\Kp\Def_{\Kb}[*]$ generated by $([1]_1,1)$. There is a canonical isomorphism
\[\Kp\Def_{\Kb}[*]/J = \Kp\Def_{\Kb}\,.\]
In particular, there is an isomorphism
\[\sC_+^\vol(Z) = (\Kp\Def_{\Kb,Z}\otimes_{\cP^{00}_+(Z)}\cP_+(Z))/(1-(\bbL-1)\otimes\frac{\bbL^{-1}}{1-\bbL^{-1}})\,.\]
\end{lem}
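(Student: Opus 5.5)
The plan has two parts: first identify $\Kp\Def_{\Kb}[*]/J$ with $\Kp\Def_{\Kb}$, then feed this into the presentation of $\sC_+^\bdd(Z)$ given by Lemma \ref{lem_RV_dec_1}.

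For the first part, I define a semiring morphism $\Phi:\Kp\Def_{\Kb}[*]\to\Kp\Def_{\Kb}$ which sends the class of $X\subset(\Kb^\times)^d\times\Kb^m$ in degree $d$ to its class in the colimit. The inclusions $\Def_{\Kb}[d]\subset\Def_{\Kb}[d+1]$ are given by $X\mapsto\{1\}\times X$, so $[1]_1$ and $[1]_0=1$ have the same image, and $\Phi$ factors through $J$. Surjectivity is immediate, since every object of the colimit already lives in some degree $d$. For injectivity, suppose $X\in\Def_{\Kb}[d]$ and $X'\in\Def_{\Kb}[d']$ with $d\le d'$ become isomorphic in the colimit. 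Then $\{1\}^{d'-d}\times X\simeq X'$ in $\Def_{\Kb}[d']$, so in $\Kp\Def_{\Kb}[*]$ we have $[X']=[1]_1^{d'-d}[X]$, which is congruent to $[X]$ modulo $J$. Since $\Kp\Def_{\Kb}$ is additive on disjoint unions, this argument on generators extends to sums: an equality of sums in the colimit is witnessed by a single bijection in some large degree, and we then pad every summand by powers of $[1]_1$. The only subtlety I expect here is the bookkeeping of degrees for sums of objects in different degrees. I handle it by padding everything to a common top degree and using that $J$ is a semiring congruence, hence stable under multiplication by $[1]_1$. The relative version over $Z$ goes the same way, fibrewise and by compactness as in Lemma \ref{lem_sans_forme_fibre}.

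For the second part, I use Lemma \ref{lem_RV_dec_1}. This gives $\sC_+^\bdd(Z)$ as the tensor product $\Kp\Def_{\Kb,Z}[*]\otimes_{\cP^{00}_+(Z)}\cP_+(Z)$, quotiented by the relation $([1]_1\otimes1,(\bbL-[1]_1)\otimes\frac{\bbL^{-1}}{1-\bbL^{-1}})$. By Definition \ref{def_ideal_volume}, $\sC_+^\vol(Z)$ is the further quotient by $([1]_1,\tun)$. I then need to identify what $\tun=(\{*\},\tun)$ corresponds to under the decomposition of Lemma \ref{lem_RV_dec_orth_bdd} combined with $\mu$ of Lemma \ref{lem_def_mu}. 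Given the modified inclusion $(R,\omega)\mapsto(\{1\}\times R,\omega\gamma_0^{-1})$, the class $[1]_1$ becomes $[1]_1\otimes\bbL^{-1}$ and the degree-$0$ object $\tun$ becomes $1\otimes\bbL^{-1}$. The extra relation is therefore $[1]_1\otimes\bbL^{-1}\sim1\otimes\bbL^{-1}$. After multiplying by the unit $1\otimes\bbL$, this is exactly the relation $[1]_1\otimes1\sim1\otimes1$, i.e.\ the congruence $J$ tensored up. So the combined quotient is $(\Kp\Def_{\Kb,Z}[*]/J)\otimes\cP_+(Z)$, and by the first part this is $\Kp\Def_{\Kb,Z}\otimes\cP_+(Z)$. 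Substituting $[1]_1=1$ into the remaining generator turns it into $(1,(\bbL-1)\otimes\frac{\bbL^{-1}}{1-\bbL^{-1}})$, which is the stated relation.

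The main obstacle is justifying that the quotient commutes with the tensor product, i.e.\ that $(A/J)\otimes_{\cP^{00}_+}B$ equals $(A\otimes B)/(J\otimes1)$ for semimodules, where congruences rather than ideals are involved. I would prove this directly from the universal property of tensor products of commutative semirings: a morphism out of $A\otimes B$ that identifies $[1]_1\otimes1$ with $1\otimes1$ factors through $A/J$ in the first variable, because $J$ is generated by that single pair. I would also check carefully the identification of $\tun$ with $1\otimes\bbL^{-1}$, in particular its sign convention.
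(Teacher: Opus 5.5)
Your proposal is correct and follows the paper's proof. For the first part it is exactly the paper's argument: the canonical morphism kills $J$ and is obviously surjective, and injectivity comes from padding every homogeneous component by powers of $[1]_1$ up to a common top degree and then using full faithfulness of the inclusions. For the second part you make explicit what the paper dismisses as immediate from Lemma \ref{lem_RV_dec_1}. Your identifications of $[1]_1$ with $[1]_1\otimes\bbL^{-1}$ and of $\tun$ with $1\otimes\bbL^{-1}$ are right under the paper's conventions (Lemma \ref{lem_def_mu} and the modified inclusion), and $\bbL$ is indeed a unit in $\cP_+(Z)$. The universal-property argument for commuting the quotient by $J$ with the tensor product is a sound way to fill in that step.
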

\begin{proof}
The second point immediately follows from the first thanks to Lemma \ref{lem_RV_dec_1}. As for the first point, clearly there is a canonical morphism $\Kp\Def_{\Kb}[*]\to\Kp\Def_{\Kb}$, and $J$ lies in the equaliser, yielding the desired morphism. It is obviously surjective; let us prove that it is injective. To do so, let $a = \sum_{i=1}^n a_i, b=\sum_{i=1}^nb_i$ be two elements of $\Kp\Def_{\Kb}[*]$ having the same image under that morphism, with $a_i,b_i\in\Kp\Def_{\Kb}[i]$. Up to multiplying $a_i, b_i$ by $[1]_{n-i}$ (which is equal to $1$ in the quotient by $J$), we may assume that $a_i=b_i=0$ for all $i<n$. Then the result clearly follows from the fact that the inclusions $\Def_{\Kb}[m]\subset\Def_{\Kb}[m+1]$ are fully faithful.
\end{proof}

Now, the right hand side is almost a tensor product: more precisely, if we had $\bbL-1\in\cP_+(Z)$, then it would be the tensor product $\Kp\Def_{\Kb,Z}\otimes_{\cP^0_+(Z)}\cP_+(Z)$, where $\cP^0_+(Z) = \cP^{00}_+(Z)[\bbL-1]$ is the semiring generated by $\cP^{00}_+(Z)$ et $\bbL-1$. Of course, it certainly does not hold that $\bbL-1\in\cP_+(Z)$. To remedy this, we will groupify:

\begin{cor}
Let \[\sC^\vol(Z), \mathrm K\Def_{\Kb,Z},\cP(Z),\cP^0(Z)\] be the respective groupifications of \[\sC_+^\vol(Z), \Kp\Def_{\Kb,Z}, \cP_+(Z),\cP_+^0(Z)\,.\] There is an isomorphism
\[\sC^\vol(Z) = \mathrm K\Def_{\Kb,Z}\otimes_{\cP^0(Z)}\cP(Z)\,.\]
\end{cor}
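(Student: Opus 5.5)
We will deduce the corollary from Lemma \ref{lem_RV_dec_2} by groupifying both sides and using that groupification commutes with colimits, in particular with tensor products of semimodules and with quotients by congruences. Concretely, groupification is left adjoint to the inclusion of abelian groups into commutative semigroups (and of rings into semirings). Hence it sends $A\otimes_{S}B$ to $A^{\mathrm{gp}}\otimes_{S^{\mathrm{gp}}}B^{\mathrm{gp}}$, and it sends a quotient by a congruence generated by a pair $(u,v)$ to the quotient by the ideal generated by $u-v$. Applying this to the description
\[\sC_+^\vol(Z) = (\Kp\Def_{\Kb,Z}\otimes_{\cP^{00}_+(Z)}\cP_+(Z))/\bigl(1,(\bbL-1)\otimes\tfrac{\bbL^{-1}}{1-\bbL^{-1}}\bigr)\]
yields
\[\sC^\vol(Z) = (\mathrm K\Def_{\Kb,Z}\otimes_{\cP^{00}(Z)}\cP(Z))/\bigl(1-(\bbL-1)\otimes\tfrac{\bbL^{-1}}{1-\bbL^{-1}}\bigr),\]
where $\cP^{00}(Z)$ denotes the groupification of $\cP^{00}_+(Z)$.

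The next step is to identify this quotient with $\mathrm K\Def_{\Kb,Z}\otimes_{\cP^0(Z)}\cP(Z)$. After groupification, $\bbL-1$ does lie in $\cP(Z)$, since $\bbL$ and $1$ both do. Moreover $\bbL-1$ is invertible in $\cP(Z)$, with inverse $\frac{\bbL^{-1}}{1-\bbL^{-1}}$, because $(\bbL-1)\bbL^{-1}=1-\bbL^{-1}$. Also, $\cP^0(Z)$ is the subring generated by $\cP^{00}(Z)$ and $\bbL-1$, where $\bbL-1$ acts on $\mathrm K\Def_{\Kb,Z}$ as the class $[\Kb^\times]=[\bG_m]$, coming from $\bbL=[\Kb]$. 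Passing from $\otimes_{\cP^{00}(Z)}$ to $\otimes_{\cP^0(Z)}$ therefore amounts to imposing, for every $a$ and $p$, the relation
\[a[\bG_m]\otimes p = a\otimes(\bbL-1)p.\]
We check that this ideal coincides with the one generated by $1-[\bG_m]\otimes\frac{\bbL^{-1}}{1-\bbL^{-1}}$. In one direction, take $a=1$ and $p=\frac{\bbL^{-1}}{1-\bbL^{-1}}$: the relation becomes $[\bG_m]\otimes\frac{\bbL^{-1}}{1-\bbL^{-1}} = 1\otimes 1$, which is the displayed generator. In the other direction, in the quotient ring the element $[\bG_m]\otimes 1$ becomes the inverse of $1\otimes\frac{\bbL^{-1}}{1-\bbL^{-1}}$, that is, it equals $1\otimes(\bbL-1)$. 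Multiplying by $a\otimes p$ and using the ring structure then gives every relation above.

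To make this rigorous, we construct the two comparison maps explicitly. The forward map is the canonical surjection $\mathrm K\Def_{\Kb,Z}\otimes_{\cP^{00}(Z)}\cP(Z)\to\mathrm K\Def_{\Kb,Z}\otimes_{\cP^0(Z)}\cP(Z)$; it kills the generator by the first computation, so it descends to the quotient. The inverse is induced by the $\cP^0(Z)$-balanced map $(a,p)\mapsto a\otimes p$ into the quotient ring. This map is balanced by the second computation, and the two maps are clearly mutually inverse on pure tensors. The main point of care is the first step: we must make sure that groupification of the semiring quotient really gives the ring quotient, with no extra collapse from cancellation. This holds formally by the universal property, since the ring quotient satisfies exactly the universal property of the groupified semiring quotient. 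We must also make sure that $\cP^0(Z)$, which is defined inside $\widehat{\cS}\,\tGamma[\bbL]$, maps into $\cP(Z)$ compatibly, which holds once $\bbL-1$ is read through its Laurent expansion.
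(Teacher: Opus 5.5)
Your proposal is correct and takes essentially the same route as the paper, which gives no separate proof: it groupifies Lemma~\ref{lem_RV_dec_2} and notes that once $\bbL-1$ lies in $\cP(Z)$, the quotient relation becomes the extra balancing relation that turns $\otimes_{\cP^{00}(Z)}$ into $\otimes_{\cP^0(Z)}$. Your write-up supplies the details the paper leaves implicit, namely that groupification is compatible with tensor products and with congruence quotients (via the universal properties), and that the two ideals coincide because $\bbL-1$ is invertible in $\cP(Z)$ with inverse $\frac{\bbL^{-1}}{1-\bbL^{-1}}$, not merely because it lies there.
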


For $Z$ an $\cL(\emptyset)$-definable set, denote by $\sC^{\cL,\vol}(Z)$ the ring of $\cL$-definable constructible motivic functions on $Z$, as defined in \cite{cluckers_hensel_2022}, Theorem 5.8.2, using the results of \cite{cluckers_motivic_2015} (and denoted by $\sC(Z)$ in that reference). By the above corollary, there is a canonical morphism
\[\sC^{\cL,\vol}(Z)\to \sC^\vol(Z)\,.\]
Note that, if $Z\subset\RV^m$, this is even an isomorphism by $\RV$-minimality.

For $\delta\in\bN$, we also denote by $\sC_+^{\vol,\leq\delta}(Z)$ the subsemigroup of $\sC_+^\vol(Z)$ of functions with support of dimension at most $\delta$, defined as the image of $\sC_+^{\bdd,\leq\delta}(Z)$.

Now we turn to constructible motivic Functions.

\begin{dfn}\label{def_Fonctions_CL}
Let $\delta\in\bN$. Define $C_+^{\vol,\delta}(Z)$ as the quotient of $\Kp\mu_\Gamma\RV_Z^{\bdd,\delta}[*]$ by the semigroup congruence $\Isp^{Z,\vol,\delta}$ generated via the semimodule structure by the homonymous semiring congruence on $\Kp\mu_\Gamma\RV_Z^{\bdd,\mathrm{rig}}[*]$. We also let $C_+^\vol(Z) = \bigoplus_{\delta\in\bN}C_+^{\vol,\delta}(Z)$, as well as $C^\vol(Z)=\bigoplus_{\delta\in\bN}C^{\vol,\delta}(Z)$ be its groupified.
\end{dfn}

There is an obvious morphism $C_+^{\bdd,\delta}(Z)\to C_+^{\vol,\delta}(Z)$. In addition:
\begin{lem}
There is an equality
\[C_+^{\vol,\delta}(Z) = \sC_+^{\vol,\leq\delta}(Z)/\sC_+^{\vol,\leq\delta-1}(Z)\,.\]
\end{lem}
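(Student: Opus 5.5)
We prove the equality by mimicking, for the volume-normalised congruence, the description of $C_+^{\mu_\Gamma,\delta}(Z)$ as $\sC_+^{\mu_\Gamma,\leq\delta}(Z)/\sC_+^{\mu_\Gamma,\leq\delta-1}(Z)$ given in Section \ref{ssec_fonctions_constructibles_motiviques}. First we build a natural map from $\Kp\mu_\Gamma\RV_Z^{\bdd,\mathrm{rig}}[*]$, restricted to objects whose support has dimension at most $\delta$, to $\Kp\mu_\Gamma\RV_Z^{\bdd,\delta}[*]$. This map is the identity on objects, and every rigid morphism is in particular a morphism in the $\delta$-category. It is surjective, since both categories have the same objects. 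Composing with the quotient map to $C_+^{\vol,\delta}(Z)$ gives a surjection $\Phi$ from the subsemigroup of classes with support of dimension at most $\delta$. Then we check that $\Phi$ is compatible with $\Isp^{Z,\vol,\mathrm{rig}}$. This holds because, by Definition \ref{def_Fonctions_CL}, $\Isp^{Z,\vol,\delta}$ is by definition the congruence induced from the rigid one through the semimodule action. So $\Phi$ factors through $\sC_+^{\vol,\leq\delta}(Z)$.

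Next we identify the kernel of $\Phi$. If the support of $(R,\omega)$ has dimension at most $\delta-1$, then in $\mu_\Gamma\RV_Z^{\bdd,\delta}[*]$ it is isomorphic to the empty object: the partial map defined on the empty set satisfies the morphism conditions, because $\dim_\VF(\Supp R)<\delta$. So $\Phi$ kills $\sC_+^{\vol,\leq\delta-1}(Z)$. More generally, two objects that agree away from a set of dimension less than $\delta$ have the same image. This gives a surjective map
\[\sC_+^{\vol,\leq\delta}(Z)/\sC_+^{\vol,\leq\delta-1}(Z)\to C_+^{\vol,\delta}(Z)\,.\]
Here we read the quotient of semigroups in the sense used earlier, namely $\varphi\sim\psi$ if $\varphi+\alpha=\psi+\beta$ with $\alpha,\beta$ of small support.

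For injectivity, suppose $[R,\omega]=[S,\rho]$ in $C_+^{\vol,\delta}(Z)$. We use the fibrewise description of $\Isp^{Z,\mu}$ established after Definition \ref{def_cat_avec_RV}, adapted to the extra generator $([1]_1,\tun)$. That argument goes through verbatim, since the new generator is again a relation between rigid objects, relative to $Z$. It produces an $\emptyset$-definable $Z_1\subset\Supp R\cup\Supp S$ whose complement has dimension less than $\delta$, such that the restrictions to $Z_1$ are congruent in the rigid quotient. Here we need the pointwise criterion and compactness gluing, as in Lemma \ref{lem_sans_forme_fibre}, now for $\Isp^{Z,\vol,\mathrm{rig}}$. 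Then we decompose $[R]=[R_{|Z_1}]+[R_{|Z\setminus Z_1}]$, and similarly for $S$. The remainders lie in $\sC_+^{\vol,\leq\delta-1}(Z)$, so the two classes agree in the quotient.

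The main obstacle is this injectivity step, specifically transferring the fibrewise, up-to-small-dimension characterisation of the congruence to $\Isp^{Z,\vol}$. We must check that the witnesses $T_{i,j}$ can be glued while staying bounded and with support inside $Z_1$. Boundedness is preserved because subobjects of bounded objects are bounded, by the property stated after the definition of $C_+^\bdd(Z)$.
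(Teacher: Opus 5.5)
Your proof is correct, but it is organised differently from the paper's. The paper's proof is a two-line reduction. It takes as known the bounded identity $C_+^{\bdd,\delta}(Z)=\sC_+^{\bdd,\leq\delta}(Z)/\sC_+^{\bdd,\leq\delta-1}(Z)$, which is the bounded version of the description of $C_+^{\mu_\Gamma,\delta}(Z)$ in Section \ref{ssec_fonctions_constructibles_motiviques}. It then quotients both sides by the extra relation $([1]_1,\tun)$.

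You instead build the comparison map $\Phi$ directly for the volume-normalised congruence, and check well-definedness, vanishing on small support and injectivity by hand. The paper's route is shorter, but it silently uses the point your argument isolates: every generator of $\Isp^{Z,\vol,\mathrm{rig}}$ relates elements with the same support. That is what makes passing to the quotient commute with the filtration by dimension of support. Your route is self-contained and does not depend on the bounded identity, which the paper only asserts.

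Two small remarks.

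First, objects of $\mu_\Gamma\RV_Z^{\bdd,\delta}[*]$ are only bounded off a set of dimension $<\delta$, so they are not literally rigid-bounded objects. Surjectivity of $\Phi$ holds because each such object is isomorphic, in the $\delta$-category, to its restriction to the locus where it is bounded.

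Second, your injectivity step is heavier than it needs to be. An equality in $C_+^{\vol,\delta}(Z)$ is witnessed by a finite chain of elementary moves $x+m\cdot u\leftrightarrow x+m\cdot v$, with $(u,v)\in\Isp^{Z,\vol,\mathrm{rig}}$, together with finitely many $\delta$-isomorphisms. Each of these is valid off a set of dimension $<\delta$. Removing the finite union of these bad sets gives a $Z_1$ on which the whole chain is rigid. So $[R_{|Z_1}]$ and $[S_{|Z_1}]$ are congruent in the rigid quotient, with no pointwise criterion or compactness gluing needed.

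If you keep the compactness route, boundedness of the glued witnesses is automatic, because rigid boundedness is a fibrewise condition. It does not follow from the witnesses being subobjects of bounded objects, which is the justification you gave.
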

\begin{proof}
This fact is known for $\bdd$, and we deduce the result by taking quotient.
\end{proof}
In particular, when $Z$ is $\cL(\emptyset)$-definable, if $C^{\cL,\vol}(Z)$ is the graded group of $\cL$-definable constructible motivic Functions over $Z$, then there is also a graded group morphism $C^{\cL,\vol}(Z)\to C^\vol(Z)$. Moreover, if $Z\subset\RV^m$, this is an isomorphism.

\subsection{Cluckers-Loeser motivic integration}
Now that we have found Cluckers-Loeser groups, we have to connect the $f_!$ we defined to the morphisms between these groups given by the motivic integration introduced in \cite{cluckers_constructible_2008}, and obtain a commutative square in Theorem \ref{thm_cluckers_loeser}.

Let $f:Z'\to Z$ be an $\emptyset$-definable map. Let $a\in \I_Z C_+^\bdd(Z')$. By the projection formula of Corollary \ref{cor_projection}, we have $f_!([1]_1a)=[1]_1f_!a$ and $f_!(\tun a)=\tun f_!a$. Hence $f_!$ descends to the quotient as a semigroup morphism $f_!:\I_Z C_+^\vol(Z')\to C_+^\vol(Z)$. We have shown the following:

\begin{prop}\label{prop_formalisme_CL}
For every $\emptyset$-definable map $Z\to S$, let $\I_S C_+^\vol(Z)$ be the image of $\I_S C_+^\bdd(Z)$ under the quotient morphism $C_+^\bdd(Z)\to C_+^\vol(Z)$.

For all $\begin{tikzcd} Z'\arrow[r, "f"] & Z \arrow[r] & S\end{tikzcd}$, there is a unique morphism $f_!:\I_S C_+^\vol(Z')\to \I_S C_+^\vol(Z)$ such that the diagram
\[\begin{tikzcd}
\I_S C_+^\bdd(Z') \arrow[r] \arrow[d, "f_!"] & \I_S C_+^\vol(Z') \arrow[d, "f_!"] \\
\I_S C_+^\bdd(Z) \arrow[r] & \I_S C_+^\vol(Z)
\end{tikzcd}\]
commutes.

Moreover, those morphisms satisfy Cluckers-Loeser axioms (\cite{cluckers_constructible_2008}, Theorem 10.1.1).
\end{prop}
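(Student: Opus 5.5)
The plan has two parts: first build $f_!$ on the normalised quotients, then check the Cluckers-Loeser axioms of \cite{cluckers_constructible_2008}, Theorem 10.1.1, one at a time from the properties of $f_!$ already established.

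\textbf{Existence and uniqueness.} The horizontal arrows $\I_S C_+^\bdd\to\I_S C_+^\vol$ are surjective by definition of $\I_S C_+^\vol$. Hence uniqueness is automatic, and existence amounts to showing that $f_!$ on $\I_S C_+^\bdd(Z')$ respects the congruence $\Isp^{\vol}$.

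\begin{itemize}
\item The congruence $\Isp^{Z',\vol,\delta}$ is induced, via the semimodule structure, by the generators $([1]_1,[\RV_{<1}]_1)$ and $([1]_1,\tun)$ of the rigid semiring congruence. These generators are pulled back from the point, so they lie in the image of $f^*$.
\item The projection formula (Corollary \ref{cor_projection}) gives $f_!(f^*\varphi\cdot a)=\varphi\,f_!a$. Hence two elements that differ by one elementary step of the congruence still differ by one elementary step after applying $f_!$.
\item One must check that the integrable part is preserved. Both members of a related pair are integrable relative to $S$, by the additivity property of bounded Functions established after Definition \ref{dfn_borné}.
\item The target lands in $\I_S C_+^\vol(Z)$ because the composite $Z'\to Z\to S$ satisfies $(g\circ f)_!=g_!\circ f_!$ (Proposition \ref{prop_formalisme_integration}).
\end{itemize}

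\textbf{Checking the axioms.} I will go through them as follows.

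\begin{itemize}
\item \emph{Functoriality} $(g\circ f)_!=g_!\circ f_!$ descends from Proposition \ref{prop_formalisme_integration} via the surjection.
\item \emph{Additivity} is clear, since all maps involved are semigroup morphisms.
\item \emph{Projection formula} comes from Corollary \ref{cor_projection}. To use it here, I need the rigid semiring $\sC_+^\vol(Z)$ to act on $C_+^\vol(Z)$ compatibly with the quotient; this holds because the congruence is multiplicative.
\item \emph{Inclusions} $Z'\subset Z$: here $f_!$ is extension by zero, which is immediate from the construction through graph projections.
\item \emph{Integration along $\Kb$-variables} and \emph{along $\Gamma$-variables}: these are the $\RV$-projection case (Lemma \ref{lem_int_RV_proj}). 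To match them with the Cluckers-Loeser formulas, I will use the decomposition of Lemma \ref{lem_RV_dec_2} together with the Presburger isomorphism $\mu$ of Lemma \ref{lem_def_mu}. Under that isomorphism, summation over $\Gamma$ of $\bbL^{-\omega}$ is exactly the Cluckers-Loeser $\pi_!$, as noted after Lemma \ref{lem_def_mu}. The correction $\omega\mapsto|r|^{-1}\omega$ applied in the definition of $\RV$-projection accounts for the counting measure on the discrete sort.
\item \emph{Change of variables} for bijections: this is Lemma \ref{lem_proj_bij_vol}, with jacobian $|\Jac f|$ computed through the Serre measure (Definition \ref{def_jacobien}).
\end{itemize}

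\textbf{Main obstacle.} I expect the hardest axiom to be integration along one $\VF$-variable over a ball, with the normalisation $\int[\cO]_1=1$ and the ball formula (volume of an open ball of radius $\tgamma$ equal to $\bbL^{-\tgamma}$, up to the shift convention). My first attempt will reduce via an $\RV$-partition and cell decomposition (Theorem \ref{thm_dec_cell}) to a single twisted box $\rv^{-1}(\xi)$. There the Hrushovski-Kazhdan integral gives $[\xi]_1$. I will then compute its image using the relations $[1]_1=\tun$ and $[1]_1=[\RV_{<1}]_1$, checking consistency with the modified inclusion $(R,\omega)\mapsto(\{1\}\times R,\omega\gamma_0^{-1})$ and with $\mu([1]_d)=\bbL^{-d}$.
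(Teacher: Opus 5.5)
Your proposal is correct and follows essentially the same route as the paper. Uniqueness comes from surjectivity of the horizontal maps. Existence comes from the projection formula (Corollary \ref{cor_projection}) applied to the constant, pulled-back generators $[1]_1$ and $\tun$. The Cluckers-Loeser axioms are then read off from the constructions for $\RV$-projections, $\VF$-projections and bijective projections, together with functoriality. Where you go further, you only make explicit what the paper leaves implicit: that elementary steps of the congruence preserve integrability relative to $S$, and the volume-of-balls computation through $\mu$.
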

\begin{proof}
Uniqueness is clear, since horizontal maps are surjective. We just proved the existence. That these morphisms satisfy Cluckers-Loeser axioms comes from the construction. For instance, the functoriality axiom (A0) has already been shown; axioms (A1) and (A2) are clear by construction; (A3) has be shown already; (A5), (A6), (A7), (A8) come from the constructions in the case of the $\RV$ or $\VF$-projections.
\end{proof}

Denote by $\I_S C^\vol(Z)$ the groupification of $\I_S C_+^\vol(Z)$. If $Z\to S$ is an $\cL(\emptyset)$-definable map, denote by $\I_S C^{\cL,\vol}(Z)$ the group of $\cL(\emptyset)$-definable constructible motivic F0unctions over $Z$ that are integrable relative to $S$.

\begin{thm}\label{thm_cluckers_loeser}
\begin{enumerate}
\item Let $Z\to S$ be an $\cL(\emptyset)$-definable map. The image of $\I_S C^{\cL,\vol}(Z)$ under the morphism $C^{\cL,\vol}(Z)\to C^\vol(Z)$ lies in $\I_S C^\vol(Z)$.
\item Let $\begin{tikzcd} Z' \arrow[r, "f"] & Z \arrow[r] & S \end{tikzcd}$ be $\cL(\emptyset)$-definable. The square
\[\begin{tikzcd}
\I_S C^{\cL,\vol}(Z') \arrow[r] \arrow[d, "f_!"] & \I_S C^\vol(Z') \arrow[d, "f!"]\\
\I_S C^{\cL, \vol}(Z) \arrow[r] & \I_S C^\vol(Z')
\end{tikzcd}\]
is commutative.
\end{enumerate}
\end{thm}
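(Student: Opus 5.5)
The plan is to use the uniqueness characterisation of Cluckers-Loeser integration (\cite{cluckers_constructible_2008}, Theorem 10.1.1): the system $(\I_S C^{\cL,\vol}, f_!)$ is the \emph{unique} family of morphisms satisfying axioms (A0)--(A8). Compose the canonical morphisms $\iota_Z:C^{\cL,\vol}(Z)\to C^\vol(Z)$ with our $f_!$ from Proposition \ref{prop_formalisme_CL}. We want $\iota_Z\circ f_!^{\cL}=f_!\circ\iota_{Z'}$ on integrable Functions. Both $f_!$ systems are functorial (axiom (A0) and Proposition \ref{prop_formalisme_integration}). Every $\cL(\emptyset)$-definable $f$ factors, via its graph, as a bijective projection followed by coordinate projections. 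It therefore suffices to check the two items for three kinds of maps: bijective projections, projections along one $\RV$ (or $\Kb$, $\tGamma$) coordinate, and projections along one $\VF$ coordinate. For bijective projections we would compare Lemma \ref{lem_proj_bij_vol} with the Cluckers-Loeser change of variables axiom. The jacobian of Definition \ref{def_jacobien} built from the Serre measure agrees with the Cluckers-Loeser order of jacobian, since both are normalised by the same $\sup_I$ condition of \cite{cluckers_constructible_2008}, Section 8.

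For the $\RV$-projections, we decompose $\RV$ using Lemma \ref{lem_RV_dec_orth_rel}, and the bounded variant Lemma \ref{lem_RV_dec_orth_bdd}. This splits such a projection into a $\Kb$-projection and a $\Gamma$-projection. On the $\Kb$ side, both constructions are the forgetful functor on $\Kp\Def_{\Kb,Z}$, so they agree. On the $\Gamma$ side, Lemma \ref{lem_def_mu} and the remark following it identify $\mu$ with Cluckers-Loeser summation over $\tGamma$. The Presburger axiom (A8) then matches our $\pi_!$ along $\RV$. The normalising factor $|\mathrm{pr}_\RV|^{-1}$ corresponds to the relation $[1]_1=\tun$ built into $\sC^\vol$. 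Item (1) would be proved at the same time. Integrability in the Cluckers-Loeser sense means summability of the Presburger series. This translates exactly into boundedness in the sense of Definition \ref{dfn_borné}, so integrable Functions map into $\I_S C^\vol$.

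The main obstacle is the projection along a single $\VF$ coordinate. Here our $\pi_!$ is Hrushovski-Kazhdan integration relative to $Z\times\RV^d$ (Proposition \ref{prop_HK_rel}). The Cluckers-Loeser one is built from cell decomposition and the ball axiom (A7): a ball of radius $\tgamma$ gets volume $\bbL^{-\tgamma}$. My approach is to take an $\cL$-definable cell decomposition of the $\cL$-Function, relative to the base. Such a decomposition is in particular a cell decomposition in our sense, with trivial $\RV$-partition, and by Lemma \ref{lem_int1_ind_chi} our integral may be computed with it. On each cell, our integral is $[I_\chi X]$ with the volume form rescaled. Under $\mu$, an open ball $\rv^{-1}(\xi)$ corresponds to $[1]_1$ times $|\xi|$, which is $\bbL^{-\tgamma-1}$; this is what (A7) gives after the normalisation $\int[\cO]=1$. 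Finally, we would check the boundary behaviour of dimension-zero cells and the support/degree bookkeeping ($\delta$-grading against Cluckers-Loeser's almost-everywhere equality). That check should use Theorem \ref{thm_dim}, which says $\dim_\VF$ agrees with $\cT$-dimension on $\cL$-definable sets (Lemma \ref{lem_dim_T-def}).
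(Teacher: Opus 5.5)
Your overall route is the paper's. You reduce, via functoriality (A0) and the graph factorisation, to projections along $\Kb$-, $\Gamma$- and $\VF$-coordinates (plus bijective projections and inclusions), and compare with the Cluckers--Loeser axioms case by case. The paper does this in one line by quoting Proposition \ref{prop_formalisme_CL}, whereas you recompute each case. Your $\VF$-step is the right check: an $\cL$-cell decomposition is admissible by Lemma \ref{lem_int1_ind_chi}, and $\rv^{-1}(\xi)$ receives $[1]_1\cdot|\xi|=\bbL^{-\tgamma-1}$.

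The $\RV$-step, however, fails as you state it. The factor $|\mathrm{pr}_\RV|^{-1}$ and the relation $[1]_1=\tun$ do not ``correspond''. The factor only removes the density $|r|$. Each nonzero projected coordinate is still placed in the degree part, so it still contributes the class $[1]_1$ of $1+\mathfrak m$. In $C^\vol$ that class equals $\tun$, i.e.\ $\bbL^{-1}$, by the paper's own normalisation $\mu([1]_d)=\bbL^{-d}$.

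Test it on the $\cL(\emptyset)$-definable point $Z'=\{1\}\subset\Kb^\times\subset\RV^\times$ and $f:Z'\to\{*\}$. Integration along residue field variables gives $f_!\mathbf 1_{Z'}=1$ in $C^{\cL,\vol}(\{*\})$. The construction of Lemma \ref{lem_int_RV_proj} instead produces the degree-one object $(\{1\},|1|^{-1})$, i.e.\ $[1]_1=\tun\neq 1$, so the square of item (2) fails for this $f$.

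Two consequences follow. Your $\Kb$-side is \emph{not} just the forgetful functor: moving a $\Kb^\times$-coordinate into the degree creates a $\Gamma$-factor $\{1\}$ worth $\tun$. The same example also contradicts $(\pi_!)^{-1}=\pi^*$ for bijective $\RV$-projections in Lemma \ref{lem_proj_bij_vol}, which your bijective-projection step relies on.

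The paper's proof passes over this point as well, since it takes the residue-field and value-group axioms from Proposition \ref{prop_formalisme_CL} ``by construction''. So you cannot close the gap by citation; you must first renormalise the $\RV$-pushforward. One way:
\begin{itemize}
\item use $\gamma_0^{-l'}|\mathrm{pr}_\RV|^{-1}\omega$, where $l'$ is the number of nonzero projected coordinates;
\item correspondingly use $\gamma_0^{d}|r|\omega$ as the inverse transformation inside the $\VF$-projection, so that ball volumes are unchanged.
\end{itemize}
After that correction, a $\Kb$-projection returns the class of the fibre and a $\Gamma$-projection returns the plain sum. The rest of your case-by-case comparison then proceeds as you describe.
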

\begin{proof}
First we deal with the two points simultaneously by assuming that $Z=S$ in the second one. If $f:Z'\to Z$ is a projection map along $\Gamma$, $\Kb$ or $\VF$, or an injection, then the result holds by axioms (A6), (A5), (A7) and (A8), or (A4) respectively (as well as (A1)). The functoriality axiom (A0) now allows us to show the result for any $Z'\to Z\to S$.
\end{proof}

\subsection{Conclusion}
We have shown that Cluckers-Loeser motivic integration, over henselian discretely valued fields of residue characteristic zero and any base set $S\subset\VF^n\times\Gamma^m\times\Kb^l$, is the "universal" integration theory with:
\begin{enumerate}
\item the condition $\int[\cO]_1=1$;
\item a section $\sn:\RV^\times\to\VF^\times$.
\end{enumerate}
While the first point is obvious by definition and is a very reasonable decision by the authors, the second one is much more unexpected -- since there is no such section in Cluckers and Loeser work. However, we see that it appears naturally when we try to rephrase their work in terms of a universal integration theory.

It would also have been possible to take only a section of $\Gamma$ or $\Kb$, but in that case we would have to restrict the base set $S$ to $S\subset\VF^n\times\Gamma^m$ or $S\subset\VF^n\times\Kb^l$ respectively (and, in the case of $\Gamma$, to assume that $\Kb$ is algebraically bounded over $\emptyset$, so that the theory is effective).

\section{Appendix - Adding a section and dimension theory}\label{sec_appendice_dim}
In that work, we needed to develop a dimension theory when we have a dimension in a base language, and we want to expand that language and keep a dimension theory. Here we explain how to do so (see Proposition \ref{prop_appendice}).

Let $\cL$ be any (one-sorted) language and $\cT$ a theory. We assume that there is a notion of dimension on definable sets, as defined by van den Dries \cite{dries_dimension_1989}. In other words, if we let $\cS_n$ be the set of (parametrically) definable subsets of $M^n$, with $M\models \cT$, there is a map $\dim:\bigcup_{n\in\bN}\cS_n\to\bN\cup\{-\infty\}$ such that, for all $X,Y\in \cS_m,m\geq 0$, we have:

\begin{enumerate}
\item $\dim X=-\infty \iff X=\emptyset$, $\dim \{a\}=0$ for all $a\in M$, $\dim M^1=1$;
\item $\dim (X\cup Y)=\max (\dim X, \dim Y)$;
\item for every permutation $\sigma$ of $\{1,\dots,m\}$, if we let $X^\sigma$ be the image of $X$ under the action of $\sigma$ by permuting the coordinates, we have $\dim X^\sigma=\dim X$;
\item if $m\geq 1$, for $x\in M^{m-1}$, let $X_x=\{y\in M:(x,y)\in X\}$. Also set $X(0)=\{x\in M^{m-1}:\dim X_x=0\}$ and $X(1)=\{x\in M^{m-1}:\dim X_x=1\}$. Then $X(0)$ and $X(1)$ are definable, and moreover $\dim \{(x,y)\in X:x\in X(i)\}=\dim X(i)+1$ for $i=0,1$.
\end{enumerate}

We will allow us to generalise this beyond the case where $\cL$ is one-sorted. In that case, we assume that $\cL$ is many-sorted, with one main sort and auxiliary sorts, and that the dimension is taken in that main sort. In other words, the dimension of the auxiliary sorts is zero; or, if $X\subset M^n\times S$ is definable, with $M$ the main sort and $S$ auxiliary, then $\dim X=\max_{s\in S}\dim X_s$, where $X_s=\{x\in M^n:(x,s)\in X\}$.

Let $R$ be an imaginary sort of $\cT$, and let $\cL'$ be an expansion of the language $\cL$. Also let $\cT'$ be an $\cL'$-theory expanding $\cT$.

\begin{dfn}
Let $M\models\cT', A\subset M$ be a set of parameters and $X$ be an $\cL'(A)$-definable set. An \emph{$A$-definabel $R$-partition} of $X$ is the data of a subset $Y\subset M^l$ and a map $\chi:X\to Y$, both $A$-definable, such that:
\begin{enumerate}
\item for all $y\in Y$, the fibre $\chi^{-1}(y)$ is $\cL(y)$-definable;
\item there is an $\cL'(A)$-definable injective map $f:Y\to R^m$ for some $m\in\bN$.
\end{enumerate}
If such $R$-partitions exist for all $M,A$ and $X$, we say that $\cT'$ \emph{has $R$-partitions}. We always make this assumption in what follows.
\end{dfn}

In that context, we can introduce the following notion:
\begin{dfn}
For an $\cL'$-definable set (with parameters) $X$, and $\chi:X\to Y$ and $R$-partition of $X$, we define:
\[\dim (X,\chi) = \max_{y\in Y}\dim_\cT\chi^{-1}(y)\,.\]
\end{dfn}

If we want this notion to be well-behaved, it is clearly necessary to make some assumptions. For that purpose we will briefly need the notion of $R$-covering.
\begin{dfn}
For an $\cL$-definable set (with parameters) $X$, we call \emph{$R$-covering of $X$} an  $\cL'$-definable family $(Z_y)_{y\in Y}$, with $Z_y$ an $\cL(y)$-definable set and $Y$ an $\cL'$-definable set with parameters that maps into $R$, such that $X=\bigcup_{y\in Y} Z_y$.
\end{dfn}
In particular, any $R$-partition is an $R$-covering by its fibres.

Now we may state the following hypothesis:\\
\textbf{Hypothesis:} For $X$ an $\cL$-definable set with parameters, and $(Z_y)_{y\in Y}$ an $R$-covering of $X$, we have 
\begin{equation}\label{egalite_dim_sec}\dim X = \max \dim Z_y\,.
\end{equation}

This corresponds to the idea that $\dim R=0$. Note that the inequality $\dim X\geq\max\dim Z_y$ is trivial. A first immediate consequence is that, for any $\cL$-definable set (with parameters) $X$,for any $\chi$-partition of $X$, we have $\dim X=\dim (X,\chi)$.

Now we show that, by doing so, we got a good dimension theory for $\cL'$-definable sets.

\begin{prop}\label{prop_appendice}
Under this assumption, for every $\cL'$-definable set (with parameters) $X$ and every $\chi_i:X\to Y_i, i=1,2$ two $R$-partitions of $X$, we have $\dim (X,\chi_1)=\dim (X,\chi_2)$. So we define $\dim X$ unambiguously. Moreover, this notation is legitimate, in the sense that this dimension still satisfies the four van den Dries axioms.
\end{prop}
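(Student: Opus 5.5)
Independence of the choice of $R$-partition will come from a common refinement, and each van den Dries axiom will then be checked fibrewise on a well-chosen $R$-partition, using the Hypothesis (\ref{egalite_dim_sec}) to transfer to $\cL'$-sets what is known for $\cL$-sets.

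\emph{Independence.} Given $\chi_i:X\to Y_i$, I would take $\chi=(\chi_1,\chi_2):X\to Y_1\times Y_2$. Its image maps into $R$ through the product of the injections. Each fibre $\chi^{-1}(y_1,y_2)=\chi_1^{-1}(y_1)\cap\chi_2^{-1}(y_2)$ is $\cL(y_1y_2)$-definable, so $\chi$ is an $R$-partition refining both. It then suffices to show $\dim(X,\chi)=\dim(X,\chi_1)$. Fix $y_1\in Y_1$. The family $(\chi^{-1}(y_1,y_2))_{y_2}$ is an $R$-covering, with parameter $y_1$ added, of the $\cL(y_1)$-definable set $\chi_1^{-1}(y_1)$. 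Here I use that the Hypothesis is stated for arbitrary parameters, so it holds over $y_1$. Hence $\dim_\cT\chi_1^{-1}(y_1)=\max_{y_2}\dim_\cT\chi^{-1}(y_1,y_2)$, and taking the maximum over $y_1$ gives the equality. As a byproduct, for $\cL$-definable $X$ this dimension agrees with $\dim_\cT$.

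\emph{Axioms.} For axiom (1), an $R$-partition of $\emptyset$ has no fibres; a point and $M$ are $\cL$-definable, so the byproduct applies. For axiom (2), take $R$-partitions $\chi_X,\chi_Y$ of $X,Y$ and define an $R$-partition of $X\cup Y$ by $z\mapsto(\chi_X(z),\chi_Y(z))$, using dummy values off $X$ or off $Y$ and a flag coordinate. Its fibres are $\cL$-definable subsets of fibres of $\chi_X$ and $\chi_Y$, with unions controlled by axiom (2) for $\cT$. Comparing with the refinements $z\mapsto\chi_X(z)$ on $X$, and using independence, yields the max formula. Axiom (3) is immediate, since permuting coordinates commutes with $R$-partitions and $\dim_\cT$ is permutation invariant.

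\emph{Axiom (4).} This is the main obstacle. Take an $R$-partition $\chi:X\to Y$. For each $y$, the fibre $X_y:=\chi^{-1}(y)$ is $\cL(y)$-definable, so $\cT$ gives $\cL(y)$-definable sets $X_y(0),X_y(1)$ with the fibre formula. The difficulty is that $\dim X_x$, for the $\cL'$-set $X_x$, is computed through an $R$-partition of $X_x$ rather than through the $X_y$. I would show that $\chi$ restricted to $X_x$, namely $(x,t)\mapsto\chi(x,t)$, is itself an $R$-partition of $X_x$ over parameter $x$: its fibres $(X_y)_x$ are $\cL(xy)$-definable. Then $\dim X_x=\max_y\dim_\cT (X_y)_x$, so $x\in X(1)$ iff $x\in\bigcup_y X_y(1)$. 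This exhibits $X(1)$, and similarly $X(0)$, as $\cL'$-definable, obtained by quantifying over $y\in Y$. For the dimension formula, restricting $\chi$ to $\{(x,t):x\in X(i)\}$ is an $R$-partition. I would refine it by the $\cL(y)$-definable pieces $X_y\cap(X_y(j)\times M)$, using a flag $j$. Then $\cT$'s axiom (4) applies fibrewise, giving dimension $\dim_\cT(X_y(j)\cap X(i))+j$. Finally I need $\max_{y,j}$ of this to equal $\dim X(i)+i$, and pieces with $j\neq i$ must not dominate. A piece with $j=1$ over $X(0)$ is impossible, since $\dim X_x=0$ there. A piece with $j=0$ over $X(1)$ is bounded by the $j=1$ pieces over the same base points, using the covering of $X(1)$ by the sets $X_y(1)\cap X(1)$, which is itself an $R$-partition-type covering, together with independence. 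This bookkeeping is the delicate step.
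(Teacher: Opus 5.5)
Your overall plan matches the paper's proof: the common refinement $(\chi_1,\chi_2)$ plus the Hypothesis gives independence, quantifying over $y\in Y$ gives definability of $X(0),X(1)$, and an $R$-covering argument gives the fibre formula. But one false step recurs, and your arguments for axioms (2) and (4) both rest on it. You treat the restriction of an $R$-partition to an $\cL'$-definable subset as again an $R$-partition. In axiom (4) you assert this for $\chi$ restricted to $\{(x,t)\in X : x\in X(i)\}$. In axiom (2), your dummy-value map has fibre $\chi_X^{-1}(a)\setminus Y$ over a point off $Y$. Intersecting two fibres of $R$-partitions is harmless, as in your independence step. However, $X(i)$ and $Y$ are only $\cL'$-definable, so $\chi^{-1}(y)\cap(X(i)\times M)$ or $\chi_X^{-1}(a)\setminus Y$ need not be $\cL$-definable over any parameters. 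Then $\dim_\cT(X_y(j)\cap X(i))$ is not even defined. For instance, take a section $\sn$ of $\rv$ as in the paper, let $X=(\VF\times\{0\})\cup(\sn(\RV^\times)\times\VF)$, and set $\chi(x,0)=0$ and $\chi(s,t)=s$ for $t\neq 0$. This is an $R$-partition with $X(1)=\sn(\RV^\times)$. Yet the restricted fibre over $0$ is $\sn(\RV^\times)\times\{0\}$, which is infinite with empty interior, hence not $\cL$-definable in a 1-h-minimal $\cT$. Taking $\VF\times\{0\}$ and $\sn(\RV^\times)\times\VF$ as the two sets in axiom (2) breaks your union construction in the same way.

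The missing idea is to use the standing assumption that \emph{every} $\cL'$-definable set has an $R$-partition, and to refine before intersecting. This is exactly what the paper does.
For axiom (2), it takes $R$-partitions of $X_1\cap X_2$, $X_1\setminus X_2$ and $X_2\setminus X_1$ and glues them.
For axiom (4), it takes an $R$-partition $\chi_i$ of $X(i)$ and reduces to one fibre $W=\chi_i^{-1}(y_i)$, which is $\cL(y_i)$-definable. Then $(\chi,\chi_i\circ\pi)$ is an $R$-partition of $\{(x,t)\in X: x\in W\}$ with $\cL(y,y_i)$-definable fibres.
With $W$ in place of $X(i)$, your bookkeeping works:
for $i=1$, the sets $X_y(1)\cap W$, $y\in Y$, form an $R$-covering of $W$. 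So the Hypothesis yields some $y$ with $\dim_\cT(X_y(1)\cap W)=\dim W$, hence the lower bound $\dim W+1$, while the $j=0$ pieces have dimension at most $\dim W$.
For $i=0$, the sets $X_y(0)\cap W$ cover $W$.
Your covering by the $X_y(1)$ is in fact the right one. The paper instead uses the full projection $Z_y$ of $\chi^{-1}(y)$ and says the fibres of $\chi^{-1}(y)\to Z_y$ have dimension $i$. When $i=1$ this only holds over the $X_y(1)$ part.
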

\begin{proof}
For the first point, since $(\chi_1,\chi_2)$ is again an $R$-partition of $X$, we may assume without loss that $\chi_2$ refines $\chi_1$. Then let $y\in Y_1$, and set $X_y=\chi_1^{-1}(y)$. By our hypothesis, we have $\dim X_y = \dim (X_y,(\chi_2)_{|X_y})$. So indeed $\dim (X,\chi_1)=\dim (X,\chi_2)$.

Now we show that $\dim$ satisfies the van den Dries axioms. The first one is clear, using the hypothesis for the last point. For the second axiom, we just take $R$-partitions of $X_1\cap X_2$, $X_1\setminus X_2$ and $X_2\setminus X_1$ and glue them together for the result to be clear. The third point presents no difficulty, so we are left with the fourth one. We use its notations. Let $\chi:X\to Y$ be an $R$-partition of $X$, with $Y\subset M^l$. Now there is a definable subset $Z\subset M^m\times M^l$ such that, for all $y\in Y$, the fibre $\chi^{-1}(y)$ identifies with $Z_y$. By van den Dries fourth axiom, applied to $Z$ relative to the last coordinate of $X$, we find $Z(0)$ et $Z(1)$ two definable sets such that $(x,y)\in Z(i)\iff\dim\{z\in M:(x,z,y)\in Z\}=i$ for $i=0,1$. But then, for $x\in M^{m-1}$, we have:

\[\begin{array}{rcl}
\dim X_x=1 &\iff& (\exists y\in Y)\dim X_x\cap\chi^{-1}(y)=1 \\
&\iff& (\exists y\in Y)\dim Z_{x,y}=1 \\
&\iff& (\exists y\in Y)(x,y)\in Z(1)
\end{array}\]
which is clearly a definable condition, and same for $X(0)$. Denote by $X'(i)=\{(x,z)\in X:x\in X(i)\},i=0,1$, and take some $i$. Let $\chi_i:X(i)\to Y_i$ be an $R$-partition of $X(i)$. Let $y_i\in Y_i$; as one can see by taking an $R$-partition of $X'(i)$ that refines $\chi_i$, it is enough to show that, if $X'=\{(x,z)\in X:x\in \chi_i^{-1}(y_i)\}$, then $\dim X'=\dim \chi_i^{-1}(y_i)+i$. From now on we assume $X=X'$ et $X(i)=\chi_i^{-1}(y_i)$. Let $y\in Y$, then the projection $\chi^{-1}(y)\to X(i)$ has fibres of dimension at most $i$ and image of dimension at most $\dim X(i)$, so $\dim\chi^{-1}(y)\leq\dim X(i)+i$. Consequently, $\dim X\leq\dim X(i)+i$. Conversely, the $R$-partition $\chi$ of $X$ induces a natural $R$-covering of $X(i)$, via the projection map $X\to X(i)$, which we denote by $(Z_y)_{y\in Y}$. By hypothesis, there is $y\in Y$ such that $\dim Z_y=\dim X(i)$.But the projection map $\chi^{-1}(y)\to Z_y$ has fibres of dimension $i$; since here everything is $\cL$-definable, with some parameters, we deduce that $\dim\chi^{-1}(y)=\dim X(i)+i$, thus $\dim X\geq\dim X(i)+i$.
\end{proof}
\begin{rmk}
Here we used a hypothesis on $R$-coverings, stronger \emph{a priori} than if we had stated it just for $R$-partitions, because coverings are preserved under more operations, and in particular under projections. However, in the article, except to prove that this hypothesis is satisfied, we never need such a notion; that is why we have only stated it in passing.
\end{rmk}

\printbibliography
\end{document}